  \newcommand{\R}{\ensuremath{\mathbb{R}}}%
  \newcommand{\Z}{\ensuremath{\mathbb{Z}}}%
	\newcommand{\Q}{\ensuremath{\mathbb{Q}}}%
  \newcommand{\N}{\ensuremath{\mathbb{N}}}%
	\newcommand{\F}{\ensuremath{\mathcal{F}}}%
        \newcommand{\V}{\ensuremath{\mathcal{V}}}%
\renewcommand{\fg}{\ensuremath{\operatorname{FG}}}
				\newcommand{\X}{\ensuremath{\mathcal{X}}}%
                \renewcommand{\P}{\ensuremath{\mathcal{P}}}%
                \renewcommand{\Pi}{\ensuremath{\mathcal{P}_\perp}}%
        \renewcommand{\H}{\ensuremath{\mathcal{H}}}%
				\renewcommand{\L}{\ensuremath{\mathcal{L}}}%
                \newcommand{\K}{\ensuremath{\mathcal{K}}}%
  \newcommand{\Sym}{\ensuremath{\operatorname{Sym}}}%
    \newcommand{\sym}{\ensuremath{\operatorname{Sym}}}%
  \newcommand{\supp}{\ensuremath{\operatorname{Supp}}}%
	\newcommand{\fix}{\ensuremath{\operatorname{Fix}}}%
  \newcommand{\sub}{\ensuremath{\operatorname{Sub}}}%
  \newcommand{\homeo}{\ensuremath{\operatorname{Homeo}}}%
    \newcommand{\aut}{\ensuremath{\operatorname{Aut}}}%
    \newcommand{\st}{\ensuremath{\operatorname{St}}}%
    \newcommand{\rist}{\ensuremath{\operatorname{R}}}%
    \newcommand{\iet}{\ensuremath{\operatorname{IET}}}%
    \newcommand{\rk}{\ensuremath{\operatorname{rk}}}%
  \newcommand{\vol}{\ensuremath{\operatorname{vol}}}%
        \newcommand{\leud}{\ensuremath{\operatorname{dim}_{\operatorname{LE}}}}%
    \newcommand{\tg}{\ensuremath{\mathsf{t}}}%
   \newcommand{\org}{\ensuremath{\mathsf{o}}}%
  \newtheorem{thm}{Theorem}[section]
  \newtheorem{main thm}{Theorem}
  \newtheorem{prop}[thm]{Proposition}
    \newtheorem{prop-def}[thm]{Proposition-Definition}
  \newtheorem{cor}[thm]{Corollary}
   \newtheorem{lem}[thm]{Lemma}
      \newtheorem*{lem-spe}{Lemma}
\theoremstyle{definition}
  \newtheorem{defin}[thm]{Definition}
	 \newtheorem{notation}[thm]{Notation}
\theoremstyle{remark}
  \newtheorem{remark}[thm]{Remark}
  \newtheorem{example}[thm]{Example}
\begin{document}

\title[A commutator lemma for confined subgroups]{A commutator lemma for confined subgroups and applications to groups acting on rooted trees}



\author{Adrien Le Boudec}
\address{CNRS, UMPA - ENS Lyon, 46 all\'ee d'Italie, 69364 Lyon, France}
\email{adrien.le-boudec@ens-lyon.fr}
\thanks{ALB supported by ANR-19-CE40-0008-01 AODynG}

\author{Nicol\'as Matte Bon}
\address{
	CNRS,
	Institut Camille Jordan (ICJ, UMR CNRS 5208),
	Universit\'e de Lyon,
	43 blvd.\ du 11 novembre 1918,	69622 Villeurbanne,	France}

\email{mattebon@math.univ-lyon1.fr}
\thanks{NMB was supported in part by the SNF grant 200020\_169106}

\thanks{This work was supported by the LABEX MILYON (ANR-10-LABX-0070) of Université de Lyon, within the program "France 2030" (ANR-11-IDEX-0007) operated by the French National Research Agency (ANR).}

\subjclass[2020]{Primary 20E07, 20E08
37B05}

\date{19 April 2023}

\dedicatory{}

\begin{abstract}
A subgroup $H$ of a group $G$ is confined if the $G$-orbit of $H$ under conjugation is bounded away from the trivial subgroup in the space $\sub(G)$ of subgroups of $G$. We prove a commutator lemma for confined subgroups. For groups of homeomorphisms, this provides the exact analogue for confined subgroups (hence in particular for URSs) of the classical commutator lemma for normal subgroups: if $G$ is a group of homeomorphisms of a Hausdorff space $X$ and $H$ is a confined subgroup of $G$, then $H$ contains the derived subgroup of the rigid stabilizer of some open subset of $X$.

We apply this commutator lemma in the setting of groups acting on rooted trees. We prove a theorem describing the structure of URSs of weakly branch groups and of their non-topologically free minimal actions. Among the applications of these results, we show: 1) if $G$ is a finitely generated branch group, the $G$-action on $\partial T$ has the smallest possible growth among all faithful $G$-actions; 2) if $G$ is a finitely generated branch group, then every embedding from $G$ into a group of homeomorphisms of strongly bounded type (e.g. a bounded automaton group) must be spatially realized;  3) if $G$ is a finitely generated weakly branch group, then $G$ does not embed into the group IET of interval exchange transformations.

\bigskip
\noindent \textbf{Keywords.} Confined subgroups and uniformly recurrent subgroups, minimal Cantor actions, groups acting on rooted trees, branch groups, growth of Schreier graphs of finitely generated groups.
\end{abstract}

\maketitle

\setcounter{tocdepth}{2}

\section{Introduction}

Given a group $G$, we denote by $\sub(G)$ the space of subgroups of $G$, endowed with the induced topology from the set $\left\lbrace 0,1 \right\rbrace ^G$ of all subsets of $G$. The group $G$ acts on $\sub(G)$ by conjugation. A subgroup $H$ of $G$ is \textbf{confined} if the closure of the $G$-orbit of $H$ in $\sub(G)$ does not contain the trivial subgroup $\left\lbrace 1\right\rbrace $. By definition of the topology in $\sub(G)$, this is equivalent to saying that there exists a finite subset $P$ of non-trivial elements of $G$ such that $g H g^{-1} \cap P$ is not empty for every $g \in G$. A subset $P$ with this property is called a \textbf{confining subset} for $(H,G)$. When $G$ is  finitely generated, yet another equivalent definition is that  a subgroup $H$ of $G$ is confined if the Schreier graph associated to $H$ does not contain copies of arbitrarily large balls of the Cayley graph of $G$ (see Section \ref{s-preliminaries} for the precise formulation).   

A related notion is the notion of uniformly recurrent subgroup (URS) introduced by Glasner and Weiss \cite{GW-urs}. By definition a URS is a (non-empty) closed $G$-invariant subspace $\H \subseteq \sub(G)$ that is minimal (with respect to inclusion) among closed $G$-invariant subspaces of $\sub(G)$. Equivalently, $\H$ is a URS is for every $H \in \H$, the orbit closure of $H$ is equal to $\H$. By extension we also say that subgroup $H$ of $G$ is a URS if the orbit closure of $H$ is minimal. Clearly, if $H$ is non-trivial and $H$ is a URS, then $H$ is confined. Conversely, if $H$ is confined, then the orbit closure of $H$ contains a non-trivial URS by Zorn's lemma. 

Historically, confined subgroups were first systematically studied in the setting of locally finite groups, in connection with the study of ideals in group algebras \cite{Sehg-Zal-Alt93,Zaless94,Hart-Zal-97,Leinen-Puglisi-02Isr,Leinen-Puglisi-02Pac,Leinen-Puglisi-03}. More recently, confined subgroups and URSs played an important role in the study of ideals in reduced group $C^*$-algebras \cite{Kal-Ken,Ken-URS,LBMB-sub-dyn}. Beyond these connections with group algebras, confined subgroups and URSs were shown to be useful tools to establish rigidity results about various families of countable groups appearing as groups of homeomorphisms \cite{LBMB-sub-dyn,MB-graph-germs}.

Given a group $G$ acting on a set $\Omega$ and a subset $\Sigma$ of $\Omega$, we denote by $\rist_G(\Sigma)$ the \textbf{rigid stabilizer} of $\Sigma$ in $G$, that is the pointwise stabilizer in $G$ of $\Omega \setminus \Sigma$. An elementary observation in the study of normal subgroups is the following: if a group $G$ acts faithfully on a set $\Omega$, and if $\Sigma$ is a subset of $\Omega$ and $N$ is a normal subgroup of $G$ such that there exist $g \in N$ such that $\Sigma$ and $g(\Sigma)$ are disjoint, then $N$ contains the derived subgroup $\rist_G(\Sigma)'$ of $\rist_G(\Sigma)$. This classical trick goes back at least to Higman \cite{Higman54}, and is the common denominator of many proofs of simplicity. It is sometimes referred to as the \enquote{commutator lemma} or \enquote{double commutator lemma}, as its proof consists of a suitable commutator manipulation (see Lemma \ref{lem-comm-lemma-norm-abs}). One particular setting in which the commutator lemma has been used extensively is the case of groups of homeomorphisms. In this setting it says that if $G$ is a group of homeomorphisms of a Hausdorff space $X$ and $N$ is a non-trivial normal subgroup of $G$, then $N$ contains $\rist_G(U)'$ for some non-empty open subset $U\subset X$. 

In this article we prove the exact extension of this statement to confined subgroups.

\begin{thm} \label{thm-doublecomm-conf-homeo-intro}
	Let $G$ be a group of homeomorphisms of a Hausdorff space $X$. If $H$ is a confined subgroup of $G$ (e.g.\ if $H$ is a non-trivial URS of $G$), then there exists a non-empty open subset $U\subset X$ such that $H$ contains $\rist_G(U)'$. 
\end{thm}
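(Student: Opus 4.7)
The plan is to adapt the classical double-commutator trick for normal subgroups, replacing normality by a pigeonhole argument powered by the finiteness of the confining set $P=\{p_1,\dots,p_n\}$.

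First, take $g=1$ in the confinement condition $gHg^{-1}\cap P\neq\emptyset$ to obtain $p\in H\cap P$. Since $p$ is a non-trivial homeomorphism of the Hausdorff space $X$, there is a non-empty open $U\subset X$ with $p(U)\cap U=\emptyset$. I claim $\rist_G(U)'\subset H$. Fix $a,b\in\rist_G(U)$ and aim for $[a,b]\in H$. Since $pa^{-1}p^{-1}$ and $pap^{-1}$ are supported in $p(U)$, they commute with every element of $\rist_G(U)$, and the standard commutator manipulation (pulling $pa^{-1}p^{-1}$ past $aba^{-1}$ and cancelling $pa^{-1}p^{-1}\cdot pap^{-1}=e$) yields the identity
\[
[[p,a^{-1}],b]=[a,b].
\]
In the normal case one uses $p\in N$ to deduce $[p,a^{-1}]\in N$ and concludes immediately; here $[p,a^{-1}]$ need not lie in $H$, and this is the gap to fill.

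To fill it I would invoke the abstract commutator lemma for confined subgroups (the paper's central technical tool), whose mechanism is a pigeonhole argument. One applies the confinement condition to an $(n+1)$-element family of group elements built from $p$, $a$, and $b$ --- for instance of the form $g_k=[p,a^{-1}]^kb^{\varepsilon_k}$ for a suitable sequence of exponents $\varepsilon_k\in\{0,1\}$ --- obtaining for each $k$ some $q_k\in P$ with $h_k:=g_k^{-1}q_kg_k\in H$. Since $|P|=n$, two indices $i\neq j$ share the same $q_i=q_j=q$, and the product $h_ih_j^{-1}\in H$ cancels the $q$-contribution, leaving a commutator of the family elements. With the $\varepsilon_k$'s chosen correctly, this residual commutator equals (a conjugate of) $[[p,a^{-1}],b]=[a,b]$, producing $[a,b]\in H$ and then $\rist_G(U)'\subset H$ as $a,b$ vary. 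The main obstacle is precisely this configuration step --- choosing the family so that the pigeonhole cancellation isolates the intended commutator $[a,b]$ rather than some other commutator-like element polluted by $q\in P$ --- and it is the combinatorial heart of the abstract commutator lemma; once that lemma is granted, the geometric setup above makes the theorem a direct specialization.
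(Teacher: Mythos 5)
Your proposal identifies the right mechanism in spirit (pigeonhole over the confining set plus commutator manipulations), and the identity $[[p,a^{-1}],b]=[a,b]$ is correct under your disjointness hypothesis, but the proof has several genuine gaps that prevent it from being a proof. First, the geometric setup is too weak: fixing a single $p\in H\cap P$ and a single $U$ with $p(U)\cap U=\varnothing$ does not suffice. The paper instead constructs a \emph{displacement configuration} $(U_\sigma)_{\sigma\in P}$, one open set per element of the confining set, satisfying conditions (C1), (C3), (C4) of Definition \ref{def-displace}; building it (Lemma \ref{l-partie-finie}) requires that no element of $P$ acts as an involution on the relevant subspace, which forces a preprocessing step (Lemmas \ref{lem-conf-2group} and \ref{lem-conf-order2}) to pass to a confining set without elements of order $2$. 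None of this appears in your argument, and without it the combinatorics of the pigeonhole step cannot be set up correctly. Second, the pigeonhole step itself is asserted rather than carried out: the family $g_k=[p,a^{-1}]^kb^{\varepsilon_k}$ is not shown to produce, after cancellation, anything controlled, and the paper's actual argument is structurally different — it applies B.\ H.\ Neumann's covering lemma to conclude that some subgroup $D_{\sigma,k}\le R$ has finite index at most $r$, and then manipulates the auxiliary subgroups $A_{\sigma,k}$ and their conjugates $B_\lambda$, not a sequence of powers of a single commutator.

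Third, and most importantly for the final claim, you assert that ``once that lemma is granted, the geometric setup above makes the theorem a direct specialization.'' This is not so: the conclusion of Theorem \ref{thm-conf-intro} is that $H$ contains a non-trivial subgroup $N\le\rist_G(U)$ whose normalizer in $\rist_G(U)$ has index at most $r$ — this is strictly weaker than $\rist_G(U)'\le H$. To upgrade, the paper passes to a normal finite-index subgroup $L\le\rist_G(U)$, applies the classical commutator lemma for normal subgroups (Lemma \ref{lem-double}) to $N\trianglelefteq L$ to get some $V$ with $\rist_L(V)'\le N$, observes that $\rist_L(V)'$ is normal in $\rist_G(V)$, and applies Lemma \ref{lem-double} again to find $W$ with $\rist_G(W)'\le\rist_L(V)'\le N\le H$. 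These two further applications of the classical commutator lemma are essential and are entirely missing from your proposal. In short: the idea is in the right ballpark, but the configuration step you call ``the main obstacle'' is precisely where your sketch departs from what actually works, and the final boosting from the abstract lemma's conclusion to the theorem's conclusion is omitted.
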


Although  the statement of Theorem \ref{thm-doublecomm-conf-homeo-intro} does not contain any assumption on the group  $G\le \homeo(X)$, its conclusion non-trivial only when $\rist_G(U) \neq 1$ for every non-empty open subset $U$. A group of homeomorphisms $G\le \homeo(X)$ satisfying this condition is said to be \textbf{micro-supported}.

We actually derive Theorem \ref{thm-doublecomm-conf-homeo-intro} from a more constructive version of the commutator lemma for confined subgroups in the abstract setting of group actions on sets (Theorem \ref{thm-conf-intro}). That result has the advantage of being applicable outside of the realm of micro-supported groups of homeomorphisms, and has other applications. In  \cite{LBMB-ht} we use this result to establish a connection between confined subgroups of a group $G$ and highly transitive actions of $G$.


Micro-supported group of homeomorphisms have been the subject of a program that aims at understanding, given a micro-supported subgroup $G\le \homeo(X)$, how much the space $X$ and the action of $G$ on $X$ are intrinsically associated to $G$ as an abstract group. An instance of this program is given by the reconstruction results of Rubin \cite{Rub-tr}, which provide various (mild) sufficient conditions under which two micro-supported faithful actions of the same group $G$ by homeomorphisms on two spaces $X, Y$ must be conjugate, or equivalently under which an isomorphism between two micro-supported groups $G, H$  must be implemented by a homeomorphism of the underlying spaces.  Similar results in this spirit were obtained in \cite{Whittaker, Filip}  in the setting of groups of homeomorphisms and diffeomorphisms of manifolds, in \cite{Dye-I} in the setting of measured group actions, or also in \cite{Lav-Nek} and \cite{GPS,Medynets, Mat-SFT} for groups of homeomorphisms of the Cantor set. 

A considerably less understood problem, suggested for example by Rubin in \cite[p.493]{Rub-tr}, is the following. Given a micro-supported group $G\le \homeo(X)$, find natural conditions under which an action of $G$ on another space $Y$ (not necessarily micro-supported) must be related to the action of $G$ on $X$, for instance by the existence of a continuous equivariant map from $Y$ to $X$ (or at least to a close kin, such as its space of closed subsets $\F(X)$). Another version of this problem is to find natural conditions on a group of homeomorphisms $H\le \homeo(Y)$ under which any embedding of $G$ into $H$ must automatically give rise to such a map. 

One direction of application of Theorem \ref{thm-doublecomm-conf-homeo-intro} is that the study of confined subgroups is a key tool to study these problems. To understand the connection, recall that an action of a group $G$ on a compact space $Y$ is \textbf{topologically free} if the germ stabilizer $G_y^0$ is trivial for every $y \in Y$. Here $G_y^0$ is the set of elements of $G$ acting trivially on a neighbourhood of $y$. Conversely we say that the action of $G$ on $Y$ is \textbf{topologically nowhere free} if  $G_y^0$ is non-trivial for every $y \in Y$. If the action is minimal, this is equivalent to the fact that the action is not topologically free. Observe that topologically nowhere free is a much weaker condition than micro-supported. One readily checks that compactness of $Y$ implies that if the action of $G$ on $Y$ is topologically nowhere free, then $G_y^0$ is a  confined subgroup of $G$ for every $y\in Y$. Therefore, given  any micro-supported group $G\le \homeo(X)$, Theorem \ref{thm-doublecomm-conf-homeo-intro} establishes a connection between the natural $G$-action on $X$ and all topologically nowhere free $G$-actions.

A version of Theorem \ref{thm-doublecomm-conf-homeo-intro} has been previously obtained in \cite[Theorem 3.10]{LBMB-sub-dyn} under a strong additional assumption on the dynamics of the action of $G$ on $X$, namely that the action of $G$ on $X$ is extremely proximal, and this assumption was weakened in \cite{MB-graph-germs} to the assumption that the action of $G$ on $X$ is proximal. Recall that an action of a group $G$ on a topological space $X$ is \textbf{proximal} if for every pair of points $x, y$, there exists a net $(g_i)$ in $G$ such that $g_i(x)$ and $g_i(y)$  converge to the same point $z\in X$.  These weaker versions of Theorem \ref{thm-doublecomm-conf-homeo-intro} were used  among other things in \cite{LBMB-sub-dyn} to study $C^*$-simplicity in combination with \cite{Kal-Ken,Ken-URS} and to prove a rigidity theorem for  non-topologically free minimal actions of certain micro-supported groups (including Thompson's groups), and in \cite{MB-graph-germs} to prove structure theorems for embeddings of topological full groups of pseudogroups over the Cantor set into other groups of homeomorphisms.

The main novelty of Theorem \ref{thm-doublecomm-conf-homeo-intro} over these previous versions is that it holds without any assumption on the action of $G$ on $X$, and this opens the way to applications to  broader classes of micro-supported groups. One important class of actions in topological dynamics that is at the opposite of the class of proximal actions is the class of profinite actions, i.e.\ inverse limit of finite actions. This class admits the following characterization: an action of a group $G$ on a compact metrizable space $X$ is profinite if and only if there exists a  locally finite rooted tree $T$ and an action of $G$ by automorphisms on $T$ such that the action of $G$ on the boundary $\partial T$ is conjugate to the action of $G$ on $X$ (see Proposition \ref{p-odometers}). Following Grigorchuk, a group $G$ is called a \textbf{weakly branch group} if $G$ admits a profinite action on a metrizable space that is faithful, minimal and micro-supported. Equivalently, $G$ is a weakly branch group if there exists a locally finite rooted tree on which $G$ acts faithfully such that the action of $G$ on $\partial T$ is minimal and micro-supported.  This class of groups is rich and includes well-studied examples such as Grigorchuk's groups from \cite{Gri-growth}, the Basilica group and the Gupta-Sidki groups. See \cite{BGS-branch} for a survey and for additional examples. In this setting  Lavreniuk and Nekrashevych proved the following reconstruction result: if $G$ admits a faithful weakly branch action on two trees $T,T'$, then the actions on $\partial T$ and $\partial T'$ are conjugate \cite{Lav-Nek}. Hence for a weakly branch group the $G$-space $\partial T$ is well-defined and canonically attached to $G$ (although the tree $T$ itself is not).

In Sections \ref{sec-trees-min-actions} and \ref{sec-trees-graphs-actions} of the article we focus on applications of our results on confined subgroups within the class of weakly branch groups. In particular we prove:

\begin{enumerate}[label=\Roman*)]
	\item  a structure theorem about URSs of weakly branch groups (Theorem \ref{t-wb-urs}), and a structure theorem about their non-topologically free minimal actions (Theorem \ref{t-wb-structure-lsc}).
	\item for finitely generated branch groups, the growth of every faithful $G$-action on a set   is bounded below by the growth of the action of $G$ on $\partial T$ (Theorem \ref{thm-branch-wobb}). The definition of growth of an action can be found shortly below in the introduction, or in \S \ref{subsec-growth}.
	
		\item finitely generated weakly branch groups cannot embed into the group IET of interval exchange transformations (Theorem \ref{t-wb-iet}).
	\item a spatial realization theorem about embeddings of a finitely generated branch group into a class of groups of homeomorphisms of the Cantor space, which includes bounded automata groups and topological full groups of Cantor minimal systems (Theorem \ref{t-strongly-bounded}). 
\end{enumerate}

In the remainder of this introduction we give a more detailed overview of these results.

\addtocontents{toc}{\protect\setcounter{tocdepth}{1}}

\subsection*{URSs of weakly branch groups and applications to growth of actions.} We apply the commutator lemma for confined subgroups to study URSs of weakly branch groups.  Simple constructions carried out in \S \ref{subsec-2-constructions} provide two natural ways to obtain families of URSs in a weakly branch group $G$. The flexibility of these constructions provide in particular continuously many distinct URSs in $G$. Our main result on URSs (Theorem \ref{t-wb-urs}) provides structural information about URSs of a weakly branch group. This result implies that a lot of information can be recovered on a URS $H$ from its set of fixed points $\fix(H)$ in $\partial T$: the set $\fix(H)$ varies continuously with $H$, and one can find a partition of the complement of $\fix(H)$ in $\partial T$ into cylinders subsets such that $H$ contains the derived subgroup of the rigid stabilizer of each of these cylinders subsets, and moreover this partition also varies continuously with $H$. We refer to Section \ref{sec-trees-min-actions} for details. In the setting of IRSs, a similar statement was obtained by Zheng in  \cite{Zheng-irs}, where it is deduced from a commutator lemma for IRSs of groups of homeomorphisms. In the special case of finitary regular branch groups, a similar IRS statement was also previously obtained in \cite{Fer-Tot}. 

We give an application of Theorem \ref{t-wb-urs} to the study of the  growth of actions of finitely generated branch groups. Recall that if $G = \left\langle  S  \right\rangle $ is a finitely generated group and $X$ is a $G$-set, the growth of the action of $G$ on $X$ is the function 

\[\vol_{G,S,X}(n)= \sup_{x\in X} |B_S(n) \cdot x|,\] where $B_S(n)$ is the ball of radius $n$ in $G$ around the identity with respect to the word metric associated to $S$. For example the growth of the left action of $G$ on a coset space $G/H$ is the uniform growth of the Schreier graph associated to the subgroup $H$. Up to a natural equivalence relation, the function $\vol_{G,S,X}$ does not depend on $S$, and is denoted $\vol_{G,X}$. Clearly $\vol_{G,X}$ is bounded above by the growth $\vol_{G}$ of the group $G$ (which corresponds to the left action of $G$ on itself), but many finitely generated groups $G$ admit faithful actions whose growth is strictly smaller than the growth of $G$. For instance, non-abelian free groups admit faithful actions with linear growth. Thus given a finitely generated group, it is natural to ask how small the growth of its faithful actions can be. 

Various examples of Schreier graphs of weakly branch groups $G \leq \aut(T)$ associated to the action of $G$ on $\partial T$ have been studied in details in the literature \cite{Hanoi-GS,Bon,Vor,DDMN}, and play a role in the study of existence of free subgroups and of amenability of such groups \cite{Nek-free, JNS}.  In many of these examples,  the graphs of the action of $G$ on $\partial T$ are quite small and much easier to describe that the Cayley graph of $G$. In particular the growth  $\vol_{G,\partial T}$ is often slow, for instance it is polynomial if $G$ is a contracting group (see \cite{Nek-book}); but it can also be exponential, for instance for the groups from \cite{Sidki-Wilson}. 

An an application of our structure results on URSs of branch groups, we prove the following (see Theorem \ref{thm-branch-wobb}):


\begin{thm} \label{thm-branch-wobb-intro}
	Let $G \leq \aut(T)$ be a finitely generated  branch group. Then for every $G$-set $X$ on which $G$ acts faithfully, the growth $\vol_{G, X}$ satisfies $\vol_{G, X} \succeq \vol_{G, \partial T}$.  
\end{thm}

\subsection*{Non-topologically free actions of weakly branch groups and applications.} 
We also prove a structure theorem that applies to non-topologically free minimal actions of weakly branch groups (Theorem \ref{t-wb-structure-lsc}).  A consequence of this theorem is that every faithful and minimal action of a weakly branch group $G$ that is not topologically free admits as a factor a non-trivial closed $G$-invariant subspace of the space $\F(\partial T)$ of closed subsets of $\partial T$. In particular, it factors onto a non-trivial $G$-space on which the $G$-action is profinite (Corollary \ref{c-wb-actions}). The existence of a non-trivial profinite factor is a restrictive condition that has consequences on the dynamics of the action of $G$ on $X$. To put into context, recall that Frish, Tamuz and Vahidi Ferdowsi proved that every countable group with trivial FC-center (i.e.\ such that every non-trivial element has an infinite conjugacy class) admits a minimal and proximal faithful action  \cite{FTV}, that can be chosen to be topologically free \cite{GTWZ}. Corollary \ref{c-wb-actions} implies for instance that for weakly branch groups, \textit{every} faithful minimal and proximal action is topologically free. Similarly we also deduce that every faithful minimal and weakly mixing action is topologically free. See Corollary \ref{cor-wb-prox-wm}. 

In the case when the  growth of the action of  the finitely generated weakly branch group $G$ on $X$ is polynomially bounded, we  show that this profinite factor is  infinite (Theorem \ref{t-wb-polynomial}). To illustrate this result, we give an application related to the group $\iet$ of interval exchange transformations. An {interval exchange transformation} is a permutation of $\R/\Z$ with finitely many discontinuities, which coincides with a translation in restriction to each interval of continuity. While the dynamics of iterations of one element of IET is a well-studied topic, the study of more general groups that are capable of acting by interval exchanges has attracted attention recently. A central question is  to understand which finitely generated groups can embed in the group $\iet$. While a few obstructions to the existence of such embedding have been found \cite{Nov-disc, DFG-free, ExtAmen, DFG-solv, Cor-comm-pc}, this question remains not well-understood in general. For instance it is not known  if  non-abelian free groups can embed into $\iet$ (a question attributed to Katok in the literature \cite{DFG-free}), or if $\iet$ can contain infinite finitely generated periodic groups, or finitely generated groups with intermediate growth.  As an application of Theorem \ref{t-wb-polynomial}, we prove the following:

\begin{thm}\label{t-wb-iet-intro}
	Let $G \le \aut(T)$ be a finitely generated weakly branch group. Then $G$ does not admit any faithful action on $\R/\Z$ by interval exchange transformations.
\end{thm}

 We refer to Remark \ref{rmk-infgen-branch} for an example showing that the finite generation assumption is necessary in Theorem \ref{t-wb-iet}, and to Remark \ref{rmk-grig-group} for a more detailed discussion on the specific case of the Grigorchuk group.

Finally as another application, we explain how Theorem \ref{t-wb-structure-lsc} recovers the aforementioned result from \cite{Lav-Nek} which asserts that weakly branch groups admit only one faithful minimal and micro-supported action that is profinite (Corollary \ref{cor-reconstr-wb}). We also consider more general minimal and micro-supported actions of weakly branch groups, and show that every such $G$-space factors onto $\partial T$ and is  a highly proximal extension of $\partial T$ in the sense of Auslander and Glasner \cite{Ausl-Glasn} (Corollary \ref{cor-wb-micro-hp}).

\subsection*{Embeddings of weakly branch groups into other groups of homeomorphisms.} Theorem \ref{thm-doublecomm-conf-homeo-intro} provides a simple characterization of confined subgroups of weakly branch groups (Corollary \ref{cor-confined-wb}). We use this characterization to prove a rigidity result for embeddings of finitely generated branch groups $G$ into other groups of homeomorphisms. We show that if $H$ is a finitely generated group of homeomorphisms of a compact space $X$  such that the associated graphs of germs satisfy a suitable one-dimensionality condition, then every embedding $\rho \colon G\to H$ must be spatially realized in the sense that the induced action of $G$ on a natural subspace of $X$ factors onto the natural $G$-action on $\partial T$ (Theorem \ref{t-actions-leud}). This theorem applies for instance when the group $H$ belongs to the class of groups of homeomorphism of a Cantor set of strongly bounded type, defined in  \cite{JNS} (see Theorem \ref{t-strongly-bounded}). The class of groups of strongly bounded type includes for instance groups of bounded automorphisms of rooted trees \cite{Nek-free} (e.g. groups generated by finite-state bounded automata), which contain many well-studied examples of weakly branch groups acting on rooted trees, as well as other groups of homeomorphisms such as topological full groups of Cantor minimal systems. We refer to Section \ref{sec-trees-graphs-actions} for details.

\subsection*{Organization.} The article is organized as follows. In Section \ref{s-preliminaries} we set some notation and recall basic facts about URSs and semi-continuous maps. Section \ref{sec-confined} contains the proof of the commutator lemma for confined subgroups (Theorem \ref{thm-conf-intro}), and all the other sections of the article depend on this section. In Section \ref{sec-confined-homeo} we explain how to deduce Theorem \ref{thm-doublecomm-conf-homeo-intro} from Theorem \ref{thm-conf-intro}.

Sections \ref{sec-trees-min-actions} and \ref{sec-trees-graphs-actions} are devoted to groups acting on rooted trees. In Section \ref{sec-trees-min-actions} we apply the results of Section \ref{sec-confined} to obtain the structure theorem on URSs (Theorem \ref{t-wb-urs}) as well as a structure theorem about non-topologically free minimal actions (Theorem \ref{t-wb-structure-lsc}). This result is actually the core of this section, and Theorem \ref{t-wb-urs} is deduced from Theorem \ref{t-wb-structure-lsc}. At the end of Section \ref{sec-trees-min-actions} we also discuss some of the consequences of Theorems \ref{t-wb-urs} and \ref{t-wb-structure-lsc} that will be used in the last section.

In Section \ref{sec-trees-graphs-actions}  we prove all the other results mentioned in this introduction. The precise dependence between Section 6 and the rest of the article is that \S \ref{subsec-growth} and \S \ref{s-iet} depend on Theorem \ref{t-wb-structure-lsc}, while \S \ref{subsec-rigid-embed} only depends on Theorem \ref{thm-doublecomm-conf-homeo-intro}, so that the reader interested in \S \ref{subsec-rigid-embed} may fairly skip Section \ref{sec-trees-min-actions}.

\addtocontents{toc}{\protect\setcounter{tocdepth}{1}}

\let\oldtocsection=\tocsection

\let\oldtocsubsection=\tocsubsection

\let\oldtocsubsubsection=\tocsubsubsection

\renewcommand{\tocsection}[2]{\hspace{0em}\oldtocsection{#1}{#2}}
\renewcommand{\tocsubsection}[2]{\hspace{1em}\oldtocsubsection{#1}{#2}}
\renewcommand{\tocsubsubsection}[2]{\hspace{2em}\oldtocsubsubsection{#1}{#2}}

\tableofcontents

\section{Preliminaries} \label{s-preliminaries}

\subsection{Subgroups  and graphs associated to group actions} 
Let $G$ be a group acting on a topological space $X$. We denote by $G_x$ the stabiliser of $x\in X$ in $G$, and by $G^0_x$ the \textbf{germ-stabiliser} of $x$, i.e.\ elements $g\in G$ which fix pointwise a neighbourhood of $x$. Note that $G^0_x$ is a normal subgroup of $G_x$. The corresponding quotient $G_x/G^0_x$ is called the \textbf{isotropy group} (or \textbf{group of germs}) of $G$ at $x$. A point $x\in X$ is said to be \textbf{regular} if $G_x^0=G_x$. If $G$ is countable and  $X$ is a Baire space (e.g. if it is locally compact) then a standard application of Baire's theorem shows that the set of regular points is a dense $G_\delta$ subset of  $X$.

For $C\subset X$ we denote by $\fix_G(C)$ the pointwise stabilizer of $C$ in $G$.  The subgroup $\fix_G(X\setminus U)$, for $U\subset X$, is the \textbf{rigid stabiliser} of $U$, and is denoted $\rist_G(U)$. The action of $G$ on $X$ is \textbf{micro-supported} if $\rist_G(U)$ acts non-trivially on $X$ for every non-empty open subset $U\subset X$.

Given a subgroup $H$ of $G$, we denote by $\fix_X(H)$ the set of fixed points of $H$ in $X$. The (closed) \textbf{support} $\supp_X(H)$ of $H$ is defined as the closure  $\overline{X\setminus \fix_X(H)}\subset X$.

Assume now that $G$ is a finitely generated group acting on a set $X$, and let $S$ be a finite symmetric generating subset of $G$.  The \textbf{graph of the action} of $G$ on $X$ (with respect to $S$) is  is the graph $\Gamma(G, X)$ with vertex set $X$ and edge set $X\times S$, where  each edge $(x, s)$ connects the point $x$ with $sx$, and is labelled by the corresponding generator $s\in S$. Note that $\Gamma(G, X)$ is connected if and only if the action is transitive.  If $H\le G$ is a subgroup, the \textbf{Schreier graph} associated to $H$ is the graph $\Gamma(G, G/H)$ of the left action of $G$ on the coset space $G/H$. Given $x\in X$, the \textbf{orbital graph} $\Gamma(G, x)$ of $x$ is the graph $\Gamma(G, G/G_x)$. It is naturally isomorphic with the connected component of $\Gamma(G, X)$ containing $x$. Following the terminology of Nekrashevych \cite{Nek-frag}, the  \textbf{graph of germs} $\widetilde{\Gamma}(G, x)$ of $x$ is the graph $\Gamma(G, G/G_x^0)$. Note that elements of the vertex set of $\widetilde{\Gamma}(G, x)$ are in one-to-one correspondence with the set of \textbf{germs} of elements of $G$ at $x$, that is the equivalence classes of elements of $G$ where two elements are equivalent if they coincide on some neighbourhood of $x$.  Since $G_x^0$ is a subgroup of $G_x$, there is a natural map $\widetilde{\Gamma}(G, x) \to \Gamma(G, x)$, which is  a Galois cover with deck transformation group equal to the isotropy group $G_x/G^0_x$. Note that all the graphs defined here depend on the choice of the finite generating subset $S$ of $G$. However all the properties of these graphs that will be considered in the sequel are actually independent of $S$. This is why we omit $S$ in order to simplify the notation.

\subsection{Semi-continuous maps} \label{subsec-semi-conti}
 
When $Y$ is a locally compact space, we will denote by $\mathcal{F}(Y)$ the space of closed subsets of $Y$, endowed with the Chabauty topology, i.e.\ the topology generated by the sets  \[ \left\{C \in \mathcal{F}(Y)\, : \, C \cap K = \emptyset; \, C \cap U_i \neq \emptyset \, \, \text{for all $i$} \, \right\}, \] where $K \subset Y$ is compact and $U_1,\ldots,U_n \subset Y$ are open. The space $\mathcal{F}(Y)$ is compact. When $Y$ is discrete, this topology is the product topology on the set $\left\lbrace 0,1 \right\rbrace ^Y$ of all subsets of $Y$. When $G$ is a discrete group, the space $\sub(G)$ of subgroups of $G$ is closed in $\left\lbrace 0,1 \right\rbrace ^G$, and hence is a compact space.

Given a  space $X$, a map $\varphi: X \to \mathcal{F}(Y)$ is \textbf{upper semi-continuous} if for every compact space $K \subseteq Y$, the set of points $x \in X$ such that $\varphi(x) \cap K = \emptyset$ is open in $X$. This is equivalent to asking that whenever $(x_i)$ is a net in $X$ converging to $x$ such that $(\varphi(x_i))$ converges to $F$, one has $F \subseteq \varphi(x)$. Also $\varphi$ is \textbf{lower semi-continuous} if for every open subset $U \subseteq Y$, the set of points $x \in X$ such that $\varphi(x) \cap U \neq \emptyset$ is open in $X$; or equivalently if for every net $(x_i)$ converging to $x$ and such that $(\varphi(x_i))$ converges to $F$, one has $\varphi(x) \subseteq F$.

If a locally compact space $Y$ is second countable and $X$ is a Baire space (e.g. if $X$ is compact), then for every upper or lower semi-continuous map $\varphi: X \to \mathcal{F}(Y)$ the set of continuity points of $\varphi$ is a dense $G_\delta$ subset of $X$ (see e.g. \cite[Th.~VII]{Kuratowski1928}).

\begin{lem} \label{lem-compos-fix-lsc-usc}
Let $G$ be a  group, $X$ a compact $G$-space, and $Y$ a locally compact $G$-space. Let $\Phi\colon X\to \sub(G)$ be a lower semi-continuous map, and for $x\in X$ let $\fix_Y(\Phi(x))$ be the set of fixed points of $\Phi(x)$ in $Y$. Then the map $X\to \F(Y), x\mapsto \fix_Y(\Phi(x))$, is upper semi-continuous.
\end{lem}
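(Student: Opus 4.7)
The plan is to verify upper semi-continuity via the net characterization given in the paper: assuming a net $(x_i)$ in $X$ converges to $x$ and that $\fix_Y(\Phi(x_i))$ converges to some $F\in\F(Y)$, I will show $F \subseteq \fix_Y(\Phi(x))$. Fix a point $y\in F$ and an element $g\in \Phi(x)$; the goal reduces to proving $g\cdot y = y$.

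The first step is to approximate $y$ from below and $\Phi(x)$ from above. Since $Y$ is locally compact, convergence in $\F(Y)$ agrees with Kuratowski--Painlev\'e convergence, and the inclusion $F\subseteq \liminf_i \fix_Y(\Phi(x_i))$ provides points $y_i \in \fix_Y(\Phi(x_i))$ (defined cofinally) with $y_i\to y$. The second step is to handle the subgroups: by compactness of $\sub(G)$, after passing to a subnet I may assume $\Phi(x_i)\to H$ in the Chabauty topology for some $H\in \sub(G)$, and lower semi-continuity of $\Phi$ then yields $\Phi(x) \subseteq H$, so in particular $g\in H$.

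The third and final step combines these two approximations. Since $G$ is discrete, Chabauty convergence $\Phi(x_i)\to H$ means that every element of $H$ eventually belongs to $\Phi(x_i)$; applied to $g\in H$, this gives $g\cdot y_i = y_i$ for $i$ sufficiently large. Passing to the limit using continuity of the homeomorphism $g\colon Y\to Y$, we conclude $g\cdot y = y$, so $y\in \fix_Y(\Phi(x))$, which proves the claim.

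I do not anticipate a real obstacle: the argument is a straightforward unpacking of the definitions, combined with the compactness of $\sub(G)$. The only delicate point is the existence of the approximating net $(y_i)$ with $y_i \in \fix_Y(\Phi(x_i))$ and $y_i\to y$, which is the standard equivalence between Fell and Kuratowski--Painlev\'e convergence on closed subsets of a locally compact space and should be invoked explicitly. Note that equivariance of $\Phi$ plays no role in the proof; the hypothesis that $\Phi$ is a $G$-map is presumably there only because this is the form in which the lemma is applied later.
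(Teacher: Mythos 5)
Your proof is correct, but it takes a different route from the paper's. The paper works directly with the open-set definition of upper semi-continuity: given a compact $K\subset Y$ such that $\Phi(x)$ fixes no point of $K$, it chooses for each $y\in K$ an element $g_y\in\Phi(x)$ and an open $U_y\ni y$ with $g_y(U_y)\cap U_y=\emptyset$, covers $K$ by finitely many $U_{y_1},\dots,U_{y_n}$, and observes that $\{x'\colon g_{y_1},\dots,g_{y_n}\in\Phi(x')\}$ is (by lower semi-continuity of $\Phi$) an open neighbourhood of $x$ on which no point of $K$ is fixed. This is a short, self-contained compactness argument in $Y$ that never mentions $\sub(G)$ being compact, Chabauty convergence, or Kuratowski--Painlev\'e convergence.

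You instead use the net characterization and pass to a subnet of $(\Phi(x_i))$ in $\sub(G)$. This works, but two of your steps are heavier than necessary. First, the appeal to compactness of $\sub(G)$ and subsequent extraction of a subnet is avoidable: lower semi-continuity of $\Phi$ already says that for each $g\in G$ the set $\{x'\colon g\in\Phi(x')\}$ is open (take $U=\{g\}$ in the definition), so $g\in\Phi(x)$ directly gives $g\in\Phi(x_i)$ eventually, with no subnet and no limit $H$ needed. Second, the extraction of a net $y_i\in\fix_Y(\Phi(x_i))$ with $y_i\to y$ requires a small amount of care with directed sets; you acknowledge this with the parenthetical ``(defined cofinally),'' but it can be bypassed entirely. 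Suppose $g\cdot y\neq y$ for some $g\in\Phi(x)$; by local compactness and Hausdorffness choose an open $U\ni y$ with $g(U)\cap U=\emptyset$. Since $\fix_Y(\Phi(x_i))\to F$ and $y\in F$, eventually $\fix_Y(\Phi(x_i))\cap U\neq\emptyset$; since eventually $g\in\Phi(x_i)$ as above, pick such an $i$ and a $z\in\fix_Y(\Phi(x_i))\cap U$ to get $g(z)=z\in U\cap g(U)$, a contradiction. Streamlined this way, your argument and the paper's become essentially the same argument viewed from the net side and the open-set side, respectively; the paper's version has the advantage of requiring no discussion of Fell versus Kuratowski--Painlev\'e convergence.

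Your closing remark that $G$-equivariance of $\Phi$ plays no role is correct and matches the paper's proof.
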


\begin{proof}
If $K$ is a compact subset of $Y$ and $x \in X$ is such that $\Phi(x)$ fixes no point in $K$, we have to show that there is a neighbourhood of $x$ in $X$ consisting of points still having this property. For all $y \in K$ there exist an open subset $U_y$ of $Y$ and $g_y \in \Phi(x)$ such that $g_y(U_y) \cap U_y = \emptyset$. By compactness $K$ can be covered by finitely many $U_{y_1},\ldots,U_{y_n}$.  By lower semi-continuity the set $\{x'\in X \colon g_{y_1}, \ldots, g_{y_n}\in \Phi(x')\} $ is an open neighbourhood of $x$ with the desired property.
\end{proof}

\subsection{URSs and confined subgroups} \label{subsec-URS-conf}

Recall that a URS $\H$ of a group  $G$ is a closed and minimal $G$-invariant subspace of $\sub(G)$. If $H$ is a subgroup of $G$ having only finitely many conjugates $g_1 H g_1^{-1}, \ldots, g_n H g_n^{-1}$, then $\H = \left\lbrace g_1 H g_1^{-1}, \ldots, g_n H g_n^{-1} \right\rbrace $ is a banal example of a URS. Such a URS is called a \textbf{finite URS}. Note that by minimality and compactness, if $\H$ is a URS that is not finite, then $\H$ has no isolated points. By extension we also say that a subgroup $H$ of $G$ is a URS if the conjugacy class closure of $H$ in $\sub(G)$ is minimal. We will always use letters $\H$ for subsets of $\sub(G)$ and $H$ for subgroups of $G$, so that this there is no possible confusion.

Every minimal action of $G$ on a compact space $X$ gives rise to a URS $\mathcal{S}_{G}(X)$, called the \textbf{stabilizer URS} associated to $X$, which is the only minimal $G$-invariant closed subset in the closure of the set of subgroups $\left\lbrace G_x :  x \in X\right\rbrace $ \cite{GW-urs}. The action of $G$ on $X$ is topologically free if and only if $\mathcal{S}_{G}(X) = \left\lbrace \left\lbrace 1 \right\rbrace \right\rbrace $.

The inclusion between subgroups naturally induces a relation $\preccurlyeq$ on $\F(\sub(G))$, defined by $\H \preccurlyeq \K$ if there exists $H \in \H$ and $K \in \K$ such that $H \leq K$. It is not very hard to check that $\preccurlyeq$ is a partial order when restricted to URSs of $G$ \cite[Cor.\ 2.15]{LBMB-sub-dyn}. 

Recall that a subgroup $H$ of $G$ is \textbf{confined} if the closure of the $G$-orbit of $H$ under conjugation in $\sub(G)$ does not contain the trivial subgroup. This is equivalent to saying that there exists a finite subset $P$ of non-trivial elements of $G$ such that $g H g^{-1} \cap P$ is not empty for every $g \in G$, or equivalently such that the coset space $G/H$ is the union of the set of fixed points of elements of $P$ for the left action of $G$ on $G/H$. When $G = \left\langle S \right\rangle $ is a finitely generated group, a subgroup $H$ of $G$ is \textit{not} confined if and only if the Schreier graph $\Gamma(G, G/H)$ associated to $H$ contains isomorphic copies (as labelled graphs) of arbitrarily large balls of the Cayley graph of $G$.

\begin{remark}
The terminology ``confined subgroup'' was first used in the special case of locally finite groups \cite{Hart-Zal-97} with a slightly different definition: the confining set $P$ is required there to be a \emph{subgroup} of $G$ with the identity removed. The two definitions are equivalent when $G$ is locally finite.
\end{remark}

\subsection{Minimal actions and finite index subgroups}
In the sequel we will invoke the following basic lemma.


\begin{lem} \label{l-minimal-fi}
	Let $G$ be a group and $X$ be a minimal compact $G$-space. If $H$ be a finite index subgroup of $G$, then the closed minimal $H$-invariant subspaces of $X$ form a finite clopen partition of $X$.
\end{lem}

\begin{proof}
	Assume first that $H$ is normal. Let $X_1$ be a closed minimal $H$-invariant subspace of $X$, and $X_1, \ldots X_k$ be the collection of its distinct $G$-translates. Then each $X_i$ is $H$-invariant and minimal, so in particular $X_1,\ldots , X_k$ must be disjoint. Moreover by minimality of the $G$-action we must have $X=X_1\sqcup \cdots \sqcup X_k$, so it follows that $X_i$ is indeed clopen. 
	
	Assume now that $H$ is not necessarily normal, and choose a subgroup $K\le H$ that is normal and of finite index in $G$. By the previous paragraph we have a decomposition $X=X_1'\sqcup \cdots \sqcup X_r'$ into clopen minimal $K$-invariant subsets. Then $H$ acts on the set $\{X_1',\ldots, X'_r\}$. If $k$ is the number of orbits for this action and if $X_i$ is the union of the sets $X_j'$ in each orbit, $i=1, \ldots, k$, then $X_1, \ldots, X_k$ satisfy the conclusion.
\end{proof}

\section{Commutator lemma for confined subgroups} \label{sec-confined}


The goal of this section is to prove the commutator lemma for confined subgroups (Theorem \ref{thm-conf-intro}).  The proof is of purely group-theoretical nature, and is an elaboration of the arguments of \cite[Proposition 3.8]{LBMB-sub-dyn}.

\subsection{Confined subgroups}


The following definition generalizes the notion of a confined subgroup $H \leq G$ to the case where $H,G$ are two subgroups (not necessarily contained in each other) of an ambient group $L$. 

\begin{defin}
	Let $H,G$ be two subgroups of a group $L$. We say that $H$ is \textbf{confined} by $G$ if there exists a finite subset $P$ of non-trivial elements of $L$ such that for every $g \in G$, $g H g^{-1} \cap P$ is not empty. A subset $P$ with this property is called a \textbf{confining subset} for $(H,G)$. 
\end{defin}

Given two subgroups $H,G$ of a group $L$, we denote by $\mathcal{CO}_G^L(H)$ the closure of the $G$-orbit of $H$ under conjugation in the space $\sub(L)$. Hence a subgroup $H$ is confined by $G$ if and only if $\mathcal{CO}_G^L(H)$ does not contain the trivial subgroup $\left\lbrace 1 \right\rbrace $. 

%


%

Although we will be mainly interested in the case $n=1$, the commutator lemma for confined subgroups will be proven in the more general setting of a confined $n$-tuple of subgroups.

\begin{defin}
	Let $n \geq 1$, and $H_1, \ldots, H_n, G$ be subgroups of a group $L$. The $n$-tuple  $(H_1,\ldots,H_n)$ is \textbf{confined} by $G$ if there exists a finite subset $P$ of non-trivial elements of $L$ such that for every $g \in G$, there exists $j$ such that $g H_j g^{-1} \cap P$ is not empty. As above such a subset $P$ is a \textbf{confining subset} for $(H_1,\ldots,H_n,G)$.
\end{defin}

Equivalently, the $n$-tuple $(H_1,\ldots,H_n)$ is confined by $G$ if and only if for the diagonal action of $G$ on the space $\sub(L) \times \cdots \times \sub(L)$, there exists a neighbourhood of the point $\left\{1\right\}^n$ that does not intersect the $G$-orbit of $(H_1,\ldots,H_n)$.

\begin{lem} \label{lem-conf-order2}
	Let $H_1,\ldots, H_n,G$ be two subgroups of a group $L$ such that $(H_1,\ldots, H_n)$ is confined by $G$, and  let $\mathcal{C}$ be the closure of the $G$-orbit of $(H_1,\cdots, H_n)$ in $\sub(L)\times \cdots \times \sub(L)$. Assume that every confining subset $P\subset L$ for $(H_1,\ldots, H_n, G)$ contains at least one element of order $2$. Then there exists $(K_1,\ldots, K_n)\in \mathcal{C}$  such that $K_1,\ldots, K_n$ are all elementary abelian $2$-groups.
\end{lem}

\begin{proof}
We prove the contraposition. For every $\xi=(K_1, \ldots, K_n)\in \mathcal{C}$ we can find $i_{\xi}\in\{1, \ldots, n\}$ and $g_\xi\in K_{i_\xi}$ such that $g_\xi^2\neq 1$.  The sets \[ U(i_\xi, g_\xi) =  \left\lbrace (J_1,\ldots, J_n)\in \mathcal{C}  \, \, : \, \, g_\xi \in J_{i_\xi}\right\rbrace  \] are open and cover $\mathcal{C}$, so we can find a finite subcover $U(i_{\xi_1}, g_{\xi_1}), \ldots, U(i_{\xi_r}, g_{\xi_r})$. This means that $\left\lbrace g_{\xi_1}, \ldots, g_{\xi_r}\right\rbrace $ is a confining subset for $(H_1\ldots, H_n,G)$, and by definition none of these elements have order $2$.
\end{proof}

The following examples show that the notion of confined $n$-tuple appears naturally when dealing with rigid stablisers of group actions. 

\begin{example}
Let $G=\sym_f(\Omega)$ be the group of finitely supported permutations of a set $\Omega$. Consider a cover  $\Omega=\Omega_1\cup \cdots \cup \Omega_n$  of $\Omega$ by non-empty (not necessarily disjoint) subsets, and let $H_i=R_G(\Omega_i)$. Then it is not difficult to check that $(H_1,\cdots, H_n)$ is confined by $G$ (as a confining subset one can take $P$ to be a set of $n+1$ transposition with one common point in their support). However, an individual subgroup $H_i$ is not confined provided that $\Omega_i$ has infinite complement. 
\end{example}

\begin{example}
Let $G$ be any micro-supported group of homeomorphisms of the circle $\mathbb{S}^1$. Consider any cover $\mathbb{S}^1=I_1\cup\cdots\cup I_n$ of $\mathbb{S}^1$ by open intervals, and let $H_i=R_G(I_i)$.  Then it is easy to check as in the previous examples that $(H_1,\ldots, H_n)$ is confined in $G$. However the subgroups $H_i$ need not be: for instance if there exists a sequence $(g_k)$ which accumulates both  endpoints of $I_i$  to the same point, then $(g_kH_ig_k^{-1})_k$ converges to $\{1\}$ in $\sub(G)$. 
\end{example}

\subsection{The case of normal subgroups} \label{subsec-normal}

Before we embark on the proof of Theorem \ref{thm-conf-intro}, we first recall the corresponding statement for normal subgroups. When $N$ is normal in $G$, saying that $P$ is a confining subset for $(N,G)$ is the same as saying that $P$ contains an element $g$ of $N$, and then the singleton $\left\lbrace g \right\rbrace$ is already a confining subset for $(N,G)$. In this setting the notion of displacement configuration that we introduce below is not needed, and Theorem \ref{thm-conf-intro} takes the following form. This statement is well-known, we include a proof for completeness.

\begin{lem}[Classical commutator lemma for normal subgroups] \label{lem-comm-lemma-norm-abs}
Suppose that $G$ acts faithfully on a set $\Omega$, let $N$ be a subgroup of $\Sym(\Omega)$ that is normalized by $G$. Suppose that $\Sigma$ is a subset of  $\Omega$ such that there exists $\sigma \in N$ such that $\Sigma$ and $\sigma(\Sigma)$ are disjoint. Then $N$ contains $\rist_G(\Sigma)'$.
\end{lem} 

\begin{proof}
Take $g_1,g_2 \in \rist_G(\Sigma)$. Since $N$ is normalized by $G$, the element $[g_1,\sigma]$ belongs to $N$, and so does $[[g_1,\sigma],g_2]$. Now since $\Sigma$ and $\sigma(\Sigma)$ are disjoint, $\sigma g_1 \sigma^{-1}$ and $g_2$ commute, and $[[g_1,\sigma],g_2] = [g_1,g_2]$. Therefore $[g_1,g_2]$ belongs to $N$, as desired. 
\end{proof}

\subsection{Notation} \label{subsec-nota}

In the sequel we denote by $\Sym(\Omega)$ the symmetric group on the set $\Omega$. Let $P$ be a finite set of non-trivial elements of $\Sym(\Omega)$ and $\left\lbrace \Omega_\sigma \right\rbrace_{\sigma \in P}$ a collection of subsets of $\Omega$.

Let $H_1,\ldots,H_n \le \Sym(\Omega)$ and $G \le \Sym(\Omega)$ be subgroups of $\Sym(\Omega)$. We denote by $R$ the subgroup of $G$ generated by the rigid stabilizers $\rist_G(\Omega_\sigma)$ for $\sigma \in P$. For every $\sigma \in P$ and $k \leq n$ we let \[ Y_{\sigma,k}=\{\gamma\in R \colon \gamma \sigma \gamma^{-1}\in H_k\}, \, \, \text{and} \, \,  D_{\sigma,k}=\langle \gamma\delta^{-1} \colon \gamma, \delta\in Y_{\sigma,k}\rangle \leq R, \] and for $\gamma, \delta\in Y_{\sigma,k}$ we set \[ a_{\delta, \gamma}=(\delta \sigma^{-1}\delta^{-1})(\gamma \sigma \gamma^{-1}) \, \, \text{and} \, \, A_{\sigma,k} =\langle a_{\delta, \gamma} \colon \gamma, \delta \in Y_{\sigma,k}\rangle \leq H_k . \]

\subsection{A first result} \label{subsec-first-res}

The goal of this paragraph is to prove some preliminary results towards the proof of Theorem \ref{thm-conf-intro}. 

\begin{defin} \label{def-C1-C2}
For a finite set $P$ of non-trivial elements of $\Sym(\Omega)$ and a collection $\left\lbrace \Omega_\sigma \right\rbrace_{\sigma \in P}$ of non-empty subsets of $\Omega$, consider the following properties:
	\begin{enumerate}
		\item[(C1)] for all $\sigma, \rho \in P$, either $\Omega_\sigma = \Omega_\rho$, or $\Omega_\sigma$ and $\Omega_\rho$ are disjoint.
		\item[(C2)] \label{item-one-disj} for all $\sigma \in P$, $\sigma(\Omega_\sigma)$ is disjoint from $\bigcup_{\alpha \in P} \Omega_\alpha$.
	\end{enumerate}
\end{defin}

\begin{defin} \label{def-M-F-P}
Let $P$ and $\left\lbrace \Omega_\sigma \right\rbrace_{\sigma \in P}$ satisfying (C1) and (C2). For $\sigma \in P$, we denote by $M_\sigma \subseteq P$ the set of $\rho \in P$ such that $\sigma(\Omega_\rho)$ is disjoint from $\bigcup_{\alpha \in P} \Omega_\alpha$, and by $F_\sigma \subseteq P$ the set of $\rho \in P$ such that $\sigma$ fixes pointwise $\Omega_\rho$. Clearly $M_\sigma$ and $F_\sigma$ are disjoint, and $\sigma \in M_\sigma$ thanks to condition (C2). 
\end{defin}

	\begin{center}
		\textit{Until the end of \S \ref{subsec-first-res} we assume that $P$ and $\left\lbrace \Omega_\sigma \right\rbrace_{\sigma \in P}$ satisfy conditions (C1) and (C2) of Definition \ref{def-C1-C2}, and we retain the notation $M_\sigma, F_\sigma$ from Definition \ref{def-M-F-P}. We also let $H_1,\ldots,H_n \le \Sym(\Omega)$ and $G \le \Sym(\Omega)$ be subgroups of $\Sym(\Omega)$, and we retain the notation $R, Y_{\sigma,k}, D_{\sigma,k}, A_{\sigma,k}$ from \S \ref{subsec-nota}.}
	\end{center}
	
	\begin{defin} \label{defi-projections}
		We denote by $\mathrm{Stab}(\Omega_\rho)$ the setwise stabilizer of $\Omega_\rho$ in $\Sym(\Omega)$. We also denote by $p_\rho: \mathrm{Stab}(\Omega_\rho) \to \Sym(\Omega_\rho)$ and $\pi_\sigma : \bigcap_{\rho \in M_\sigma} \mathrm{Stab}(\Omega_\rho) \to \Sym(\bigcup_{\rho \in M_\sigma} \Omega_\rho)$ the restriction  maps.
	\end{defin}
	
	 We observe that for any two $\sigma,\rho$ such that $\Omega_\sigma \neq \Omega_\rho$, the rigid stabilizers $\rist_G(\Omega_\sigma)$ and $\rist_G(\Omega_\rho)$ intersect trivially and centralize each other by (C1), so that the subgroup $R$ generated by all the $\rist_G(\Omega_\sigma)$ is the direct product of the $\rist_G(\Omega_\sigma)$, where the product is taken over the set of distinct $\Omega_\sigma$.

	\begin{lem} \label{lem-A-supp}
	For every $\sigma \in P$ and $k \leq n$, the following hold:
		\begin{enumerate}[label=\roman*) ]
			\item \label{item-A-1} The subgroup $A_{\sigma,k}$ is supported in $\bigcup_{\rho \in P \setminus F_\sigma} \Omega_\rho \cup \sigma^{-1}(\Omega_\rho)$.
			\item \label{item-A-2} $A_{\sigma,k}$ preserves $\Omega_\rho$ for all $\rho \in M_\sigma$, and one has $\pi_\sigma(A_{\sigma,k}) = \pi_\sigma(D_{\sigma,k})$. 
		\end{enumerate}
	\end{lem}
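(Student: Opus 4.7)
The plan is to prove both items by a direct computation of the action of a generator $a_{\delta,\gamma} = (\delta\sigma^{-1}\delta^{-1})(\gamma\sigma\gamma^{-1})$ on different types of points $x \in \Omega$, exploiting three structural observations. First, since $\{\Omega_\tau\}_{\tau \in P}$ is a collection of pairwise equal or disjoint sets by (C1), the subgroup $R$ is the direct product of the $\rist_G(\Omega_\tau)$ over distinct sets, so every $\eta \in R$ fixes $\Omega \setminus \bigcup_\tau \Omega_\tau$ pointwise and preserves each $\Omega_\tau$. Second, for $\rho \in F_\sigma$ the set $\Omega_\rho$ is fixed pointwise by $\sigma$ (and hence by $\sigma^{-1}$). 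Third, for $\rho \in M_\sigma$, the set $\sigma(\Omega_\rho)$ is disjoint from $\bigcup_\tau \Omega_\tau$, so $R$ fixes $\sigma(\Omega_\rho)$ pointwise.

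For (i), I will prove the contrapositive: if $x$ lies outside $\bigcup_{\rho \notin F_\sigma}(\Omega_\rho \cup \sigma^{-1}(\Omega_\rho))$, then $a_{\delta,\gamma}(x)=x$. I distinguish two cases. Case A: $x \notin \bigcup_\tau \Omega_\tau$. Then $\gamma^{-1}(x)=x$, and moreover $\sigma(x) \notin \bigcup_\tau \Omega_\tau$ either, for if $\sigma(x) \in \Omega_\tau$ then $x \in \sigma^{-1}(\Omega_\tau)$ forces $\tau \in F_\sigma$ by the hypothesis on $x$; but then $\sigma$ fixes $\sigma(x) \in \Omega_\tau$, giving $x = \sigma(x) \in \Omega_\tau$, a contradiction. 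Hence $\gamma$ fixes $\sigma(x)$, then $\delta^{-1}$ fixes $\sigma(x)$, then $\sigma^{-1}$ sends it back to $x$, and finally $\delta$ fixes $x$, giving $a_{\delta,\gamma}(x)=x$. Case B: $x \in \Omega_\tau$ for some $\tau$; by (C1) and the hypothesis on $x$, any such $\tau$ must lie in $F_\sigma$. Then $\gamma^{-1}(x) \in \Omega_\tau$ is $\sigma$-fixed, so $\gamma\sigma\gamma^{-1}(x)=x$, and symmetrically $\delta\sigma^{-1}\delta^{-1}(x) = x$.

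For (ii), I claim that for every $x \in \Omega_\rho$ with $\rho \in M_\sigma$ one has $a_{\delta,\gamma}(x) = \delta\gamma^{-1}(x)$. Indeed $\gamma^{-1}(x) \in \Omega_\rho$, so $\sigma(\gamma^{-1}(x)) \in \sigma(\Omega_\rho)$, which is disjoint from $\bigcup_\tau \Omega_\tau$ by definition of $M_\sigma$; hence $\gamma$ and then $\delta^{-1}$ both fix this point, $\sigma^{-1}$ returns it to $\gamma^{-1}(x) \in \Omega_\rho$, and finally $\delta$ sends it to $\delta\gamma^{-1}(x) \in \Omega_\rho$. This simultaneously shows that each generator $a_{\delta,\gamma}$ of $A_{\sigma,k}$ preserves $\Omega_\rho$ (hence so does $A_{\sigma,k}$), and that its restriction to $\bigcup_{\rho \in M_\sigma}\Omega_\rho$ coincides with that of $\delta\gamma^{-1} \in D_{\sigma,k}$; since the elements $\gamma\delta^{-1}$ (equivalently $\delta\gamma^{-1}$) for $\gamma,\delta \in Y_{\sigma,k}$ generate $D_{\sigma,k}$, this equality of images on generators yields $\pi_\sigma(A_{\sigma,k}) = \pi_\sigma(D_{\sigma,k})$.

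I do not foresee a serious obstacle: the proof is bookkeeping along the four factors of $a_{\delta,\gamma}$. The only subtle point is the first case of (i), where it is exactly the inclusion of the sets $\sigma^{-1}(\Omega_\rho)$ (rather than just $\Omega_\rho$) in the forbidden region that prevents $\sigma(x)$ from landing inside some $\Omega_\tau$ with $\tau \notin F_\sigma$, on which $\gamma$ or $\delta$ could act non-trivially and break the cancellation.
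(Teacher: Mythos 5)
Your proof is correct and follows essentially the same route as the paper's. The paper establishes item (i) more compactly by factoring $a_{\delta,\gamma}=\delta(\sigma^{-1}\delta^{-1}\gamma\sigma)\gamma^{-1}$ and reading off the support from the supports of the three factors before noting the action is trivial on $\Omega_\rho$ for $\rho\in F_\sigma$, whereas you trace the six-step action of $a_{\delta,\gamma}$ pointwise through the two cases; these are the same computation, just organised differently, and your item (ii) (showing $a_{\delta,\gamma}$ agrees with $\delta\gamma^{-1}$ on $\bigcup_{\rho\in M_\sigma}\Omega_\rho$ because $\sigma(\Omega_\rho)$ misses all the $\Omega_\tau$) is identical to the paper's argument.
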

	
	\begin{proof}
		Let $\gamma, \delta\in Y_{\sigma,k}$. Recall that $\gamma, \delta$ are supported in $\bigcup_{\rho \in P}\Omega_\rho$ and preserve each $\Omega_\rho$. The element $\sigma^{-1}\delta^{-1} \gamma \sigma$ is therefore supported in $ \bigcup_{\rho \in P} \sigma^{-1}(\Omega_\rho)$ and preserves each $\sigma^{-1}(\Omega_\rho)$. Hence $a_{\delta, \gamma}=\delta (\sigma^{-1}\delta^{-1}\gamma \sigma)\gamma^{-1}$ is supported in $\bigcup_{\rho \in P} \Omega_\rho \cup \sigma^{-1}(\Omega_\rho)$. Since $\sigma$ acts trivially on $\Omega_\rho$ for $\rho \in F_\sigma$, $a_{\delta, \gamma}$ also acts trivially on $\Omega_\rho$. Hence it follows that $a_{\delta, \gamma}$ is actually supported in $\bigcup_{\rho \in P \setminus F_\sigma} \Omega_\rho \cup \sigma^{-1}(\Omega_\rho)$, so \ref{item-A-1} holds.

		 For \ref{item-A-2}, observe that $a_{\delta, \gamma}$ coincides with $\delta \gamma^{-1}$ on $\bigcup_{\rho \in M_\sigma} \Omega_\rho$ because for $\rho \in M_\sigma$ we have that $\sigma(\Omega_\rho)$ is disjoint from $\Omega_\alpha$ for all $\alpha \in P$. Hence $\pi_\sigma(a_{\delta, \gamma}) = \pi_\sigma(\delta \gamma^{-1})$, and the equality $\pi_\sigma(A_{\sigma,k}) = \pi_\sigma(D_{\sigma,k})$ follows since $D_{\sigma,k}$ is generated by the $\gamma\delta^{-1}$ when $\gamma, \delta$ range over $Y_{\sigma,k}$.
	\end{proof}

	\begin{lem} \label{lem-cons-Neum}
 Suppose that $P$ is a confining subset for $(H_1,\ldots,H_n,R)$, and let $r = |P|$. Then there exist $\sigma \in P$ and $k \leq n$ such that $D_{\sigma,k}$ has index at most $nr$ in $R$.
	\end{lem}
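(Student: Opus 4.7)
The plan is a standard application of B.H. Neumann's coset covering lemma, combined with a careful unpacking of the definitions of $Y_{\sigma,k}$ and $D_{\sigma,k}$.

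First, I would use the confining assumption to cover $R$ by the sets $Y_{\sigma,k}$. Indeed, by definition, for every $g \in R$ there exist $\sigma \in P$ and $k \leq n$ such that $\sigma \in g H_k g^{-1}$, i.e., $g^{-1} \sigma g \in H_k$. Setting $\gamma = g^{-1}$, this says $\gamma \sigma \gamma^{-1} \in H_k$, so $\gamma \in Y_{\sigma,k}$. Since $g \mapsto g^{-1}$ is a bijection of $R$, this shows
\[ R = \bigcup_{\sigma \in P, \, k \leq n} Y_{\sigma,k}, \]
a union of $nr$ sets (some possibly empty).

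Next, I would observe that each non-empty $Y_{\sigma,k}$ is contained in a single right coset of $D_{\sigma,k}$. Fixing any $\delta_0 \in Y_{\sigma,k}$, the definition $D_{\sigma,k} = \langle \gamma \delta^{-1} : \gamma, \delta \in Y_{\sigma,k} \rangle$ immediately gives $\gamma \delta_0^{-1} \in D_{\sigma,k}$ for all $\gamma \in Y_{\sigma,k}$, i.e., $Y_{\sigma,k} \subseteq D_{\sigma,k} \delta_0$.

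Combining these two observations, $R$ is covered by at most $nr$ right cosets of the subgroups $D_{\sigma,k}$. I would then conclude by invoking B.H. Neumann's lemma: if a group is the union of finitely many right cosets of subgroups $K_1, \ldots, K_m$, then at least one of the $K_i$ has index at most $m$ in the ambient group. Applied here, this yields some $(\sigma, k)$ with $[R : D_{\sigma,k}] \leq nr$, which is the desired conclusion.

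There is no significant obstacle: the argument is a bookkeeping reduction of the hypothesis to Neumann's lemma. The only mildly delicate point is getting the direction of the conjugation right in passing from the confining property for $R$ (which gives $g^{-1} \sigma g \in H_k$) to membership in $Y_{\sigma,k}$, and then checking that $Y_{\sigma,k}$ lies in a right (not left) coset of $D_{\sigma,k}$, so that the coset covering form of Neumann's lemma applies cleanly.
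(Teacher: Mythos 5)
Your proof is correct and takes essentially the same route as the paper: translate the confining condition into a covering $R=\bigcup_{\sigma,k}Y_{\sigma,k}$, observe that each $Y_{\sigma,k}$ lies in a single right coset of $D_{\sigma,k}$, and apply B.H.~Neumann's lemma to the resulting cover by at most $nr$ cosets. The paper states this more tersely, but the content is identical; your only addition is to make explicit the inversion $g\mapsto g^{-1}$ needed to pass from $\sigma\in gH_kg^{-1}$ to $g^{-1}\in Y_{\sigma,k}$ and the verification that $Y_{\sigma,k}\subseteq D_{\sigma,k}\delta_0$, both of which are correct.
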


\begin{proof}
Saying that $P$ is a confining subset for $(H_1,\ldots,H_n,R)$ is equivalent to saying that the union of the $Y_{\sigma,k}$ for $\sigma \in P$ and $k \leq n$ is equal to $R$. Since for every $g \in Y_{\sigma,k}$ we have the inclusion $Y_{\sigma,k} \subset D_{\sigma,k} g$, it follows that $R$ can be written as a union of cosets of the subgroups $D_{\sigma,k}$ for $\sigma \in P$ and $k \leq n$. Hence according to B.H. Neumann's lemma \cite{Neum54}, there must exist $\sigma \in P$ and $k$ such that $D_{\sigma,k}$ has finite index at most $nr$ in $R$. 
\end{proof}

We isolate the following statement, that will be used in \S \ref{subsec-lower-sc}. Recall that the  the notation $p_\sigma$ was introduced in Definition \ref{defi-projections}.
	
\begin{prop} \label{p-first-step}
Let $H_1,\ldots,H_n \le \Sym(\Omega)$ and $G \le \Sym(\Omega)$ such that $(H_1,\ldots,H_n)$ is confined by $G$. Suppose $P$ is a confining subset for $(H_1,\ldots,H_n,G)$ and $\left\lbrace \Omega_\sigma \right\rbrace_{\sigma \in P}$ satisfies (C1) and (C2). Then there exist $\sigma \in P$ and $k \leq n$ such that $A_{\sigma,k} \leq H_k$ preserves $\Omega_\sigma$, and $p_\sigma(A_{\sigma,k})$ is a subgroup of $\rist_G(\Omega_\sigma)$ of index at most $nr$, where $r = |P|$.
\end{prop}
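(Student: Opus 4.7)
The plan is to combine the already-established support/projection analysis of Lemma \ref{lem-A-supp} with the B.H.\ Neumann covering lemma that appears in Lemma \ref{lem-cons-Neum}, and then push everything down to $\Omega_\sigma$ via the restriction map $p_\sigma$.

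First I would verify that $A_{\sigma,k} \leq H_k$ for all $\sigma,k$. This is essentially a tautology from the definitions: each generator $a_{\delta,\gamma} = (\delta \sigma^{-1}\delta^{-1})(\gamma \sigma \gamma^{-1})$ is the product of two elements of $H_k$, since $\gamma, \delta \in Y_{\sigma,k}$ means precisely that $\gamma \sigma \gamma^{-1}$ and $\delta \sigma \delta^{-1}$ (and hence its inverse) belong to $H_k$. So this step is immediate and requires no work beyond recalling Lemma \ref{lem-A-supp}.

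Next I would apply Lemma \ref{lem-cons-Neum}. The hypothesis is that $P$ confines $(H_1,\ldots,H_n)$ with respect to $G$; since $R \leq G$, the same finite set $P$ confines $(H_1,\ldots,H_n)$ with respect to $R$. Neumann's covering lemma then yields $\sigma \in P$ and $k \leq n$ with $[R : D_{\sigma,k}] \leq nr$. By Lemma \ref{lem-A-supp}\ref{item-A-2}, and because $\sigma \in M_\sigma$ thanks to condition (C2), the subgroup $A_{\sigma,k}$ preserves $\Omega_\sigma$ and $p_\sigma(A_{\sigma,k}) = p_\sigma(D_{\sigma,k})$ (the map $p_\sigma$ factors through $\pi_\sigma$ since $\Omega_\sigma \subseteq \bigcup_{\rho \in M_\sigma}\Omega_\rho$).

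Finally I would control the index through $p_\sigma$. The direct product decomposition $R = \prod_{[\Omega_\rho]}\rist_G(\Omega_\rho)$ (over distinct sets $\Omega_\rho$) combined with condition (C1) shows that factors indexed by $\Omega_\rho \neq \Omega_\sigma$ act trivially on $\Omega_\sigma$, so $p_\sigma(R) = p_\sigma(\rist_G(\Omega_\sigma))$. Moreover $p_\sigma$ is injective on $\rist_G(\Omega_\sigma)$ (an element supported in $\Omega_\sigma$ that is trivial on $\Omega_\sigma$ is trivial). Hence we may identify $p_\sigma(\rist_G(\Omega_\sigma))$ with $\rist_G(\Omega_\sigma)$ itself, and the surjection $p_\sigma \colon R \to \rist_G(\Omega_\sigma)$ sends $D_{\sigma,k}$ to a subgroup of index at most $[R:D_{\sigma,k}] \leq nr$. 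Since $p_\sigma(A_{\sigma,k}) = p_\sigma(D_{\sigma,k})$, this concludes the proof.

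I do not really anticipate a serious obstacle: the content of the argument has been front-loaded into Lemma \ref{lem-A-supp} (support control) and Lemma \ref{lem-cons-Neum} (Neumann's lemma). The only mildly delicate point is the bookkeeping of the last paragraph, where one must confirm that the quantitative bound $[R:D_{\sigma,k}] \leq nr$ survives the passage to $\rist_G(\Omega_\sigma)$ via $p_\sigma$, which is just the observation that $p_\sigma|_R$ is a surjection onto $\rist_G(\Omega_\sigma)$ (identified through its faithful restriction to $\Omega_\sigma$).
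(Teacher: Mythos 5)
Your proof is correct and follows essentially the same route as the paper's: apply Neumann's lemma (via Lemma \ref{lem-cons-Neum}) to get $\sigma$, $k$ with $[R:D_{\sigma,k}]\leq nr$, invoke Lemma \ref{lem-A-supp}\,\ref{item-A-2} together with $\sigma\in M_\sigma$ from (C2) to get $p_\sigma(A_{\sigma,k})=p_\sigma(D_{\sigma,k})$, and transfer the index bound through the surjection $p_\sigma\colon R\twoheadrightarrow\rist_G(\Omega_\sigma)$. Your last paragraph simply unpacks the inequality $(\rist_G(\Omega_\sigma):p_\sigma(D_{\sigma,k}))\leq(R:D_{\sigma,k})$ that the paper states without elaboration, using the direct-product decomposition of $R$ already recorded just before Lemma \ref{lem-A-supp}.
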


\begin{proof}
Since $P$ is confining for $(H_1,\ldots,H_n,G)$, it is also confining for $(H_1,\ldots,H_n,R)$. Hence by Lemma \ref{lem-cons-Neum} we can choose $\sigma \in P$ and $k$ such that $D_{\sigma,k}$ has index at most $nr$ in $R$. Thanks to condition (C2) we have $\sigma \in M_\sigma$, so Lemma \ref{lem-A-supp} \ref{item-A-2} implies in particular that $A_{\sigma,k}$ preserves $\Omega_\sigma$ and $p_\sigma(A_{\sigma,k}) = p_\sigma(D_{\sigma,k})$. Since $(\rist_G(\Omega_\sigma) : p_\sigma(D_{\sigma,k})) \leq (R : D_{\sigma,k})$, the statement follows.
\end{proof}

\begin{remark}
Strictly speaking, $\rist_G(\Omega_\sigma)$ should be replaced by $p_\sigma(\rist_G(\Omega_\sigma))$ in the conclusion of the previous proposition. However $p_\sigma$ being injective on restriction to $\rist_G(\Omega_\sigma)$, for ease of notation we freely identify $\rist_G(\Omega_\sigma)$ and $p_\sigma(\rist_G(\Omega_\sigma))$.
\end{remark}

\subsection{The proof of Theorem \ref{thm-conf-intro}} \label{subsec-displace}

The following is a strengthening of the condition Definition \ref{def-C1-C2}.
		
		\begin{defin} \label{def-displace}
			Let $P$ be a finite set of non-trivial elements of $\Sym(\Omega)$. A collection $\left\lbrace \Omega_\sigma \right\rbrace_{\sigma \in P} $ of non-empty subsets of $\Omega$ is a \textbf{displacement configuration} for $P$ if the following hold: 
			\begin{enumerate}
				\item[(C1)] \label{item-disj-eq} for all $\sigma, \rho \in P$, either $\Omega_\sigma = \Omega_\rho$, or $\Omega_\sigma$ and $\Omega_\rho$ are disjoint;
				\item[(C3)] \label{item-one-move} for all $\sigma, \rho \in P$, either $\sigma$ fixes $\Omega_\rho$ pointwise, or $\sigma(\Omega_\rho)$ is disjoint from $\bigcup_{\alpha \in P}\Omega_\alpha$;
				\item[(C4)] \label{item-all-disj} for all $\sigma \in P$, $\sigma(\Omega_\sigma)$ is disjoint from $\bigcup_{\alpha \in P}\Omega_\alpha$ and also from $\bigcup_{\alpha \in P} \sigma^{-1}(\Omega_\alpha)$.
			\end{enumerate}
		\end{defin}
	
\begin{remark}
	We note that condition (C4) implies (C2) from Definition \ref{def-C1-C2}. Note also that (C3) is equivalent to saying that $P = M_\sigma \cup F_\sigma$ for all $\sigma \in P$ (Definition \ref{def-M-F-P}). Combined with (C1), (C3) implies that for $\rho \in M_\sigma$ we have that $\sigma^{-1}(\Omega_\rho)$ is disjoint from $\bigcup_{\alpha \in P} \Omega_\alpha$.
\end{remark}

\begin{center}
	\textit{Until the end of this section we assume that $P$ is a finite set of non-trivial elements of $\Sym(\Omega)$, that $\left\lbrace \Omega_\sigma \right\rbrace_{\sigma \in P} $ is a displacement configuration for $P$, and we retain the notation $M_\sigma, F_\sigma$ from Definition \ref{def-M-F-P}. We also let $H_1,\ldots,H_n$ and $G$ be subgroups of $\Sym(\Omega)$, and we retain the notation $R, Y_{\sigma,k}, D_{\sigma,k}, A_{\sigma,k}$ from \S \ref{subsec-nota}.}
\end{center}

In addition we will also use the following notation. For $\lambda \in Y_{\sigma,k}$, we denote by $B_\lambda$ the conjugate of $A_{\sigma,k}$ by $\lambda \sigma  \lambda^{-1}$: \[ B_\lambda = \lambda \sigma  \lambda^{-1} A_{\sigma,k} \lambda \sigma^{-1} \lambda^{-1}. \] Since $\lambda \in Y_{\sigma,k}$ and $A_{\sigma,k}$ is a subgroup of $H_k$, $B_\lambda$ is also a subgroup of $H_k$.
		
		\begin{lem} \label{lem-B-supp}
		For every $\sigma \in P$, $k \leq n$ and $\lambda \in Y_{\sigma,k}$, the following hold:
		\begin{enumerate}[label=\roman*)] 
			\item \label{lem-B-supp1} $B_\lambda$ is supported in $\bigcup_{\rho \in M_\sigma} \Omega_\rho \cup \sigma(\Omega_\rho)$;
			\item \label{lem-B-supp2} $B_\lambda$ preserves each $\Omega_\rho$ and $\sigma(\Omega_\rho)$, and $\pi_\sigma(B_\lambda) \leq \pi_\sigma(R)$.
		\end{enumerate}
		\end{lem}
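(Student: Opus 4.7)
The plan is to build on Lemma \ref{lem-A-supp} and then do a careful orbit-tracking computation for the conjugation by $\lambda\sigma\lambda^{-1}$, making essential use of all three displacement conditions of Definition \ref{def-displace}.

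First I would strengthen Lemma \ref{lem-A-supp}. By the remark following Definition \ref{def-displace}, condition (C3) gives $P = M_\sigma\cup F_\sigma$, so part \ref{item-A-1} already upgrades to $\supp A_{\sigma,k}\subseteq \bigcup_{\rho\in M_\sigma}\Omega_\rho\cup\sigma^{-1}(\Omega_\rho)$. The same remark records that for $\rho\in M_\sigma$ the set $\sigma^{-1}(\Omega_\rho)$ is disjoint from $\bigcup_{\alpha\in P}\Omega_\alpha$; combined with the fact that $\gamma,\delta\in Y_{\sigma,k}\subseteq R$ are supported in $\bigcup_{\alpha\in P}\Omega_\alpha$ and preserve each $\Omega_\alpha$, a direct orbit chase for a generator $a_{\delta,\gamma}=\delta\sigma^{-1}\delta^{-1}\gamma\sigma\gamma^{-1}$ shows that $a_{\delta,\gamma}$ preserves \emph{each individual} $\Omega_\rho$ and \emph{each individual} $\sigma^{-1}(\Omega_\rho)$ for $\rho\in M_\sigma$. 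As a by-product the chase yields the restriction formulas
\[
a_{\delta,\gamma}|_{\Omega_\rho} \;=\; (\delta\gamma^{-1})|_{\Omega_\rho},\qquad a_{\delta,\gamma}|_{\sigma^{-1}(\Omega_\rho)} \;=\; (\sigma^{-1}\delta^{-1}\gamma\sigma)|_{\sigma^{-1}(\Omega_\rho)},
\]
the first of which is already in the proof of Lemma \ref{lem-A-supp} \ref{item-A-2}.

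Next I would analyze the conjugating element $\lambda\sigma\lambda^{-1}$. Since $\lambda\in R$, it preserves every $\Omega_\alpha$ and fixes pointwise everything outside $\bigcup_{\alpha\in P}\Omega_\alpha$; in particular, using (C4) together with the remark cited above, $\lambda$ fixes $\sigma(\Omega_\rho)$ and $\sigma^{-1}(\Omega_\rho)$ pointwise for every $\rho\in M_\sigma$. A one-line chase then gives $(\lambda\sigma\lambda^{-1})(\Omega_\rho)=\sigma(\Omega_\rho)$ and $(\lambda\sigma\lambda^{-1})(\sigma^{-1}(\Omega_\rho))=\Omega_\rho$, so conjugation by $\lambda\sigma\lambda^{-1}$ swaps the two families $\{\Omega_\rho\}_{\rho\in M_\sigma}$ and $\{\sigma(\Omega_\rho)\}_{\rho\in M_\sigma}$ in a $\rho$-preserving way. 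Translating the support and preservation statements for $A_{\sigma,k}$ obtained in the previous paragraph through this swap yields part \ref{lem-B-supp1} of the lemma and the first assertion of \ref{lem-B-supp2}.

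It remains to prove $\pi_\sigma(B_\lambda)\le\pi_\sigma(R)$. For this I would compute, for $x\in\Omega_\rho$ with $\rho\in M_\sigma$, the action of a generic element $(\lambda\sigma\lambda^{-1})\,a_{\delta,\gamma}\,(\lambda\sigma\lambda^{-1})^{-1}$ on $x$: chasing $x$ through the composition, the pointwise-fixing properties of $\lambda$ let one remove each occurrence of $\lambda^{\pm 1}$ adjacent to $\sigma^{\pm 1}$, and the restriction formula for $a_{\delta,\gamma}$ on $\sigma^{-1}(\Omega_\rho)$ collapses the inner block; the end result is $(\lambda\delta^{-1}\gamma\lambda^{-1})(x)$. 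Since $\lambda,\gamma,\delta\in R$, this is the restriction to $\Omega_\rho$ of an element of $R$, giving $\pi_\sigma(B_\lambda)\le\pi_\sigma(R)$. I expect the main difficulty to be purely notational: every step is an orbit chase through a long composition, and conditions (C1), (C3) and (C4) must all be invoked at the right places to guarantee that each intermediate point lands in a region where one of the operators acts trivially or preserves the relevant piece.
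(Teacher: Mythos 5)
Your proposal is correct and follows essentially the same route as the paper: both establish that $A_{\sigma,k}$ preserves $\Omega_\rho$ and $\sigma^{-1}(\Omega_\rho)$ for $\rho\in M_\sigma$, observe that $\lambda\sigma\lambda^{-1}$ sends $\sigma^{-1}(\Omega_\rho)\mapsto\Omega_\rho\mapsto\sigma(\Omega_\rho)$, deduce (i) and the preservation claim by conjugation, and then show the restriction of $(\lambda\sigma\lambda^{-1})a_{\delta,\gamma}(\lambda\sigma\lambda^{-1})^{-1}$ to $\Omega_\rho$ equals $\lambda\delta^{-1}\gamma\lambda^{-1}$. The only cosmetic difference is that the paper obtains the last identity by an algebraic rearrangement $\lambda(\sigma\lambda^{-1}\delta\sigma^{-1})\delta^{-1}\gamma(\sigma\gamma^{-1}\lambda\sigma^{-1})\lambda^{-1}$ and the observation that the bracketed blocks act trivially on $\Omega_\rho$, whereas you carry out the equivalent pointwise orbit chase.
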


\begin{proof}
 For every $\rho \in M_\sigma$, $\sigma^{-1}(\Omega_\rho)$ is disjoint from $\Omega_\alpha$ for all $\alpha \in P$, so the element $\lambda \sigma \lambda^{-1}$ sends $\sigma^{-1}(\Omega_\rho)$ to $\Omega_\rho$ and $\Omega_\rho$ to $\sigma(\Omega_\rho)$. Since $B_\lambda$ is the conjugate of $A_{\sigma,k}$ by $\lambda \sigma  \lambda^{-1}$, statement \ref{lem-B-supp1} follows from Lemma \ref{lem-A-supp} \ref{item-A-1}. Moreover for $\gamma, \delta\in Y_{\sigma,k}$, the element $a_{\delta, \gamma}=\delta (\sigma^{-1}\delta^{-1}\gamma \sigma)\gamma^{-1}$ coincides with $\sigma^{-1}\delta^{-1}\gamma \sigma$ on $\sigma^{-1}(\Omega_\rho)$. So it follows that $A_{\sigma,k}$ preserves $\sigma^{-1}(\Omega_\rho)$. Since $A_{\sigma,k}$ also preserves $\Omega_\rho$, the subgroup  $B_\lambda$ indeed preserves $\Omega_\rho$ and $\sigma(\Omega_\rho)$. So to conclude the proof of \ref{lem-B-supp2} we only have to prove that $\pi_\sigma(B_\lambda) \leq \pi_\sigma(R)$. One has \[ (\lambda \sigma \lambda^{-1}) a_{\delta, \gamma} (\lambda \sigma^{-1} \lambda^{-1}) = \lambda (\sigma \lambda^{-1}  \delta \sigma^{-1} ) \delta^{-1}\gamma (\sigma \gamma^{-1}  \lambda \sigma^{-1}) \lambda^{-1}, \] and the elements $(\sigma \lambda^{-1}  \delta \sigma^{-1} )$ and $(\sigma \gamma^{-1}  \lambda \sigma^{-1})$ both act trivially on $\Omega_\rho$ for $\rho \in M_\sigma$, so it follows from the above equality that $(\lambda \sigma \lambda^{-1}) a_{\delta, \gamma} (\lambda \sigma^{-1} \lambda^{-1})$ coincides with $\lambda \delta^{-1}\gamma \lambda^{-1}$ on $\Omega_\rho$. Hence one has \[ \pi_\sigma(\lambda \sigma_\ell \lambda^{-1} a_{\delta, \gamma} \lambda \sigma_\ell^{-1} \lambda^{-1}) = \pi_\sigma(\lambda \delta^{-1}\gamma \lambda^{-1}),\] and in particular $\pi_\sigma(B_\lambda) \leq \pi_\sigma(R)$ since $\gamma, \delta, \lambda  \in R$.
	\end{proof}

Recall from Lemma \ref{lem-A-supp} that for $\sigma \in P$, the subgroup $A_{\sigma,k}$ is supported in $\bigcup_{\rho \in M_\sigma} \Omega_\rho \cup \sigma^{-1}(\Omega_\rho)$.

\begin{defin}
For $\rho \in M_\sigma$ we denote by $A_{\sigma,k}^\rho$ the subgroup of $A_{\sigma,k}$ consisting of elements supported in $\Omega_\rho \cup \bigcup_{\alpha \in M_\sigma} \sigma^{-1}(\Omega_\alpha)$. Equivalently, $A_{\sigma,k}^\rho$ consists of elements of $A_{\sigma,k}$ acting trivially on $\Omega_\alpha$ for all $\alpha \neq \rho$.
\end{defin} 

\begin{lem} \label{lem-proj-Ai}
For all $\sigma \in P$, one has $\left( \rist_G(\Omega_\rho) : p_\rho(A_{\sigma,k}^\rho)  \right) \leq \left( R : D_{\sigma,k}  \right)$ for every $\rho \in M_\sigma$.
\end{lem}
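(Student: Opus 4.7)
The plan is to identify $p_\rho(A_{\sigma,k}^\rho)$ with a ``slice'' of $\pi_\sigma(D_{\sigma,k})$ inside a product of rigid stabilizers, and then use the standard index inequality $[A : A \cap B] \leq [G : B]$ for subgroups of a product group, together with the fact that $\pi_\sigma$ restricted to $R$ is a surjective homomorphism onto this product.

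More precisely, set $\Pi := \prod_{\alpha \in M_\sigma} \rist_G(\Omega_\alpha)$, viewed as a subgroup of $\sym(\bigcup_{\alpha \in M_\sigma} \Omega_\alpha)$. Since $R$ is the direct product of the rigid stabilizers $\rist_G(\Omega_\alpha)$ (for the distinct $\Omega_\alpha$, $\alpha \in P$), and since $\pi_\sigma$ restricted to $R$ is precisely the restriction-to-$\bigcup_{\alpha \in M_\sigma}\Omega_\alpha$ map, the restriction $\pi_\sigma|_R\colon R \to \Pi$ coincides with the coordinate projection killing the factors for $\alpha \notin M_\sigma$; in particular it is a surjective homomorphism. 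Setting $D' := \pi_\sigma(D_{\sigma,k}) \le \Pi$, this immediately gives $[\Pi : D'] \leq [R : D_{\sigma,k}]$.

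Next, I will establish the identification $p_\rho(A_{\sigma,k}^\rho) = D' \cap \rist_G(\Omega_\rho)$, where $\rist_G(\Omega_\rho)$ is embedded in $\Pi$ in the obvious way as the $\rho$-th factor. For the inclusion ``$\subseteq$'': any $a \in A_{\sigma,k}^\rho$ acts trivially on $\Omega_\alpha$ for $\alpha \in M_\sigma \setminus \{\rho\}$ by definition, so $\pi_\sigma(a)$ lies in the $\rho$-th factor of $\Pi$; moreover $A_{\sigma,k}^\rho \leq A_{\sigma,k}$, so $\pi_\sigma(a) \in \pi_\sigma(A_{\sigma,k}) = D'$ by Lemma \ref{lem-A-supp} \ref{item-A-2}, and the equality $\pi_\sigma(a) = p_\rho(a)$ holds since the only non-trivial component of $\pi_\sigma(a)$ is on $\Omega_\rho$. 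For the reverse inclusion ``$\supseteq$'': given $b \in D' \cap \rist_G(\Omega_\rho)$, use $D' = \pi_\sigma(A_{\sigma,k})$ to choose $a \in A_{\sigma,k}$ with $\pi_\sigma(a) = b$; since $b$ is trivial on $\Omega_\alpha$ for $\alpha \in M_\sigma \setminus \{\rho\}$, so is $a$, which means $a \in A_{\sigma,k}^\rho$, and then $p_\rho(a) = b$.

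Combining these two steps, the conclusion follows from the standard inequality
\[ [\rist_G(\Omega_\rho) : D' \cap \rist_G(\Omega_\rho)] \leq [\Pi : D'] \leq [R : D_{\sigma,k}], \]
where the first inequality uses that distinct cosets of $D' \cap \rist_G(\Omega_\rho)$ in $\rist_G(\Omega_\rho)$ lie in distinct cosets of $D'$ in $\Pi$. No step here looks especially hard; the only point requiring care is the bookkeeping that distinguishes the three ``restriction'' maps $p_\rho$, $\pi_\sigma$, and the direct-product projection $R \to \Pi$, and verifying that they are compatible in the way described.
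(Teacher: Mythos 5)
Your proof is correct and follows essentially the same route as the paper: the paper's one-line proof identifies $p_\rho(A_{\sigma,k}^\rho)$ with $\rist_G(\Omega_\rho)\cap D_{\sigma,k}$ via Lemma \ref{lem-A-supp} and then applies the index inequality $[\rist_G(\Omega_\rho):\rist_G(\Omega_\rho)\cap D_{\sigma,k}]\leq [R:D_{\sigma,k}]$. Your version spells out the bookkeeping in more detail by factoring through $\Pi$ and $D'=\pi_\sigma(D_{\sigma,k})$, which cleanly separates the two index bounds and carefully tracks the distinction between $D_{\sigma,k}$ and its image under $\pi_\sigma$, but the underlying argument is the same.
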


\begin{proof}
	Indeed one has \[ \left( \rist_G(\Omega_\rho) : p_\rho(A_{\sigma,k}^\rho)   \right) = \left( \rist_G(\Omega_\rho) : \rist_G(\Omega_\rho) \cap D_{\sigma,k}  \right) \leq \left( R : D_{\sigma,k}  \right), \] where the first equality follows from Lemma \ref{lem-A-supp}.
\end{proof}
	

For $k \geq 1$ we will denote by $\mathrm{FC}_{\leqslant k}(G)$ the set of elements of $G$ whose conjugacy class has cardinality at most $k$. Equivalently, $\mathrm{FC}_{\leqslant k}(G)$ is the set of elements of $G$ having a centralizer of index at most $k$. By definition the FC-center of $G$ is $\mathrm{FC}(G) = \bigcup_k \mathrm{FC}_{\leqslant k}(G)$.
	

		\begin{thm} \label{thm-conf-abstract}
			Let $H_1,\ldots,H_n, G \le \Sym(\Omega)$ such that $(H_1,\ldots,H_n)$ is confined by $G$, and let $P$ be a confining subset for $(H_1,\ldots,H_n,G)$, and $r = |P|$. Assume that $\left\lbrace \Omega_\sigma \right\rbrace_{\sigma \in P} $ is a displacement configuration for $P$ such that for all $\sigma \in P$ the group $\rist_G(\Omega_\sigma)$ is non-trivial and satisfies $\mathrm{FC}_{\leqslant nr}(\rist_G(\Omega_\sigma)) = \left\lbrace 1 \right\rbrace $. Then there exist $\rho \in P$ and $k \leq n$ such that $H_k$ contains a non-trivial subgroup $N\le \rist_G(\Omega_\rho)$ whose normalizer in $\rist_G(\Omega_\rho)$ has index at most $nr$. 
		\end{thm}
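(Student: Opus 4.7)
The plan is to derive Theorem \ref{thm-conf-abstract} from Proposition \ref{p-first-step}, using the auxiliary subgroups $B_\lambda$ of Lemma \ref{lem-B-supp} to produce a non-trivial subgroup of $\rist_G(\Omega_\sigma) \cap H_k$ that is normalized by a large subgroup of $\rist_G(\Omega_\sigma)$.

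First I would apply Proposition \ref{p-first-step} to obtain $\sigma \in P$ and $k \leq n$ such that $A_{\sigma, k} \leq H_k$ preserves $\Omega_\sigma$ and $V := p_\sigma(A_{\sigma, k})$ is a subgroup of $\rist_G(\Omega_\sigma)$ of index at most $nr$. Taking $\rho := \sigma$, the natural candidate is
\[
N := A_{\sigma,k} \cap \rist_{\Sym(\Omega)}(\Omega_\sigma),
\]
the subgroup of elements of $A_{\sigma,k}$ that are supported on $\Omega_\sigma$. Each such element is uniquely determined by its restriction to $\Omega_\sigma$, which lies in $V \leq \rist_G(\Omega_\sigma)$; hence $N \leq H_k \cap \rist_G(\Omega_\sigma)$.

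Next I would verify that $V$ normalizes $N$. Given $v \in V$, pick any lift $a \in A_{\sigma,k}$ with $p_\sigma(a) = v$ and write $a = v \cdot c$ in $\Sym(\Omega)$, where $c = v^{-1} a$ is trivial on $\Omega_\sigma$ and hence supported on $\Omega \setminus \Omega_\sigma$. Since $A_{\sigma, k}$ is supported on $\bigcup_{\rho \in M_\sigma} (\Omega_\rho \cup \sigma^{-1}(\Omega_\rho))$, and by conditions (C1) and (C4) this set intersected with $\Omega \setminus \Omega_\sigma$ is disjoint from $\Omega_\sigma$, the element $c$ commutes with every element of $N$. For $n \in N$ one then obtains
\[
a n a^{-1} \;=\; v c n c^{-1} v^{-1} \;=\; v n v^{-1},
\]
which lies in $A_{\sigma,k}$ and is supported on $\Omega_\sigma$, hence in $N$. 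Therefore $V \leq N_{\rist_G(\Omega_\sigma)}(N)$, which gives $[\rist_G(\Omega_\sigma) : N_{\rist_G(\Omega_\sigma)}(N)] \leq [\rist_G(\Omega_\sigma) : V] \leq nr$.

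The main obstacle is to show $N \neq \{1\}$, and this is where both the FC hypothesis and the auxiliary $B_\lambda$'s intervene. A first consequence of $\mathrm{FC}_{\leq nr}(\rist_G(\Omega_\sigma)) = \{1\}$ is that $V$ is centerless: any non-trivial $v \in Z(V)$ would have $V \leq C_{\rist_G(\Omega_\sigma)}(v)$, giving an element of $\mathrm{FC}_{\leq nr}(\rist_G(\Omega_\sigma))$, contradiction. Hence $V$ is non-abelian and $[V, V] \neq \{1\}$. To realize a non-trivial element of $[V, V]$ inside $N$, the key identity is
\[
[a_1, a_2] \;=\; [v_1, v_2] \cdot [c_1, c_2],
\]
valid for lifts $a_i = v_i c_i \in A_{\sigma,k}$ because each $v_i$ commutes with each $c_j$ (disjoint supports). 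If one can select lifts whose outside components $c_i$ commute, the resulting commutator lies in $N$ and projects to the non-trivial element $[v_1, v_2]$. The mechanism for arranging this is the family $B_\lambda \leq H_k$ of Lemma \ref{lem-B-supp}, which has support in $\bigcup_{\rho \in M_\sigma} (\Omega_\rho \cup \sigma(\Omega_\rho))$, disjoint from the problematic region $\bigcup_{\rho \in M_\sigma} \sigma^{-1}(\Omega_\rho)$: working with $A^\sigma_{\sigma,k}$ and the analogous subgroup of $B_\lambda$ (elements trivial on $\Omega_\rho$ for $\rho \neq \sigma$), commutators automatically become supported on $\Omega_\sigma$ alone. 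The hardest point is ruling out the pathological case where $N$ remains trivial for $\rho = \sigma$; one then replaces $\sigma$ by another $\rho \in M_\sigma$ and applies the same construction to $A^\rho_{\sigma,k}$, invoking the FC hypothesis on $\rist_G(\Omega_\rho)$ and a counting argument to guarantee success for at least one such $\rho$.
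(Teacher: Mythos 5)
Your outline is on the right track---apply Proposition~\ref{p-first-step}, bring in the FC hypothesis and the conjugates $B_\lambda$---but the crucial step, the non-triviality of $N$, is not actually established, and the mechanism you sketch for it is not the one that works. Your $N := A_{\sigma,k}\cap\rist_{\Sym(\Omega)}(\Omega_\sigma)$ can very well be trivial: $A_{\sigma,k}$ could sit diagonally inside $\Sym(\Omega_\sigma)\times\Sym(\Omega\setminus\Omega_\sigma)$, so that no non-trivial element is supported on $\Omega_\sigma$ alone, and your identity $[a_1,a_2]=[v_1,v_2][c_1,c_2]$ shows exactly what goes wrong: unless the outside parts $c_i$ commute, the commutator escapes $\Omega_\sigma$. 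You acknowledge this is the hard point, but the ``counting argument'' you invoke is not given, and it is not what the paper does.

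The key idea missing from the proposal is a double-commutator trick that exploits condition (C4) to force disjoint supports, rather than trying to arrange commuting outside components. One fixes $f\in A_{\sigma,k}^\sigma$ with $p_\sigma(f)\neq 1$ (possible because $p_\sigma(A_{\sigma,k}^\sigma)$ has index at most $nr$ in $\rist_G(\Omega_\sigma)$ by Lemma~\ref{lem-proj-Ai}, and $\rist_G(\Omega_\sigma)$ has more than $nr$ elements). If $f$ acts trivially on every $\sigma^{-1}(\Omega_\alpha)$ then $f$ itself lies in $\rist_G(\Omega_\sigma)\cap H_k$ and $\rho=\sigma$ works. Otherwise there is $\rho\in M_\sigma$ on whose $\sigma^{-1}$-translate $f$ acts non-trivially; the conjugate $h=(\lambda\sigma\lambda^{-1})f(\lambda\sigma^{-1}\lambda^{-1})\in B_\lambda$ is then supported in $\bigcup_{\alpha\in M_\sigma}\Omega_\alpha\cup\sigma(\Omega_\sigma)$ with $p_\rho(h)\neq 1$ (Lemma~\ref{lem-B-supp}). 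By (C4) the support of $A_{\sigma,k}^\rho$ (contained in $\Omega_\rho\cup\bigcup_\alpha\sigma^{-1}(\Omega_\alpha)$) meets the support of $h$ only inside $\Omega_\rho$, so any commutator $[a,h]$ with $a\in A_{\sigma,k}^\rho$ is \emph{automatically} supported in $\Omega_\rho$ and $p_\rho$ is injective on the set of such commutators. The hypothesis $\mathrm{FC}_{\leqslant nr}(\rist_G(\Omega_\rho))=\{1\}$ together with Lemma~\ref{lem-proj-Ai} ensures some $a$ gives $[p_\rho(a),p_\rho(h)]\neq 1$, hence a non-trivial $h_0:=[a,h]\in H_k\cap\rist_G(\Omega_\rho)$. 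Note that this $h_0$ need not lie in $A_{\sigma,k}$, so your specific $N$ would not capture it; the paper instead takes $N$ to be the subgroup generated by the $D_{\sigma,k}$-conjugates of $h_0$ (realized via lifts in $A_{\sigma,k}$), which is non-trivial by construction, lies in $H_k\cap\rist_G(\Omega_\rho)$, and is normalized by $p_\rho(D_{\sigma,k})$, of index at most $nr$ in $\rist_G(\Omega_\rho)$. See Proposition~\ref{prop-confined-abstract} for the full argument.
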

	
The theorem will follow from the following more precise statement:	
	
\begin{prop} \label{prop-confined-abstract}
Retain the assumptions of Theorem \ref{thm-conf-abstract}. Then for every $\sigma \in P$ and $k \leq n$ such that $D_{\sigma,k}$ has index at most $nr$ in $R$, there exist $\rho \in M_\sigma$ such that $H_k$ contains a non-trivial subgroup $N\le \rist_G(\Omega_\rho)$ that is normalized by $D_{\sigma,k}$.

Moreover if $\rist_G(\Omega_\rho)$ is finitely generated for all $\rho \in M_\sigma$, then one can find a finitely generated subgroup $L$ of $H_k$ that contains $N$.
\end{prop}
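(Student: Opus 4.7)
The plan is to construct $N$ as a subgroup of $A^\rho := A^\rho_{\sigma,k}$ for a suitable $\rho \in M_\sigma$. Lemma \ref{lem-proj-Ai} already tells me that for every $\rho \in M_\sigma$ the projection $p_\rho(A^\rho)$ has index at most $nr$ in $\rist_G(\Omega_\rho)$. The key preliminary observation I would state first is that any element of $A^\rho$ supported only on $\Omega_\rho$ automatically lies in $\rist_G(\Omega_\rho) \cap H_k$: indeed $p_\rho$ is injective on $\rist_G(\Omega_\rho)$ and $p_\rho(A^\rho) \le p_\rho(\rist_G(\Omega_\rho))$, so the unique lift of $p_\rho(a)$ to $\rist_G(\Omega_\rho)$ must be $a$ itself. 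This reduces the task to producing a non-trivial $D_{\sigma,k}$-invariant subgroup inside the kernel of the restriction map $q \colon A_{\sigma,k} \to \Sym(V)$ to $V = \bigcup_{\alpha \in M_\sigma}\sigma^{-1}(\Omega_\alpha)$.

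To populate this kernel I would use a length-three commutator identity. From the formulas in the proofs of Lemmas \ref{lem-A-supp} and \ref{lem-B-supp}, for $a = a_{\delta_1,\gamma_1}\,a_{\delta_2,\gamma_2}\,a_{\delta_3,\gamma_3}$ with $\alpha_i = \delta_i\gamma_i^{-1}$, one computes $p_\rho(a) = \alpha_1\alpha_2\alpha_3|_{\Omega_\rho}$ while $q(a) = \sigma^{-1}(\alpha_3\alpha_2\alpha_1)^{-1}\sigma|_V$, with the reversed order in $q$. So taking $\alpha_1, \alpha_2 \in D^\rho := D_{\sigma,k} \cap \rist_G(\Omega_\rho)$ and $\alpha_3 = \alpha_1^{-1}\alpha_2^{-1}$ (or, if such a single-factor $\alpha_3$ fails to be of the form $\delta_3\gamma_3^{-1}$, using a longer word with the same reverse-product relation $\alpha_n\cdots\alpha_1 = 1$), I get $q(a) = 1$ and $p_\rho(a) = [\alpha_1,\alpha_2]$, thereby realizing commutators in $D^\rho$ as elements of $H_k \cap \rist_G(\Omega_\rho)$. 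The subgroup $N$ is then defined as the one generated by all these realized commutators.

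Non-triviality of $N$ comes from the FC hypothesis: $D^\rho$ has index at most $nr$ in $\rist_G(\Omega_\rho)$, so if $D^\rho$ were abelian then every non-trivial element of $D^\rho$ would be centralized by a subgroup of index at most $nr$, contradicting $\mathrm{FC}_{\le nr}(\rist_G(\Omega_\rho)) = \{1\}$; hence $[D^\rho, D^\rho] \ne \{1\}$ and so is $N$. For $D_{\sigma,k}$-invariance, since $N$ is supported in $\Omega_\rho$, conjugation by $d \in D_{\sigma,k}$ acts through $p_\rho(d) \in D^\rho$ and sends each generating commutator $[\alpha_1,\alpha_2]$ to $[p_\rho(d)\alpha_1 p_\rho(d)^{-1}, p_\rho(d)\alpha_2 p_\rho(d)^{-1}]$, still a commutator in $D^\rho$, which the same recipe realizes inside $H_k$. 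The main obstacle I expect is making this realization systematic: producing, for each required commutator and its $D_{\sigma,k}$-conjugates, an explicit word in the generators $a_{\delta,\gamma}$ satisfying the reverse-product relation — a combinatorial argument on factorizations in $Y_{\sigma,k}Y_{\sigma,k}^{-1}$. For the moreover part, if $\rist_G(\Omega_\rho)$ is finitely generated for every $\rho \in M_\sigma$, so is $D^\rho$ (being of finite index), and finitely many of its generators produce via the recipe finitely many elements of $H_k$ generating a finitely generated overgroup $L \le H_k$ of $N$.
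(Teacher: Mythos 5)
Your preliminary reduction is sound: an element of $A^\rho_{\sigma,k}$ whose restriction to $V=\bigcup_{\alpha\in M_\sigma}\sigma^{-1}(\Omega_\alpha)$ is trivial does lie in $\rist_G(\Omega_\rho)\cap H_k$. But the computation you rely on to populate the kernel of $q$ is wrong, and this is a genuine gap. Writing $a_{\delta,\gamma}=\delta(\sigma^{-1}\delta^{-1}\gamma\sigma)\gamma^{-1}$ and restricting to $\sigma^{-1}(\Omega_\alpha)$ for $\alpha\in M_\sigma$, one finds that $a_{\delta,\gamma}$ acts there as $\sigma^{-1}(\delta^{-1}\gamma)\sigma$ (this is stated in the proof of Lemma~\ref{lem-B-supp}), so that for $a=a_{\delta_1,\gamma_1}a_{\delta_2,\gamma_2}a_{\delta_3,\gamma_3}$ the restriction is $\sigma^{-1}(\delta_1^{-1}\gamma_1)(\delta_2^{-1}\gamma_2)(\delta_3^{-1}\gamma_3)\sigma$. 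The elements $\delta_i^{-1}\gamma_i$ are \emph{not} the inverses of $\alpha_i=\delta_i\gamma_i^{-1}$ unless $\delta_i$ and $\gamma_i$ commute, so $q(a)$ is not $\sigma^{-1}(\alpha_3\alpha_2\alpha_1)^{-1}\sigma$ and the ``reverse-product relation'' $\alpha_3\alpha_2\alpha_1=1$ forced by $\alpha_3=\alpha_1^{-1}\alpha_2^{-1}$ does not kill $q(a)$. The restriction to $\Omega_\rho$ is governed by the word in $\delta_i\gamma_i^{-1}$, while the restriction to $V$ is governed by the word in $\delta_i^{-1}\gamma_i$, and there is no algebraic identity linking the two; producing a word with $p_\rho(a)\neq 1$ and $q(a)=1$ is therefore not a routine combinatorial exercise on factorizations in $Y_{\sigma,k}Y_{\sigma,k}^{-1}$, and I do not see how to patch it directly.

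The paper sidesteps exactly this obstacle by not trying to kill the spurious support at all. Instead it introduces the conjugate subgroup $B_\lambda=\lambda\sigma\lambda^{-1}A_{\sigma,k}\lambda\sigma^{-1}\lambda^{-1}\le H_k$, whose spurious support lies in $\bigcup_{\rho\in M_\sigma}\sigma(\Omega_\rho)$ (Lemma~\ref{lem-B-supp}). Condition (C4) of a displacement configuration is precisely what makes $\sigma(\Omega_\sigma)$ disjoint both from $\bigcup\Omega_\alpha$ and from $\bigcup\sigma^{-1}(\Omega_\alpha)$, so that the spurious supports of $A^\rho_{\sigma,k}$ and of a well-chosen element $h\in B_\lambda$ overlap only inside $\Omega_\rho$. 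Then $[A^\rho_{\sigma,k},h]$ is automatically supported in $\Omega_\rho$, $p_\rho$ is injective there, and the $\mathrm{FC}_{\le nr}$ hypothesis guarantees a non-trivial commutator, giving $h_0\in H_k\cap\rist_G(\Omega_\rho)$. (There is also a preliminary dichotomy in the paper's proof: one first picks $f\in A^\sigma_{\sigma,k}$ with $p_\sigma(f)\neq1$; if $f$ already acts trivially on every $\sigma^{-1}(\Omega_\alpha)$ one is done with $\rho=\sigma$, otherwise one passes to a $\rho$ where it does not.) Finally, $N$ is built as $\langle a h_0 a^{-1}: a\in A_S\rangle$ where $A_S\subset A_{\sigma,k}$ lifts a generating set of $p_\rho(D_{\sigma,k})$; this differs from your plan, in which $N$ was to consist of realized commutators, but your non-triviality and $D_{\sigma,k}$-invariance arguments are in the right spirit and would be unproblematic if the realization step could be repaired.
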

	
\begin{proof}
Fix $\sigma$ and $k$ as in the statement. 

\textit{Claim:} There exists $\rho \in M_\sigma$ such that $ H_k \cap \rist_G(\Omega_\rho) \neq 1$. Since $\rist_G(\Omega_\sigma)$ is not trivial and $\mathrm{FC}_{\leqslant nr}(\rist_G(\Omega_\sigma)) = \left\lbrace 1 \right\rbrace$, it follows in particular that $\rist_G(\Omega_\sigma)$ has cardinality strictly larger then $nr$. Since $\left( R : D_{\sigma,k}  \right) \leq nr$, it follows from Lemma \ref{lem-proj-Ai} that $p_\sigma(A_{\sigma,k}^\sigma)$ has index at most $nr$ in $\rist_G(\Omega_\sigma)$, and in particular $p_\sigma(A_{\sigma,k}^\sigma)$ is non-trivial. Choose an element $f\in A_{\sigma,k}^\sigma$ such that $p_\sigma(f) \neq 1$. If $f$ acts trivially on $\sigma^{-1}(\Omega_\rho)$ for every $\rho \in M_\sigma$ then $f$ belongs to $\rist_G(\Omega_\sigma)$ and the claim holds with $\rho = \sigma$. Hence in the sequel we may assume that there exists $\rho \in M_\sigma$ such that $f$ acts non-trivially on $\sigma^{-1}(\Omega_\rho)$.

Fix $\lambda\in Y_{\sigma,k}$, and let $h = (\lambda \sigma \lambda^{-1}) f (\lambda \sigma^{-1} \lambda^{-1}) \in B_\lambda$. Since  $f\in A_{\sigma,k}^\sigma$, the element $h$ is supported in $\bigcup_{\alpha \in M_\sigma} \Omega_\alpha \cup \sigma(\Omega_\sigma)$ and $p_\rho(h) \in p_\rho (\rist_G(\Omega_\rho))$ by Lemma \ref{lem-B-supp}. Moreover by our assumption $p_\rho(h)$ is non-trivial. Since $\mathrm{FC}_{\leqslant nr}(\rist_G(\Omega_\rho)) = \left\lbrace 1 \right\rbrace$, it follows that the index in $\rist_G(\Omega_\rho)$ of the centralizer of  $p_\rho(h)$ is strictly larger than $nr$, and hence by Lemma \ref{lem-proj-Ai} the subgroup  $p_\rho(A_{\sigma,k}^\rho)$ does not centralize $p_\rho(h)$.  
		
		Now by condition (C4) of Definition \ref{def-displace}, we have that $\sigma(\Omega_\sigma)$ is disjoint from $\Omega_\alpha$ and $\sigma^{-1}(\Omega_\alpha)$ for all $\alpha \in P$. Since the subgroup $A_{\sigma,k}^\rho$ is supported in $\Omega_\rho \cup \bigcup_{\alpha \in M_\sigma} \sigma^{-1}(\Omega_\alpha)$, the intersection between the support of $A_{\sigma,k}^\rho$ and the support of $h$ is contained in $\Omega_\rho$. It follows that every element in $[A_{\sigma,k}^\rho,h]$ is supported in $\Omega_\rho$, and that the map $p_\rho$ is injective in restriction to $[A_{\sigma,k}^\rho,h]$. Combined with the previous paragraph, this implies that one can find $a \in A_{\sigma,k}^\rho$ such that $[a,h]$ is non-trivial and $[a,h] \in H_k \cap \rist_G(\Omega_\rho)$. This terminates the proof of the claim.

	To conclude the proof of the proposition, we fix $\rho \in M_\sigma$ and a non-trivial element $h_0 \in H_k \cap \rist_G(\Omega_\rho)$. Let $S$ be a generating set of the group $p_\rho(D_{\sigma,k})$. By Lemma \ref{lem-A-supp} one has $p_\rho(A_{\sigma,k}) = p_\rho(D_{\sigma,k})$, so for every $s \in S$ one can choose $a_s \in A_{\sigma,k}$ such that $p_\rho(a_s) = s$. We denote by $A_S$ the subgroup generated by the elements $a_s$ for $s \in S$, and by  $L$ the subgroup of $H_k$ generated by $h_0$ and $A_S$. We also let $N \leq L$ be the subgroup generated by the elements $a h_0 a^{-1}$, $a \in A_{S}$. Since $h_0 \in \rist_G(\Omega_\rho)$, the subgroup $N$ is contained in $\rist_G(\Omega_\rho)$, and $N$ is not trivial since $h_0$ is non-trivial. Moreover since  \[ \left\lbrace a ha^{-1} \, : \, a \in A_{S} \right\rbrace  = \left\lbrace g h g^{-1} \, : \, g \in D_{\sigma,k} \right\rbrace  \] according to Lemma \ref{lem-A-supp}, the subgroup $N$ is normalized by $D_{\sigma,k}$. Since $N$ is in $H_k$ by construction, it follows that $N$ satisfies all the conclusions.
	
	For the last assertion, if $\rist_G(\Omega_\rho)$ is finitely generated, then so is the finite index subgroup $p_\rho(D_{\sigma,k})$. Hence above it is possible to take for $S$ a finite generating subset of $p_\rho(D_{\sigma,k})$, and it immediately follows that the subgroup $L$ of $H_k$ is also finitely generated.
\end{proof}

\begin{proof}[Proof of Theorem \ref{thm-conf-abstract}]
We choose $\sigma \in P$ and $k \leq n$ such that $D_{\sigma,k}$ has index at most $nr$ in $R$ (these exist by Lemma \ref{lem-cons-Neum}), and apply Proposition \ref{prop-confined-abstract}. Since $N\le \rist_G(\Omega_\rho)$ and $N$ is normalized by $D_{\sigma,k}$, $N$ is also normalized by $p_\rho(D_{\sigma,k})$, which has index at most $nr$ in $\rist_G(\Omega_\rho)$.
\end{proof}

For future reference we highlight the case $n=1$ in Theorem \ref{thm-conf-abstract}:

\begin{thm}[Commutator lemma for confined subgroups] \label{thm-conf-intro} 
	Let $H \leq G$ be a confined subgroup of $G$, and let $P$ be a confining subset for $(H,G)$ or cardinality $r$.  Suppose that $G$ acts faithfully on a set $\Omega$ and that $\left\lbrace \Omega_\sigma \right\rbrace_{\sigma \in P} $ is a collection of subsets of $\Omega$ such that:
	\begin{enumerate}[label=\roman*) ]
		\item  $\left\lbrace \Omega_\sigma \right\rbrace_{\sigma \in P} $  is a displacement configuration for $P$;
		\item $\rist_G(\Omega_\sigma)$ is non-trivial and satisfies $\mathrm{FC}_{\leqslant r}(\rist_G(\Omega_\sigma)) = \left\lbrace 1 \right\rbrace $ for all $\sigma \in P$. 
	\end{enumerate}
	Then there exists $\Omega_\sigma$ such that $H$ contains a non-trivial subgroup $N\le \rist_G(\Omega_\sigma)$ whose normalizer in $\rist_G(\Omega_\sigma)$ has finite index at most $r$. 
\end{thm}

\section{Confined subgroups of groups of homeomorphisms} \label{sec-confined-homeo}

In this section we derive Theorem \ref{thm-doublecomm-conf-homeo-intro} from Theorem \ref{thm-conf-intro}. The following easy lemma shows that the notion of displacement configuration introduced in Definition  \ref{def-displace} is adapted to our current purpose in the case of group actions by homeomorphisms.

\begin{lem}\label{l-partie-finie} 
	Let $P = \left\lbrace \varphi_1,\cdots, \varphi_r\right\rbrace $ be a finite set of homeomorphisms of a Hausdorff space $X$, and let $Y \subset X$ be a non-empty subspace with no isolated points such that for all $i$ the restriction of $\varphi_i^2$ to $Y$ is not trivial. Then there exists a family of open subsets $(U_1,\cdots, U_r)$ of $Y$ that forms a displacement configuration for $P$.
\end{lem}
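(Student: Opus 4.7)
My plan is to construct each $U_i$ as the intersection with $Y$ of a sufficiently small open neighborhood $V_i \subset X$ of a carefully chosen base point $y_i \in Y$. The guiding observation for (C3) applied to a pair $(\varphi_j, U_i)$ is that there are essentially only two ways it can be satisfied by shrinking: either $\varphi_j$ fixes $U_i$ pointwise (achievable for small $V_i$ exactly when $y_i$ lies in the $Y$-interior of $\fix(\varphi_j)\cap Y$), or $\varphi_j(U_i)$ is disjoint from every $U_k$ (achievable when $\varphi_j$ moves $y_i$ and $\varphi_j(y_i)$ avoids the finite set $\{y_1,\ldots,y_r\}$). The base points $y_i$ must therefore avoid the $Y$-boundary $\partial_Y(\fix(\varphi_j)\cap Y)$ for every $j$, and the images $\varphi_j(y_i)$ and $\varphi_i^2(y_i)$ must stay away from the other $y_k$'s.

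First I would select the $y_i$'s. The set $\fix(\varphi_j)\cap Y$ is closed in $Y$, so its $Y$-boundary is closed and nowhere dense in $Y$; hence $Y\setminus \bigcup_j \partial_Y(\fix(\varphi_j)\cap Y)$ is dense open in $Y$. The hypothesis $\varphi_i^2|_Y \neq \mathrm{id}$ ensures that $A_i := \{y\in Y : \varphi_i^2(y)\neq y\}$ is non-empty and open in $Y$. Proceeding by induction on $n$, I would pick $y_n$ inside the non-empty open set $A_n \setminus \bigcup_j \partial_Y(\fix(\varphi_j)\cap Y)$ and outside a finite ``forbidden'' set built from the $y_k$, the points $\varphi_j^{\pm 1}(y_k)$, and the points $\varphi_n^{-2}(y_k)$ for $k<n$. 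This is possible because $Y$ has no isolated points, so every non-empty open subset of $Y$ is infinite and hence meets the complement of any finite set.

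Next, for each $i$ I would choose $V_i\ni y_i$ open in $X$ small enough to achieve all of the following simultaneously: (a) the $V_i$ are pairwise disjoint; (b) for each $j$ such that $y_i\in\mathrm{int}_Y(\fix(\varphi_j)\cap Y)$, one has $V_i\cap Y \subseteq \fix(\varphi_j)$; (c) for each $j$ such that $\varphi_j(y_i)\neq y_i$ and each $k$, the set $\varphi_j(V_i)$ is disjoint from $V_k$; (d) for each $k$, $\varphi_i^2(V_i)$ is disjoint from $V_k$. Each requirement involves only finitely many pairs or triples, and each follows from Hausdorff separation of the relevant distinct points combined with continuity of the $\varphi_j$'s; the finite intersection of the corresponding shrinkings yields a single valid choice of $V_i$. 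Setting $U_i := V_i \cap Y$ then produces non-empty open subsets of $Y$ for which (a) yields (C1), (b) together with (c) yields (C3), and (c) for $j=i$ together with (d) yields (C4).

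The main obstacle is the first step. A ``boundary fixed point'' $y_i \in \partial_Y(\fix(\varphi_j)\cap Y)$ would be fatal, because any open $U_i\ni y_i$ then contains $y_i = \varphi_j(y_i) \in \varphi_j(U_i) \cap U_i$, and neither alternative of (C3) can be achieved. The crux is therefore to ensure that the set of admissible $y_n$ remains non-empty at each inductive step, and this relies on two elementary topological facts used in tandem: the $Y$-boundary of a closed subset of $Y$ is always nowhere dense in $Y$ (so that avoiding $\bigcup_j\partial_Y(\fix(\varphi_j)\cap Y)$ keeps an open dense subset of $Y$), and singletons in $Y$ are nowhere dense (so that finitely many point-exclusions do not destroy non-emptiness). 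Once the base points are secured, the shrinking argument for the second step is routine.
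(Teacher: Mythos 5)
Your proof is correct in spirit and takes a genuinely different route from the paper's. The paper argues by induction on $r$: it constructs $U_1,\ldots,U_r$ one at a time, shrinking the previously constructed open sets as needed to accommodate the new one; the possibility of such shrinking is checked at each step by an ad hoc case analysis. You instead select all base points $y_1,\ldots,y_r$ in one pass (using the nowhere-density of $\partial_Y(\fix(\varphi_j)\cap Y)$ and the absence of isolated points in $Y$) and then shrink all the neighborhoods $V_i$ simultaneously to satisfy the finitely many disjointness requirements. Making the boundary-avoidance explicit is a real clarification: the paper's inductive step quietly uses the same fact (a non-empty open $V$ cannot lie inside a nowhere dense set, so one can always move to a point where the dichotomy ``fixed near $y$ or moved at $y$'' holds) but never names it.

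There is one small omission in your forbidden set. When choosing $y_n$ you exclude $\{y_k\}$, $\{\varphi_j^{\pm 1}(y_k)\}$, and $\{\varphi_n^{-2}(y_k)\}$ for $k<n$; the last family guarantees $\varphi_n^2(y_n)\neq y_k$ for $k<n$, which is half of what requirement (d) (and hence (C4)) needs. But (d) also needs $\varphi_k^2(y_k)\neq y_n$ for $k<n$, and the points $\varphi_k^2(y_k)$ for $k<n$ are not in your listed forbidden set (nor are they generally of the form $\varphi_j^{\pm 1}(y_{k'})$). You should add $\varphi_j^{2}(y_k)$ (or symmetrically $\varphi_j^{\pm 2}(y_k)$) for $j\le r$, $k<n$ to the forbidden set; since this remains finite, the selection of $y_n$ is unaffected. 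With that correction the argument is complete.
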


\begin{proof}

For every $i$, there exist infinitely many points $y\in Y$ such that $\varphi^{-1}_i(y), y$ and $\varphi_i(y)$ are pairwise distinct, thanks the assumption that $\varphi_i^{2}$ is not the identity on $Y$ and that $Y$ has no isolated point. Since the space is Hausdorff, this implies that we can find non-empty open subsets $V_1,\cdots, V_r \subset Y$ such that the $3r$ sets $\varphi_i^{\epsilon}(V_i)$ are pairwise disjoint for $\epsilon =-1, 0, 1$ and $i=1,\ldots, r$. Note that this condition is still satisfied if we replace the sets $V_i$ by any non-empty open subsets $V_i'\subset V_i$. We will choose the sets $U_i$ inside $V_i$. For any such choice, condition (C1) is automatically satisfied as well as the part of conditions (C3) and (C4) stating that $\varphi_i(U_i)$ is disjoint from $\cup_i U_i$.
For every  $i\neq  j$, we can make sure that $\varphi_i(V_j)$ is disjoint from $\cup_i V_i$ by shrinking the sets $V_i$ unless $\varphi_i$ is the identity on $V_j$, and this condition is again preserved  still by further shrinking.  Thus we can ensure that (C3) is satisfied in finitely many steps.  Since moreover $\varphi_i$ is not the identity on $V_i$ we can argue similarly to ensure condition (C4).  \qedhere

\end{proof}

For a proof of the following, see the proof of Theorem 9.17 from \cite{Grig-dynamics-trees} or the proof of Theorem 1.2 in \cite{Zheng-irs}. 

\begin{lem} \label{lem-rist-FC}
	Let $G$ be a micro-supported group of homeomorphisms of a Hausdorff space $X$. For every open subset $U \subset X$, the rigid stabilizer $\rist_G(U)$ has trivial FC-center.
\end{lem}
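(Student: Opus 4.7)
The plan is to show directly that every non-trivial $g \in \rist_G(U)$ has infinitely many $\rist_G(U)$-conjugates. Since $g \neq 1$ and $g$ fixes $X \setminus U$ pointwise, there is a point $x \in U$ with $g(x) \neq x$. Using that $X$ is Hausdorff and $g$ is continuous, I would shrink to a non-empty open neighborhood $V \subset U$ of $x$ satisfying $V \cap g(V) = \emptyset$. The entire proof then reduces to producing infinitely many distinct conjugates of $g$ by elements of $\rist_G(V) \leq \rist_G(U)$.

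Injectivity of the conjugation map $\rist_G(V) \to \rist_G(U)$, $h \mapsto hgh^{-1}$, is a direct commutator computation. If $h \in \rist_G(V)$ commutes with $g$, then for any $y \in V$ one has $g(y) \in g(V) \subset X \setminus V$, so $h$ fixes $g(y)$; the relation $hg(y) = gh(y)$ then yields $g(y) = gh(y)$, hence $h(y) = y$. Combined with $h$ fixing $X \setminus V$ pointwise, this forces $h = 1$. Thus distinct elements of $\rist_G(V)$ give distinct conjugates of $g$, and it is enough to show that $\rist_G(V)$ is infinite.

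To this end, I would first observe that micro-supportedness forces $X$ to have no isolated points: if $\{x\} \subset X$ were open, then any homeomorphism fixing $X \setminus \{x\}$ pointwise would also have to fix $x$, so $\rist_G(\{x\}) = \{1\}$, contradicting the hypothesis. Consequently $V$ itself has no isolated points, and a routine Hausdorff argument produces a sequence of pairwise disjoint non-empty open subsets $W_1, W_2, \ldots \subset V$. By micro-supportedness each $\rist_G(W_n)$ contains a non-trivial element $h_n$; since the $h_n$ have pairwise disjoint supports they pairwise commute and generate a subgroup of $\rist_G(V)$ isomorphic to a restricted direct product $\bigoplus_n \langle h_n \rangle$, which is infinite. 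Combining with the injectivity above gives infinitely many conjugates of $g$ in $\rist_G(U)$, so $g \notin \mathrm{FC}(\rist_G(U))$.

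The only real obstacle is the perfectness observation unlocking the recursive construction of a disjoint family inside $V$; once that is in hand, the rest is either a one-line commutator computation or a direct application of the definitions. As a minor technical point, one should double-check in the recursive step that after picking $W_n \subset V^{(n-1)}$, one can still select a non-empty open $V^{(n)} \subset V^{(n-1)} \setminus \overline{W_n}$ (which works because $V^{(n-1)}$ is non-empty, has no isolated points, and is Hausdorff, so it contains at least two points that can be separated from $W_n$).
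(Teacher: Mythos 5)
The paper itself does not supply a proof of this lemma, only citing \cite{Grig-dynamics-trees} and \cite{Zheng-irs}, so there is nothing internal to compare against. Your argument is correct and self-contained: for a non-trivial $g \in \rist_G(U)$ you find a non-empty open $V \subset U$ with $g(V)\cap V=\varnothing$, verify via the one-line commutator computation that $h\mapsto hgh^{-1}$ is injective on $\rist_G(V)$, and conclude by showing $\rist_G(V)$ is infinite using micro-supportedness together with the absence of isolated points it forces.

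One small slip in the recursive construction of the disjoint family $W_1,W_2,\ldots$: as phrased, you first pick $W_n\subset V^{(n-1)}$ and then look for a non-empty open $V^{(n)}\subset V^{(n-1)}\setminus\overline{W_n}$, justifying this by saying $V^{(n-1)}$ has two points that can be separated from $W_n$. This is not true for an arbitrary choice of $W_n$ (it could be dense in $V^{(n-1)}$), and the use of closures is anyway delicate since a Hausdorff space need not be regular. The fix is immediate and dispenses with closures: since $V^{(n-1)}$ is a non-empty open set with no isolated points, it contains two distinct points, which the Hausdorff property separates by disjoint non-empty open subsets $A,B\subset V^{(n-1)}$; set $W_n=A$ and $V^{(n)}=B$ simultaneously. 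With this adjustment the rest of your proof goes through verbatim.
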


\begin{lem} \label{lem-conf-2group}
	Let $G$ be a micro-supported group of homeomorphisms of a Hausdorff space $X$, and let $H_1,\ldots,H_n \leq \homeo(X)$ such that $(H_1,\ldots,H_n)$ is confined by $G$. Then there is $\ell$ such that $H_\ell$ is not an elementary abelian $2$-group.
\end{lem}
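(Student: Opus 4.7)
The plan is to argue by contradiction: assume every $H_\ell$ is an elementary abelian $2$-group and derive a contradiction with Lemma \ref{lem-rist-FC}. As a first reduction, since the elementary abelian $2$-group property is conjugation-invariant, every non-trivial element of any conjugate $g H_j g^{-1}$ is an involution; hence $P' = \{\sigma \in P : \sigma^2 = 1\}$ remains a confining subset for $(H_1, \ldots, H_n, G)$, and I may replace $P$ by $P'$ and assume $P$ consists entirely of involutions.

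Next I would produce a collection $\{\Omega_\sigma\}_{\sigma \in P}$ of non-empty open subsets of $X$ satisfying conditions (C1) and (C2) of Definition \ref{def-C1-C2}, but \emph{not} the full displacement configuration of Definition \ref{def-displace}: indeed, for an involution $\sigma$, condition (C4) applied with $\alpha = \sigma$ would force $\sigma(\Omega_\sigma)$ to be disjoint from $\sigma^{-1}(\Omega_\sigma) = \sigma(\Omega_\sigma)$, hence empty. Such a weaker collection is produced by an inductive construction analogous to but simpler than the proof of Lemma \ref{l-partie-finie}, using only that $X$ is Hausdorff and that each $\sigma \in P$ is non-trivial as a homeomorphism of $X$.

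The crucial observation is that the proof of Proposition \ref{p-first-step} only uses conditions (C1) and (C2); in particular, (C2) applied to $\sigma$ itself already yields $\sigma \in M_\sigma$, which is all that is needed. Since $G$ is micro-supported, $\rist_G(\Omega_\sigma) \neq 1$ for every $\sigma \in P$, so I can invoke Proposition \ref{p-first-step} to obtain $\sigma \in P$ and $k \leq n$ such that the subgroup $A_{\sigma,k} \leq H_k$ (in the notation of Section \ref{sec-confined}) projects to a subgroup $p_\sigma(A_{\sigma,k}) \leq \rist_G(\Omega_\sigma)$ of index at most $nr$.

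Since $H_k$ is an elementary abelian $2$-group, so is $A_{\sigma,k}$ and hence its homomorphic image $p_\sigma(A_{\sigma,k})$; thus $\rist_G(\Omega_\sigma)$ is virtually abelian. But a virtually abelian group $R$ with $\mathrm{FC}(R)=1$ must be trivial: replacing a finite-index abelian subgroup by its normal core yields a finite-index normal abelian subgroup $A \le R$, and for every $a \in A$ the conjugacy class $\{r a r^{-1} : r \in R\}$ is contained in $A$ (by normality) and has size at most $[R:A]$ (since the abelian $A$ centralizes $a$), so $A \subseteq \mathrm{FC}(R)=1$, forcing $R$ to be finite and then trivial. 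Combined with Lemma \ref{lem-rist-FC}, this contradicts micro-supportedness of $G$. The most delicate point in this plan is verifying that the proof of Proposition \ref{p-first-step} genuinely goes through without conditions (C3) and (C4); this amounts to a careful re-reading of that proof to confirm that only (C1) and (C2) were used.
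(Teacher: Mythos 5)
Your proof is correct, but it takes a genuinely different route from the paper's. The paper argues directly: it fixes open sets $U_1,\ldots,U_r$ displaced by $P$, then for each $i$ selects $nr+1$ disjoint open subsets $U_{i,j}\subset U_i$ with elements $\gamma_{i,j}\in\rist_G(U_{i,j})$ of order $\neq 2$ (using Lemma \ref{lem-rist-FC} only to know rigid stabilizers are non-abelian), and finishes with an explicit pigeonhole argument producing $j\neq k$ and $i,\ell$ with $a=\gamma_j\varphi_i\gamma_j^{-1}$ and $b=\gamma_k\varphi_i\gamma_k^{-1}$ both in $H_\ell$; the element $a^{-1}b\in H_\ell$ then restricts to $\gamma_{i,j}$ on $U_{i,j}$ and hence has order $\neq 2$. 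Your approach instead goes by contradiction and routes through the abstract machinery of Section \ref{sec-confined}: you reduce to a confining subset of involutions, observe (correctly, after checking Lemmas \ref{lem-A-supp}, \ref{lem-cons-Neum} and Proposition \ref{p-first-step}) that only conditions (C1) and (C2) are used there, feed a weak displacement configuration into Proposition \ref{p-first-step}, and then use the structure of virtually abelian groups with trivial FC-center together with Lemma \ref{lem-rist-FC}. The paper's version is shorter and self-contained; yours is more conceptual and makes explicit the useful fact that the results of \S\ref{subsec-first-res} require only (C1)--(C2) rather than a full displacement configuration, which is exactly what makes them usable when $P$ consists of involutions. Two small details worth making explicit in a final write-up: (1) the inductive construction of $\{\Omega_\sigma\}$ needs $X$ to have no isolated points, which follows from micro-supportedness (an isolated point would have trivial rigid stabilizer); (2) while the abstract pigeonhole is hidden inside B.\ H.\ Neumann's lemma in your argument, it is used explicitly in the paper's, so the two proofs are closer in spirit than they may first appear.
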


\begin{proof}
	Let $P=\{\varphi_1,\ldots, \varphi_r\}$ be a confining subset for $(H_1,\ldots,H_n,G)$. Since $\varphi_1,\ldots, \varphi_r$ are non-trivial, there are non-empty open subsets $U_1,\ldots, U_r$ such that $U_1,\ldots, U_r$, $\varphi_1(U_1),\ldots, \varphi_r(U_r)$ are all disjoint. Since rigid stabilizers are not abelian (Lemma \ref{lem-rist-FC}), in particular they are not of exponent $2$, so for each $i$ we may find disjoint open subsets $U_{i,1},\ldots, U_{i,nr+1} \subset U_i$ and $\gamma_{i,j} \in \rist_G(U_{i,j})$ such that $\gamma_{i,j}$ is not of order $2$. Let $\gamma_j = \gamma_{1,j} \cdots \gamma_{r,j}$. Since $P$ is confining for $(H_1,\ldots,H_n,G)$, by the pigeon-hole principle, we can find $j \neq k$ and $i,\ell$ such that $a = \gamma_j \varphi_i \gamma_j^{-1}$ and $b = \gamma_k \varphi_i \gamma_k^{-1}$ both belong to $H_\ell$. Then $h = a^{-1}b$ is an element of $H_\ell$ that preserves $U_{i,j}$ and coincides with $\gamma_{i,j}$ on $U_{i,j}$. It follows that $h$ is not of order $2$, and $H_\ell$ is not an elementary abelian $2$-group.
\end{proof}

Recall the classical commutator lemma for normal subgroups of groups of homeomorphisms, that follows from Lemma \ref{lem-comm-lemma-norm-abs}.

\begin{lem} \label{lem-double}
Let $G$ be a group of homeomorphisms of a Hausdorff space $X$, and let $N \leq G$ be a non-trivial normal subgroup. Then there exists a non-empty open subset $U\subset X$ such that $N$ contains $\rist_G(U)'$. More precisely, for every open subset $U$ such that there exists $\sigma \in N$ such that $\sigma(U)$ and $U$ are disjoint, $N$ contains $\rist_G(U)'$.
\end{lem}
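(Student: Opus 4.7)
The plan is to reduce this immediately to the abstract commutator lemma for normal subgroups (Lemma \ref{lem-comm-lemma-norm-abs}) already established for faithful actions on a set. The action of $G \leq \homeo(X)$ on $X$ is tautologically faithful, so Lemma \ref{lem-comm-lemma-norm-abs} (applied with $\Omega = X$) directly yields the ``more precise'' clause: for every open $U \subset X$ for which there exists $\sigma \in N$ with $\sigma(U) \cap U = \emptyset$, one has $\rist_G(U)' \leq N$. So the only thing to prove is the existence of such a pair $(\sigma, U)$.

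To produce it, I would pick any non-trivial element $\sigma \in N$, which exists because $N$ is non-trivial. Since $\sigma \neq 1$ and $G$ acts faithfully on $X$, there is a point $x \in X$ with $\sigma(x) \neq x$. Using that $X$ is Hausdorff, choose disjoint open neighborhoods $V$ of $x$ and $W$ of $\sigma(x)$; by continuity of $\sigma$ we may shrink $V$ to an open neighborhood $U \subseteq V$ of $x$ with $\sigma(U) \subseteq W$. Then $U$ and $\sigma(U)$ are disjoint, and $U$ is non-empty.

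With such $U$ in hand, applying Lemma \ref{lem-comm-lemma-norm-abs} to this $\sigma$ and $\Omega_\sigma = U$ gives $\rist_G(U)' \leq N$, proving the first assertion. The second (more precise) assertion is literally the content of Lemma \ref{lem-comm-lemma-norm-abs} in this context. There is no real obstacle: the only mild point is the use of Hausdorffness and continuity of $\sigma$ to separate $U$ from $\sigma(U)$, and the observation that faithfulness of the homeomorphism action is automatic, so no additional hypothesis is required.
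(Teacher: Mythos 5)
Your proposal is correct and matches the paper's intent exactly: the paper states Lemma \ref{lem-double} "follows from Lemma \ref{lem-comm-lemma-norm-abs}" without further elaboration, and your proof supplies precisely the routine details (faithfulness is automatic for a subgroup of $\homeo(X)$, and Hausdorffness plus continuity produce an open $U$ with $U \cap \sigma(U) = \emptyset$ from any nontrivial $\sigma \in N$) that make that reduction work.
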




Theorem \ref{thm-doublecomm-conf-homeo-intro} follows from the following theorem.

\begin{thm} \label{thm-doublecomm-conf-homeo}
	Let $G$ be a group of homeomorphisms of a Hausdorff space $X$ and let $H_1,\ldots,H_n \leq \homeo(X)$ such that $(H_1,\ldots,H_n)$ is confined by $G$. Then there exist $k$ and a non-empty open subset $U\subset X$ such that $H_k$ contains $\rist_G(U)'$. 
\end{thm}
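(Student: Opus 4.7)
The plan is to deduce Theorem \ref{thm-doublecomm-conf-homeo} from Theorem \ref{thm-conf-abstract} (the $n$-tuple version of Theorem \ref{thm-conf-intro}) combined with the classical commutator lemma for normal subgroups (Lemma \ref{lem-double}). We may assume the action of $G$ on $X$ is micro-supported, for otherwise there is a non-empty open $U$ with $\rist_G(U)=\{1\}$ and the conclusion holds trivially with that $U$ and any index $k$. Under micro-supportedness $X$ has no isolated points, since an isolated point $x$ would make $\{x\}$ a non-empty open with $\rist_G(\{x\})=\{1\}$ (any homeomorphism fixing $X\setminus\{x\}$ also fixes $x$).

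I would next refine the confining subset. Starting from any $P$ confining for $(H_1,\ldots,H_n,G)$, one can choose such a $P$ containing no element of order $2$: otherwise a direct $n$-tuple extension of Lemma \ref{lem-conf-order2} (same compactness argument verbatim) would produce, in the closure of the $G$-orbit of $(H_1,\ldots,H_n)$ in $\sub(L)^n$, a tuple $(K_1,\ldots,K_n)$ with each $K_i$ an elementary abelian $2$-group; this tuple is still confined by $G$, contradicting Lemma \ref{lem-conf-2group}. Taking $P=\{\varphi_1,\ldots,\varphi_r\}$ with no involutions, each $\varphi_i^2$ is a non-trivial homeomorphism of $X$, so Lemma \ref{l-partie-finie} applied with $Y=X$ yields a displacement configuration $\{\Omega_\sigma\}_{\sigma\in P}$ of non-empty open subsets. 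By micro-supportedness $\rist_G(\Omega_\sigma)\neq\{1\}$, and Lemma \ref{lem-rist-FC} gives $\mathrm{FC}_{\leqslant nr}(\rist_G(\Omega_\sigma))=\{1\}$, so Theorem \ref{thm-conf-abstract} produces $\rho\in P$, $k\leq n$, and a non-trivial $N\leq H_k$ contained in $R:=\rist_G(\Omega_\rho)$ whose normalizer $M$ in $R$ satisfies $[R:M]\leq nr$.

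The main obstacle is the final step, upgrading the almost-normal subgroup $N\triangleleft M$ to the desired commutator $\rist_G(V)'\subseteq H_k$ of a full rigid stabilizer. Applying Lemma \ref{lem-double} to $M\leq\homeo(X)$ with normal subgroup $N$ already gives $\rist_M(U)'=(M\cap\rist_G(U))'\subseteq N\subseteq H_k$ for every non-empty open $U$ displaced by a non-identity element of $N$. The difficulty is that $\rist_M(U)$ is only a finite-index subgroup of $\rist_G(U)$ (of index at most $nr$), and finite-index subgroups of rigid stabilizers do not in general contain smaller rigid stabilizers, so the passage to $\rist_G(V)'\subseteq H_k$ is not automatic. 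My plan to close the gap is to apply Lemma \ref{lem-double} a second time: form the normal core $K_0$ of $\rist_M(U)$ inside $\rist_G(U)$, which is $\rist_G(U)$-normal of finite index at most $(nr)!$ and non-trivial (since by Lemma \ref{lem-rist-FC} the group $\rist_G(U)$ has trivial FC-center, so any finite-index subgroup is infinite), and apply Lemma \ref{lem-double} to the pair $(\rist_G(U),K_0)$ to obtain $\rist_G(V)'\subseteq K_0\subseteq\rist_M(U)$ for a sufficiently small $V\subseteq U$ displaced by some element of $K_0$. A final use of the double commutator identity from the proof of Lemma \ref{lem-double}, this time applied to pairs of elements of $\rist_G(V')\subseteq\rist_G(V)'\subseteq\rist_M(U)$ for $V'\subseteq V$ small enough that they interact with the chosen element of $N$ through pairwise disjoint open sets, would promote this containment inside $\rist_M(U)$ to the desired containment inside $\rist_M(U)'\subseteq H_k$; the careful execution of this last combinatorial step is where I expect the real work to lie, and it crucially leverages the trivial FC-center of rigid stabilizers in a micro-supported action to prevent the successive derived subgroups from degenerating.
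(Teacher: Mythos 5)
Your setup mirrors the paper's proof exactly through the application of Theorem \ref{thm-conf-abstract}: reduce to the micro-supported case, use Lemma \ref{lem-conf-2group} and the $n$-tuple extension of Lemma \ref{lem-conf-order2} to find a confining set $P$ with no involutions, feed it into Lemma \ref{l-partie-finie} to get a displacement configuration of open sets, verify the $\mathrm{FC}_{\leq nr}$ hypothesis via Lemma \ref{lem-rist-FC}, and extract a non-trivial $N \leq H_k\cap\rist_G(\Omega_\rho)$ with normalizer $M$ of index $\leq nr$ in $R=\rist_G(\Omega_\rho)$. This part is correct and essentially identical to the paper.

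The gap is in the final upgrading step, and it comes from the order of operations. You apply Lemma \ref{lem-double} to $(M,N)$ first, getting $\rist_M(U)'\subseteq N$, and only then pass to a normal core; after a second application of Lemma \ref{lem-double} you land in $\rist_M(U)$ rather than $\rist_M(U)'$, and you then need a third step showing that $\rist_G(V)''\neq 1$ (equivalently, that $\rist_G(V)'$ is non-abelian) so that another application of Lemma \ref{lem-double} produces $\rist_G(W)'\subseteq\rist_G(V)''\subseteq\rist_M(U)'$. This non-abelianness does not follow from Lemma \ref{lem-rist-FC} alone (groups with trivial FC-center can be metabelian), so it requires a further micro-supportedness argument you do not supply; moreover your sketch asserts $\rist_G(V')\subseteq\rist_G(V)'$ for small $V'\subseteq V$, which is false in general — only $\rist_G(V')'\subseteq\rist_G(V)'$ holds, via Lemma \ref{lem-double}. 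The paper avoids this entirely by reversing the order: it first replaces $M$ by a finite-index subgroup $L$ that is normal in $\rist_G(U)$ (the normal core still normalizes $N$), applies Lemma \ref{lem-double} to $(L,N)$ to get $\rist_L(V)'\subseteq N$, then notes that $\rist_L(V)=L\cap\rist_G(V)$ is a finite-index normal subgroup of $\rist_G(V)$ — hence non-abelian directly by Lemma \ref{lem-rist-FC} — so $\rist_L(V)'$ is a non-trivial normal subgroup of $\rist_G(V)$, and one more application of Lemma \ref{lem-double} to $(\rist_G(V),\rist_L(V)')$ gives $\rist_G(W)'\subseteq\rist_L(V)'\subseteq N\subseteq H_k$. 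Taking the normal core \emph{before} the first use of Lemma \ref{lem-double} is what makes the intermediate object both contained in $N$ and normal in a full rigid stabilizer at once, so only a first (not second) derived subgroup needs to be shown non-trivial.
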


\begin{proof}
	We may assume that the action of $G$ on $X$  is micro-supported, because otherwise the conclusion is trivially satisfied. Observe that if $(K_1,\ldots, K_n)$ belongs to the  $G$-orbit closure of $(H_1, \ldots, H_n)$, then $(K_1, \ldots, K_n)$ is still confined by $G$. Therefore according to Lemma \ref{lem-conf-2group} and Lemma \ref{lem-conf-order2}, there must exist a finite subset $P$ of $\homeo(X)$ that is confining for $(H_1,\ldots,H_n,G)$ such that $P$ contains no element of order $2$. Lemma \ref{l-partie-finie} asserts that we may find a displacement configuration $(U_1,\cdots, U_r)$ for $P$ consisting of non-empty open subsets of $X$. The fact that the action of $G$ on $X$ is micro-supported implies that all rigid stabilizers $\rist_G(U_i)$ have trivial FC-center (Lemma \ref{lem-rist-FC}), so it follows that all the assumptions of Theorem \ref{thm-conf-abstract} are satisfied. The conclusion of that theorem ensures that there is $k$ and $U_i = U$ such that $H_k$ contains a non-trivial subgroup $N$ of $\rist_G(U)$ that is normalized by a finite index normal subgroup $L$ of $\rist_G(U)$. Since $N$ is non-trivial and normalized by $L$, by Lemma \ref{lem-double} there exists an open subset $V$ of $U$ such that $N$ contains $\rist_L(V)'$. Now $\rist_L(V)$ is normal in $\rist_G(V)$, and hence so is $\rist_L(V)'$, so that by applying Lemma \ref{lem-double} again we find a non-empty open subset $W$ such that $\rist_L(V)'$ contains $\rist_G(W)'$. Hence $\rist_G(W)' \leq H_k$, and we are done.
\end{proof}




\section{Groups acting on rooted trees I: structure of non-free minimal actions} \label{sec-trees-min-actions}

\subsection{Preliminaries on group actions on rooted trees}

In the sequel $T$ is a locally finite rooted tree, with root $o$. We denote by $\partial T$ is the visual boundary of $T$. We let $\aut(T)$ be the group of automorphisms of $T$ that fix the root. Note that every $G \le \aut(T)$ acts on $\partial T$ by homeomorphisms.  The following classical fact provides a characterisation of  actions that arise in this way. Recall that if a group $G$ acts on a compact space $X$, the $G$-action is \textbf{profinite} if it is the inverse limit of finite $G$-actions. When this holds we will also say that the $G$-space $X$ is profinite. 

\begin{prop} \label{p-odometers}
	Let $G$ be a group acting by homeomorphisms on a totally disconnected compact metrizable space $X$. The following are equivalent:
	\begin{enumerate}[label=\roman*)]
		\item The action of $G$ on $X$ is profinite.
		\item Continuous equivariant maps from  $X$ to finite $G$-spaces separate points in $X$.
		\item \label{i-clopen-finite} Every clopen subset of $X$ has a finite $G$-orbit.
		\item There exists a $G$-action by automorphisms on a locally finite rooted tree $T$ such that the action of $G$ on $\partial  T$ is topologically conjugate to its action on $X$.
		\item \label{i-ultrametric} The $G$-action on $X$ preserves a compatible distance on $X$.
\end{enumerate}
Moreover if this holds, then:

\begin{itemize}
\item[(a)] the orbit closure of every $x\in X$ is minimal;
\item[(b)]  every $G$-action on a compact space $Y$ which is a factor of the action on $X$ is again profinite. 
\end{itemize}
\end{prop}

\begin{proof}
Some implications are immediate. For the others, see  \cite[Proposition 6.4]{Grig-Nek-Sush}. Statement (a) follows from \ref{i-ultrametric}, and  (b) follows from \ref{i-clopen-finite}.
\end{proof}


The set of vertices of a locally finite rooted tree $T$ at distance $n$ from the root form the $n$-th \emph{level} of $T$,  denoted  by $\L(n)$. A subgroup $G\le \aut(T)$ is said to be \textbf{level-transitive} if its action on $\L(n)$ is transitive for every $n$; this is equivalent to the minimality of the action of $G$ on $\partial T$. Note that this is possible only if the tree $T$ is \textbf{spherically homogeneous}, that is, any two vertices at the same level have the same degree.

 Given vertices $v, w\in T$, we say that $w$ is \textbf{below} $v$  if the unique geodesic from the root to $w$ passes through $v$. We denote by $T_v$ the subtree of $T$  of vertices below $v$, and by $\partial T_v$ the corresponding clopen subset in the boundary $\partial T$. 

Given a subgroup $G \leq \aut(T)$ and a vertex $v$, we denote by $\st_G(v)$ the stabilizer of $v$ in $G$, and by $\rist_G(v)$ the rigid stabilizer of $\partial T_v$ in $\partial T$, i.e.\ the elements of $G$ which fix pointwise the complement of $T_v$.  For $n \geq 1$, we denote by $\st_G(n)$ the $n$-th level stabilizer in $G$, i.e.\ the intersection of $\st_G(v)$ for $v \in \L(n)$. We also denote by $\rist_G(n) \simeq \prod_{v\in \L(n)} \rist_G(v)$ the subgroup generated by $\rist_G(v)$ when $v$ ranges over $\L(n)$.

\begin{defin} 
A group $G$ is a \textbf{weakly branch group} if $G$ admits a faithful action on a locally finite rooted tree $T$ such that the associated action on $\partial T$ is minimal and micro-supported. This last condition is equivalent to saying that $\rist_G(v)$ is non-trivial for every vertex $v\in T$.
\end{defin}

\begin{defin} 
	A group $G$ is a \textbf{branch group} if $G$ admits a faithful action on a locally finite rooted tree $T$ such that the associated action on $\partial T$ is minimal and micro-supported, and moreover for every $n \ge 1$ the $n$-th level rigid stabiliser $\rist_G(n)$ has finite index in $G$.
\end{defin}

In the sequel for simplicity when we say that a subgroup $G$ of $ \aut(T)$ is a weakly branch group or a branch group, we implicitly mean that the $G$-action on $T$ satisfies the defining conditions above. 

\begin{lem} \label{lem-wb-xi1xi2}
Let $G\le \aut(T)$ be a weakly branch group. Then for every $\xi_1 \neq \xi_2 \in \partial T$, there exists $n$ such that the subgroup generated by $G_{\xi_1}^0$ and $G_{\xi_2}^0$ contains $\rist_G(n)$.
\end{lem}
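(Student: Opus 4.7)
The plan is to take $n$ to be any level at which $\xi_1$ and $\xi_2$ have already diverged. Formally, for each $i\in\{1,2\}$ and each level $m$, let $v_i^{(m)}\in \L(m)$ denote the unique vertex at level $m$ lying on the geodesic from the root to $\xi_i$. Since $\xi_1\neq \xi_2$, for all sufficiently large $n$ one has $v_1^{(n)}\neq v_2^{(n)}$; I fix such an $n$ and write $v_i:=v_i^{(n)}$.

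The argument then proceeds vertex by vertex over $\L(n)$. The observation is that if $v\in \L(n)$ and $v\neq v_i$, then the clopen set $\partial T_v$ is disjoint from $\partial T_{v_i}$, so every element of $\rist_G(v)$ fixes $\partial T_{v_i}$ pointwise. Since $\partial T_{v_i}$ is an open neighbourhood of $\xi_i$ in $\partial T$, this forces $\rist_G(v)\le G_{\xi_i}^0$. Because $v_1\neq v_2$, each $v\in \L(n)$ must differ from at least one of $v_1,v_2$, so every $\rist_G(v)$ is contained in $G_{\xi_1}^0$ or in $G_{\xi_2}^0$, and hence in $\langle G_{\xi_1}^0, G_{\xi_2}^0\rangle$. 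Recalling that $\rist_G(n)$ is generated by the $\rist_G(v)$ as $v$ ranges over $\L(n)$, the desired inclusion $\rist_G(n)\le \langle G_{\xi_1}^0, G_{\xi_2}^0\rangle$ follows immediately.

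I do not anticipate a real obstacle here: the argument is essentially a direct unfolding of the definitions of germ-stabilizer, rigid stabilizer, and the generation of $\rist_G(n)$ by the vertex rigid stabilizers at level $n$. Notice that the weakly branch hypothesis on $G$ is not actually used to establish the inclusion itself; it is of course the point of the lemma that under this hypothesis $\rist_G(n)$ is a substantial subgroup of $G$, which is how the conclusion will be put to use in later applications.
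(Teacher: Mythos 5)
Your argument is correct and is essentially identical to the paper's proof: both pick a level $n$ at which $\xi_1$ and $\xi_2$ diverge, observe that each $\rist_G(v)$ for $v\in\L(n)$ lies in $G_{\xi_1}^0$ or $G_{\xi_2}^0$, and conclude via the generation of $\rist_G(n)$ by the vertex rigid stabilizers. Your remark that the weakly branch hypothesis is not used in the inclusion itself is a fair observation, though it does not affect the comparison.
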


\begin{proof}
Since $\xi_1 \neq \xi_2 $ there exists $n$ such that $\xi_1$ and $\xi_2 $ are separated by vertices of level $n$, and it follows that for every $v \in \L(n)$ we have $\rist_G(v) \leq G_{\xi_1}^0$ or $\rist_G(v) \leq G_{\xi_2}^0$.
\end{proof}

For groups acting on rooted trees, the commutator lemma for normal subgroup gives the following fundamental fact, used by Grigorchuk in \cite{Gri-just-infinite}. 

\begin{lem}[Grigorchuk] \label{l-Gri-branch}
Let $G\le \aut(T)$ be a  level-transitive subgroup of automorphisms of a rooted tree. Then every non-trivial normal subgroup of $G$ contains $\rist_G(n)'$ for some $n\ge 1$. In particular, if $G$ is a branch group,  every proper quotient of $G$ is virtually abelian. 
\end{lem}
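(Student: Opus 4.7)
The plan is to deduce this directly from the classical commutator lemma for normal subgroups of groups of homeomorphisms (Lemma~\ref{lem-double}), viewed as acting on the boundary $\partial T$, together with the level-transitivity of $G$.

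First, let $N$ be a non-trivial normal subgroup of $G$ and pick $g\in N\setminus \{1\}$. Since $g$ acts non-trivially on $T$ and $g$ fixes the root, there must exist some vertex $v\in T$ with $g(v)\neq v$; write $n$ for its level. Then $\partial T_v$ and $g(\partial T_v)=\partial T_{g(v)}$ are disjoint clopen subsets of $\partial T$, so Lemma~\ref{lem-double}, applied to the action of $G$ on the Hausdorff space $\partial T$ with $U=\partial T_v$, yields
\[
	\rist_G(v)'=\rist_G(\partial T_v)'\le N.
\]

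Next I would use level-transitivity to propagate this inclusion across the whole level. For every $w\in \L(n)$, choose $h\in G$ with $h(v)=w$; then $h\rist_G(v)h^{-1}=\rist_G(w)$, and normality of $N$ gives $\rist_G(w)'\le N$ for every $w\in \L(n)$. Since the rigid stabilizers of distinct vertices at the same level commute and intersect trivially, we have the internal direct product decomposition $\rist_G(n)\simeq \prod_{w\in \L(n)}\rist_G(w)$, whose derived subgroup is $\prod_{w\in \L(n)}\rist_G(w)'$. Hence $\rist_G(n)'\le N$, proving the first assertion.

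For the final statement, assume $G$ is branch and let $N$ be a non-trivial normal subgroup. Taking $n$ as above, $\rist_G(n)$ has finite index in $G$, hence so does $\rist_G(n)'$. The image of $\rist_G(n)$ in the quotient $G/N$ is then a finite index subgroup which is abelian because $\rist_G(n)'\le N$; thus $G/N$ is virtually abelian. The only real point requiring attention—and where one must be slightly careful—is the product decomposition of $\rist_G(n)'$, but this is immediate from the commutativity of distinct $\rist_G(w)$ and the fact that the derived subgroup of a direct product is the direct product of derived subgroups, so there is no serious obstacle in the argument.
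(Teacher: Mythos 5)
Your proof is correct. The paper itself does not include a proof of this lemma---it is stated with attribution to Grigorchuk and a citation to \cite{Gri-just-infinite}---but your argument is precisely the standard one: apply the commutator lemma for normal subgroups (Lemma~\ref{lem-double}, or equivalently Lemma~\ref{lem-comm-lemma-norm-abs} applied directly to the $G$-action on the vertex set of $T$ with $\Omega_\sigma = T_v$) to get $\rist_G(v)'\le N$ for one displaced vertex $v$, propagate this across the level $\L(n)$ by level-transitivity together with normality of $N$ and the conjugation identity $h\rist_G(v)h^{-1}=\rist_G(h(v))$, and conclude using the internal direct product decomposition $\rist_G(n)\simeq\prod_{w\in\L(n)}\rist_G(w)$ together with the fact that the derived subgroup of a direct product is the direct product of the derived subgroups. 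The deduction of the virtually abelian quotient from the finite-index condition on $\rist_G(n)$ is likewise correct. All the ingredients you invoke are available in the paper prior to this lemma, so the argument is self-contained.
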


We  will also use the following result due to Francoeur. 

\begin{thm}[Francoeur] \label{t-Francoeur}
If $G\le \aut(T)$ is  a finitely generated branch group, all normal subgroups of $G$ are finitely generated. As a consequence, for every $v\in T$ the group $\rist_G(v)'$ is finitely generated. 
\end{thm}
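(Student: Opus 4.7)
The plan is to combine Grigorchuk's Lemma (Lemma \ref{l-Gri-branch}) with a Noetherian quotient argument, and then, for the consequence about $\rist_G(v)'$, to pass to $G$-normal closures.

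\emph{Reduction to finite generation of $\rist_G(n)'$.} Let $N$ be a non-trivial normal subgroup of $G$; by Lemma \ref{l-Gri-branch} we have $\rist_G(n)'\le N$ for some $n\ge 1$. Since $G$ is branch and finitely generated, $\rist_G(n)$ has finite index in $G$ and is therefore finitely generated, and its abelianisation $\rist_G(n)/\rist_G(n)'$ is a finitely generated abelian normal subgroup of finite index in $G/\rist_G(n)'$. Hence $G/\rist_G(n)'$ is finitely generated virtually abelian, so polycyclic and Noetherian; every subgroup of it is finitely generated. Applying this to $N/\rist_G(n)'$ one sees that $N$ is generated by $\rist_G(n)'$ together with finitely many additional elements, so the problem reduces to showing $\rist_G(n)'$ is finitely generated.

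\emph{Finite generation of $\rist_G(n)'$.} The decomposition $\rist_G(n)=\prod_{v\in\L(n)}\rist_G(v)$ as a direct product of pairwise commuting subgroups gives $\rist_G(n)'=\prod_{v\in\L(n)}\rist_G(v)'$, and by level-transitivity these factors are pairwise $G$-conjugate, so it suffices to treat one $\rist_G(v)'$. To push the argument one level deeper, fix $m>n$ and set $M(v,m)=\prod_w\rist_G(w)'\le \rist_G(v)'$, where the product ranges over vertices $w\in\L(m)$ below $v$; this subgroup is normalised by $\rist_G(v)$ because $\rist_G(v)$ permutes these descendants. The quotient $\rist_G(v)/M(v,m)$ is virtually abelian --- a finite extension of $\prod_w\rist_G(w)^{\mathrm{ab}}$ by the finite group $\rist_G(v)/(\rist_G(v)\cap\rist_G(m))$ --- and finitely generated, hence polycyclic. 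The Noetherian argument from the first paragraph then shows that $\rist_G(v)'/M(v,m)$ is finitely generated, so $\rist_G(v)'$ is finitely generated provided $M(v,m)$ is, equivalently provided $\rist_G(w)'$ is finitely generated for some $w\in\L(m)$.

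\emph{Main obstacle.} This propagates finite generation along the tree but, taken in isolation, only leaves a tautological level-shift equivalence. The crucial remaining input --- where I expect Francoeur's argument to make essential use of the finite generation of $G$ --- is to establish finite generation of $\rist_G(w)'$ at some level without assuming it elsewhere. A natural route is to apply a Reidemeister--Schreier calculation to the finite-index inclusion $\rist_G(n)\le G$ to obtain an explicit finite presentation of $\rist_G(n)$, and, using that the abelianisation $\rist_G(n)^{\mathrm{ab}}$ is a finitely generated abelian group, to read off a finite generating set for $\rist_G(n)'$ from the resulting relators.

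\emph{Consequence for $\rist_G(v)'$.} For arbitrary $v\in T$ of level $n$, the $G$-orbit of $\rist_G(v)'$ under conjugation is $\{\rist_G(w)':w\in\L(n)\}$ by level-transitivity, so the $G$-normal closure of $\rist_G(v)'$ equals $\rist_G(n)'$. The latter is finitely generated by the main statement, and $\rist_G(v)'$ is a direct factor of it, hence a quotient of a finitely generated group; thus $\rist_G(v)'$ is finitely generated.
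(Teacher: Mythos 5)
The paper does not actually prove the first assertion: it is a black-box citation to Francoeur \cite{Fra-append}, and the only thing proved in the paper is the consequence about $\rist_G(v)'$. Your derivation of that consequence (pass to the normal subgroup $\rist_G(n)'=\prod_{w\in\L(n)}\rist_G(w)'$, which is finitely generated, and project onto the factor $\rist_G(v)'$) is correct and essentially identical to the paper's.

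Where your proposal goes wrong is in the attempt to reprove the cited statement. The reduction to finite generation of $\rist_G(n)'$ via the Noetherian quotient $G/\rist_G(n)'$ is correct, and the level-shift observation, though tautological as you note, is accurate. But the ``natural route'' you propose to break the tautology does not work, for two independent reasons. First, Reidemeister--Schreier transfers a \emph{finite presentation} of $G$ to a finite presentation of a finite-index subgroup; $G$ is only assumed finitely generated, and finitely generated branch groups are typically not finitely presented (the Grigorchuk group, for instance, is not), so you cannot obtain a finite presentation of $\rist_G(n)$ this way. Second, and more fundamentally, even if $\rist_G(n)$ were finitely presented with finitely generated abelianisation, one cannot in general ``read off a finite generating set'' for its derived subgroup from the relators: the free group $F_2$ is finitely presented with abelianisation $\Z^2$, yet $[F_2,F_2]$ is free of infinite rank. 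Establishing finite generation of $\rist_G(n)'$ genuinely requires the branch structure and is the content of Francoeur's argument; it is not a consequence of finite generation plus virtually abelian abelianisation, and no Reidemeister--Schreier calculation will supply it.
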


\begin{proof} The fact that the normal subgroups are finitely generated is proven in \cite{Fra-append}. In particular for every level $n$ the group $ \rist_G(n)'=\prod_{v\in \L(n)} \rist_G(v)'$ is finitely generated, and it follows that  all the groups $\rist_G(v)'$ must be finitely generated as well. \qedhere
\end{proof}

\subsection{A characterization of confined subgroups in weakly branch groups} 

Theorem \ref{thm-doublecomm-conf-homeo} implies that weakly branch groups enjoy the following simple characterization of confined subgroups.

\begin{cor} \label{cor-confined-wb}
Let $G\le \aut(T)$ be a weakly branch group, and let $H$ be a subgroup of $G$. Then $H$ is confined if and only if there exists a vertex $v$ such that $\rist_G(v)' \leq H$.
\end{cor}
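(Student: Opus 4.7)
The statement is a biconditional, and each direction is essentially a direct application of the machinery already developed, so the plan is to handle the two directions separately.

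For the ``if'' direction, I would start from the hypothesis that $\rist_G(v)'\le H$ for some vertex $v$, and argue that $H$ is confined. First, since $G$ is weakly branch the action on $\partial T$ is micro-supported, so by Lemma \ref{lem-rist-FC} the group $\rist_G(w)$ has trivial FC-center for every vertex $w$; in particular $\rist_G(w)$ is non-abelian, hence $\rist_G(w)'\neq \{1\}$. Now let $n$ be the level of $v$. Since $G$ acts on $T$ by tree automorphisms, for every $g\in G$ one has $g\,\rist_G(v)\,g^{-1}=\rist_G(g(v))$, and hence $g\,\rist_G(v)'\,g^{-1}=\rist_G(g(v))'$, with $g(v)$ a vertex of level $n$. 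Choose for each vertex $w$ of level $n$ a non-trivial element $p_w\in \rist_G(w)'$, and set $P=\{p_w:w\in \L(n)\}$. Then $P$ is a finite set of non-trivial elements, and for every $g\in G$ the conjugate $gHg^{-1}$ contains $\rist_G(g(v))'$ and thus the element $p_{g(v)}\in P$. Hence $P$ is a confining subset for $(H,G)$, proving that $H$ is confined.

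For the ``only if'' direction, assume $H$ is a confined subgroup of $G$. Since $G\le \aut(T)$ acts on the Hausdorff space $\partial T$ by homeomorphisms, we can apply Theorem \ref{thm-doublecomm-conf-homeo-intro} to obtain a non-empty open subset $U\subset \partial T$ such that $\rist_G(U)'\le H$. The cylinder sets $\partial T_v$, $v\in T$, form a basis of the topology of $\partial T$, so there exists a vertex $v$ with $\partial T_v \subseteq U$. By definition of the rigid stabilizer we then have $\rist_G(v)=\rist_G(\partial T_v)\le \rist_G(U)$, and consequently $\rist_G(v)'\le \rist_G(U)'\le H$, as required.

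Neither direction is a real obstacle: the sufficiency is a short computation with the conjugation action on vertex rigid stabilizers together with the fact, already noted above, that the derived subgroups of these rigid stabilizers are non-trivial in the weakly branch setting; the necessity is a one-line application of the general commutator lemma (Theorem \ref{thm-doublecomm-conf-homeo-intro}) combined with the observation that the cylinder sets form a basis of $\partial T$. The only place where some care is needed is in recording that $\rist_G(w)'$ is non-trivial for every $w$, so that the finite set $P$ built in the first part consists of genuinely non-trivial elements.
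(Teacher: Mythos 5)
Your proof is correct and follows essentially the same approach as the paper: the ``if'' direction is the observation that $\rist_G(v)'$ has only finitely many $G$-conjugates (all of the form $\rist_G(w)'$ with $w$ at the level of $v$) and is non-trivial, giving a confining set; the ``only if'' direction is the application of Theorem~\ref{thm-doublecomm-conf-homeo-intro} together with the fact that cylinder sets form a basis. Your version is slightly more explicit than the paper's (which phrases the first direction as ``$\rist_G(v)'$ is characteristic in $\rist_G(v)$, hence normal in $\st_G(n)$, hence has finitely many conjugates'' and leaves the reduction from an open set to a cylinder implicit), but the ideas are identical.
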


\begin{proof}
Being characteristic in $\rist_G(v)$, the subgroup  $\rist_G(v)'$ is non-trivial (by Lemma \ref{lem-rist-FC}) and normal in $\st_G(n)$, where $n$ is the level of $v$. Hence $\rist_G(v)'$ has only finitely many conjugates. Since $\rist_G(v)'$ is non-trivial, it is then confined, and hence so is every subgroup $H$ of $G$ containing it. The converse implication is provided by Theorem \ref{thm-doublecomm-conf-homeo}.
\end{proof}

\subsection{Two constructions of URSs in weakly branch groups} \label{subsec-2-constructions}

In this paragraph we explain the construction of two families of examples of URSs in weakly branch groups. The first arises by looking at the action on the space of closed subsets of the boundary of the tree (which is endowed with the Chabauty topology).

We will use the following observation.

\begin{lem} \label{lem-2^X-odom}
	If the action of a group $G$ on a compact space $X$ is profinite, then the induced action of $G$ on $\F(X)$ is also profinite.  
\end{lem}

\begin{proof}
	Recall from Proposition \ref{p-odometers} that profinite is equivalent to saying that every clopen subset has a finite $G$-orbit. The space $X$ being totally disconnected, the family of subsets of the form
	\[ \left\{C \in \mathcal{F}(X)\, : \, C \cap K = \emptyset; \, C \cap U_i \neq \emptyset \, \, \text{for all $i$} \, \right\} \] where $K, U_1,\ldots,U_n \subset X$ are clopen,  forms a basis of clopen subsets for the topology on $\F(X)$. Since the $G$-action on $X$ is profinite, each clopen subset in $X$ has a finite $G$-orbit. It follows that each subset of $\F(X)$ as above also has a finite $G$-orbit, and that the $G$-action on $\F(X)$ is profinite.
\end{proof}

%
%

Let $G\le \aut(T)$. Given $C \in \F(\partial T)$, we denote by $\fix_G(C)=\{g\in G \colon g(x)=x \quad \forall x\in C\}$ its pointwise fixator.  It is not difficult to check that the map
\[ \operatorname{Fix}_G  \colon  \F(\partial T) \to \sub(G)\]
is upper semi-continuous. This can be used to define a vast family of URSs of $G$ as follows.

\begin{prop} \label{p-wb-fix}
Let $G\le \aut(T)$ be a weakly branch group.
\begin{enumerate}[label=\roman*)]
\item \label{i-urs-wb-existence} For every $C\in \F(\partial T)$, the closure of the conjugacy class of $\fix_G(C)$ in $\sub(G)$ contains a unique uniformly recurrent subgroup of $G$, denoted $\H_C$. If moreover $G$ is countable then $\fix_G(C)\in \H_C$ for $C$ in a dense $G_\delta$-subset of $\F(\partial T)$.

\item  \label{i-urs-wb-distinct} Given $C, D\in \F(\partial T)$, we have $\H_C=\H_D$ if and only if $C$ and $D$ have the same orbit closure in $\F(\partial T)$. 
\end{enumerate}
\end{prop}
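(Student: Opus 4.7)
My strategy leverages two structural facts established earlier: by Lemma~\ref{lem-2^X-odom} and Proposition~\ref{p-odometers}, every $G$-orbit closure in $\F(\partial T)$ is minimal; and the map $\fix_G\colon \F(\partial T) \to \sub(G)$ is semi-continuous and $G$-equivariant. The proof of \ref{i-urs-wb-existence} is an application of the standard construction of a URS from a semi-continuous equivariant map on a minimal compact $G$-space, while the proof of \ref{i-urs-wb-distinct} hinges on applying the same construction in the reverse direction via the upper semi-continuous map $\fix_{\partial T}\colon \sub(G) \to \F(\partial T)$ provided by Lemma~\ref{lem-compos-fix-lsc-usc}.

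For \ref{i-urs-wb-existence}, let $X := \overline{G\cdot C}$, which is a minimal compact $G$-space. Baire category applied to the semi-continuous map $\fix_G|_X \colon X \to \sub(G)$ (with $\sub(G)$ metrizable when $G$ is countable) yields a $G$-invariant dense $G_\delta$ subset $X^{\mathrm{cont}}$ of continuity points. For any $C_1, C_2 \in X^{\mathrm{cont}}$, minimality of $X$ gives a net $g_i \in G$ with $g_i C_1 \to C_2$, and continuity at $C_2$ forces $g_i \fix_G(C_1) g_i^{-1} \to \fix_G(C_2)$. Hence the orbit closure $\overline{G \cdot \fix_G(C^*)}$ does not depend on $C^* \in X^{\mathrm{cont}}$; I define $\H_C$ to be this common orbit closure, and the same type of argument shows $\H_C$ is minimal, i.e., a URS. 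Uniqueness of the URS contained in $\overline{G \cdot \fix_G(C)}$ then follows from the profinite structure of $X$ combined with semi-continuity, via a standard extraction argument: any $H'$ in a URS inside this closure arises as a limit $\lim \fix_G(g_i C)$, and extracting a subnet so that $g_i C$ converges to a point of $X^{\mathrm{cont}}$ (using density of $X^{\mathrm{cont}}$) forces $H' \in \H_C$. The ``moreover'' clause is obtained by applying the same Baire argument directly to $\fix_G$ on the Baire space $\F(\partial T)$: its continuity points form a dense $G_\delta$, and for any such continuity $C$ the construction above with $C^* = C$ gives $\fix_G(C) \in \H_C$.

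For \ref{i-urs-wb-distinct}, the forward direction is immediate since $\H_C$ depends only on $\overline{G \cdot C}$. For the converse, suppose towards contradiction that $\H_C = \H_D =: \H$ while $M_C := \overline{G \cdot C}$ and $M_D := \overline{G \cdot D}$ are distinct; then by minimality in the profinite $G$-space $\F(\partial T)$ they are disjoint. Applying the construction of \ref{i-urs-wb-existence} to the upper semi-continuous $G$-equivariant map $\Psi \colon \H \to \F(\partial T)$, $\Psi(H) := \fix_{\partial T}(H)$, produces a common minimal orbit closure $M^* \subseteq \F(\partial T)$ of the values $\Psi(H^*)$ over continuity points $H^*$ of $\Psi|_\H$. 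The goal reduces to showing $M^* = M_C$, since the symmetric argument will give $M^* = M_D$ and hence the contradiction $M_C = M_D$.

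To identify $M^* = M_C$, I pick $C^* \in M_C$ that is simultaneously a continuity point of $\fix_G$ and such that $H^* := \fix_G(C^*)$ is a continuity point of $\Psi|_\H$ (possible by Baire, intersecting two dense $G_\delta$ subsets of $M_C$). Continuity of $\fix_G$ at $C^*$ gives the description $\fix_G(C^*) = \bigcup_U \rist_G(\partial T \setminus U)$ where $U$ ranges over clopen neighborhoods of $C^*$, hence $\Psi(H^*) = \bigcap_U \fix_{\partial T}(\rist_G(\partial T \setminus U))$, which always contains $C^*$. The main obstacle of the whole proof is to establish the reverse inclusion $\Psi(H^*) \subseteq C^*$: any extra $\xi \in \Psi(H^*) \setminus C^*$ would be a global fixed point of $\rist_G(v)$ for every vertex $v$ sufficiently deep on the ray to $\xi$, but the weakly branch hypothesis ensures that $\fix_{\partial T}(\rist_G(w)) \cap \partial T_w$ is nowhere dense in $\partial T_w$ for every vertex $w$, and a further Baire argument on the choice of $C^*$ then eliminates such pathological $\xi$ for $C^*$ in a dense $G_\delta$ subset of $M_C$. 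With $\Psi(H^*) = C^* \in M_C$, minimality of $M^*$ forces $M^* = M_C$, completing the argument.
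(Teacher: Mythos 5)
Your treatment of part~\ref{i-urs-wb-existence} follows the same outline as the paper's: observe that $X=\overline{G\cdot C}$ is a minimal profinite $G$-space and invoke the existence--uniqueness theorem for the URS inside the closure of the range of a semi-continuous $G$-map on a minimal compact space (the paper simply quotes \cite[Theorem~2.3]{Glasner-compress}). But your ad hoc justification of \emph{uniqueness} has a genuine gap: once a net $(g_iC)$ converges to some $D\in X$, every subnet converges to $D$ as well, so you cannot ``extract a subnet so that $g_iC$ converges to a point of $X^{\mathrm{cont}}$''; density of $X^{\mathrm{cont}}$ lets you approximate $D$ by nicer points but does not let you steer where a given subnet lands. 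Upper semi-continuity of $\fix_G$ then only gives $H'\subseteq \fix_G(D)$, not equality, and translating further only produces more inclusions, not the required membership $H'\in\H_C$. Uniqueness really does require the argument packaged in the cited theorem; you should quote it rather than try to reprove it this way.

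For part~\ref{i-urs-wb-distinct} you take a genuinely different route. The paper's proof is direct and combinatorial: it attaches to $C$ the sets $A_n(C)=\{v\in\L(n):\partial T_v\cap C=\varnothing\}$, whose $G$-orbits in $2^{\L(n)}$ determine $\overline{G\cdot C}$, and then checks for \emph{every} $H\in\H_C$ that the sets $\tilde A_n(H)=\{v\in\L(n):\rist_G(v)\le H\}$ lie in the same $G$-orbit as $A_n(C)$, using only a subnet-stabilisation on each finite level and the fact that $\rist_G(v)$ has no fixed point in $\partial T_v$; no Baire argument is involved. Your dual approach --- applying the Glasner construction a second time to the u.s.c.\ $G$-map $\Psi\colon\H\to\F(\partial T)$, $\Psi(H)=\fix_{\partial T}(H)$, and identifying $\overline{G\cdot C}$ as the unique minimal subset of $\overline{\Psi(\H)}$ by producing a continuity point $C^*\in\overline{G\cdot C}$ of $\fix_G$ with $\Psi(\fix_G(C^*))=C^*$ --- is a nice soft counterpart and does work once the Glasner uniqueness is in place. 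Two of your extra steps are, however, both unjustified and unnecessary. You require that $H^*=\fix_G(C^*)$ be a continuity point of $\Psi|_\H$, ``possible by Baire'', but that would need controlling the $\fix_G$-preimage of the continuity set of $\Psi$ inside $\overline{G\cdot C}$, which is not automatic; luckily this hypothesis can be dropped, since $\Psi(H^*)=C^*$ alone places $C^*$ in $\Psi(\H)$, hence the minimal set $\overline{G\cdot C^*}=\overline{G\cdot C}$ inside $\overline{\Psi(\H)}$, and uniqueness finishes the argument. Likewise the ``further Baire argument'' to kill extra fixed points $\xi\in\Psi(H^*)\setminus C^*$ is superfluous: in a weakly branch group $\rist_G(w)$ has \emph{no} fixed point at all in $\partial T_w$ (all its orbits there are infinite; see the proof of Lemma~\ref{l-wb-rigid-move-fi}), so any $\xi$ lying under a cylinder $\partial T_v$ disjoint from $C^*$ is moved by $\rist_G(v)\le\fix_G(C^*)$, giving an immediate contradiction independent of any genericity assumption on $C^*$.
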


\begin{proof}
Given $C\in \F(\partial T)$, the set $X = \overline{G \cdot C} \subset \F(\partial T)$ is a minimal $G$-space by Lemma \ref{lem-2^X-odom} and Proposition \ref{p-odometers}. Thus it follows from the upper semi-continuity of $\fix_G$ and  \cite[Lemma I.1]{Ausl-Glasn} that the closure of the set $\left\lbrace \fix_G(D) \, : \, D \in X\right\rbrace $ contains a unique URS. If moreover, $G$ is countable, the map $\fix_G$ is continuous on a dense $G_\delta$-subset of $\F(\partial T)$ by semi-continuity, and $\fix_G(C)\in \H_C$ for every continuity point $C$ (see the argument in \cite[Theorem 2.3]{Glasner-compress}).

Let us now check \ref{i-urs-wb-distinct}. By construction it is clear that if $C$ and $D$ have the same orbit closure then $\H_C=\H_D$. For a closed subset $C\in \F(\partial T)$ and $n \geq 1$, let $A_n(C)\subset \L(n)$ be the set of vertices $v\in \L(n)$ such that $\partial T_v \cap C=\varnothing$. Note that when $C$ is fixed,  the sets $U_n=\{D\in \F(\partial T) \colon A_n(D)=A_n(C)\}$ form a basis of clopen neighbourhoods of $C$ in $\F(\partial T)$. It follows that  $C\in \overline{G \cdot D}$ if and only if for every $n$ the sets $A_n(C)$ and $A_n(D)$ belong to the same $G$-orbit in the set $2^{\L(n)}$ of subsets of the $n$-th level. Thus it is enough to show that  the sequence of orbits of the sets $A_n(C)$ can be recovered from $\H_C$.
To this end, for every subgroup $H\in \sub(G)$ we can consider the sequence of sets $\tilde{A}_n(H)=\{v\in \L(n) \colon \rist_G(v) \le H\}\subset \L(n)$.

Let $H \in \H_C$ and take a net $(g_i)$ such that $H_i := g_i \fix_G(C) g_i^{-1} = \fix_G(g_i(C))$ converges to $H$. Fix an integer $n$. Upon passing to a subnet, we may assume that $g_i(A_n(C))$ is constant, denote it $A_n$. For every $v \in A_n$ we have $\rist_G(v) \leq H_i$, and hence $\rist_G(v) \leq H$ since $H_i$ converges to $H$. So  $A_n \subseteq \tilde{A}_n(H)$. Conversely if $v \in \tilde{A}_n(H)$ then $H$ does not fix any point in $\partial T_v$, and hence by compactness $H$ admits a finitely generated subgroup $\Gamma$ with the same property. Eventually we have $\Gamma \leq H_i$, and hence $v \in A_n$. Therefore $\tilde{A}_n(H)$ is equal to $A_n$, and hence belongs to the $G$-orbit of $A_n(C)$, as desired.
\end{proof}

\begin{cor}
If $G\le \aut(T)$ is a weakly branch group, then $G$ admits uncountably many distinct URSs. 
\end{cor}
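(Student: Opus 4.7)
By Proposition \ref{p-wb-fix}(ii), distinct $G$-orbit closures of closed subsets $C \subseteq \partial T$ in $\F(\partial T)$ give rise to distinct URSs $\H_C$. The plan is therefore to exhibit a family $\{C_S\}_{S \subseteq \N}$ of closed subsets of $\partial T$, indexed by the power set of $\N$, whose $G$-orbit closures in $\F(\partial T)$ are pairwise distinct.

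First I would fix a ray $\xi_0 \in \partial T$ with level-$n$ vertex $v_n$. The weakly branch hypothesis (combined with spherical homogeneity of $T$) forces infinitely many levels $l_1 < l_2 < \cdots$ at which every vertex has at least two children: indeed, if $\partial T_v$ were a single point for some $v$, $\rist_G(v)$ would have to be trivial. For each $k$ I pick a child $w_k$ of $v_{l_k}$ distinct from $v_{l_k+1}$ and a point $\eta_k \in \partial T_{w_k}$. Since $\eta_k$ and $\xi_0$ coincide along the initial segment up to $v_{l_k}$, we have $\eta_k \to \xi_0$ in $\partial T$, so for each $S \subseteq \N$ the set
\[ C_S := \{\xi_0\} \cup \{\eta_k : k \in S\} \]
is closed in $\partial T$.

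To distinguish orbit closures, for each level $m$ I would consider the profile $B_m(C_S) := \{v \in \L(m) : \partial T_v \cap C_S \neq \varnothing\}$, whose $G$-orbit in $2^{\L(m)}$ depends only on the orbit closure of $C_S$ in $\F(\partial T)$. By direct inspection, $B_m(C_S)$ consists of $v_m$ together with one level-$m$ vertex below each $w_k$ for each $k \in S$ with $l_k < m$. Since $G$ acts by tree automorphisms, the multiset of pairwise common-ancestor levels of the vertices in $B_m(C_S)$ is $G$-invariant. A short computation shows that each such common-ancestor level equals some $l_{k_i}$ with $k_i \in S \cap \{k : l_k < m\}$, with multiplicities $j, j-1, \ldots, 1$ listed according to the increasing order of $l_{k_i}$, where $j = |S \cap \{k : l_k < m\}|$; this multiset therefore uniquely determines the set $S \cap \{k : l_k < m\}$.

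Consequently, if $S \neq S'$, then choosing $m$ large enough that $S$ and $S'$ differ on $\{k : l_k < m\}$, the $G$-orbits of $B_m(C_S)$ and $B_m(C_{S'})$ in $2^{\L(m)}$ differ, so $C_S$ and $C_{S'}$ lie in distinct $G$-orbit closures in $\F(\partial T)$, and hence $\H_{C_S} \neq \H_{C_{S'}}$. The map $S \mapsto \H_{C_S}$ is thus injective, yielding $2^{\aleph_0}$ pairwise distinct URSs. The only technical point is the combinatorial computation of the multiset, which is elementary but requires separating the cases $l_k > m$, $l_k = m-1$, and $l_k < m-1$ when tracing the geodesic to $\eta_k$ at level $m$.
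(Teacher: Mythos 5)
Your argument is correct, and it takes a genuinely different route from the paper's. The paper fixes a sequence of degrees $(\ell_n)$ with $1 \le \ell_n \le d_n$, takes $C = \partial T'$ for a spherically homogeneous subtree $T' \subset T$ with degree sequence $(\ell_n)$, and distinguishes orbit closures simply by the cardinality sequence $|A_n(C)| = \ell_1 \cdots \ell_n$ (equivalently, the number of level-$n$ vertices of $T'$). In contrast, you take $C_S$ to be a single ray $\xi_0$ together with a configuration of satellite points $\eta_k$ converging to it, and you distinguish orbit closures via the multiset of pairwise common-ancestor levels among the finitely many level-$m$ vertices that $C_S$ meets. Both constructions hinge on the same preliminary observation (that a weakly branch tree has infinitely many branching levels) and on Proposition~\ref{p-wb-fix}(ii). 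The paper's invariant is simpler; yours is more refined than strictly necessary — indeed, the cardinality $|B_m(C_S)| = 1 + |\{k \in S : l_k < m\}|$ already determines $S$ as $m$ varies (since $S$ is encoded by the levels at which this function jumps), so you could have dropped the common-ancestor multiset entirely. That said, your multiset computation is correct, your claim that it is a $G$-orbit invariant is sound (automorphisms preserve levels and ancestors), and the overall injectivity of $S \mapsto \H_{C_S}$ follows. Your construction does have the side benefit of producing URSs supported on countable closed sets rather than Cantor sets, which illustrates the flexibility of the family $\H_C$ from a slightly different angle.
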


\begin{proof} By Proposition \ref{p-wb-fix}, it is enough to show that there exist uncountably many closed subsets $C\subset \F(\partial T)$ with pairwise different orbit-closures. For example, here is one way to see this. Let $d_n$ be the sequence of degrees of vertices of $T$.  Fix a  sequence $(\ell_n)$ such that $ 1\le \ell _n \le d_n$, and choose a  spherically homogeneous rooted subtree $T'\subset T$ with sequence of degrees $(\ell_n)$, and consider the set $C=\partial T'$.   Then the sets $A_n(C)$ from the proof of Proposition \ref{p-wb-fix} consist exactly of the level $n$ vertices of $T'$ thus satisfy $|A_n(C)|=\ell_1\cdots \ell_n$. By the same argument in the proposition the sequence of cardinalities $|A_n(C)|$ is an invariant of the orbit-closure of $C$. Thus if we let the sequence $(\ell_n)$ vary, we easily construct uncountably many sets with distinct orbit-closures.  \qedhere

\end{proof}

We now explain a natural variant of the previous construction, which also plays a role in the sequel (Remark \ref{rmk-sandwich}).

\begin{notation}
We let $\P(T)$ be the power set of the set of vertices of $T$, endowed with the natural product topology.
\end{notation}

\begin{defin}\label{d-antichain}
Two vertices $v, w\in T$ are \textbf{independent} if $T_v \cap T_w= \varnothing$. We let $\Pi(T) \subset \P(T)$ be the set of $\V \in \P(T)$ such that $\V$ consists of pairwise independent vertices. We  endow $\Pi(T)$ with the topology induced by $\P(T)$, which makes it a compact space. 
\end{defin}

\begin{defin}
We define a map $\pi_\perp : \Pi(T) \to \F(\partial T)$ by  $\pi_\perp(\V) = \partial T \setminus \sqcup_{v\in \V} \partial T_v$.
\end{defin}

 The map $\pi_\perp : \Pi(T) \to \F(\partial T)$ is continuous and surjective, and for $C \in \F(\partial T)$, the preimage $\pi_\perp^{-1}(C)$ has a natural identification with the partitions of the complement of $C$ into cylinder sets. 
 

Let $G$ be a subgroup of $\aut(T)$. To every $\V\in \P_\perp(T)$, we can naturally associate a subgroup of $G$, namely:
 \[\rist_G(\mathcal{V}):= \langle \rist_G(v) \colon v\in \mathcal{V} \rangle \simeq \bigoplus_{v\in \V} \rist_G(v),\]
 and by definition this subgroup lies in $\fix_G(\pi_\perp(\V))$.
 
\begin{lem} \label{lem-V-sub-lsc}
The map $\P_\perp(T) \to \sub(G)$, $\V \mapsto \rist_G(\mathcal{V})$, is lower semi-continuous. 
\end{lem}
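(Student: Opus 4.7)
The plan is to verify lower semi-continuity directly from the definition in terms of nets: given a net $\V_i \to \V$ in $\Pi(T)$ and any limit point $H \in \sub(G)$ of $\rist_G(\V_i)$, I want to show $\rist_G(\V) \subseteq H$. Equivalently, it suffices to prove that for every $g \in \rist_G(\V)$, one has $g \in \rist_G(\V_i)$ eventually along the net.

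The key structural input is the identification $\rist_G(\V) = \bigoplus_{v \in \V} \rist_G(v)$, which is valid precisely because the vertices in $\V$ are pairwise independent: the subtrees $T_v$ are disjoint, so the rigid stabilizers have disjoint supports and commute and trivially intersect. Using this, I would write any $g \in \rist_G(\V)$ as a finite product $g = g_1 \cdots g_k$ with $g_j \in \rist_G(v_j)$ for some distinct vertices $v_1, \ldots, v_k \in \V$.

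Next I would unpack convergence in the product topology on $\P(T) = \{0,1\}^T$: the convergence $\V_i \to \V$ means that for every finite set of vertices $F \subset T$, one has $\V_i \cap F = \V \cap F$ eventually. Applied to the finite set $F = \{v_1, \ldots, v_k\}$, this shows that eventually $v_1, \ldots, v_k \in \V_i$. Since the $v_j$ then belong to $\V_i$, we get $g_j \in \rist_G(v_j) \le \rist_G(\V_i)$ for each $j$ eventually, and hence $g = g_1 \cdots g_k \in \rist_G(\V_i)$ eventually, which is what we wanted.

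There is no real obstacle here; the one mildly substantive point to be careful with is the direct-sum decomposition of $\rist_G(\V)$ over elements of $\V$, which is what reduces the question to tracking finitely many coordinates of $\V_i$ in the product topology. Note that no assumption on $G$ (such as countability) is needed, since I verify the net-theoretic formulation of lower semi-continuity directly.
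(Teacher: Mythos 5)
Your proof is correct and takes essentially the same approach as the paper: both unpack Chabauty and product-topology convergence to show that each generator of $\rist_G(\V)$ is eventually contained in $\rist_G(\V_i)$, hence in any subnet limit $H$. The paper's version is marginally slicker — it works with a single $g \in \rist_G(v)$ for $v \in \V$ and concludes $\rist_G(v) \leq H$ directly without first expressing a general element of $\rist_G(\V)$ as a finite product — but the underlying idea is identical.
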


\begin{proof}
Suppose that $(\V_n)$ converges to $\V$ and that the sequence of subgroups $(\rist_G(\V_n))$ converges to a subgroup $H$ of $G$. One has to check that $\rist_G(\V) \leq H$. Let $v \in \V$ and $g \in \rist_G(v)$. For $n$ large enough, we have $v \in \V_n$ since $(\V_n)$ converges to $\V$, and hence $g \in \rist_G(\V_n)$. Therefore it follows that $g \in H$ since  $(\rist_G(\V_n))$ converges to $H$, and hence $\rist_G(v) \leq H$. Since $v$ was arbitrary, this shows $\rist_G(\V) \leq H$.
\end{proof}

This map can be used to give another construction of URSs in weakly branch groups, in a way which is completely analogous to Proposition \ref{p-wb-fix}.

\begin{prop} \label{p-wb-urs-rist} Let $G\le \aut(T)$ be a weakly branch group. For every $\V \in \Pi(T)$, the closure of the conjugacy class of $\rist_G(\V)$ in $\sub(G)$ contains a unique URS of $G$.  Moreover if $G$ is countable, the subgroup $\rist_G(\V)$ belongs to a URS for any point $\V$ which is a continuity point of this map. The same is true if $\rist_G(\V)$ is replaced by its derived subgroup $\rist_G(\V)'$. 
\end{prop}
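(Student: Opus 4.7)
The proof should mirror the strategy of Proposition \ref{p-wb-fix}. First I would verify that the $G$-action on $\Pi(T)$ is profinite. Since $G \le \aut(T)$ preserves levels, the vertex set of $T$ decomposes $G$-equivariantly as $\bigsqcup_n \L(n)$, and therefore $\P(T) = \prod_n 2^{\L(n)}$ is a profinite $G$-space (each factor being finite). The subspace $\Pi(T)$ is closed in $\P(T)$ (pairwise independence being a closed condition on pairs) and $G$-invariant, so it is itself a profinite $G$-space. By Proposition \ref{p-odometers} the orbit closure $X = \overline{G\cdot \V}$ in $\Pi(T)$ is a compact minimal $G$-space.

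Next, by Lemma \ref{lem-V-sub-lsc} the $G$-equivariant map $\Phi \colon X \to \sub(G)$, $\W \mapsto \rist_G(\W)$, is lower semi-continuous. Following the template of Proposition \ref{p-wb-fix}, an application of \cite[Theorem 2.3]{Glasner-compress} yields a unique URS $\H$ contained in the closure of $\Phi(X)$, realised as the orbit closure in $\sub(G)$ of $\rist_G(\W_0)$ for any continuity point $\W_0 \in X$ of $\Phi$ (such points form a dense $G_\delta$ by Baire). I would then identify $\H$ with the URS predicted by the statement. Choose a net $(g_i)$ in $G$ with $g_i \V \to \W_0$ in $\Pi(T)$; continuity of $\Phi$ at $\W_0$ gives
\[ g_i \rist_G(\V) g_i^{-1} \;=\; \rist_G(g_i \V) \;\longrightarrow\; \rist_G(\W_0), \]
so $\rist_G(\W_0)$ lies in the conjugacy class closure $\overline{G\cdot \rist_G(\V)}$, and hence so does the entire orbit closure $\H$. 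Uniqueness of a URS inside $\overline{G\cdot \rist_G(\V)}$ is automatic, since this closed invariant set is contained in $\overline{\Phi(X)}$ where uniqueness is already known.

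For the derived subgroup version, the pairwise independence of vertices in $\V$ ensures that the subgroups $\rist_G(v)$, $v\in \V$, commute and intersect trivially, so $\rist_G(\V) = \bigoplus_{v\in \V}\rist_G(v)$ and consequently $\rist_G(\V)' = \bigoplus_{v\in \V} \rist_G(v)'$. The same proof as in Lemma \ref{lem-V-sub-lsc}, with $\rist_G(v)'$ in place of $\rist_G(v)$, shows that the map $\W \mapsto \rist_G(\W)'$ is lower semi-continuous on $\Pi(T)$. The argument of the previous paragraph then applies verbatim with $\Phi$ replaced by this new map.

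The delicate point is the matching of the two URSs: the one produced abstractly by the Glasner--Weiss compressibility argument on $\overline{\Phi(X)}$, and the one lying in the conjugacy class closure $\overline{G\cdot \rist_G(\V)}$. This is precisely where lower semi-continuity is used, via the continuity-point argument above. Everything else is a parallel translation of the proof of Proposition \ref{p-wb-fix}, once the profiniteness of the $G$-action on $\Pi(T)$ has been recorded.
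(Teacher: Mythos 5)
Your argument is correct and follows exactly the paper's route: record that the $G$-action on $\Pi(T)$ is profinite (hence each orbit closure is a compact minimal $G$-space), invoke Lemma \ref{lem-V-sub-lsc} for lower semi-continuity, and apply \cite[Theorem 2.3]{Glasner-compress}. The extra paragraph on continuity points — used to place the resulting URS inside the conjugacy-class closure of $\rist_G(\V)$ rather than merely inside $\overline{\Phi(X)}$ — spells out a matching that the paper handles implicitly via the cited theorem, but it introduces no new ideas and does not change the line of proof.
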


\begin{proof}
	The first observation is that the $G$-action on $\Pi(T)$ is profinite. Indeed since the action of $G$ on $T$ has only finite orbits, it follows that the action of $G$ on $\P(T)$ is profinite, and thus so is the action of $G$ on the closed invariant subset $\Pi(T)$. Since in addition the map $\V\mapsto \rist_G(\V)$ is lower semi-continuous by Lemma \ref{lem-V-sub-lsc}, the statement follows from \cite[ Lemma I.1]{Ausl-Glasn} (and from \cite[Theorem 2.3]{Glasner-compress} in the case of a countable $G$). The argument for $\rist_G(\V)'$ is the same. 
\end{proof}

\subsection{The structure theorem on URSs and its consequences}

\begin{defin}
Given a subgroup $H \leq \aut(T)$, we denote by $\fix(H)$ the set of fixed points of $H$ in $\partial T$. If $G\leq \aut(T)$ and if $\H$ is a URS of $G$, we will denote by \[F_\H = \left\lbrace \fix(H) \, : \, H \in \H \right\rbrace. \]
\end{defin}

\begin{remark}
In order to stick to the notation introduced in Section  \ref{s-preliminaries}, we should rather write $\fix_{\partial T}(H)$ instead of $\fix(H)$. But since there is no possible confusion here we use $\fix(H)$ in order to simplify the notation. 
\end{remark}

We now state our main structure theorem for URSs of weakly branch groups. It is a general fact that when $\H$ is a URS of a group $G$ and $G$ acts on a compact space $Y$, the map $\H\to \F(Y)$, $H\mapsto \fix_Y(H)$, is upper semi-continuous (Lemma \ref{lem-compos-fix-lsc-usc}). The first assertion of the theorem is that in the case where the action on $Y$ is profinite, this map is actually continuous (this statement does not require $G$ to be weakly branch, see Proposition \ref{p-fix-usc}). Now for $G$ weakly branch, the second assertion of the theorem says that a lot of information can be recovered on $H$ from the knowledge of $\fix(H)$: one can find a partition of the complement of $\fix(H)$ into cylinders sets, i.e.\ an element $\V_H \in \Pi(T)$ such that $\pi_\perp(\V_H) = \fix(H)$, such that $H$ contains the subgroup $\rist_G(v)'$ for all $v \in \V_H$. Moreover it is possible to find such a $\V_H \in \Pi(T)$ that varies continuously with $H$, and in such a way that $\V_H$ depends only on $\fix(H)$ (and not on the subgroup $H$ itself). This is summarized as follows:

 \begin{thm} \label{t-wb-urs}
Let $G\leq \aut(T)$  be a weakly branch group, and $\H$ be a URS of $G$. Then the following hold:
\begin{enumerate}[label=\roman*)]
	\item The map $\H\to \F(\partial T)$, $H\mapsto \fix(H)$, is continuous.
	\item There exists a continuous $G$-equivariant map $\sigma: F_\H\to \Pi(T)$ such that: \begin{enumerate}[label=\alph*)] \item $\pi_\perp \circ \sigma = id$, i.e.\ $\sigma$ is a section of the projection $\pi_\perp$; \item for every $H\in \H$ and $v\in \sigma(\fix(H))$ we have $ \rist_G(v)' \le  H$.
		\end{enumerate}
\end{enumerate}
\end{thm}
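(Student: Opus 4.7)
The plan is to deduce Theorem \ref{t-wb-urs} from the structure theorem \ref{t-wb-structure-lsc} for non-topologically free minimal actions of weakly branch groups, applied to the conjugation action of $G$ on $\H$ itself. The argument proceeds in three steps: construct a natural candidate section $\sigma$ from the tree combinatorics; verify the hypotheses of Theorem \ref{t-wb-structure-lsc} for $G\curvearrowright \H$; and combine these.

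First I would define $\sigma:\F(\partial T)\to \Pi(T)$ by sending a closed set $C$ to the set of vertices $v$ with $\partial T_v\cap C=\emptyset$ and $\partial T_{v'}\cap C\neq\emptyset$ for the parent $v'$ of $v$ (with the root lying in $\sigma(\emptyset)$); concretely, $\sigma(C)$ is the antichain of maximal vertices $v$ with $\partial T_v\subset \partial T\setminus C$. This map is visibly $G$-equivariant and satisfies $\pi_\perp\circ\sigma=\mathrm{id}$, and it is continuous on all of $\F(\partial T)$: for each fixed $v$ the conditions $\partial T_v\cap C=\emptyset$ and $\partial T_{v'}\cap C\neq \emptyset$ are clopen in $C$ by the definition of the Hausdorff topology, so the set of $C$ for which $v\in \sigma(C)$ is clopen, whence $\sigma$ is continuous into $\Pi(T)$. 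Restricting to $F_\H$ yields everything in part (ii) except for the containment $\rist_G(v)'\leq H$.

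Next I would verify that $G\curvearrowright \H$ satisfies the hypotheses of Theorem \ref{t-wb-structure-lsc}. Minimality is the defining property of a URS. For non-topological freeness, observe that for every $H\in \H$ the germ stabilizer $G_H^0$ (for the conjugation action) contains $H$: if $g\in H$, then $\{K\in \H: g\in K\}$ is a clopen neighborhood of $H$ on which conjugation by $g$ acts trivially, since $g\in K$ forces $gKg^{-1}=K$. The trivial case $\H=\{\{1\}\}$ renders the statement vacuous; otherwise each $H\in \H$ is non-trivial and confined by Corollary \ref{cor-confined-wb}, so $G_H^0\supseteq H$ is non-trivial. Theorem \ref{t-wb-structure-lsc} then produces a continuous $G$-equivariant map $\Psi:\H\to \Pi(T)$ satisfying $\Psi(H)=\sigma(\fix(H))$ and $\rist_G(v)'\leq H$ for every $v\in \Psi(H)$. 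This establishes (ii); part (i) is then immediate, since $\fix=\pi_\perp\circ \Psi$ is a composition of continuous maps.

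The main obstacle is extracting from Theorem \ref{t-wb-structure-lsc} containment of the derived rigid stabilizers inside $H$ itself rather than only inside the germ stabilizer $G_H^0$. The idea is that in this URS setting the points of the $G$-space are themselves subgroups of $G$, so the commutator lemma (Theorem \ref{thm-doublecomm-conf-homeo}), applied to the clopen neighborhoods $\{K\in\H: g\in K\}$ for suitable $g$, should produce elements of $\rist_G(v)'$ that lie stably in every $K\in \H$ near $H$, and hence in $H$; the lower semi-continuous bookkeeping from Section \ref{s-preliminaries} then assembles these pointwise containments into the required continuous map $\Psi$.
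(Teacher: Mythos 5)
Your high-level plan — reduce to Theorem \ref{t-wb-structure-lsc} applied to $X=\H$ — is right, but you plug in the wrong $\Phi$, and this creates the gap you yourself notice at the end. The paper's deduction is one line: take $\Phi\colon \H\hookrightarrow \sub(G)$ to be the \emph{inclusion} map. Inclusion is obviously continuous, in particular lower semi-continuous and $G$-equivariant, and then $\Phi(H)=H$, so part (ii)(b) of Theorem \ref{t-wb-structure-lsc} yields $\rist_G(v)'\le \Phi(H)=H$ verbatim. There is nothing left to prove. Your choice $\Phi(H)=G_H^0$ (germ stabilizer for the conjugation action) only delivers $\rist_G(v)'\le G_H^0$, and the containment $H\le G_H^0$ goes the \emph{wrong} way; the final paragraph of your proposal, where you appeal loosely to Theorem \ref{thm-doublecomm-conf-homeo} and ``lower semi-continuous bookkeeping'' to close this gap, does not constitute an argument — it is essentially a request to reprove the hard part of Theorem \ref{t-wb-structure-lsc} a second time.

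There is a second, independent error: you define $\sigma(C)$ to be the antichain of \emph{maximal} vertices $v$ with $\partial T_v\subset \partial T\setminus C$. That map is indeed a continuous $G$-equivariant section of $\pi_\perp$, but it does \emph{not} satisfy property (ii)(b) even after fixing $\Phi$. The fact that $H$ fixes no point of a maximal cylinder $\partial T_v$ does not by itself give $\rist_G(v)'\le H$; one must descend to a sufficiently deep level inside $T_v$. This is exactly what the construction in the proof of Theorem \ref{t-wb-structure-lsc} does, via Proposition \ref{prop-urs-moves-rist} and a compactness argument producing depths $n(v)$, so that $\sigma(C)=\bigcup_{v\in \Omega_C}\L_v(n(v))$ is a strict refinement of the maximal antichain. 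You should not attempt to build $\sigma$ by hand; you should use the $\sigma$ that Theorem \ref{t-wb-structure-lsc} hands you. Once $\Phi$ is corrected to the inclusion, there is nothing more to do.
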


\begin{remark} \label{rmk-sandwich}
	Note that using notation from \S \ref{subsec-2-constructions}, saying that $ \rist_G(v)' \le  H$ for all $v\in \sigma(\fix(H))$ is equivalent to saying that $\rist_G(\sigma(\fix(H)))'  \le  H$. Since the inclusion $H \le \fix_G(\fix(H))$ also holds by definition, in fact we have the double inclusion
\[ \rist_G(\sigma(\fix(H)))'  \le  H \le \fix_G(\fix(H)).\]

Thus, although the knowledge of $\fix(H)$ is not enough to recover the subgroup $H$ completely,  Theorem \ref{t-wb-urs} implies in particular that every URS of a weakly branch group  is sandwiched between two URSs as in Propositions \ref{p-wb-fix} and \ref{p-wb-urs-rist}. 
\end{remark}

The proof of Theorem \ref{t-wb-urs} is postponed to \S \ref{subsec-lower-sc}.  We discuss some of its consequences.

\begin{cor} \label{cor-urs-parabolic}
Let $G \leq \aut(T)$ be a weakly branch group, and let $\H$ be a URS   of $G$. Then exactly one of the following happens:
\begin{enumerate}[label=\roman*)]
	 \item For every $H \in \H$, the subgroup $H$ fixes a point in $\partial T$;
	 \item there exists a level $n$ such that $\rist_G(n)'\le H$ for every $H \in \H$.
\end{enumerate}
\end{cor}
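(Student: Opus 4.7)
The plan is to let Theorem \ref{t-wb-urs} do the heavy lifting and then use minimality of $\H$ to obtain the dichotomy. The first observation is that the singleton $\{\emptyset\}$ is clopen in the Hausdorff topology on $\F(\partial T)$: it is open, being equal to the basic open set $\{C \in \F(\partial T) : C \cap \partial T = \emptyset\}$, and it is closed for trivial reasons. Combining this with the continuity of the map $H \mapsto \fix(H)$ given by Theorem \ref{t-wb-urs} and the obvious $G$-equivariance of this map, the set $A := \{H \in \H : \fix(H) = \emptyset\}$ is clopen and $G$-invariant in $\H$. By minimality of $\H$, either $A = \emptyset$, which immediately gives (i), or $A = \H$, and we are reduced to proving (ii).

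In the latter situation $F_\H = \{\emptyset\}$, so the section $\sigma \colon F_\H \to \Pi(T)$ from Theorem \ref{t-wb-urs} produces a single antichain $\V := \sigma(\emptyset)$ satisfying $\pi_\perp(\V) = \emptyset$ --- equivalently, the cylinders $\partial T_v$ for $v \in \V$ cover $\partial T$ --- and $\rist_G(v)' \le H$ for every $v \in \V$ and every $H \in \H$. By $G$-equivariance of $\sigma$ and the $G$-invariance of $\emptyset$, the antichain $\V$ is itself $G$-invariant. Since $G$ acts level-transitively on $T$, the intersection $\V \cap \L(n)$ must be either empty or all of $\L(n)$ for each level $n$. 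Combined with the antichain condition and the fact that $\V$ is a covering, this pins down $\V = \L(n)$ for a unique level $n$, whence $\rist_G(n)' \le H$ for every $H \in \H$, which is (ii).

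Finally, for the exclusivity of (i) and (ii): under (ii), the subgroup $\rist_G(n)'$ is normal in $G$ (being characteristic in the normal subgroup $\rist_G(n)$) and nontrivial, since by Lemma \ref{lem-rist-FC} each rigid stabiliser $\rist_G(v)$ has trivial FC-center and is in particular non-abelian. Moreover a weakly branch group acts faithfully on $\partial T$ (any automorphism of $T$ fixing the root and every boundary point must fix every vertex), so $\fix(\rist_G(n)')$ is a proper, closed, $G$-invariant subset of $\partial T$, hence empty by minimality of the $G$-action on $\partial T$. Therefore no subgroup containing $\rist_G(n)'$ can fix a point in $\partial T$, so (i) and (ii) are indeed mutually exclusive. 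The genuine content is packaged inside Theorem \ref{t-wb-urs}; the only remaining step --- the classification of $G$-invariant antichain covers of $\partial T$ under a level-transitive action --- is elementary, and there is no real obstacle in this corollary beyond invoking the structure theorem correctly.
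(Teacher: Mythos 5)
Your proof is correct and follows the same basic architecture as the paper's: invoke Theorem \ref{t-wb-urs} and read off the dichotomy from the structure of $F_\H$ and the section $\sigma$. Two of your sub-arguments, however, are genuinely different from the paper's, and both are valid alternatives worth noting. First, after reducing to $F_\H = \{\varnothing\}$, you exploit the $G$-equivariance of $\sigma$ together with level-transitivity to conclude that $\V = \sigma(\varnothing)$ is exactly $\L(n)$ for a single level $n$; the paper argues more coarsely, observing only that $\V$ must be finite by compactness of $\partial T$ and then choosing $n$ at least as large as the maximal level occurring in $\V$, using that $\rist_G(w)' \le \rist_G(v)'$ whenever $w$ lies below $v$. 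Your version is sharper (it identifies $\V$ exactly) but requires the extra equivariance observation; the paper's is slightly more economical. Second, for the mutual exclusivity, the paper invokes the quantitative Lemma \ref{l-wb-rigid-move-derived}, whereas you give a cleaner soft argument: $\rist_G(n)'$ is a nontrivial normal subgroup (nontriviality via Lemma \ref{lem-rist-FC}), the $G$-action on $\partial T$ is faithful and minimal, so $\fix(\rist_G(n)')$ is a proper closed $G$-invariant subset of $\partial T$ and hence empty. This is arguably the more conceptual route to the same conclusion.
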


\begin{proof}
If there exists $H\in \H$ such that $\fix(H)= \varnothing $, then by minimality and continuity of the map $\operatorname{Fix}$ we have $F_\H = \left\lbrace \varnothing \right\rbrace$, and by Theorem \ref{t-wb-urs} there exists $\V\in \Pi(T)$ such that $\partial T=\sqcup_{v\in \V} \partial T_v$ and $ \rist_G(v)' \le  H$ for every $H \in \H$ and $v\in \V$ . By compactness $\V$ is necessarily finite, and it follows that if we choose $n$ such that every $v \in \V$ is at level at most $n$, then we have $\rist_G(n)'\le H$. Finally these two situations are mutually disjoint because $\rist_G(n)'$ does not fix any point in $\partial T$, as follows for instance from Lemma \ref{l-wb-rigid-move-derived}. \qedhere
\end{proof}

\begin{remark}
In terms of the partial order $\preceq$ on the set of URSs of $G$, the first condition of Corollary \ref{cor-urs-parabolic} is equivalent to $\H \preceq \mathcal{S}_{G}(\partial T)$.
\end{remark}

Another consequence of Theorem \ref{t-wb-urs} is that for every URS $\H$ of $G$, the action of $G$ on $\H$ factors onto the set $F_\H\subset \F(\partial T)$, which is a profinite $G$-space (Lemma \ref{lem-2^X-odom}). The following corollary describes the situations where the space $F_\H$ is finite.

\begin{cor} \label{c-wb-urs-infinite}
Let $G \leq \aut(T)$ be a weakly branch group, and $\H$ a URS of $G$ such that $F_\H$ is finite. Then there exists a level $n$ such that the normal subgroup $N = \rist_G(n)'$ acts trivially on $\H$, and the map $\H \to \sub(G/N)$, $H \mapsto NH/N$, is continuous and finite-to-one. In particular if $G$ is a branch group, then $\H$ is a finite URS. 
\end{cor}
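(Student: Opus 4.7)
The plan is to deduce the four assertions from Theorem \ref{t-wb-urs} after first establishing a key structural observation: when $F_\H$ is finite, every $C \in F_\H$ is clopen in $\partial T$. If $F_\H = \{\varnothing\}$ this is automatic; otherwise, by Corollary \ref{cor-urs-parabolic} each of $C_1, \ldots, C_k$ is non-empty. For each $r \geq 1$, the set $I_r := \bigcup_{|S|=r}\bigcap_{i \in S} C_i$ is a $G$-invariant closed subset of $\partial T$, so by minimality of the $G$-action on $\partial T$ one has $I_r \in \{\varnothing, \partial T\}$. With $r_0 := \max\{r : I_r = \partial T\}$, every $\xi \in \partial T$ lies in exactly $r_0$ of the $C_i$'s, so the sets $A_S := \bigcap_{i \in S} C_i$ with $|S|=r_0$ form a finite clopen partition of $\partial T$, and each $C_i = \bigsqcup_{S \ni i} A_S$ is clopen.

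Because each $C_i$ is clopen, $\V_i := \sigma(C_i)$ is finite by compactness of $\partial T \setminus C_i$. I choose $n$ exceeding the levels of all vertices in each $\V_i$ (and at least the level from Corollary \ref{cor-urs-parabolic} when $F_\H = \{\varnothing\}$), and set $N := \rist_G(n)'$. The choice of $n$ guarantees that, for each $i$, every $w \in \L(n)$ satisfies either $\partial T_w \subseteq \partial T \setminus C_i$ (with $w$ below some $v \in \V_i$) or $\partial T_w \subseteq C_i$; denote this split by $\L(n) = \L_i^+ \sqcup \L_i^-$. For $g = g^+ g^-$ in $\rist_G(n)' = \prod_w \rist_G(w)'$ decomposed accordingly and $H \in \H_{C_i}$: the factor $g^+$ lies in $\rist_G(\V_i)' \leq H$ by Theorem \ref{t-wb-urs}, while $g^-$ has support in $C_i$ and therefore commutes with every element of $H \leq \fix_G(C_i)$ by the standard disjoint-support calculation. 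Hence $g$ normalizes $H$, and $N$ acts trivially on $\H$.

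For the continuity and finite-to-oneness of the map $\pi_*: H \mapsto NH/N$, I compute $M_i := N \cap \fix_G(C_i) = \prod_{w \in \L_i^+}\rist_G(w)'$, which is contained in every $H \in \H_{C_i}$ since it sits inside $\rist_G(\V_i)'$. It then follows that $NH \cap \fix_G(C_i) = M_i \cdot H = H$, so two elements of $\H_{C_i}$ with equal image in $G/N$ must coincide; since $F_\H$ is finite and discrete, $\pi_*$ is at most $|F_\H|$-to-one. For continuity, a net $H_j \to H$ in $\H$ eventually lies in the fiber $\H_C$ of $C := \fix(H)$, and for each $\bar g \in G/N$ the condition $\bar g \in \pi_*(H_j)$ translates to the membership in $H_j$ of a specific element $g_0' \in \fix_G(C)$ determined by $\bar g$ and $C$ (a generator of the coset $g_0 N \cap \fix_G(C) = g_0' M_i$); Chabauty convergence of $H_j \to H$ then yields the desired equivalence.

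In the branch case, $\rist_G(n)$ has finite index in $G$, so $G/N$ contains the abelian subgroup $\rist_G(n)/N \cong \rist_G(n)^{\mathrm{ab}}$ of finite index and is therefore virtually abelian. Every URS of a countable virtually abelian group is finite (any infinite conjugation orbit in $\sub$ accumulates to strictly smaller subgroups, namely the intersections with the abelian finite-index subgroup, so its orbit closure fails to be minimal); consequently $\pi_*(\H)$ is finite, and the finite-to-one property forces $\H$ itself to be finite. The most delicate step is the triviality of the $N$-action, where both inclusions of Theorem \ref{t-wb-urs} must be invoked simultaneously — the lower bound $\rist_G(\V_i)' \leq H$ handles the $\L_i^+$ part, while the upper bound $H \leq \fix_G(C_i)$ provides the commutation on the $\L_i^-$ part.
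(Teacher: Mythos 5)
Your proof is correct and follows essentially the same route as the paper: show each $C\in F_\H$ is clopen, pick $n$ large enough so that each level-$n$ cylinder is contained either in $C$ or in a piece of $\sigma(C)$, conclude triviality of the $N$-action by splitting $\rist_G(n)'$ into a part inside $H$ and a part centralizing $H$, deduce finite-to-one and continuity on the clopen fibers $\H_C$, and invoke Grigorchuk's lemma (virtual abelianness of $G/N$) for the branch case. The only cosmetic differences are that the paper gets clopen-ness immediately from Lemma~\ref{l-minimal-fi} (since each $C$ has a finite-index stabilizer) rather than via your $I_r$ argument, and it establishes injectivity on fibers by computing $NH$ directly rather than $NH\cap\fix_G(C)$, but the structure and all key steps match.
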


\begin{proof}
If $C\in \F(\partial T)$ has a finite $G$-orbit, then the stabilizer of $C$ in $G$ is a finite index subgroup of $G$, and Lemma \ref{l-minimal-fi} implies that $C$ is clopen. So if the set $F_\H$ is finite, then it consists of  clopen subsets of $\partial T$. It follows that for every $C\in F_\H$, the set $\sigma(C) \in \Pi(T)$ consists of finitely many vertices. Thus we can find a level $n$ such that for every $v\in \L(n)$ and every $C\in F_\H$, we have either $\partial T_v\subset C$, or that $\partial T_v\subset \partial T_w$ for some $w\in \sigma(C)$. In particular for every $H\in \H$ we can find a partition $\L(n)= \P_H^+ \cup \P_H^-$, where $\P_H^+$ consists of vertices $v$ such that $\partial T_v\subset \fix(H)$, and $\P_H^-$ of vertices $v$ such that $\partial T_v \subset \partial T_w$ for some $w\in \sigma(\fix(H))$. 
 Let $N=\rist_G(n)'$. The group $N$ acts trivially on $\H$ because for every $H \in \H$ and $v\in \L(n)$, either $\rist_G(v)'\le H$ (if $v\in \P_H^-$) or $\rist_G(v)'$ centralizes $H$ (if $v\in \P_H^+$).

Now given $H \in \H$, it follows from the above description that $NH = \prod_{v \in \P_H^+} \rist_G(v)' \times H$. From this we deduce that $H \mapsto NH$ is continuous and is injective in restriction to each fiber of the map that associate to every $H$ the corresponding partitions $\P_H^+\sqcup \P_H^-$. Since there are finitely many such fibers, this map is finite-to-one. Since the natural map from $\sub(G/N)$ to $\sub(G)$ is a $G$-equivariant homeomorphism onto its image, the statement follows. 

If $G$ is a branch group, the quotient $G/N$ is virtually abelian by Lemma \ref{l-Gri-branch}. Hence the image of $\H$ in $\sub(G/N)$ is finite, and therefore $\H$ is also finite.
\end{proof}

\begin{remark}
When the URS $\mathcal{H}$ is finite, the conclusion of Theorem \ref{t-wb-urs} is equivalent to a result of  Garrido and Wilson \cite[Theorem 1.2]{Gar-Wil} on the structure of subgroups of branch groups with a finite conjugacy class.
\end{remark}

\subsection{The main structure theorem for non-topologically free minimal actions} \label{subsec-lower-sc}

In this section, we prove  Theorem \ref{t-wb-urs}. In fact we will prove the following more general theorem, which is the core of this section:

\begin{thm} \label{t-wb-structure-lsc}
Let $G\leq \aut(T)$ be a weakly branch group, $X$ a minimal compact $G$-space, and $\Phi \colon X \to \sub(G)$ a lower semi-continuous $G$-equivariant map. Then the following hold:
\begin{enumerate}[label=\roman*)]
	\item The map $\Psi\colon X \to \F(\partial T)$, $x \mapsto \fix(\Phi(x))$, is continuous.
	\item There exists a continuous $G$-map $\sigma: F_X \to \Pi(T)$, where $F_X$ is the image of $\Psi$, such that: \begin{enumerate}[label=\alph*)] \item $\pi_\perp \circ \sigma = \operatorname{id}$, i.e.\ $\sigma$ is a section of the projection $\pi_\perp$; \item for every $x \in X$ and $v\in \sigma(\fix(\Phi(x)))$ we have $ \rist_G(v)' \le  \Phi(x)$.
	\end{enumerate}
\end{enumerate}
\end{thm}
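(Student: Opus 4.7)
The plan is to prove the two assertions in turn.

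For part (i), I would begin by observing that $\Psi$ is upper semi-continuous: this is immediate from Lemma \ref{lem-compos-fix-lsc-usc} applied to the LSC map $\Phi$ with target $Y = \partial T$. Continuity of $\Psi$ then follows from the profiniteness of the $G$-action on $\F(\partial T)$ (Lemma \ref{lem-2^X-odom}) combined with minimality of $X$, via a general principle that I would isolate as a separate preliminary proposition (the analogue of Proposition \ref{p-fix-usc} referenced in the discussion): any USC $G$-equivariant map from a minimal compact $G$-space into a profinite $G$-space is automatically continuous, because such a target carries a basis of clopen $G$-invariant finite partitions, whose USC pullbacks must be $G$-invariant collections of closed sets that are forced to be clopen by minimality.

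For part (ii), the first step is to extract a uniform confining set $P$ for the family $\{\Phi(x)\}_{x \in X}$. Lower semi-continuity of $\Phi$ makes each set $\{x : g \in \Phi(x)\}$ with $g \neq 1$ open, and $G$-equivariance makes $\{x : \Phi(x) = \{1\}\}$ a $G$-invariant set, hence all of $X$ or empty by minimality; in the non-trivial case compactness then yields a finite $P \subset G \setminus \{1\}$ that confines every $\Phi(x)$. I would define $\sigma \colon F_X \to \Pi(T)$ by letting $\sigma(y)$ consist of the maximal vertices $v$ (closest to the root) such that $\partial T_v \cap y = \varnothing$ and $\rist_G(v)' \leq \Phi(x)$ for every $x \in \Psi^{-1}(y)$. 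Then $\sigma(y)$ is automatically a collection of pairwise independent vertices, both defining conditions depend only on $y$ so $\sigma$ is $G$-equivariant, and continuity of $\sigma$ as a map into $\Pi(T)$ will follow from continuity of $\Psi$ (part (i)) together with the fact that, for fixed $v$, the condition $v \in \sigma(y)$ depends only on the intersection of $y$ with finitely many cylinder sets up to a bounded level.

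The core step is to show that $\pi_\perp(\sigma(y)) = y$, i.e.\ that every $\xi \in \partial T \setminus y$ has an ancestor in $\sigma(y)$. Fixing $\xi$ and choosing an ancestor $v_0$ with $\partial T_{v_0} \cap y = \varnothing$, the task is to produce some $v$ on the geodesic from $v_0$ to $\xi$ with $\rist_G(v)' \leq \Phi(x)$ for every $x \in \Psi^{-1}(y)$. I would apply the $n$-tuple version Theorem \ref{thm-conf-abstract} to a tuple $(\Phi(x_1), \ldots, \Phi(x_n))$ for well-chosen $x_i \in \Psi^{-1}(y)$, with a displacement configuration for $P$ built inside $\partial T_{v_0}$ via Lemma \ref{l-partie-finie} (which applies since $\partial T_{v_0}$ is perfect), after swapping out the elements of $P$ that happen to fix $\partial T_{v_0}$ pointwise using the LSC of $\Phi$ and the fact that $\partial T_{v_0} \cap y = \varnothing$ forces each $\Phi(x_i)$ to contain elements moving points inside $\partial T_{v_0}$. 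The conclusion of the theorem—a non-trivial subgroup of some $\Phi(x_k)$ sitting inside $\rist_G(w_0)$ for some $w_0$ below $v_0$, normalized by a finite-index subgroup of $\rist_G(w_0)$—can be upgraded via iterated applications of the normal-subgroup commutator lemma \ref{lem-double}, following the pattern at the end of the proof of Theorem \ref{thm-doublecomm-conf-homeo}, to yield $\rist_G(v)' \leq \Phi(x_k)$ for some $v$ below $v_0$.

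The main obstacle will be to run this local commutator argument uniformly across the full fiber $\Psi^{-1}(y)$, rather than for just finitely many of its points. I expect to handle this via a compactness reduction exploiting that each condition $\rist_G(v)' \leq \Phi(x)$ is $G_\delta$ in $x$ by LSC (so the set of $x$ failing it is $F_\sigma$), combined with the flexibility of the $n$-tuple formulation, which simultaneously displaces elements with respect to all the chosen $\Phi(x_i)$ at once. Iterating the construction downward in $T$, if necessary, would give the desired vertex that works for every $x \in \Psi^{-1}(y)$ simultaneously.
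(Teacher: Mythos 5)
Part (i) of your argument is essentially the paper's: USC from Lemma \ref{lem-compos-fix-lsc-usc}, and the upgrade to continuity uses exactly the finite-index/minimality argument (Lemma \ref{l-minimal-fi} plus the level stabilizers), so no issue there.

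Part (ii) has a genuine gap, and it is concentrated in the step where you want to get $\rist_G(v)' \le \Phi(x)$ uniformly over the entire fiber $\Psi^{-1}(y)$. The paper's proof is organized around a local uniformity statement (Proposition \ref{prop-urs-moves-rist}): for each $x$ and each $\xi$ not fixed by $\Phi(x)$, one finds a vertex $w$ above $\xi$ \emph{and a clopen neighbourhood $Z$ of $x$} such that $\rist_G(w)' \le \Phi(z)$ for all $z \in Z$. That clopen-neighbourhood uniformity is the engine of the whole proof: it is obtained by passing to the finite-index stabilizer $L = \st_G(n)$, taking $Z$ to be the $L$-orbit closure of $x$, and observing that the \emph{same} confining set $P = \{\gamma_i h \gamma_i^{-1}\}$ works for every $z \in Z$; then Proposition \ref{p-first-step} (applied twice, with $h$ and $h^2$, to produce a commuting pair and a normal subgroup $C_z$ of bounded index) gives a uniform conclusion on $Z$. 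Your plan lacks this. The compactness reduction you sketch cannot substitute for it: the set $\{x : \rist_G(v)' \le \Phi(x)\}$ is only $G_\delta$ by LSC, not open, so its complement is not closed and you cannot extract a finite subcover of the fiber from it. Likewise, applying the $n$-tuple Theorem \ref{thm-conf-abstract} to a tuple $(\Phi(x_1),\ldots,\Phi(x_n))$ does not "simultaneously displace": the conclusion of that theorem produces a subgroup inside a \emph{single} $H_k$, so you still only control one $\Phi(x_k)$ at a time, and the fiber is typically uncountable.

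A secondary but related gap is in the continuity of your $\sigma$. You define $\sigma(y)$ intrinsically via maximal vertices satisfying "$\rist_G(v)' \le \Phi(x)$ for all $x \in \Psi^{-1}(y)$," and assert that membership $v \in \sigma(y)$ depends only on $y$ through finitely many cylinder intersections. That assertion is unjustified: the condition depends on the whole fiber $\Psi^{-1}(y)$, which is a closed subset of $X$ varying only upper semi-continuously with $y$. The paper sidesteps this by \emph{choosing} a level $n(v)$ for each maximal cylinder $v$ (using compactness of the clopen set $O_v^X \subset X$ together with Proposition \ref{prop-urs-moves-rist}) and defining $\sigma(C) = \bigcup_{v\in\Omega_C}\L_v(n(v))$; continuity is then built in because $\Omega_C$ depends on $C$ only through the clopen sets $O_v$. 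Without the local uniformity result, you have no mechanism to produce such a uniform $n(v)$, and without it your $\sigma$ is not evidently continuous.
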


\begin{remark}
This theorem implies Theorem \ref{t-wb-urs} by taking $X=\H$ a URS of $G$ and $\Phi\colon \H \hookrightarrow \sub(G)$ the inclusion (which is thus continuous). The main advantage of stating this result in this more general setting is that it  also leads to a rigidity result for non-topologically free minimal $G$-actions on compact spaces, by considering the map $\Phi(x)=G_x^0$. In particular, it implies that every non-topologically free minimal action of $G$ must factor onto a non-trivial closed $G$-invariant subspace of $\F(\partial T)$ (see \S \ref{subsec-factor-profinite}).
\end{remark}



We will  need the following elementary lemmas.

 \begin{lem} \label{l-wb-rigid-move-fi}
Let $G\le \aut(T)$ be a weakly branch group. Let $v \in T$, and $k \ge 1$. Then there exists $m=m(v, k)$ such that  every orbit  of the  action of $\rist_G(v)$ on the $m$th level of $T_v$ has cardinality greater or equal than $k$. \end{lem}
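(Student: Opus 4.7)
The plan is to argue by contradiction and reduce the general statement to the key case $k=2$, namely the claim that $\rist_G(w)$ has no fixed point in $\partial T_w$ for every vertex $w \in T$.

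For the reduction, suppose the conclusion fails for some $k$, so that at each level $m$ of $T_v$ there exists a $\rist_G(v)$-orbit of cardinality less than $k$. Since projection from level $m+1$ to level $m$ sends orbits to orbits of equal or smaller size, the small orbits form an inverse system of non-empty finite sets, and K\"onig's lemma produces a coherent sequence $(O_m)$, hence a boundary point $\xi \in \partial T_v$ whose level-$m$ ancestor $\xi_m$ lies in $O_m$ for every $m$. The stabilisers $\st_{\rist_G(v)}(\xi_m)$ form a decreasing chain of subgroups of index $<k$, and since the indices form a non-decreasing bounded sequence, the chain stabilises from some $m_0$ onward, giving $\st_{\rist_G(v)}(\xi)=\st_{\rist_G(v)}(\xi_{m_0})$ and a boundary orbit $F=\rist_G(v)\cdot\xi$ of size $r=|O_{m_0}|<k$. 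Taking $m_0$ also large enough so that the $r$ points $\xi_1,\ldots,\xi_r$ of $F$ have pairwise distinct level-$m_0$ ancestors $u_1,\ldots,u_r$, the subgroup $\rist_G(u_1)\le \rist_G(v)$ fixes each $\xi_j$ for $j\neq 1$ automatically (as $\xi_j\notin \partial T_{u_1}$), and its orbit of $\xi_1$ lies in $F\cap \partial T_{u_1}=\{\xi_1\}$. Hence $\rist_G(u_1)$ fixes $\xi_1\in \partial T_{u_1}$, contradicting the key case applied to $u_1$.

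For the key case, suppose $\rist_G(w)$ fixes some $\xi\in\partial T_w$ and let $w_1$ be the child of $w$ on the geodesic to $\xi$. Then $\rist_G(w_1)\le \rist_G(w)$ also fixes $\xi$, and $\rist_G(w_1)$ is a non-trivial normal subgroup of the stabiliser $G_{w_1}$. Level-transitivity of $G$ on $T$ forces level-transitivity of $G_{w_1}$ on $T_{w_1}$, so the $G_{w_1}$-action on $\partial T_{w_1}$ is minimal and the orbit $G_{w_1}\cdot \xi$ is dense. The normal core $N$ of $\st_{G_{w_1}}(\xi)$ in $G_{w_1}$ therefore contains $\rist_G(w_1)$ and is non-trivial, and applying Grigorchuk's commutator lemma (Lemma \ref{l-Gri-branch}) to the level-transitive action of $G_{w_1}$ on $T_{w_1}$ yields some $n\ge 1$ such that $\rist_G(u)'\le N$ for every vertex $u$ at level $n$ in $T_{w_1}$. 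Since each such $\rist_G(u)'$ then fixes the dense orbit $G_{w_1}\cdot\xi$ pointwise, by continuity it acts trivially on all of $\partial T_{w_1}$, and hence on all of $\partial T$ (its support lies in $T_u\subseteq T_{w_1}$). Faithfulness of the $G$-action on $\partial T$ forces $\rist_G(u)'=1$, so $\rist_G(u)$ is abelian; but $\rist_G(u)$ is non-trivial by the weakly branch hypothesis and has trivial FC-center by Lemma \ref{lem-rist-FC}, so cannot be abelian: the desired contradiction.

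The most delicate ingredient is the compactness/K\"onig step in the reduction, which requires combining the monotonicity of orbit sizes under projection with the stabilisation of chains of finite-index subgroups having uniformly bounded index (in order to convert ``small orbit at every level'' into a single ``small boundary orbit''). Once this is in place, the key case is a fairly direct use of Lemma \ref{l-Gri-branch}, exploiting the minimality of the $G_{w_1}$-action on $\partial T_{w_1}$ together with the non-abelianness of rigid stabilisers forced by the trivial FC-center in micro-supported groups.
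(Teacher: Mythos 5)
Your proof takes a genuinely different route from the paper: the paper simply cites \cite[Lemma~3.3]{Barth-Grig-hecke} (which asserts that every $\rist_G(v)$-orbit in $\partial T_v$ is infinite) and invokes compactness of $\partial T_v$, whereas you give a self-contained argument. Your reduction step — passing from "small orbit at every level" to a small boundary orbit via K\"onig's lemma, the stabilisation of the bounded-index chain $\st_{\rist_G(v)}(\xi_m)$, and the ancestor-separation trick to pin down a fixed point for some $\rist_G(u_1)$ — is correct and is in effect a contrapositive form of the compactness argument the paper alludes to. The key case (no $\rist_G(w)$ fixes a point of $\partial T_w$) is precisely what the cited reference establishes, so you are reproving that ingredient rather than quoting it; that is a legitimate and more self-contained route.

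There is, however, a real gap in your proof of the key case: you apply Lemma~\ref{l-Gri-branch} to the action of $G_{w_1}$ on $T_{w_1}$, but that lemma is stated for a subgroup $\Gamma\le\aut(T)$, i.e.\ for a \emph{faithful} level-transitive action. The action of $G_{w_1}$ on $T_{w_1}$ need not be faithful (there may be elements of $G_{w_1}$ acting trivially on $T_{w_1}$ but nontrivially elsewhere in $T$), and in fact the conclusion you are heading toward — that $N$ acts trivially on $\partial T_{w_1}$ — is exactly the scenario in which the commutator lemma gives nothing. The step can be repaired by passing to the faithful quotient and working modulo the kernel, but it is much cleaner to skip Grigorchuk's lemma entirely: since $\rist_G(w_1)$ is a normal subgroup of $G_{w_1}$ contained in $\st_{G_{w_1}}(\xi)$, normality forces it to fix the entire orbit $G_{w_1}\cdot\xi$; this orbit is dense in $\partial T_{w_1}$ by level-transitivity of $G_{w_1}$ on $T_{w_1}$, so by continuity $\rist_G(w_1)$ acts trivially on $\partial T_{w_1}$; as $\rist_G(w_1)$ already acts trivially off $\partial T_{w_1}$, it acts trivially on all of $\partial T$ and hence equals $\{1\}$ by faithfulness of $G$ on $\partial T$, contradicting the weakly branch hypothesis. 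This direct argument makes the FC-center input (Lemma~\ref{lem-rist-FC}) unnecessary as well. I would recommend rewriting the key case along these lines; the reduction step is fine as written.
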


\begin{proof}
By \cite[Lemma 3.3]{Barth-Grig-hecke} all the orbits of $\rist_G(v)$ in $\partial T_v$ are infinite, so the statement easily follows by compactness of $\partial T_v$.
\end{proof}


%

%

The following is a strengthening of the previous lemma. 

\begin{lem}\label{l-wb-rigid-move-derived}
Let $G\le \aut(T)$ be a weakly branch group. Let $v \in T$, and $k \ge 1$. Then there exists $m=m(v, k)$ such that for every subgroup $D$ of $\rist_G(v)$ of index at most $k$ in $\rist_G(v)$, the action of $D'$ on the $m$th level of $T_v$ does not fix any vertex. 
\end{lem}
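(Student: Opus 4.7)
The plan is to apply Lemma \ref{l-wb-rigid-move-fi} at two successive scales inside $T_v$ and then, for each vertex $w$ at level $m$, to produce an explicit commutator in $D'$ that moves $w$. This commutator will be built from one element supported on a well-chosen block $T_u$ containing $w$ and a second element in $D$ that permutes $T_u$ with a sibling block; the fact that the two elements ``intertwine'' only in $T_u$ will make the commutator act on $T_u$ as the first element alone.

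Concretely, I would first use Lemma \ref{l-wb-rigid-move-fi} applied at $v$ with parameter $k+1$ to choose a level $m_1$ such that every $\rist_G(v)$-orbit on level $m_1$ of $T_v$ has cardinality $>k$. Then for each of the finitely many vertices $u$ at level $m_1$, apply the same lemma at $u$ to obtain a depth $m_2(u)$ with the analogous property inside $T_u$. Setting $m_2=\max_u m_2(u)$ and $m=m_1+m_2$ provides the candidate level.

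Now fix $D\le R:=\rist_G(v)$ of index $\le k$ and a vertex $w$ at level $m$, and let $u$ be the ancestor of $w$ at level $m_1$. Since the $R$-orbit of $u$ has size $>k\ge [R:D]$, $D$ cannot be contained in the stabilizer of $u$, so there is $b\in D$ with $b(u)\ne u$; then $u^*:=b^{-1}(u)$ is at level $m_1$ and distinct from $u$, whence $T_{u^*}\cap T_u=\emptyset$. Similarly, $D\cap \rist_G(u)$ has index $\le k$ in $\rist_G(u)$ while the $\rist_G(u)$-orbit of $w$ in $T_u$ has size $>k$, which yields an element $a\in D\cap \rist_G(u)$ with $a(w)\ne w$. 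Since $a$ is supported in $T_u$ it fixes $T_{u^*}$ pointwise, and for any $x\in T_u$ we have $b^{-1}(x)\in T_{u^*}$, giving the key identity
\[ [a,b](x)\;=\;a\,b\,a^{-1}\,b^{-1}(x)\;=\;a\,b\,b^{-1}(x)\;=\;a(x). \]
In particular $[a,b](w)=a(w)\ne w$, and as $[a,b]\in D'$ this shows $D'$ does not fix $w$.

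The main obstacle overcome by this argument is that a direct application of Lemma \ref{l-wb-rigid-move-fi} only produces elements of $D$ itself moving vertices at deep levels, which is strictly weaker than the desired statement about $D'$: the action of $D$ on a single orbit could a priori be abelian regular, in which case $D'$ would act trivially on that orbit. The two-scale setup above is precisely what is needed to circumvent this: by separating the action ``moving the block $T_u$'' from the action ``moving $w$ inside $T_u$'', the commutator identity automatically converts the existence of large $D$-orbits at two nested scales into non-trivial behaviour of $D'$ on every vertex at level $m$.
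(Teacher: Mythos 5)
Your proof is correct and follows essentially the same route as the paper's: two applications of Lemma \ref{l-wb-rigid-move-fi} at nested scales (first at $v$, then at the vertices $u$ at level $m_1$), the orbit--stabilizer observation that a subgroup of index $\le k$ cannot fix a vertex whose orbit has size $>k$, and a commutator of the form $[a,b]$ with $a\in D\cap\rist_G(u)$ supported in $T_u$ and $b\in D$ moving $T_u$ off itself, which restricts to $a$ on $T_u$. The paper's write-up suppresses some of the bookkeeping (e.g.\ it phrases the first step directly as ``$D$ fixes no vertex at level $m'$'' and has a harmless typo $\rist_G(v)\cap D$ for $\rist_G(w)\cap D$), but the ideas and the final commutator computation are identical to yours.
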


\begin{proof}
By Lemma \ref{l-wb-rigid-move-fi}, there exists $m'$ depending only on $v$ and $k$ such that the action of $D$ on the $m'$-th level of $T_v$ does not  fix any vertex. For every $w\in T_v$, the intersection $D\cap \rist (w)$ has index at most $k$ in $\rist_G(w)$. Thus we can find $m''(w)$ such that the action of $\rist_G(v) \cap D$ on the $m''(w)$ level of $T_w$ does not fix any vertex. 
We claim that $m=m'+\max_w m''(w)$ (where the maximum is taken over $w\in T_v$ at the $m'$ level of $T_v$) satisfies the desired conclusion. Indeed let $u\in T_v$ be at level $m$, and let $w\in T_v$ be the unique vertex above $u$ at level $m'$. By construction we can find $g\in D$ such that $g(w) \neq w$, as well as $h\in \rist_G(w) \cap D$ such that $h(u) \neq u$. Then the commutator $[g, h]$ will satisfy $[g, h](u)= h(u) \neq u$, showing that $D'$ does not fix $u$. \qedhere

\end{proof}

The following proposition is the core of the proof of Theorem \ref{t-wb-structure-lsc}. This is the part of the proof that is based on the results of Section \ref{sec-confined}.  

\begin{prop} \label{prop-urs-moves-rist}
Let $G$ be a subgroup of $\aut(T)$, $X$ a minimal compact $G$-space, and $\Phi\colon X\to \sub(G)$ a lower semi-continuous $G$-map. For every $x\in X$ and every $\xi \in \partial T$ that is not fixed by $\Phi(x)$, there exist a vertex $w$ above $\xi$ and a clopen neighbourhood $Z$ of $x$ in $X$ such that $\Phi(z)$ contains $\rist_G(w)'$ for every $z\in Z$. 
\end{prop}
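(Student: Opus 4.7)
The plan is to apply the commutator lemma for confined subgroups (Theorem \ref{thm-conf-intro}) to $\Phi(x)$, equipped with a displacement configuration whose cylinder sets sit above $\xi$, and then to upgrade the resulting containment from the point $x$ to a neighbourhood of $x$ by lower semi-continuity. First, since $\Phi(x)$ does not fix $\xi$, I would pick $g \in \Phi(x)$ with $g\xi \neq \xi$; by LSC the set $U_g = \{z \in X : g \in \Phi(z)\}$ is open and contains a clopen neighbourhood $Z_0$ of $x$. Then by minimality of $X$ together with compactness, there exist finitely many $h_1 = 1, h_2, \ldots, h_r \in G$ with $X = \bigcup_i h_i Z_0$; by $G$-equivariance of $\Phi$, the set $P := \{g_i := h_i g h_i^{-1}\}_{i=1}^r$ is then a confining subset for $\Phi(y)$ for every $y \in X$, and in particular for $\Phi(x)$.

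\textbf{Displacement configuration and commutator lemma.} Because $g\xi \neq \xi$, the subtrees $T_w$ and $gT_w$ are disjoint for every sufficiently deep ancestor $w$ of $\xi$. I would fix such a $w$ at a level $n$, put $w_i := h_i w$ (an ancestor of $h_i \xi$) and $\Omega_{g_i} := \partial T_{w_i}$. Each of conditions (C1), (C3), (C4) of Definition \ref{def-displace} is then a finite system of inequalities between vertices under the $g_i^{\pm 1}$, and all of them hold once $n$ is large enough, so $\{\Omega_{g_i}\}$ is a displacement configuration for $P$. In the interesting case where $G$ acts micro-supportedly on a neighbourhood of $\xi$ (otherwise the conclusion is trivial), Lemma \ref{lem-rist-FC} supplies the trivial-$\mathrm{FC}_{\leqslant r}$-center hypothesis needed by Theorem \ref{thm-conf-intro}, which produces a non-trivial subgroup $N \le \Phi(x) \cap \rist_G(\partial T_{w_\ast})$ for some $w_\ast$ among the $w_i$'s, normalised by a finite-index subgroup of $\rist_G(\partial T_{w_\ast})$. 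Two applications of the classical double commutator lemma (Lemma \ref{lem-double}), exactly as in the proof of Theorem \ref{thm-doublecomm-conf-homeo}, then upgrade this to $\rist_G(w)' \le \Phi(x)$ for some vertex $w$ inside $\partial T_{w_\ast}$.

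\textbf{Passing to a neighbourhood.} Once $\rist_G(w)' \le \Phi(x)$ is established with $w$ above $\xi$, the last assertion of Proposition \ref{prop-confined-abstract} yields a \emph{finitely generated} subgroup $L \le \Phi(x)$ built from a finite generating set of $p_{w}(D_{g,1})$ (finite generation being available from Theorem \ref{t-Francoeur} in the branch case, and sufficient for the application to URSs in \S \ref{subsec-lower-sc}). By LSC of $\Phi$, the set $\{z \in X : S \subset \Phi(z)\}$ is open for every finite $S \subset G$; applied to a finite generating set of $L$, which normally generates $\rist_G(w)'$, this produces an open neighbourhood of $x$ on which $\Phi$ contains $\rist_G(w)'$. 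Intersecting with a clopen neighbourhood of $x$ gives the desired clopen $Z$.

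\textbf{Main obstacle.} The main difficulty, and the only step that does not reduce to a mechanical combination of Theorem \ref{thm-conf-intro} with the classical double commutator lemma, is to force $w$ to be above $\xi$ itself rather than above some $h_j \xi$ with $j > 1$. I expect this to be handled by invoking the more precise Proposition \ref{prop-confined-abstract} with the specific choice $\sigma = g = g_1$: the vertex $\rho \in M_g$ it returns is then controlled by the part of the displacement configuration lying above $\xi$, and by choosing $w$ deep enough in the tree, one can ensure that the support analysis of the witnessing element $f \in A_{g,1}^g$ in the proof of Proposition \ref{prop-confined-abstract} only allows the output $\rho = g$ itself.
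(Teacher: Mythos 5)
Your plan correctly identifies the two ingredients (the commutator lemma and lower semi-continuity), but the execution has a genuine gap precisely at the step you flag as the ``main obstacle'', and the paper's actual argument avoids the obstacle by a different reduction that you do not make.

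The key move in the paper is to restrict from the outset to the level stabilizer $L = \st_G(n)$ (where $n$ is the level of a vertex $v$ above $\xi$ moved by some $h\in\Phi(x)$) and to the clopen piece $Z=\overline{L\cdot x}$, which is a minimal $L$-space by Lemma~\ref{l-minimal-fi}. This forces every element $\gamma_i$ used to build the confining set $P=\{\sigma_i:=\gamma_i h\gamma_i^{-1}\}$ to satisfy $\gamma_i\in L$, so $\gamma_i(v)=v$ and hence $\sigma_i(v)=h(v)$ for \emph{all} $i$. One can then take the single cylinder $\Omega_{\sigma_i}=\partial T_v$ for every $i$; this satisfies (C1) and (C2) trivially, and only Proposition~\ref{p-first-step} (not Theorem~\ref{thm-conf-intro}) is needed. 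By contrast, your confining set $P=\{g_i=h_i g h_i^{-1}\}$ is built from arbitrary $h_i\in G$ (using minimality of the full $G$-action), so the translates $h_i\xi$, hence the cylinders $\Omega_{g_i}=\partial T_{h_i w}$, are scattered over $\partial T$. Your claim that (C3) and (C4) hold ``once $n$ is large enough'' is not correct in general: as $n\to\infty$, (C3) reduces to the condition that for all $i,j$, either $h_i g h_i^{-1}$ fixes $h_j\xi$ or $h_i g h_i^{-1}h_j\xi\notin\{h_1\xi,\dots,h_r\xi\}$, and there is no reason this should hold; the finitely many coincidences that could occur among the points $h_i g^{\pm 1} h_i^{-1} h_j\xi$ and $h_k\xi$ do not disappear when $w$ is taken deeper. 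You could of course fall back on Lemma~\ref{l-partie-finie} to manufacture \emph{some} displacement configuration, but then you lose all control over the location of the output cylinder, and the obstacle you name in your last paragraph becomes fatal: in Proposition~\ref{prop-confined-abstract} you do not get to choose $\sigma$ (only those $\sigma$ with $D_{\sigma,k}$ of small index in $R$ are usable, and which ones these are is out of your hands), nor do you get to choose $\rho\in M_\sigma$, so nothing forces the resulting vertex to sit above $\xi$ rather than above some $h_j\xi$.

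The paper's argument resolves this differently. After applying Proposition~\ref{p-first-step} with $\Omega_{\sigma_i}=\partial T_v$, the output already lives in $\rist_G(v)$ with $v$ above $\xi$. To promote this to a derived subgroup $\rist_G(w)'$ with $w$ still above $\xi$, the paper reruns the argument with $h^2$ in place of $h$ (after arranging that $v, h(v), h^2(v)$ are pairwise distinct), producing two subgroups $A_z, B_z\le\Phi(z)$ supported in $\partial T_v\sqcup\partial T_{h^{-1}(v)}$ and $\partial T_v\sqcup\partial T_{h^{-2}(v)}$ respectively; the fact that these supports meet only in $\partial T_v$ makes the commutator $[a,b]$ of lifts of elements of $p_{\partial T_v}(A_z)\cap p_{\partial T_v}(B_z)$ land in $\Phi(z)$. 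This yields a normal subgroup $C_z'\trianglelefteq\rist_G(v)$ contained in $\Phi(z)$, of index bounded uniformly in $z\in Z$, and Lemmas~\ref{l-wb-rigid-move-derived} and~\ref{lem-double} then pin down a single vertex $w$ above $\xi$ with $\rist_G(w)'\le C_z'\le\Phi(z)$ for \emph{all} $z\in Z$ simultaneously. This uniformity over $Z$ is obtained at no extra cost, which makes your separate ``passing to a neighbourhood'' step unnecessary; that step would in any case require $\rist_G(w)'$ to be finitely generated (so would need Theorem~\ref{t-Francoeur}, hence a finitely generated branch $G$), whereas Proposition~\ref{prop-urs-moves-rist} is stated and used for general $G\le\aut(T)$.

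In short: the idea of combining the commutator lemma with lower semi-continuity is right, but the paper's crucial device — reducing to the level stabilizer so the confining elements all displace the same cylinder the same way, then a two-step ($h$ and $h^2$) commutator argument with uniform index control over the clopen piece $Z$ — is absent from your plan, and the localization above $\xi$ that you flag as the difficulty remains unproved in your sketch.
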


\begin{proof}
We choose $h \in \Phi(x)$ and $v\in T$ above $\xi$ such that $h(v) \neq v$. Let $n$ be the level of $v$ and $L$ the stabilizer of level $n$ in $G$. Let $Z$ be the closure of the $L$-orbit of $x$ in $X$. Then $Z$ is a clopen subset of $X$ and $L$ acts minimally on $Z$ (Lemma \ref{l-minimal-fi}). By lower semi-continuity of $\Phi$, the set $U=\{z\in Z \colon h\in \Phi(z)\}$ is open in $Z$, so that by minimality and compactness there exist $\gamma_1, \ldots, \gamma_r \in L$ such that $Z = \bigcup_i \gamma_i (U)$.

Fix $z \in Z$. It follows that for every $g \in L$ there is $i$ such that $g\Phi(z)g^{-1}=\Phi(g(z))$ contains $\sigma_i:=\gamma_i h \gamma_i^{-1}$. That is,  the set $P=\{\sigma_1, \ldots, \sigma_r\}$ is confining for $(\Phi(z), L)$. Note that by construction we have $\sigma_i(v)=h(v)$ for every $i=1,\ldots, r$. Thus if we set $\Omega_{\sigma_i}= \partial T_v$ for every $i$, conditions (C1) and (C2) of Definition \ref{def-C1-C2} are satisfied. By applying Proposition \ref{p-first-step} with $n=1$ to the pair $(\Phi(z), L)$, we deduce that there exists a subgroup $A_z$ of $\Phi(z)$ that fixes $v$ and $h(v)$, acts trivially on the complement of $\partial T_v \sqcup \partial T_{h^{-1}(v)}$, and such that $D(A_z):= p_{\partial T_v}(A_z)$ is a subgroup of $\rist_G(v)$ of index at most $r$.  Note that since $D(A_z)$ has finite index in $\rist_G(v)$,  Lemma \ref{l-wb-rigid-move-derived} implies that the group $D(A_z)'$ does not fix the point $\xi$.  As a consequence, there exists $d\in D(A_z)$ such that $\xi, d(\xi), d^2(\xi)$ are pairwise distinct (indeed otherwise the image of the permutation representation of $D(A_z)$ on the orbit of $\xi$ would be an abelian 2-group, so that $D(A_z)'$ would fix $\xi$).  Choosing $a\in A_z$ such that $p_{\partial T_v}(a)=d$, we see that $A_z$ (and hence $\Phi(z)$) also contains elements with this property.  In particular this is true for $z=x$, so that  we could have chosen to begin with the  element $h$ and the vertex $v$ so that $v, h(v),h^2(v)$ are pairwise distinct. In the sequel we assume that this is the case.

Now we can apply the argument of the previous paragraph with $h^2$ instead to $h$, and we find for every $z\in Z$  a subgroup $B_z\le \Phi(z)$ that fixes $v$ and $h^{-2}(v)$, acts trivially on the complement of $\partial T_v \cup \partial T_{h^{-2}(v)}$, and such that $D(B_z):= p_{\partial T_v}(B_z)$ is a also a subgroup of index at most $r$ in $\rist_G(v)$.   Note that $D(A_z) \cap D(B_z)$ has index at most $r^2$ in $\rist_G(v)$, and thus contains  a subgroup $C_z$ which is normal in $\rist_G(v)$ and has index at most $(r^2)!$ (observe that although $C_z$ might depend non-trivially on $z\in Z$, the bound on its index does not). For every $c_1, c_2\in C_z$, we choose $a \in A_z, b \in B_z$ such that $p_{\partial T_v}(a)=c_1$ and $p_{\partial T_v}(b)=c_2$. Using that $a,b$ are supported respectively in $\partial T_{v} \sqcup \partial T_{h^{-1}(v)}$ and $\partial T_{v} \sqcup \partial T_{h^{-2}(v)}$, and since $h(v)\neq  h^2(v)$, it follows that $[c_1, c_2] = [a, b]\in \Phi(z)$. Since $c_1, c_2$ were arbitrary, this shows that the derived subgroup $C_z'$ is contained in $\Phi(z)$ for every $z\in Z$. Note also that $C_z'$ is still normal in $\rist_G(v)$. 
 
Since the index of $C_z$ in $\rist_G(v)$ is bounded uniformly on $z$,  by Lemma \ref{l-wb-rigid-move-derived} we can find a vertex $w$ above $\xi$ and below $v$ which is moved by $C_z'$ for every $z\in Z$. Applying Lemma \ref{lem-double} to $C_z'$, we have that $\rist_G(w)'\le C_z' \le \Phi(z)$, concluding the proof.  \qedhere 
\end{proof}

\begin{prop} \label{p-fix-usc}
	Let $G$ be a subgroup of $\aut(T)$, $X$ a minimal compact $G$-space and $\Phi\colon X\to \sub(G)$ a lower semi-continuous $G$-map. Then the map $x\mapsto \fix(\Phi(x))$ is continuous.
\end{prop}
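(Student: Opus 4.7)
The claim has two parts: upper and lower semi-continuity of $\Psi := \fix \circ \Phi$. Upper semi-continuity is immediate from Lemma \ref{lem-compos-fix-lsc-usc}, which applies since $\Phi$ is lower semi-continuous and $\partial T$ is locally compact. I will concentrate on lower semi-continuity.

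My plan is to argue by contradiction. Assume lower semi-continuity fails. Then one finds a net $x_i \to x$ in $X$ and, by compactness of $\F(\partial T)$, a subnet along which $\Psi(x_i) \to F$ in the Hausdorff topology with $F \subsetneq \Psi(x)$ (we already have $F \subseteq \Psi(x)$ from the upper semi-continuity established above). Pick $\xi \in \Psi(x) \setminus F$ and a cylinder $\partial T_v \ni \xi$ with $\partial T_v \cap F = \varnothing$. Then $\fix(\Phi(x_i)) \cap \partial T_v = \varnothing$ eventually; in particular $\xi \notin \fix(\Phi(x_i))$ for $i$ large.

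For each such $i$, I would apply Proposition \ref{prop-urs-moves-rist} to the pair $(x_i, \xi)$ to obtain a vertex $w_i$ above $\xi$ and a clopen neighbourhood $Z_i \ni x_i$ such that $\rist_G(w_i)' \le \Phi(z)$ for every $z \in Z_i$. Inspecting the proof of Proposition \ref{prop-urs-moves-rist}, the neighbourhood $Z_i$ is precisely $\overline{L_i \cdot x_i}$, where $L_i = \st_G(n_i)$ is the level stabiliser for some integer $n_i \ge 0$ produced by the proof. I will then extract a further subnet so that the integers $n_i$ are eventually constant, equal to some $n$, and set $L := \st_G(n)$. Since $L$ has finite index in $G$, Lemma \ref{l-minimal-fi} partitions $X$ into finitely many clopen $L$-minimal components; because $x_i \to x$, eventually $x_i$ and $x$ lie in the same component, so $x \in \overline{L \cdot x_i} = Z_i$. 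Hence $\rist_G(w_i)' \le \Phi(x)$. But $\xi \in \partial T_{w_i}$, and applying Lemma \ref{l-wb-rigid-move-derived} with $D = \rist_G(w_i)$ shows that $\rist_G(w_i)'$ fixes no point of $\partial T_{w_i}$, in particular it does not fix $\xi$. This contradicts $\xi \in \fix(\Phi(x))$.

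The main obstacle is the step that allows one to take the levels $n_i$ to be bounded. Concretely, when applying Proposition \ref{prop-urs-moves-rist}, one should choose an element $h_i \in \Phi(x_i)$ that moves $\xi$ at the shallowest possible depth on the ray to $\xi$; the level $n_i$ is then at most the depth of the first moved vertex. When $\Phi(x_i) \not\le \st_G(v)$ the choice $n_i = n(v)$ always works and the argument closes. The delicate case is when $\Phi(x_i) \le \st_G(v)$, where one must combine the lower semi-continuity of $\Phi$ (which forces $\Phi(x_i)$ to contain larger and larger pieces of $\Phi(x)$, all of which fix $\xi$) with the hypothesis $\xi \notin \fix(\Phi(x_i))$ to keep the depths bounded along a subnet.
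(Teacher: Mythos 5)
Your handling of upper semi-continuity is correct, but the lower semi-continuity argument has a genuine gap, one you flag yourself without managing to close. The levels $n_i$ produced by the proof of Proposition \ref{prop-urs-moves-rist} are governed by how far down the ray to $\xi$ the group $\Phi(x_i)$ first moves a vertex, and nothing in your hypotheses bounds that depth: $\fix(\Phi(x_i)) \cap \partial T_v = \varnothing$ only tells you that $\Phi(x_i)$ moves $\xi$ somewhere, not at what level, and one can perfectly well have $\Phi(x_i) \le \st_G(k_i)$ with $k_i \to \infty$ while $\Phi(x_i)$ still fixes no point of $\partial T_v$. Your proposed fix via lower semi-continuity of $\Phi$ does not help: the elements of $\Phi(x)$ that eventually appear in $\Phi(x_i)$ all fix $\xi$, so they give no information about where $\Phi(x_i)$ starts to move the ray. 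There is also a scope problem: the route through Proposition \ref{prop-urs-moves-rist} relies on Lemma \ref{l-wb-rigid-move-derived} and hence on $G$ being weakly branch, whereas Proposition \ref{p-fix-usc} is stated for an arbitrary subgroup $G \le \aut(T)$.

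The paper's argument avoids all of this with a direct observation. Fix a vertex $v$ of level $n$ and set $\mathcal{U}_v = \{x \in X : \fix(\Phi(x)) \cap \partial T_v \neq \varnothing\}$. Upper semi-continuity (Lemma \ref{lem-compos-fix-lsc-usc}) shows $\mathcal{U}_v$ is closed. But $\mathcal{U}_v$ is also $\st_G(n)$-invariant, because $\st_G(n)$ preserves the cylinder $\partial T_v$ and the map $x \mapsto \fix(\Phi(x))$ is equivariant. Since $\st_G(n)$ has finite index and $X$ is minimal, Lemma \ref{l-minimal-fi} says the $\st_G(n)$-orbit closure of any point is clopen, so the closed $\st_G(n)$-invariant set $\mathcal{U}_v$ is a union of such clopen sets and therefore open. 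Since cylinders form a base for the topology of $\partial T$, openness of each $\mathcal{U}_v$ is exactly lower semi-continuity. This argument is elementary, needs no contradiction, no weakly branch hypothesis, and does not invoke Proposition \ref{prop-urs-moves-rist} or the commutator lemma machinery at all.
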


\begin{proof}
	This map is upper semi-continuous by Lemma \ref{lem-compos-fix-lsc-usc}, so we only have to prove lower semi-continuity. So given a vertex $v\in T$, we shall prove that the set $\mathcal{U}_v$ of points $x \in X$ such that $\Phi(x)$ fixes a point in $\partial T_{v}$ is open in $X$. Let $n$ be the level of $v$ in $T$, and $x \in \mathcal{U}_v$. Since the level stabilizer $\st_G(n)$ preserves $\partial T_{v}$, it is clear that $gx \in \mathcal{U}_v$ for all $g \in \st_G(n)$. Since $\st_G(n)$ has finite index in $G$, the closure in $X$ of the orbit of $x$, contains an open neighbourhood of $x$ (Lemma \ref{l-minimal-fi}). But by upper semi-continuity, the subset $\mathcal{U}_v$ is closed in $X$. It follows that $\mathcal{U}_v$ contains an open neighbourhood of $x$, and hence $\mathcal{U}_v$ is open, as desired. \qedhere
\end{proof}

We are now ready to prove Theorem \ref{t-wb-structure-lsc}.

\begin{proof}[Proof of Theorem \ref{t-wb-structure-lsc}]
	For simplicity in the proof we write $C_x = \fix(\Phi(x))$ for $x \in X$. The continuity of the map $ x\mapsto C_x$ has been established in Proposition \ref{p-fix-usc}. In particular the image $F_X$ of this map is closed in $\F(\partial T)$. We now construct the map $\sigma: F_X \to \Pi(T)$ satisfying the desired properties. 

For every vertex $v\in T$, the set $U_v=\{C\in F_X\colon C \cap \partial T_v=\varnothing\}$ is clopen in $F_X$. Let us denote by $O_v=U_v \setminus U_{v'}$, where $v'$ is the vertex above $v$: $O_v$ is the clopen subset of $F_X$ consisting of all $C\in F_X$ such that $\partial T_v \cap C=\varnothing$ and so that $\partial T_v$ is not strictly contained in another cylinder subset with this property. We also denote by $O_v^X=\{x\in X \colon C_x \in O_v\}\subset X$, i.e.  the preimage of $O_v$ under $x\mapsto C_x$, which is a clopen subset of $X$ by continuity of $x\mapsto C_x$. Finally for $v\in T$ let us denote by $\L_{v}(n)$ the $n$th level of the subtree $T_v$.

Note that for $x\in O_v^X$, the group $\Phi(x)$ does not admit fixed points in $\partial T_v$ by definition. Thus by Proposition \ref{prop-urs-moves-rist} and by compactness of $\partial T_v$,  we can find a neighbourhood $V_x\subset O_v^X$ of $x$, and an integer $n({v, x})$ such for every $z\in V_x$, the group $\Phi(z)$ contains $\bigoplus_{w\in \L_v({n(v, x)}}  \rist_G(w)' $. Using now compactness of the set $O_v^X$, we can find a single integer $n(v)$ such that $\bigoplus_{w\in \L_v({n(v)})}  \rist_G(w)' \le \Phi(x)$ for every $x\in O_v^X$.

For $C\in F_X$ we denote by $\Omega_C$ the set of vertices $v$ such that $C\in O_v$, and we set
\[\sigma(C) := \bigcup_{v \in \Omega_C} \L_v({n(v)}).\]
It follows from the definition and the fact that the sets $O_v$ are clopen in $F_X$ that the map $C \mapsto \sigma(C)$ is indeed continuous. Moreover for $C\in F_X$ we have 
\[\partial T \setminus C= \bigsqcup_{v \in \Omega_C} \partial T_v= \bigsqcup_{v \in \Omega_C} \bigsqcup_{w\in \L_v(n(v))} \partial T_w= \bigsqcup_{w\in \sigma(C)} \partial T_w,\] so that the map $\sigma$ is indeed a section of $\pi$, i.e.\ satisfies $\pi \circ \sigma = id$. Furthermore from the definition of $\L_v(n
(v))$ it is clear that $\rist_G(\sigma(C_x))' \le \Phi(x)$, so we have proved all the desired properties. \qedhere 
\end{proof}

\subsection{Factor maps to profinite $G$-spaces} \label{subsec-factor-profinite}

Theorem \ref{t-wb-structure-lsc} can be applied by choosing $\Phi\colon X\to \sub(G)$ to be the germ-stabilizer map $\Phi(x)=G^0_x$, with $X$  any minimal compact $G$-space. Of course, its conclusion is interesting only when the germ-stabilizers of the action are non-trivial, i.e. when the action is not topologically free.   This provides information on the structure of non-topologically free minimal compact $G$-spaces. The main consequence is that any non-topologically free minimal action that is faithful must factor onto a non-trivial profinite $G$-space, namely a subspace of $\F(\partial T)$:

\begin{cor} \label{c-wb-actions}
	Let $G\le \aut(T)$ be a weakly branch group, and $X$ be a compact  minimal $G$-space. Then at least one of the following hold:
	\begin{enumerate}[label=\roman*)]
		\item  \label{i-wb-nf} The action of $G$ on $X$ is not faithful.
		\item \label{i-wb-free} The action of $G$ on $X$ is topologically free.
		\item \label{i-wb-factor} There exists a factor map from $X$ to a compact minimal subset  $F_X\subset \F(\partial T)$, with $|F_X|\ge 2$. In particular, $X$ factors onto a non-trivial profinite $G$-space. 
	\end{enumerate}
\end{cor}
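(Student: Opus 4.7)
The plan is to apply Theorem \ref{t-wb-structure-lsc} with the germ-stabilizer map $\Phi\colon X \to \sub(G)$, $x\mapsto G_x^0$. Equivariance is immediate, and lower semi-continuity follows because for each $g \in G$ the set $\{x \in X : g \in G_x^0\}$ is the interior of $\fix_X(g)$, hence open. The theorem then yields a continuous $G$-equivariant map $\Psi\colon X \to \F(\partial T)$, $\Psi(x) = \fix(G_x^0)$, together with a continuous $G$-section $\sigma$ satisfying $\rist_G(v)' \le G_x^0$ for all $v \in \sigma(\Psi(x))$.

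Let $F_X := \Psi(X)$. As a continuous equivariant image of the minimal system $X$, $F_X$ is a compact minimal $G$-invariant subset of $\F(\partial T)$; it is moreover profinite, since $\F(\partial T)$ is profinite by Lemma \ref{lem-2^X-odom}. So $\Psi$ is a factor map onto a minimal profinite $G$-subspace of $\F(\partial T)$, and it only remains to rule out $|F_X| = 1$ under the assumption that neither (i) nor (ii) holds.

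So I would argue by contradiction: assume (i) and (ii) both fail and that $F_X = \{C\}$ for a single $G$-invariant closed subset $C \subseteq \partial T$. If $C = \partial T$, then $G_x^0$ fixes $\partial T$ pointwise for every $x$. Since $G\le \aut(T)$ with $T$ infinite and locally finite, the action on $\partial T$ is faithful, so $G_x^0 = 1$ for every $x$, contradicting the failure of (ii). Otherwise $C \subsetneq \partial T$, and the section satisfies $\sigma(C) \neq \varnothing$. Pick any $v \in \sigma(C)$: by the conclusion of Theorem \ref{t-wb-structure-lsc}, $\rist_G(v)' \le G_x^0$ for every $x \in X$, so every element of $\rist_G(v)'$ fixes a neighbourhood of every point of $X$ and therefore acts trivially on $X$. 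The failure of (i) then gives $\rist_G(v)' = 1$.

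The key step — and the one I expect to be the crux — is to eliminate this last possibility. Since $G$ is weakly branch, its action on $\partial T$ is micro-supported, and Lemma \ref{lem-rist-FC} gives that $\rist_G(v)$ has trivial FC-center. Being nontrivial with trivial FC-center, $\rist_G(v)$ cannot be abelian, so $\rist_G(v)' \neq 1$. This contradiction closes the case $C \subsetneq \partial T$, and hence establishes $|F_X| \ge 2$, proving (iii).
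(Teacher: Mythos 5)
Your proof is correct and follows essentially the same route as the paper: both apply Theorem \ref{t-wb-structure-lsc} to the germ-stabilizer map $\Phi(x)=G_x^0$, observe that $F_X$ is a minimal compact $G$-invariant subset of $\F(\partial T)$, and then rule out $|F_X|=1$ by the same dichotomy. The paper presents this slightly more explicitly by noting that when $|F_X|=1$, minimality of $G\acts\partial T$ forces the unique element to be $\partial T$ (giving case (ii)) or $\varnothing$ (giving case (i) via $\rist_G(n)'$ acting trivially), whereas you fold both into the single case $C\subsetneq\partial T$ and close it using Lemma \ref{lem-rist-FC} — but this is a cosmetic rearrangement of the same argument.
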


\begin{proof}
	We apply Theorem \ref{t-wb-structure-lsc} to the map $\Phi(x)=G^0_x$. Letting $C_x$ be the set of fixed points of $G^0_x$, we have that  $F_X=\{C_x \colon x\in X\}$ is a factor of $X$, and is a compact minimal $G$-invariant subset of $\F(\partial T)$. If $|F_X|\ge 2$, then we are in case \ref{i-wb-factor}. Assume that $|F_X|=1$.  Since the action of $G$ on $\partial T$  is minimal, the only fixed point for the action of $G$ on $\F(\partial T)$ are $\partial T$ and $\varnothing$, so that $F_X=\{\partial T\}$ or $F_X=\{\varnothing\}$. In the first case, we have that for every $x\in X$ the subgroup $G^0_x$ must fix the whole $\partial T$, so that $G^0_x=\{1\}$. Thus, the action is topologically free and case \ref{i-wb-free} holds. If instead $F_X=\varnothing$, then there exists a collection of vertices $\V:= \V_{\varnothing}\in \Pi(T)$ such that $\partial T= \bigsqcup_{v\in \V} \partial T_v$ and $\rist_G(\V)' \le G_x^0$ for every $x\in X$. From this we deduce that there exists $n$ such that the normal subgroup $\rist_G(n)'$ acts trivially on $X$, so case \ref{i-wb-nf} holds. \qedhere
	
\end{proof}
For a compact $G$-space $X$, the existence of a factor map to a finite $G$-space is a quite restrictive condition. For example, it has the following consequence. Recall that the action of a group $G$ on a compact space $X$ is \textbf{proximal} if  for every pair of points $x, y\in X$ there exists a net $(g_i)$ in $G$ such that $(g_ix)$ and $(g_iy)$ converge to the same limit in $X$. The action of $G$ on $X$ is \textbf{weakly mixing} if the diagonal $G$-action on $X \times X$ is topologically transitive.

\begin{cor} \label{cor-wb-prox-wm}
	Let $G$ be a weakly branch group, and $X$ be a minimal compact $G$-space on which the $G$-action is faithful. If the action of $G$ on $X$ is proximal, or weakly mixing, then it is topologically free. 
\end{cor}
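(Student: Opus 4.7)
The plan is to deduce the corollary from Corollary \ref{c-wb-actions} by ruling out its third alternative under either of the two hypotheses. Since the action of $G$ on $X$ is faithful, alternative \ref{i-wb-nf} is excluded, so it suffices to show that alternative \ref{i-wb-factor} is incompatible with proximality and with weak mixing. Both of these properties clearly pass to factors (if $\pi \colon X \to Y$ is a continuous equivariant surjection and $x,y\in Y$, lifting them to $X$ and pushing forward a net exhibiting proximality yields proximality in $Y$; similarly topological transitivity on $X\times X$ pushes forward to $Y\times Y$). Hence it is enough to prove that a non-trivial profinite $G$-action is neither proximal nor weakly mixing.

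For proximality, recall from Proposition \ref{p-odometers}\ref{i-ultrametric} that any profinite $G$-action on a compact metrizable space preserves a compatible distance $d$. If $|F_X|\geq 2$, pick distinct $C,C'\in F_X$ and observe that $d(gC,gC')=d(C,C')>0$ for every $g\in G$, so no net of translates can collapse the two points. Thus the induced action on $F_X$ is not proximal, which by the factor argument contradicts proximality of $X$.

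For weak mixing, I would use the characterization of profinite actions from Proposition \ref{p-odometers}: every clopen subset of $F_X$ has a finite $G$-orbit. Pick a proper non-empty clopen subset $U\subsetneq F_X$ (which exists since $F_X$ is a non-trivial profinite space, hence totally disconnected with more than one point) whose $G$-orbit $\{U=U_1,\ldots,U_k\}$ is finite. Then $W=\bigcup_i (U_i\times U_i)\subset F_X\times F_X$ is a non-empty proper clopen $G$-invariant subset of $F_X\times F_X$, whose complement is also non-empty and open. This obstructs topological transitivity of the diagonal action on $F_X\times F_X$, so the action of $G$ on $F_X$ is not weakly mixing; weak mixing of $X$ therefore also fails, again a contradiction.

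The main (small) obstacle is simply checking that proximality and weak mixing pass to factors and that profinite actions preserve a compatible metric, both of which are classical. Putting everything together, under either hypothesis alternative \ref{i-wb-factor} of Corollary \ref{c-wb-actions} is impossible, so the faithful minimal action of $G$ on $X$ must be topologically free.
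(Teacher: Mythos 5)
Your proof follows essentially the same route as the paper: apply Corollary \ref{c-wb-actions}, rule out non-faithfulness by hypothesis, and rule out the third alternative by observing that proximality and weak mixing pass to factors while a non-trivial profinite $G$-space enjoys neither. The paper gives exactly this argument in one sentence and leaves the two supporting facts implicit; you spell them out, and your proximality verification via the invariant metric from Proposition~\ref{p-odometers}~\ref{i-ultrametric} is correct.

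There is, however, a small gap in your weak mixing verification. You take $U \subsetneq F_X$ a proper non-empty clopen with finite $G$-orbit $\{U_1, \ldots, U_k\}$ and claim that $W = \bigcup_i (U_i \times U_i)$ is a proper subset of $F_X \times F_X$. The $U_i$ need not be pairwise disjoint, and if they overlap, $W$ may be everything. For instance, if $F_X = \{a,b,c\}$ with a cyclic $G$-action and $U_1 = \{a,b\}$, $U_2 = \{b,c\}$, $U_3 = \{c,a\}$, then every pair $(x,y)$ lies in some $U_i \times U_i$, so $W = F_X \times F_X$ and your obstruction disappears even though the action is plainly not weakly mixing. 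The fix is easy: either take the atoms of the Boolean algebra generated by $U_1,\ldots,U_k$, which give a finite $G$-invariant clopen \emph{partition} of $F_X$ with at least two pieces (so that the $U_i \times U_i$ are pairwise disjoint and miss, say, $U_1 \times U_2$), or, more directly, use Proposition~\ref{p-odometers}~(ii) to produce a non-trivial finite $G$-factor $Q$ of $F_X$; the diagonal of $Q \times Q$ is then a proper non-empty closed invariant subset, so the action on $Q$ (hence on $F_X$, hence on $X$) is not weakly mixing. With that repair, the argument is complete.
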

\begin{proof}
	The properties of proximality and weak mixing pass to factors, and a non-trivial profinite $G$-space never satisfies them, so the statement follows from Corollary \ref{c-wb-actions}.
\end{proof}

The profinite factor $F_X$ in case \ref{i-wb-factor} of Corollary \ref{c-wb-actions} may or may not be finite. The following statement provides an interpretation of when it is. Roughly speaking, if $F_X$ is finite, then a combination of cases \ref{i-wb-nf} and \ref{i-wb-free} in Corollary \ref{c-wb-actions} must hold, in the sense that there exists a clopen partition of $X$ that is invariant under a normal subgroup $\rist_G(n) \simeq \prod_{v \in \L(n)} \rist_G(v)$ and such that the action of $\rist_G(v)$ on each piece of this partition is either not faithful or topologically free. See \S \ref{s-iet} for an application of this statement.

\begin{cor} \label{c-infinite-profinite-factor}
	Let $G\le \aut(T)$ be a weakly branch group. For every minimal compact $G$-space $X$, one of the following hold.
	
	\begin{enumerate}[label=\roman*)]
		\item There exists a continuous $G$-map from $X$ to an infinite closed minimal invariant subset $F_X\subset \F(\partial T)$. In particular $X$ factors onto an infinite  profinite $G$-space.

		\item \label{i-finite-factor} There exist a clopen partition $X=X_0\sqcup \cdots \sqcup X_k$ and $n\ge 1$ and  such that each $X_i$ is invariant under the level stabilizer $\st_G(n)$, and for every vertex $v\in \L(n)$ and  $i=1, ..., k$, exactly one of the following holds:
		\begin{itemize}[label=--]
			\item the derived subgroup $\rist_G(v)'$ acts trivially on $X_i$;
			\item  $\partial T_v \subset \fix(G_x^0)$ for every $x\in X_i$. In particular the action of $\rist_G(v)$ on $X_i$ is topologically free.
		\end{itemize}
	\end{enumerate}
	
\end{cor}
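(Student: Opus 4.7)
The approach is to apply Theorem \ref{t-wb-structure-lsc} to the germ-stabilizer map $\Phi(x):=G_x^0$, which is $G$-equivariant and is lower semi-continuous by a standard observation (if $g\in G_x^0$ fixes a neighbourhood $U$ of $x$, then $g\in G_y^0$ for every $y\in U$). This yields a continuous $G$-equivariant map $\Psi\colon X\to \F(\partial T)$, $x\mapsto \fix(G_x^0)$, together with a continuous section $\sigma\colon F_X\to \Pi(T)$ of $\pi_\perp$, where $F_X:=\Psi(X)$ is a compact minimal $G$-invariant subset of $\F(\partial T)$, and $\rist_G(v)'\le G_x^0$ for every $x\in X$ and $v\in \sigma(\Psi(x))$. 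The space $F_X$ is automatically profinite by Lemma \ref{lem-2^X-odom}, so the dichotomy in the statement will correspond to whether $F_X$ is infinite or finite.

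The infinite case is immediate: if $F_X$ is infinite, then $\Psi$ itself is the factor map required by the first alternative of the corollary.

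The substantive case is when $F_X=\{C_0,\ldots, C_k\}$ is finite. The first step is to verify that each $C_i$ is clopen in $\partial T$: its stabilizer $H_i\le G$ has finite index, so Lemma \ref{l-minimal-fi} applied to the minimal $G$-space $\partial T$ yields a clopen partition $\partial T=Y_1\sqcup \cdots \sqcup Y_r$ into $H_i$-invariant $H_i$-minimal pieces; as $C_i$ is closed and $H_i$-invariant, $H_i$-minimality forces each $C_i\cap Y_j$ to be empty or all of $Y_j$, so $C_i$ is a union of $Y_j$'s. In particular each antichain $\sigma(C_i)\in \Pi(T)$ partitions the clopen, hence compact, complement $\partial T\setminus C_i$, so $\sigma(C_i)$ is finite. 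I would then choose $n\ge 1$ large enough so that (a) every vertex in $\bigcup_i \sigma(C_i)$ has level $\le n$, and (b) each $C_i$ is a union of level-$n$ cylinders, and set $X_i:=\Psi^{-1}(C_i)$ for $i=0,\ldots, k$, giving a clopen partition of $X$ by continuity of $\Psi$.

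With these choices the verification of \ref{i-finite-factor} is then routine. Each $X_i$ is $\st_G(n)$-invariant because $\st_G(n)$ stabilizes every level-$n$ cylinder and hence, by (b), fixes each $C_i$ setwise. For $v\in \L(n)$ and any index $i$, condition (b) implies that either $\partial T_v\subset C_i$, in which case the second bullet holds since $C_i=\fix(G_x^0)$ for $x\in X_i$; or $\partial T_v$ is disjoint from $C_i$, and by (a) there is a (unique) $w\in \sigma(C_i)$ with $\partial T_v\subset \partial T_w$, whence $\rist_G(v)'\le \rist_G(w)'\le G_x^0\le G_x$ for every $x\in X_i$ by Theorem \ref{t-wb-structure-lsc}, so $\rist_G(v)'$ acts trivially on $X_i$. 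No step in this plan appears to present a serious obstacle: the only delicate point is the clopenness of the $C_i$'s, whose proof is already contained in the argument of Corollary \ref{c-wb-urs-infinite}, and the rest is a direct packaging of Theorem \ref{t-wb-structure-lsc}.
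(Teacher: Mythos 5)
Your proposal is correct and follows essentially the same route as the paper: apply Theorem~\ref{t-wb-structure-lsc} to the germ-stabilizer map $\Phi(x)=G_x^0$, dispose of the infinite case via $\Psi$, and in the finite case borrow the clopenness argument from Corollary~\ref{c-wb-urs-infinite} to choose a level $n$, then partition $X$ into the fibers of $\Psi$ (which, with your choice of $n$, agree with the paper's fibers of the map $x\mapsto(\P_x^-,\P_x^+)$). The only difference is that you spell out a few details the paper leaves implicit (finiteness of $\sigma(C_i)$, why the $X_i$ are $\st_G(n)$-invariant), but the structure and the key ingredients are identical.
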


\begin{proof} The proof is similar to the proof of Corollary \ref{c-wb-urs-infinite}. We apply Theorem \ref{t-wb-structure-lsc} to the map $\Phi(x)=G^0_x$. If $F_X$ is finite, then it consists of clopen subsets, and as in the proof of Corollary \ref{c-wb-urs-infinite} we can find a level $n$, and for every $x\in X$ a partition $\L(n)=\P_x^+\sqcup \P_x^-$ such that for every $x\in X$ we have $\rist_G(v)'\le G^0_x$ for $x\in \P_x^{+}$, and $\partial T_v \subset \fix(G_x^0)$ if $x\in \P_x^-$. We then let $X=X_0\sqcup \cdots \sqcup X_k$ be the partition of $X$ into fibers of the map that to $x$ associates $(\P_x^-, \P_x^+)$, which is continuous. Each fiber of this map is clearly $\st_G(n)$ invariant (hence $\rist_G(n)$-invariant)  and satisfies the conclusion by construction. Note that if $(v, i)$ are such that $\partial T_v \subset \fix(G^0_x)$ for $x\in X_i$, then $\rist_G(v) \cap G^0_x=\{1\}$, so that the action of $\rist_G(v)$ on $X_i$ is topologically free.  \qedhere 
\end{proof}

\subsection{Micro-supported actions} \label{subsec-microsupp}

As an another application of Theorem \ref{t-wb-structure-lsc}, let us show how it recovers the following reconstruction theorem proven in \cite{Lav-Nek} (see \cite[Th. 2.10.1]{Nek-book} for the following version). 

\begin{cor}[Lavreniuk-Nekrashevych] \label{cor-reconstr-wb}
Suppose that a group $G$ admits two faithful and weakly branch actions on rooted trees $T_1$ and $T_2$. Then there exists a $G$-equivariant homeomorphism $\partial T_1 \to \partial T_2$.
\end{cor}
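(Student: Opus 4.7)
The plan is to apply Theorem \ref{t-wb-structure-lsc} twice (once with the roles of $T_1, T_2$ as below and once swapped) to obtain mutually inverse $G$-equivariant continuous maps between $\partial T_1$ and $\partial T_2$. View $G \le \aut(T_1)$ as weakly branch, and take $X = \partial T_2$ with $\Phi(\xi) = G^0_\xi$ the germ-stabilizer of the $\partial T_2$-action. Since $G \le \aut(T_2)$ is also weakly branch, $G^0_\xi$ contains the non-trivial subgroup $\rist_G(w)$ for every vertex $w \in T_2$ off the ray from the root to $\xi$, and $\Phi$ is $G$-equivariant and lower semi-continuous. Theorem \ref{t-wb-structure-lsc} then produces a continuous $G$-equivariant map
\[
\Psi_2\colon \partial T_2 \to \F(\partial T_1),\qquad \Psi_2(\xi) = \fix_{\partial T_1}(G^0_\xi),
\]
together with a continuous $G$-equivariant section $\sigma_2$ of $\pi_\perp$ satisfying $\rist_G(v)' \le G^0_\xi$ for every $v \in \sigma_2(\Psi_2(\xi))$. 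Because the action on $\partial T_2$ is faithful and topologically nowhere free, Corollary \ref{c-wb-actions} rules out $\Psi_2(\xi) = \varnothing$ and $\Psi_2(\xi) = \partial T_1$. A symmetric application yields $\Psi_1\colon \partial T_1 \to \F(\partial T_2)$ with analogous properties.

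The key step is to show that $\Psi_2(\xi)$ is a singleton for every $\xi$. A first observation is that for every $\eta \in \partial T_1$, $\fix_{\partial T_1}(G^0_\eta) = \{\eta\}$: if $\eta' \neq \eta$, choose $v \in T_1$ off the ray to $\eta$ with $\eta' \in \partial T_{1,v}$; then $\rist_G(v) \le G^0_\eta$, and by Lemma \ref{l-wb-rigid-move-fi}, $\rist_G(v)$ does not fix $\eta'$. It therefore suffices to produce, for each $\xi \in \partial T_2$, a unique $\eta = \eta(\xi) \in \partial T_1$ with $G^0_\eta \le G^0_\xi$, since this forces $\Psi_2(\xi) \subseteq \{\eta\}$ and non-emptiness then gives $\Psi_2(\xi) = \{\eta\}$. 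Equivalently, the antichain $\V_\xi := \sigma_2(\Psi_2(\xi)) \in \Pi(T_1)$ should leave exactly one uncovered vertex at each level of $T_1$, so that these vertices form the ancestor chain of a unique end $\eta$. This is where the symmetric hypothesis enters: each $\rist_G(v)$ ($v \in T_1$) is confined in $G$, being normal in a finite-index subgroup, and Corollary \ref{cor-confined-wb} applied to $G \le \aut(T_2)$ yields some $w = w(v) \in T_2$ with $\rist_G(w)' \le \rist_G(v)$. When $v_1, v_2 \in \L_1(n)$ are incomparable, $\rist_G(v_1) \cap \rist_G(v_2) = \{1\}$ forces the corresponding subgroups $\rist_G(w_1)', \rist_G(w_2)' \le G$ to be commuting subgroups of $G$ with trivial intersection, which in view of the weakly branch structure on $T_2$ and the non-triviality of the derived rigid stabilizers forces $w_1, w_2$ to be incomparable in $T_2$. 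Combined with the structure map $\sigma_2$, this geometric constraint prevents two vertices at the same level of $T_1$ from being uncovered by $\V_\xi$ and simultaneously having $\xi$ in the $T_2$-support of their rigid stabilizers, yielding the desired uniqueness.

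Once the singleton property is established, $\Psi_2$ defines a continuous $G$-equivariant map $\varphi\colon \partial T_2 \to \partial T_1$, and symmetrically $\varphi'\colon \partial T_1 \to \partial T_2$. Both maps are canonically characterized by germ-stabilizer containments (each assigns to a point the unique boundary point whose germ-stabilizer is contained in the germ-stabilizer of the source), so the compositions $\varphi' \circ \varphi$ and $\varphi \circ \varphi'$ must coincide with the identities. Hence $\varphi$ is the desired $G$-equivariant homeomorphism $\partial T_2 \to \partial T_1$. The main obstacle is the singleton step: while Theorem \ref{t-wb-structure-lsc} directly produces a factor of $\partial T_2$ into $\F(\partial T_1)$ using only the weak-branch structure on $T_1$, ruling out multi-point values of $\Psi_2(\xi)$ requires the symmetric application of Corollary \ref{cor-confined-wb} to transfer rigid-stabilizer data between the two trees.
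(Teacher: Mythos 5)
Your setup is the same as the paper's, and the first third of your argument is fine: applying Theorem \ref{t-wb-structure-lsc} to $\Phi(\xi) = G_\xi^0$, getting the continuous map $\Psi_2$, and ruling out $\Psi_2(\xi) = \varnothing$ or $\partial T_1$ via Corollary \ref{c-wb-actions} (the paper argues non-emptiness slightly differently, directly from the structure of $\sigma$, but your route is valid). The problem is your singleton step.

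You reduce the singleton claim to producing $\eta \in \partial T_1$ with $G_\eta^0 \le G_\xi^0$, but this is at least as hard as the statement you are trying to prove: the nonemptiness $\Psi_2(\xi) \neq \varnothing$ only gives some $\eta_0$ with $G_\xi^0 \le G_{\eta_0}$, not the reverse inclusion $G_{\eta_0}^0 \le G_\xi^0$, and it is not obtained from Theorem \ref{t-wb-structure-lsc} (the antichain $\sigma_2(\Psi_2(\xi))$ gives you $\rist_G(v)' \le G_\xi^0$ only for $v$ in one fixed antichain, not for all $v$ off the ray to $\eta_0$, so it does not generate $G_{\eta_0}^0$). Your attempt to force the singleton geometrically also does not close. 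The transfer via Corollary \ref{cor-confined-wb} correctly produces incomparable $w_1, w_2 \in T_2$ with $\rist_G(w_i)' \le \rist_G(v_i)$, and one of them, say $w_1$, has $\xi \notin \partial T_{w_1}$, hence $\rist_G(w_1)' \le G_\xi^0 \cap \rist_G(v_1)$. But there is nothing contradictory about a nontrivial subgroup of $\rist_G(v_1)$ of this form fixing a point of $\partial T_{v_1}$, so no contradiction is reached from assuming $\partial T_{v_1}$ and $\partial T_{v_2}$ both meet $\Psi_2(\xi)$. The sentence asserting the "geometric constraint prevents..." is not an argument.

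The paper's actual proof of the singleton step goes through Lemma \ref{lem-wb-xi1xi2} applied in both trees to obtain, for any $\eta \in \Psi_2(\xi)$, the double inclusion $G_\eta = G_\xi$ (otherwise one of these stabilizers would contain a nontrivial normal subgroup, contradicting faithfulness); this shows $G_\xi$ fixes a unique point of $\partial T_1$. For $\xi$ regular, i.e. $G_\xi = G_\xi^0$, one then directly reads off $\Psi_2(\xi)$ as a singleton, and since regular points are dense in $\partial T_2$ (Baire, so one first reduces to countable $G$, and then lifts to the general case) and singletons form a closed subset of $\F(\partial T_1)$, continuity of $\Psi_2$ upgrades this to all $\xi$. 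Both the regular-point/continuity device and the two applications of Lemma \ref{lem-wb-xi1xi2} are missing from your proposal, and they are exactly what fills the gap you leave open. Finally, your proposed argument that the two maps are inverse to each other (via a "canonical characterization" by germ-stabilizer containments) relies on the same unproven inclusion $G_{\varphi(\xi)}^0 \le G_\xi^0$; the paper instead observes that the composition is a $G$-endomorphism of the micro-supported $G$-space $\partial T_1$ and hence the identity, which is cleaner and does not depend on that inclusion.
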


\begin{proof}
The map $\phi_1: \partial T_2 \to \F(\partial T_1)$, $\xi \mapsto \fix_{\partial T_1}(G_\xi^0)$, is lower semi-continuous, so that we can invoke Theorem \ref{t-wb-structure-lsc}. Fix $\xi \in \partial T_2$.  If $G_\xi^0$ does not fix any point in $\partial T_1$, then by the theorem there would exist a finite set $\V$ of independent vertices  such that the disjoint union of $\partial (T_{1})_v$ for $v \in \V$ is equal to $\partial T_1$, and $\rist_G(v)' \leq G_\xi^0$ for all $v \in \V$. That would imply that $G_\xi^0$ contains the derived subgroup of the rigid stabilizer in $G$ of some level $n$ in $T_1$, which is impossible. Hence there exists $x \in \partial T_1$ such that $G_\xi^0 \leq G_x$. By Lemma \ref{lem-wb-xi1xi2} we have $G_x \leq G_\xi$ (because otherwise $G_x$ would contain a non-trivial normal subgroup of $G$), and again by the same lemma we have $G_\xi \leq G_x$. Hence $G_x = G_\xi$, and it follows that the point $x$ is the only point in $\partial T_1$ that is fixed by $G_\xi$. So we have shown that for every $\xi \in \partial T_2$, there exists a unique $x \in \partial T_1$ that is fixed by $G_\xi$.

Assume for a moment that the group $G$ is countable. That assumption implies that there is a dense $G_\delta$-subset of regular points in $\partial T_2$, ie.\ points $\xi$ such that $G_\xi = G_\xi^0$. For such a point we have that $G_\xi^0$ fixes a unique point in $\partial T_1$ by the above paragraph.  Together with continuity of the map $\phi_1$, this implies that $\phi_1$ actually takes values in $\partial T_1$, so that $\phi_1$ is a continuous $G$-map  $\phi_1: \partial T_2 \to \partial T_1$. Now by symmetry we also have $\phi_2: \partial T_1 \to \partial T_2$, so that $\psi = \phi_1 \circ \phi_2: \partial T_1 \to \partial T_1$ is an endomorphism of the $G$-space $\partial T_1$. Since the $G$-action on $\partial T_1$ is micro-supported, $\psi$ must be the identity. So $\phi_1, \phi_2$ are homeomorphisms, as desired.

If now $G$ is not countable, then we can find a countable subgroup $\Gamma$ of $G$ such that the actions of $\Gamma$ on $T_1$ and $T_2$ are weakly branch (indeed since the vertex sets of $T_1, T_2$ are countable, one can find a countable subset $S$ of $G$ which contains non-trivial elements in all rigid stabilisers and such that $\Gamma:=\langle S\rangle$ is level-transitive on both trees). By the same argument as above we can find $\xi$ such that $\Gamma_\xi^0$ fixes a unique point in $\partial T_1$. But then $G_\xi^0$ also fixes a unique point in $\partial T_1$ since $\Gamma_\xi^0 \leq G_\xi^0$, and hence the previous argument applies for $G$.
\end{proof}

Our present goal is now to explain a result about general micro-supported actions of weakly branch groups. We need some terminology. Let $G$ be a group, and $X,Y$ compact $G$-spaces. An extension  $\pi: Y \to X$ is \textbf{highly proximal} if for every non-empty open subset $U \subseteq Y$, there exists $x \in X$ such that $\pi^{-1}(x) \subseteq U$. Equivalently, the image of every proper closed subset of $Y$ is a proper subset of $X$. When $X,Y$ are minimal, $\pi: Y \to X$ is highly proximal if and only if for every $x \in X$, the fiber $\pi^{-1}(x)$ is compressible, in the sense that there exists $y\in Y$ such that for every neighbourhood $V$ of $y$, there is $g \in G$ such that $g(\pi^{-1}(x)) \subset V$ \cite{Ausl-Glasn}. We say that two compact $G$-spaces $X_1,X_2$ are \textbf{highly proximally equivalent} if $X_1$ and $X_2$ admit a common highly proximal extension \cite{Ausl-Glasn}.

For a compact $G$-space $X$, we denote by $\tilde{X}$ the Stone space of the Boolean algebra $R(X)$ of regular open subsets of $X$. Note that $\tilde{X}$ is also a compact $G$-space. The map $\pi: \tilde{X} \to X$, which associates to every ultrafilter on $R(X)$ its limit in $X$, is a continuous and surjective $G$-map, and is highly proximal \cite[Th.\ 3.2]{Gleason1958}.

Recall that if a group $G$ admits faithful and micro-supported actions on two compact spaces $X_1,X_2$, then a theorem of Rubin \cite{Rubin-loc-mov-book} asserts that there exists a $G$-equivariant isomorphism between $R(X_1)$ and $R(X_2)$, or equivalently a $G$-equivariant homeomorphism between $\tilde{X_1}$ and $\tilde{X_2}$, so that  $X_1,X_2$ are highly proximally equivalent. Since a highly proximal extension of a micro-supported $G$-space remains micro-supported (see \cite[Proposition 2.3]{CLB-commens}), Rubin theorem implies that for every group admitting a faithful and micro-supported action on a compact space, there exists a (necessarily unique) universal compact $G$-space on which the $G$-action is faithful and micro-supported which factors onto every compact $G$-space on which the $G$-action is faithful and micro-supported. Hence there always exists a largest object among faithful and micro-supported $G$-spaces.

The goal of this paragraph is to exhibit sufficient conditions ensuring that a similar behaviour happens \enquote{at the bottom}, in the sense that there exists a smallest object among faithful and micro-supported $G$-spaces (see Corollary \ref{cor-micro-hp}). This will apply to weakly branch groups, providing a companion of Corollary \ref{cor-reconstr-wb} (see Corollary \ref{cor-wb-micro-hp}).

Particular instances of the two following  results already appeared in \cite[\S 4]{LBMB-sub-dyn}. Recall from \S \ref{subsec-URS-conf} that if $X$ is a minimal compact $G$-space, $\mathcal{S}_G(X)$ is the stabilizer URS associated to $X$. 

\begin{lem} \label{lem-Hx-usc}
	Let $G$ be a group, $X$ a minimal compact $G$-space and $\H$ a URS of $G$. For $x \in X$, let $\H_x$ be the set of $H \in \H$ such that $G_x^0 \leq H \leq G_x$. Then:
	\begin{enumerate}[label=\roman*)]
		\item  The map $x \mapsto \H_x$ is upper semi-continuous.
		\item If $\H = \mathcal{S}_G(X)$ then $\H = \cup_x \H_x$.
	\end{enumerate}
\end{lem}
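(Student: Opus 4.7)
For part (i), the natural approach is to verify the sequential characterization of upper semi-continuity for the set-valued map $x \mapsto \H_x$: given a net $x_i \to x$ and $H_i \in \H_{x_i}$ with $H_i \to H$ in $\sub(G)$, show $H \in \H_x$. Since $\H$ is closed in $\sub(G)$, certainly $H \in \H$. The inclusion $H \leq G_x$ follows because for any $h \in H$, we have $h \in H_i$ eventually (by definition of convergence in $\sub(G)$), hence $h \in G_{x_i}$, so $h x_i = x_i$; passing to the limit via continuity of the $G$-action gives $hx = x$. For the opposite inclusion $G_x^0 \leq H$, pick $g \in G_x^0$, fixing a neighborhood $V$ of $x$. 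Eventually $x_i \in V$, so $g \in G_{x_i}^0 \leq H_i$, whence $g \in H$.

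For part (ii), let $H \in \mathcal{S}_G(X)$; I need to produce $x \in X$ with $G_x^0 \leq H \leq G_x$. The plan is to invoke the standard construction of the stabilizer URS: the map $\mathrm{Stab}: X \to \sub(G)$, $y \mapsto G_y$, is upper semi-continuous, its continuity points form a dense $G_\delta$ subset $X_0 \subset X$, and $\mathcal{S}_G(X)$ coincides with the closure of $\{G_y : y \in X_0\}$ (these continuity points satisfy $G_y^0 = G_y$). In particular $H$ can be written as a limit $H = \lim G_{y_n}$ for some net $y_n \in X$, and by compactness we may assume $y_n \to x$ for some $x \in X$.

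The two inclusions now follow from exactly the same arguments as in (i), applied to $H_n = G_{y_n}$: for $h \in H$, eventually $h \in G_{y_n}$, so $h y_n = y_n$ and therefore $hx = x$, giving $H \leq G_x$; and for $g \in G_x^0$, eventually $y_n$ lies in a neighborhood of $x$ fixed pointwise by $g$, so $g \in G_{y_n} = H_n$ and hence $g \in H$. Thus $H \in \H_x$, completing the proof.

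I do not expect any serious obstacle: the argument is essentially a packaging of the standard semi-continuity of stabilizer and germ-stabilizer maps together with the characterization of $\mathcal{S}_G(X)$ as the orbit closure of the stabilizers at regular points. The only mildly delicate point is to remember that for continuity points of $\mathrm{Stab}$ one has $G_y = G_y^0$, which is why the approximation through $X_0$ produces a bona fide element of $\mathcal{S}_G(X)$ rather than merely an element of the closure of $\{G_y : y \in X\}$; but this is standard and is not needed in the semi-continuity computation itself.
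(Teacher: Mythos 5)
Your proof is correct and follows the same approach as the paper, which simply declares part (i) routine and, for part (ii), says that for $H \in \mathcal{S}_G(X)$ one can take a net $(x_i)$ with $G_{x_i} \to H$ and verify $H \in \H_x$ at any accumulation point $x$ of $(x_i)$. Your detailed check of the two inclusions $G_x^0 \le H \le G_x$ via Chabauty convergence is exactly the "easy verification" the authors leave to the reader; the digression about continuity/regular points at the end is harmless but unnecessary, since all that is used is that $\mathcal{S}_G(X)$ lies in the closure of $\{G_y : y \in X\}$.
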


\begin{proof}
The verification of  upper semi-continuity is routine, and we leave it to the reader. Suppose $\H = \mathcal{S}_G(X)$, and let $H \in \H$. Then there exists $(x_i)$ such that $G_{x_i}$ converges to $H$, and we easily verify that $H \in \H_x$ for every accumulation point $x$ of $(x_i)$. 
\end{proof}

\begin{prop} \label{prop-sameURS-factor}
	Let $G$ be a group and $X$ a minimal compact $G$-space on which the $G$-action is faithful and with the property that for every $x_1 \neq x_2 \in X$, the subgroup generated by $G_{x_1}^0$ and $G_{x_2}^0$ contains a non-trivial normal subgroup of $G$. Let $Y$ a minimal compact $G$-space such that $\mathcal{S}_G(Y) = \mathcal{S}_G(X)$. Then $Y$ factors onto $X$.
\end{prop}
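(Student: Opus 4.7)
The plan is to realize the desired factor map $p\colon X \to \X$ as the single-valued projection of its graph
\[ W = \{(x, \xi) \in X \times \X : \H_x \cap \H_\xi \neq \emptyset\}, \]
where $\H = \mathcal{S}_G(X) = \mathcal{S}_G(\X)$ and $\H_x, \H_\xi$ are as in Lemma \ref{lem-Hx-usc}. The upper semi-continuity of the maps $x \mapsto \H_x$ and $\xi \mapsto \H_\xi$, combined with compactness of $\H$, ensures that $W$ is closed in $X \times \X$, and $W$ is manifestly $G$-invariant. Since $\H = \bigcup_x \H_x = \bigcup_\xi \H_\xi$ by Lemma \ref{lem-Hx-usc}(ii), and since minimality together with upper semi-continuity guarantees that every fibre $\H_x$ and $\H_\xi$ is non-empty, the set $W$ projects surjectively onto both $X$ and $\X$.

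The heart of the argument — and the main obstacle — is to show that for every $x \in X$ the fibre $W_x \subseteq \X$ is a singleton; once this is established, $W$ is the graph of a (surjective, $G$-equivariant) map $p\colon X \to \X$, with continuity automatic since $W$ is a closed functional relation between compact Hausdorff spaces. Suppose by contradiction that distinct $\xi_1, \xi_2 \in \X$ both belong to $W_x$, and pick $H_i \in \H_x \cap \H_{\xi_i}$ for $i=1,2$. The inclusions $G_{\xi_i}^0 \leq H_i \leq G_x$ give $\langle G_{\xi_1}^0, G_{\xi_2}^0 \rangle \leq G_x$, so by the hypothesis on $\X$ there exists a non-trivial normal subgroup $N \trianglelefteq G$ with $N \leq G_x$.

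To close the argument I would deduce that $N$ acts trivially on $\X$, contradicting faithfulness of the $G$-action there. Since $N$ is normal, $\fix_X(N)$ is a non-empty closed $G$-invariant subset of $X$, so minimality forces $N$ to act trivially on $X$, whence $N \leq G_y^0$ for every $y \in X$. By Lemma \ref{lem-Hx-usc}(ii), $N$ is therefore contained in every $H \in \H$. Given any $\xi \in \X$, choosing $H \in \H_\xi$ yields $N \leq H \leq G_\xi$, so $N$ fixes $\xi$; as $\xi$ was arbitrary, this gives the desired contradiction and completes the proof.
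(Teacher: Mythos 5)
Your proposal is correct and follows essentially the same approach as the paper: both proofs hinge on the map $x \mapsto \H_x$ of Lemma \ref{lem-Hx-usc}, on the observation that the hypothesis on $\X$ forces the fibres $\H_\xi$ to be mutually disjoint (so that $\H_x$ lies inside a unique $\H_\xi$), and on upper semi-continuity to obtain continuity of the resulting map. Your packaging via the closed relation $W \subset X \times \X$ (deducing continuity from the closed-graph principle for compact Hausdorff spaces) is a cosmetic variant of the paper's net argument, not a genuinely different route.
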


\begin{proof}
	Let $\H = \mathcal{S}_G(X) = \mathcal{S}_G(Y)$. According to Lemma \ref{lem-Hx-usc} we have $\H = \cup_{x\in X} \H_x = \cup_{y\in Y} \H_y$, and by our assumption on the $G$-action on $X$ we have that the sets $\H_x$ are pairwise disjoint. 
	
	We claim that for every $y$, there exists $x$ (necessarily unique) such that $\H_y \subseteq \H_x$. Indeed if $H,K \in \H_y$ are such that $H \in \H_{x_1}$ and $K \in \H_{x_2}$, then $G_y$ contains $G_{x_1}^0$ and $G_{x_2}^0$ because it contains $H$ and $K$, and hence we deduce that $x_1 = x_2$. We define $\phi(y) = x$.
	
	That $\phi$ is $G$-equivariant is clear. In order to see that it is continuous, it is enough to see that whenever $(y_i)$ converges to $y$ and $(\phi(y_i))$ converges to $x$, then $\phi(y) = x$. Without loss of generality we may assume that $\H_{y_i}$ and $\H_{\phi(y_i)}$ converge respectively to $\K$ and $\L$, and clearly $\K \subseteq \L$. Moreover by upper semi-continuity of $x \mapsto \H_x$ and $y \mapsto \H_y$ (Lemma \ref{lem-Hx-usc}), we have $\K \subseteq \H_y$ and $\L \subseteq \H_x$, so that $\K \subseteq \H_y \cap \H_x$. In particular $\H_y \cap \H_x$ is not empty, which by definition implies $\phi(y) = x$, as desired. So the proof is complete.
\end{proof}


Recall from  \S  \ref{subsec-URS-conf} that if $\H, \K$ are closed subsets of $\sub(G)$, we write $\H \preccurlyeq \K$ if there exists $H \in \H$ and $K \in \K$ such that $H \leq K$.

\begin{lem} \label{lem-hp-sameURS}
Two highly proximally equivalent minimal $G$-spaces give rise to the same stabilizer URS.
\end{lem}

\begin{proof}
It is enough to show that $\pi: Y \to X$ is highly proximal, then  $\mathcal{S}_G(X) = \mathcal{S}_G(Y)$. Write $\mathcal{T}_G(X) = \overline{\left\lbrace G_x : x \in X\right\rbrace }$ and $\mathcal{T}_G(Y) = \overline{\left\lbrace G_y : y \in Y\right\rbrace }$, and recall that $\mathcal{S}_G(X)$ and $\mathcal{S}_G(Y)$ are the unique minimal closed $G$-invariant subsets of respectively $\mathcal{T}_G(X)$ and $\mathcal{T}_G(Y)$. Since $Y$ is an extension of $X$, we have $ \mathcal{T}_G(Y) \preccurlyeq \mathcal{T}_G(X)$. By Lemma 2.13 in \cite{LBMB-sub-dyn} this implies $ \mathcal{S}_G(Y) \preccurlyeq \mathcal{S}_G(X)$. Now let $x$ in $X$. Since $\pi: Y \to X$ is highly proximal, the closure of the $G$-orbit of $\pi^{-1}(\left\lbrace x\right\rbrace )$ in $\F(Y)$ contains a singleton $\left\lbrace y\right\rbrace$. By upper semi-continuity of the stabilizer map, this implies $ \mathcal{T}_G(X) \preccurlyeq \mathcal{T}_G(Y)$. Again by Lemma 2.13 in \cite{LBMB-sub-dyn} we deduce that $ \mathcal{S}_G(X) \preccurlyeq \mathcal{S}_G(Y)$. Since $\preccurlyeq$ is an order on the set of URSs of $G$ \cite[Cor.\ 2.15]{LBMB-sub-dyn}, the equality $ \mathcal{S}_G(X) = \mathcal{S}_G(Y)$ follows.
\end{proof}

\begin{cor} \label{cor-micro-hp}
	Let $G$ be a group and $X$ a faithful and micro-supported minimal compact $G$-space, with the property that for every $x_1 \neq x_2 \in X$, the subgroup generated by $G_{x_1}^0$ and $G_{x_2}^0$ contains a non-trivial normal subgroup of $G$. Then the faithful and micro-supported minimal compact $G$-spaces are exactly the highly proximal extensions of $X$.
\end{cor}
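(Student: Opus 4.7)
The plan is to check both inclusions. The easy one, that every highly proximal extension $Y \to \X$ is faithful (being an extension of the faithful space $\X$) and micro-supported (by \cite[Proposition 2.3]{CLB-commens} as already recalled), is direct. For the converse, let $X$ be a faithful, micro-supported, minimal and compact $G$-space. Rubin's reconstruction theorem \cite{Rubin-loc-mov-book} provides a $G$-equivariant homeomorphism $\tilde{X} \cong \tilde{\X}$; writing $Z$ for this common space and $\pi_X \colon Z \to X$, $\pi_\X \colon Z \to \X$ for the two highly proximal quotient maps, the standard fact that highly proximal extensions preserve the stabilizer URS (used for example in \cite[\S 4]{LBMB-sub-dyn}) gives $\mathcal{S}_G(X) = \mathcal{S}_G(Z) = \mathcal{S}_G(\X)$, so Proposition \ref{prop-sameURS-factor} furnishes a $G$-factor $\phi \colon X \to \X$.

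The heart of the argument is a uniqueness principle for $G$-factors from $X$ to $\X$, which I would establish as follows. Suppose $\psi$ and $\phi$ are two such factors, and pick $x \in X$ with $\H_x \neq \varnothing$ (a dense set in $X$: it contains all regular points, where $G_x = G_x^0$ lies in $\H = \mathcal{S}_G(X)$). Let $H \in \H_x$, $\xi_1 = \psi(x)$, $\xi_2 = \phi(x)$. Equivariance of $\psi$ gives $H \le G_x \le G_{\xi_1}$, while the construction of $\phi$ in Proposition \ref{prop-sameURS-factor} gives $G_{\xi_2}^0 \le H \le G_{\xi_2}$; combining these yields $\langle G_{\xi_1}^0, G_{\xi_2}^0 \rangle \le G_{\xi_1}$. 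If $\xi_1 \neq \xi_2$, the hypothesis on $\X$ produces a non-trivial normal subgroup $N$ of $G$ contained in $G_{\xi_1}$; normality forces $N \le G_{g\xi_1}$ for every $g \in G$, so $N$ fixes the dense subset $G \cdot \xi_1 \subseteq \X$, whence $N$ acts trivially on $\X$, contradicting faithfulness of $\X$. Thus $\psi(x) = \phi(x)$ on a dense set and hence everywhere by continuity. Applying this uniqueness to the two $G$-factors $\phi \circ \pi_X$ and $\pi_\X$ from $Z$ to $\X$ (valid since $\mathcal{S}_G(Z) = \mathcal{S}_G(\X)$, so that Proposition \ref{prop-sameURS-factor} applies to the pair $(Z, \X)$) forces the key identity $\phi \circ \pi_X = \pi_\X$.

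From $\phi \circ \pi_X = \pi_\X$ the high proximality of $\phi$ follows by a soft transfer argument. Given $\xi \in \X$, a compression point $z_0 \in \pi_\X^{-1}(\xi)$ for the highly proximal map $\pi_\X$ projects to $x_0 = \pi_X(z_0) \in \phi^{-1}(\xi)$ via $\phi^{-1}(\xi) = \pi_X(\pi_\X^{-1}(\xi))$; applying compressibility to open neighborhoods of $z_0$ of the form $\pi_X^{-1}(W)$ produces $g \in G$ with $g \cdot \phi^{-1}(\xi) \subseteq W$ for any prescribed neighborhood $W$ of $x_0$, so $\phi^{-1}(\xi)$ is compressible in $X$ and $\phi$ is highly proximal. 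The main potential obstacle is the uniqueness principle of the middle paragraph; once it supplies the identity $\phi \circ \pi_X = \pi_\X$, everything else---both the construction of $\phi$ via Proposition \ref{prop-sameURS-factor} and the transfer of high proximality---is essentially routine.
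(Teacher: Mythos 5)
Your proof is correct, but for the key step (showing that the factor $\phi\colon X\to \X$ is highly proximal) you take a genuinely different and considerably longer route than the paper. The paper's argument is direct and local: given a non-empty open $U\subset X$, the non-triviality of $\rist_G(U)$ (micro-supported) together with faithfulness of $G\acts \X$ produces $g\in \rist_G(U)$ and $\xi\in \X$ with $g\xi\neq\xi$; then $g\,\phi^{-1}(\xi)=\phi^{-1}(g\xi)$ is disjoint from $\phi^{-1}(\xi)$, and since $g$ is supported in $U$ this forces $\phi^{-1}(\xi)\subset U$. This verifies the definition of highly proximal extension in two lines, with no reference back to $\tilde X$. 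Your route instead goes through a uniqueness principle for $G$-factors onto $\X$ (any two such factors from a minimal $G$-space with the same stabilizer URS must coincide, proved by combining the $\H_x$-sandwich from Proposition \ref{prop-sameURS-factor} with the hypothesis on germ stabilizers and faithfulness of $\X$), then identifies $\phi\circ\pi_X$ with $\pi_\X$, and finally transfers high proximality of $\pi_\X$ down to $\phi$. All the steps in your chain are correct --- in particular the identity $\phi^{-1}(\xi)=\pi_X(\pi_\X^{-1}(\xi))$ and the density of $\{x: \H_x\neq\varnothing\}$ (which follows simply from its non-emptiness, $G$-invariance and minimality) check out. What your approach buys is the uniqueness statement for factor maps onto $\X$, which is of independent interest and makes the role of the germ-stabilizer hypothesis transparent; what the paper's approach buys is brevity and the fact that it never needs to invoke the universal highly proximal extension $\tilde X$ after Rubin's theorem has been used once to match up the stabilizer URSs.
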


\begin{proof}
It follows easily from the definitions that if $Y$ is a highly proximal extension of $X$, then $Y$ is a faithful and minimal $G$-space. Moreover by  \cite[Proposition 2.3]{CLB-commens} $Y$ is also micro-supported. Conversely, let $Y$ be a faithful, minimal and micro-supported $G$-space. According to Rubin theorem \cite{Rubin-loc-mov-book}, $X$ and $Y$ are highly proximally equivalent. By Lemma \ref{lem-hp-sameURS} we have $\mathcal{S}_G(X) = \mathcal{S}_G(Y)$. Hence Proposition \ref{prop-sameURS-factor} applies and provides a factor map $\pi: Y \to X$, and we shall check that this map is highly proximal. Let $U$ be a non-empty open subset of $Y$. Since the rigid stabilizer $\rist_G(U)$ is non-trivial and the $G$-action of $X$ is faithful, we can find $x \in X$ and $g \in \rist_G(U)$ such that $g(x) \neq x$. Hence the fiber $\pi^{-1}(x)$ is such that $g(\pi^{-1}(x))$ is disjoint from $\pi^{-1}(x)$. Since $g$ is supported in $U$ we must have $\pi^{-1}(x) \subset U$, and hence $\pi: Y \to X$ is indeed a highly proximal extension. 
\end{proof}

This applies to weakly branch groups by Lemma \ref{lem-wb-xi1xi2}, and we obtain:

\begin{cor} \label{cor-wb-micro-hp}
Let $G\le \aut(T)$ be a weakly branch group. Then the faithful and micro-supported minimal compact $G$-spaces are exactly the highly proximal extensions of $\partial T$.
\end{cor}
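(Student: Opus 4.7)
The plan is to deduce this from Corollary \ref{cor-micro-hp} by taking $\X = \partial T$. So I need to verify that $\partial T$ satisfies the two hypotheses of that corollary: it is a faithful and micro-supported minimal compact $G$-space, and for every $\xi_1 \neq \xi_2 \in \partial T$, the subgroup $\langle G_{\xi_1}^0, G_{\xi_2}^0 \rangle$ contains a non-trivial normal subgroup of $G$.

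The first hypothesis is essentially the definition of a weakly branch group: $G$ acts faithfully on $T$, the action on $\partial T$ is minimal (by level-transitivity), and micro-supportedness corresponds to $\rist_G(v) \neq 1$ for every $v \in T$. The second hypothesis is exactly what Lemma \ref{lem-wb-xi1xi2} delivers: we get an integer $n$ such that $\rist_G(n) \leq \langle G_{\xi_1}^0, G_{\xi_2}^0 \rangle$. It then remains to observe that $\rist_G(n) = \prod_{v \in \L(n)} \rist_G(v)$ is normalized by $G$ (since $G$ permutes $\L(n)$ and conjugation sends $\rist_G(v)$ to $\rist_G(g(v))$), and is non-trivial because each $\rist_G(v)$ is non-trivial by the weak branch assumption. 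So $\rist_G(n)$ is a non-trivial normal subgroup of $G$ contained in $\langle G_{\xi_1}^0, G_{\xi_2}^0 \rangle$, verifying the hypothesis.

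Once both hypotheses are verified, Corollary \ref{cor-micro-hp} applies verbatim and gives the conclusion. There is no real obstacle here: the entire content of the statement is already packed into Corollary \ref{cor-micro-hp} together with Lemma \ref{lem-wb-xi1xi2}. The only minor point to flag is the observation that $\rist_G(n)$ is normal in $G$, but this is immediate from level-transitivity. The proof will therefore be a brief three-line application of the two preceding results.
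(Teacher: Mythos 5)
Your proof is correct and follows exactly the paper's route: the paper states this corollary as an immediate application of Corollary \ref{cor-micro-hp} to $\X = \partial T$, citing Lemma \ref{lem-wb-xi1xi2} to verify the hypothesis, just as you do. Your additional remark that $\rist_G(n)$ is a non-trivial normal subgroup is the (elementary) step left implicit in the paper.
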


%

%


%

\section{Groups acting on rooted trees II: rigidity of actions with small Schreier graphs} \label{sec-trees-graphs-actions}

In this section we give give further applications of the results of the previous sections to  rigidity results for actions of finitely generated (weakly) branch groups.  A common feature of all results in this section is that they rely on the study of the geometry of graphs of actions. We refer the reader to \S \ref{s-preliminaries} for the necessary terminology concerning graphs of group actions. The general principle is that if a finitely generated branch group $G\le \aut(T)$ acts on a set $X$ with sufficiently ``nice''  graphs, then the action of $X$ must be tightly related to the action on $\partial T$ (in a sense that will be specified in each situation). This is used to to prove rigidity results for certain types of actions and for embeddings of $G$ into other groups of homeomorphisms and of automorphisms of rooted trees. 

\subsection{Growth of actions of branch groups} \label{subsec-growth}



Let $\Gamma$ be a graph of bounded degree (not necessarily connected). The \textbf{uniform growth} of $\Gamma$ is the function 
\[\vol_\Gamma(n)=\sup_{v\in \Gamma} |B_\Gamma (v, n)|,\]
where $B_\Gamma(v, n)$ is the ball of radius $n$ around $v$. If $G = \left\langle  S  \right\rangle $ is a finitely generated group and $X$ a $G$-set, the \textbf{growth of the action} $\vol_{G, X}$ of $G$ on $X$ is the uniform growth of the graph $\Gamma(G,X)$ of the action of $G$ on $X$ (with respect to the generating set $S$):
\[\vol_{G, X}(n):= \vol_{\Gamma(G, X)}(n).\] 

Given two functions $f, g\colon \N\to \N$, we write $f\preceq g$ if there is $C > 0$ such that $f(n)\le Cg(Cn)$, and $f\sim g$ if $f\preceq g$ and $g \preceq f$. Since two finite generating subsets $S,S'$ give rise to bi-Lipschitz graphs, the equivalence class of $\vol_{G,  X}$ with respect to $\sim$ does not depend on the choice of $S$. This justifies that we omit $S$ in the notation. Note that we do not require the action of $G$ on $X$ to be transitive; in particular the function $\vol_{G,  X}$ can be unbounded even if every individual $G$-orbit in $X$ is finite. 

The following lemma is immediate from the definitions.

 \begin{lem} \label{l-growth-subgroup}
 Let $G$ be a finitely generated group acting on a set $X$ and $H\le G$ be a finitely generated subgroup. Then $\vol_{H, X} \preceq \vol_{G, X}$. 
 \end{lem}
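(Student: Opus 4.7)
The proof is a direct unpacking of the definitions, so the plan is short. First I would fix finite symmetric generating sets $S_G$ for $G$ and $S_H$ for $H$, and let $\Gamma_G = \Gamma(G, X)$ and $\Gamma_H = \Gamma(H, X)$ be the corresponding graphs of actions. Since $H$ is finitely generated and $H \le G$, each element of $S_H$ can be written as a word of length at most some integer $L \ge 1$ in the alphabet $S_G$.

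Next I would show that for every $x \in X$, the ball $B_{\Gamma_H}(x, n)$ is contained in $B_{\Gamma_G}(x, Ln)$. Indeed, if $y \in B_{\Gamma_H}(x, n)$, there is a sequence $x = x_0, x_1, \ldots, x_k = y$ with $k \le n$ and $x_{i+1} = s_i \cdot x_i$ for some $s_i \in S_H$. Writing each $s_i$ as a word of length at most $L$ in $S_G$ and tracking the intermediate points gives a path of length at most $Ln$ in $\Gamma_G$ from $x$ to $y$, so $y \in B_{\Gamma_G}(x, Ln)$.

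Taking cardinalities and then suprema over $x \in X$ yields $\vol_{H, X, S_H}(n) \le \vol_{G, X, S_G}(Ln)$ for all $n$, which by definition of $\preceq$ gives $\vol_{H,X} \preceq \vol_{G,X}$, as required. There is no real obstacle here; the only point to be careful about is simply that finite generation of $H$ is used to obtain a uniform bound $L$ on the length of generators of $H$ as words in $S_G$ (otherwise no such $L$ would exist), which is precisely why the hypothesis that $H$ is finitely generated is invoked in the statement.
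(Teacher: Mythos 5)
Your proof is correct and is exactly the argument the paper has in mind; the paper simply states the lemma is "immediate from the definitions" without writing it out. Fixing generating sets, bounding each $s \in S_H$ as a word of length $\le L$ in $S_G$, deducing $B_{\Gamma(H,X)}(x,n) \subseteq B_{\Gamma(G,X)}(x,Ln)$, and taking suprema is the intended (and essentially the only) route, and your remark about where finite generation of $H$ is used is on point.
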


If $X$ is a compact space and the action of $G$ on $X$ is minimal, the growth of the action is equal to the growth of each of its orbital graphs:

\begin{lem} \label{l-growth-minimal-action}
	Let $G$ be a finitely generated group acting minimally on  a Hausdorff space $X$.   Then $\vol_{G, X}=\vol_{\Gamma(G, x)}$ for every $x \in X$. In particular $\vol_{\Gamma(G, x)}$ does not depend on the point $x\in X$.
\end{lem}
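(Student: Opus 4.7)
The plan is to prove the two inequalities $\vol_{\Gamma(G,x)} \leq \vol_{G, X}$ and $\vol_{G, X} \leq \vol_{\Gamma(G,x)}$ separately, for each fixed $n$ (so we even get equality as functions, not just equivalence under $\sim$). The first inequality is immediate from the definitions: $\Gamma(G,x)$ is isomorphic to the connected component of $\Gamma(G,X)$ containing $x$, so taking the supremum of $|B_S(n)\cdot y|$ over $y \in Gx$ can only be smaller than taking it over $y \in X$.

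For the reverse inequality, the key idea is to use minimality together with the Hausdorff property to transport finite configurations from an arbitrary point $y\in X$ to a point in the orbit of $x$. Fix a finite symmetric generating set $S$, an integer $n \geq 1$, and a point $y \in X$. Let $N = |B_S(n)\cdot y|$ and enumerate the distinct points of $B_S(n)\cdot y$ as $p_1,\ldots,p_N$. Since $X$ is Hausdorff, we can choose pairwise disjoint open sets $U_1,\ldots,U_N$ with $p_i \in U_i$. For each $g \in B_S(n)$ let $\phi(g) \in \{1,\ldots,N\}$ be the unique index with $g\cdot y = p_{\phi(g)}$, and set
\[ V = \bigcap_{g \in B_S(n)} g^{-1}(U_{\phi(g)}). \]
Since $B_S(n)$ is finite and $G$ acts by homeomorphisms, $V$ is an open neighbourhood of $y$.

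By minimality of the $G$-action, the orbit $Gx$ is dense in $X$, so we can pick $z \in Gx \cap V$. For any $g,h \in B_S(n)$ with $g\cdot y \neq h\cdot y$, we have $\phi(g)\neq \phi(h)$, and by construction $g\cdot z \in U_{\phi(g)}$ and $h\cdot z \in U_{\phi(h)}$, so $g\cdot z \neq h\cdot z$ by disjointness of the $U_i$. Hence $|B_S(n)\cdot z| \geq N$, which shows $\sup_{y \in X}|B_S(n)\cdot y| \leq \sup_{z \in Gx}|B_S(n)\cdot z|$, i.e.\ $\vol_{G,X}(n) \leq \vol_{\Gamma(G,x)}(n)$, completing the proof.

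There is really no serious obstacle here; the only thing to keep track of is that the transport argument requires both Hausdorffness (to separate the finitely many points $p_i$) and continuity of the action (to ensure $V$ is open), and that the finiteness of $B_S(n)$ is what makes the intersection defining $V$ be open.
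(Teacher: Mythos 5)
Your proof is correct and takes essentially the same approach as the paper: shrink a neighbourhood of an arbitrary point $y$ so that the finitely many ball-translates stay separated, then use minimality to drop a point of the orbit $Gx$ into that neighbourhood. The paper phrases it as directly choosing an open $U\ni y$ with $g_1(U),\ldots,g_m(U)$ pairwise disjoint, while you build the intersection $V$ explicitly from disjoint neighbourhoods of the image points, but these are the same argument.
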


\begin{proof}
	It is obvious that for every $n\in \N$ we have $\vol_{\Gamma(G, x)}(n) \leq \vol_{G, X}(n)$ for every $x\in X$, so let us prove the converse. Set $m=\vol_{G, X}(n)$. By definition, we can find a point $y\in X$ such that $|B_{\Gamma(G, y)}(n, y)|=m$. Thus there exists $g_1, \ldots, g_m\in G$ with word length $\le n$ in the generating set $S$ such that $g_1(y),\ldots, g_m(y)$ are pairwise distinct. Choose an open neighbourhood $U$ of $y $ such that $g_1(U),\ldots, g_m(U)$ are pairwise disjoint. By minimality there exists $z\in U$ in the same orbit as $y$. Then the points $g_1(z), \ldots, g_m(z)$ are pairwise distinct. This shows that $|B_{\Gamma(G, x)}(n, z)|\ge m$, so that $\vol_{\Gamma(G, x)}(n) \geq m= \vol_{G, X}(n)$. \qedhere 
	\end{proof}

The main result of this subsection is the following:

\begin{thm} \label{thm-branch-wobb}
	Let $G \leq \aut(T)$ be a finitely generated  branch group. Then for every $G$-set $X$ on which $G$ acts faithfully, the growth $\vol_{G, X}$ satisfies $\vol_{G, X} \succeq \vol_{G, \partial T}$.  
\end{thm}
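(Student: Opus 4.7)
The strategy is to produce a pair $(y,\xi)\in X\times \partial T$ with $G_y\le G_\xi$. Once such a pair is in hand, the $G$-equivariant surjection $G/G_y\twoheadrightarrow G/G_\xi$ sends the Schreier ball of radius $n$ around $G_y$ (whose cardinality equals $|B_S(n)\cdot y|$) onto that around $G_\xi$, so
\[ \vol_{G,X}(n)\ge |B_S(n)\cdot y|\ge |B_S(n)\cdot \xi|=\vol_{G,\partial T}(n),\]
the last equality being Lemma \ref{l-growth-minimal-action} applied to the minimal $G$-action on $\partial T$. The remainder of the plan is devoted to producing such a pair.

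First I would dispose of the case of non-confined stabilizers: if some $G_x$ is not confined, the Schreier-graph characterization of confinement recalled in \S \ref{subsec-URS-conf} shows that $\Gamma(G,G/G_x)$ contains isomorphic copies of arbitrarily large Cayley balls of $G$, so that $\vol_{G,X}\succeq \vol_G\succeq \vol_{G,\partial T}$ and we are done. Henceforth assume every $G_x$ is confined; by Corollary \ref{cor-confined-wb}, each $G_x$ contains some $\rist_G(v_x)'$. Let $\Omega\subseteq \sub(G)$ be the $G$-conjugation closure of $\{G_x : x\in X,\ |G\cdot x|=\infty\}$, and pick a URS $\H\subseteq \Omega$ via Zorn's lemma. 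By Corollary \ref{cor-urs-parabolic}, either (a) every $H\in \H$ fixes a point of $\partial T$, or (b) $\rist_G(n)'\le H$ for all $H\in \H$ for some level $n$. Case (b) is impossible: $\rist_G(n)'$ is finitely generated by Theorem \ref{t-Francoeur} and is normal in $G$, so Chabauty convergence $g_mG_{x_m}g_m^{-1}\to H$ forces $\rist_G(n)'\le G_{x_m}$ for $m$ large; but the branch condition gives $[G:\rist_G(n)']<\infty$, whence $|G\cdot x_m|$ would be finite, contradicting the choice of the $x_m$. So case (a) applies.

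The main obstacle is the remaining step: promoting the ``fixes a point'' property from a limit subgroup $H\in \H$ to some honest stabilizer $G_y$ with $y\in X$. Chabauty convergence only controls finitely many elements at a time, whereas $G_y\le G_\xi$ is a condition on all of $G_y$, so a naive limiting argument cannot work. The key tool is Theorem \ref{t-wb-urs}(ii), which supplies a continuous $G$-equivariant section $\sigma:F_\H\to \Pi(T)$ with $\rist_G(v)'\le H$ for every $v\in \sigma(\fix(H))$. Using the finite generation of $\rist_G(v)'$ from Theorem \ref{t-Francoeur}, one shows that for a convergent sequence $g_mG_{y_m}g_m^{-1}\to H$ and any finite sub-configuration $\V\subseteq \sigma(\fix(H))$, the inclusion $\rist_G(g_m^{-1}\V g_m)'\le G_{y_m}$ holds for $m$ large. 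Exploiting continuity of $\sigma$ together with the fact that $\rist_G(v)'$ acts trivially on $\partial T\setminus \partial T_v$, one then arranges that the residual elements of $G_{y_m}$ must preserve a clopen cylinder meeting $g_m^{-1}\fix(H)$ and fix some $\xi_m\in \partial T$ there. With $y=y_m$ and $\xi=\xi_m$, the opening display gives the conclusion.
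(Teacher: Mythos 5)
Your strategy is close to the paper's in spirit — reduce to a URS in the Chabauty closure of the stabilizers, apply Corollary \ref{cor-urs-parabolic}, and rule out the case $\rist_G(n)'\le H$ using finite generation of $\rist_G(n)'$ — but you have identified a ``main obstacle'' that the paper simply does not face, and your plan for overcoming it has a genuine gap.

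The obstacle you describe (promoting ``$H$ fixes $\xi$'' from a Chabauty limit $H$ to an honest stabilizer $G_y\le G_\xi$) is unnecessary, and it is unclear that such a $y$ even exists in general. What you are missing is the elementary but essential observation that the orbital growth already controls the Schreier graphs of \emph{every} subgroup in the closure $\X = \overline{\{G_x : x\in X\}}$, not just of honest stabilizers: the ball of radius $n$ in $\Gamma(G,G/K)$ around the base vertex is determined by $K\cap B_S(2n)$, and Chabauty convergence $G_{x_i}\to H$ means that eventually $G_{x_i}\cap B_S(2n)=H\cap B_S(2n)$, so every ball of $\Gamma(G,H)$ is isomorphic to a ball of some $\Gamma(G,x_i)$. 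Hence $\vol_{\Gamma(G,H)}\preccurlyeq \vol_{G,X}$ directly for every $H\in\X$. With this in hand, once Corollary \ref{cor-urs-parabolic} gives a URS $H\in\X$ with $H\le G_\xi$, the surjection $G/H\twoheadrightarrow G/G_\xi$ together with Lemma \ref{l-growth-minimal-action} finishes the proof; no promotion to an honest point stabilizer is required. Your proposed use of Theorem \ref{t-wb-urs}(ii) to force $G_{y_m}$ to fix a point is not a proof: controlling $\rist_G(\cdot)'$-type subgroups inside $G_{y_m}$ does not constrain the remaining elements of $G_{y_m}$ enough to produce a common fixed point.

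Two secondary issues. Your argument that case (b) is impossible invokes $[G:\rist_G(n)']<\infty$, which is not true for a general branch group (it is $\rist_G(n)$, not $\rist_G(n)'$, that is assumed to have finite index; the quotient $G/\rist_G(n)'$ is only virtually abelian). The paper instead observes that if no URS in $\X$ fixes a point, then by compactness of $\X$ some $\rist_G(n)'$ lies in \emph{every} $K\in\X$, hence in every $G_x$, contradicting faithfulness — no index finiteness needed. Finally, by restricting to the set $\{G_x : |G\cdot x|=\infty\}$ you have nothing to work with when all orbits of $X$ are finite, a case the paper's argument handles uniformly.

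Your preliminary reduction (non-confined stabilizer $\Rightarrow$ $\vol_{G,X}\succcurlyeq\vol_G$) is correct but also not needed once the above observation is in place.
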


\begin{proof}
Let $\X\subset \sub(G)$ be the closure of the stabilisers $G_x, x\in X$. For every $x\in X$, the growth $\vol_{\Gamma(G, x)}$ of the orbital graph associated to $x$ satisfies $\vol_{\Gamma(G, x)} \preceq \vol_{G, X}$, with constants uniform on $x$. For $H\in \X$, we can find a sequence of points $(x_n)$ such that the pointed graphs $\Gamma(G, x_n)$ converge to $\Gamma(G, H)$, and thus every finite ball in $\Gamma(G, H)$ appears in $\Gamma(G, x_n)$ for  some $n$ large. Therefore $\vol_{\Gamma(G, H)} \preceq \vol_{G, X}$.
	
	Let $H \in \X$ that is a URS of $G$ (recall that we say that a subgroup $H\le G$ is a URS if its orbit-closure is a URS). By Corollary \ref{cor-urs-parabolic}, either there exists a point $\xi\in \partial T$ such that $H$ fixes $\xi$, or there exists a level $n$ such that $H$ contains $\rist_G(n)'$. Assume towards a contradiction that no URS $H \in \X$ fixes a point in $\partial T$. Given an arbitrary subgroup $K\in \X$, the closure of the $G$-orbit  of  $K$ contains a URS, so we can find a sequence $(g_i)$ such that $(g_iKg_i^{-1})$ converges to some $H\in \X$ which is a URS. By our assumption, there exists $n$ such that $\rist_G(n)'\le H$. But $\rist_G(n)'$ is finitely generated by Theorem \ref{t-Francoeur}, so that the set of subgroups that contain it is open in $\sub(G)$. We deduce that $g_iK g_i^{-1}$ contains $\rist_G(n)'$ for $i$ large enough, and since $\rist_G(n)'$ is normal, we actually have $\rist_G(n)'\le K$. It follows that if we set  $U_n=\{H\in \X \colon \rist_G(n)' \le H\}$, then $U_{n}\subset U_{n+1}$ and the sets $(U_n)$ form an open cover of $\X$. Thus by compactness there exists $n$ such that $\rist_G(n)'\le K$ for every $K\in \X$. In particular $\rist_G(n)' \le G_x$ for every $x\in X$, and we have reached a contradiction since the action on $X$ is supposed to be faithful. 
	
	Hence it follows that there must exist a URS $H\in \X$ and $\xi  \in \partial T$ such that $H \leq G_\xi$.  This implies that $\vol_{\Gamma(G, H)} \succeq \vol_{\Gamma(G, \xi)}= \vol_{G, \partial T}$ (Lemma \ref{l-growth-minimal-action}). Since $ \vol_{G, X} \succeq \vol_{\Gamma(G, H)}$, this terminates the proof. \qedhere
\end{proof}

\begin{remark}
	Theorem \ref{thm-branch-wobb} admits an equivalent reformulation in terms of embeddings of $G$ into wobbling groups. Recall that the \textbf{wobbling group} of a graph $\Delta$ is the group $W(\Delta)$ of all permutations $\sigma$ of the vertex set of $\Delta$ which have bounded displacement in the sense that $\sup_{v\in \Delta} d_\Delta(\sigma(v), v))<\infty$, where $d_\Delta$ is the simplicial distance. Theorem \ref{thm-branch-wobb} implies that if $G$ is a finitely generated branch group, and if $\Delta$ is a graph such that $\vol_\Delta \nsucceq \vol_{G, \partial T}$, then every homomorphism $\rho\colon G\to W(\Delta)$ has a non-trivial kernel (and thus its image is virtually abelian, by Grigorchuk's Lemma \ref{l-Gri-branch}).  Indeed it is not difficult to see that for every finitely generated subgroup $H\le W(\Delta)$ the graph of the action of $H$ on $\Delta$ is Lipschitz embedded in $\Delta$, and thus its growth is bounded above by $\vol_\Delta$.  
\end{remark}

As a concrete application of Theorem \ref{thm-branch-wobb}, observe that it provides an invariant to show the non-existence of embeddings of $G$ into other groups, using Lemma \ref{l-growth-subgroup}.
\begin{cor} \label{c-homom-growth}
Let $G\le \aut(T)$ be a finitely generated branch group. Let $H$ be a finitely generated group, and assume that $H$ admits a faithful action on a set $X$ such that $\vol_{H, X} \nsucceq \vol_{G, \partial T}$. Then every homomorphism $\rho \colon G\to H$ has virtually abelian image.  
\end{cor}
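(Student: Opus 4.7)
The plan is to reduce the statement, via Lemma \ref{l-Gri-branch}, to showing that $\ker(\rho) \neq \{1\}$. Indeed, once we know this, then $\rho(G) \simeq G/\ker(\rho)$ is a proper quotient of the branch group $G$, so by Lemma \ref{l-Gri-branch} it is virtually abelian, which is exactly the conclusion we seek.

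Accordingly, I would argue by contradiction and suppose that $\rho$ is injective. Since by hypothesis the $H$-action on $X$ is faithful, the induced $G$-action on $X$, defined by $g \cdot x = \rho(g)(x)$, is then also faithful. Theorem \ref{thm-branch-wobb} therefore applies to this action of $G$ on $X$ and yields
\[ \vol_{G,X} \succeq \vol_{G,\partial T}. \]

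The next step is to control $\vol_{G, X}$ from above in terms of $\vol_{H, X}$. If $S$ is a finite generating set of $G$, then $\rho(S)$ generates the finitely generated subgroup $\rho(G) \le H$. With respect to these generating sets, the orbital graph $\Gamma(G, X)$ of the action $g \cdot x = \rho(g)(x)$ and the orbital graph $\Gamma(\rho(G), X)$ of the $\rho(G)$-action on $X$ have the same vertex set and the same set of edges (an $S$-edge $x \to \rho(s)(x)$ in the former corresponds to the $\rho(S)$-edge $x \to \rho(s)(x)$ in the latter). Hence $\vol_{G, X} \sim \vol_{\rho(G), X}$. Lemma \ref{l-growth-subgroup} applied to the inclusion $\rho(G) \le H$ gives $\vol_{\rho(G), X} \preceq \vol_{H, X}$, so altogether $\vol_{G, \partial T} \preceq \vol_{H, X}$, contradicting the assumption $\vol_{H, X} \nsucceq \vol_{G, \partial T}$. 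Thus $\ker(\rho) \neq \{1\}$, and the corollary follows.

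There is no serious obstacle in this argument: it is a direct combination of Theorem \ref{thm-branch-wobb} with the branch-group dichotomy of Lemma \ref{l-Gri-branch}. The only point requiring slight care is the verification that, when passing from $G$ (acting through $\rho$) to its image $\rho(G)$ in $H$, the orbital growth is preserved; this is handled by picking the compatible generating sets $S$ and $\rho(S)$.
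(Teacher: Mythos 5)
Your proof is correct and is essentially the paper's own argument: the paper leaves the corollary as a direct consequence of Theorem \ref{thm-branch-wobb} combined with Lemma \ref{l-growth-subgroup}, and you spell out precisely that combination, including the needed invocation of Lemma \ref{l-Gri-branch} to pass from \enquote{$\rho$ not injective} to \enquote{$\rho(G)$ virtually abelian} and the verification that $\vol_{G,X}\sim\vol_{\rho(G),X}$ via compatible generating sets.
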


%

Consider the following property, defined in \cite[Definition 1.12]{MB-graph-germs}.

\begin{defin}
	Let $\lambda \colon \N\to \N$ be a function. A finitely generated group $G$ is said to have property $\fg_\lambda$ if for every action of $G$ on a set $X$ either $\vol_{G, X} \succeq \lambda$, or the image of $G$ in $\sym(X)$ is finite. 
	\end{defin}
 \begin{remark}
 Definition 1.12 in \cite{MB-graph-germs} is formulated in a different terminology, which also applies to non-finitely generated groups, however for finitely generated groups it is equivalent to the previous one by \cite[Prop. 8.1]{MB-graph-germs}
 \end{remark}
It is a well-known observation that if $G$ has Kazhdan's property $(T)$, then it has property $\fg_{\exp}$, where $\exp\colon \N \to \N $ is the exponential function (this remark is attributed to Kazhdan in \cite[Remark 0.5 F]{Gro-asym}). This follows from the fact that if a group $G$ acts on a set $X$ with $\vol_{G, X}(n) \nsim \exp(n)$, then the $G$ action on every orbit preserves an invariant mean (i.e. a finitely additive probability measure). For the same reason, if a group $G$ has property FM  in the sense of Cornulier \cite{Cor-FM}, i.e.\ if every $G$-action with an invariant mean has a finite orbit, then $G$ has property $\fg_{\exp}$ \cite[Th. 7.1]{Cor-FM} (note that property (T) implies property FM). 
It  is shown in \cite[\S 8]{MB-graph-germs} that there exists groups with property $\fg_{\exp}$ but not property  FM, and also that there exists groups that have property $\fg_\lambda$, where $\lambda$ is sharp and varies in a vast class of subexponentially growing functions. These examples are obtained using topological full groups of \'etale groupoids. The following corollary of Theorem \ref{thm-branch-wobb} shows that the class of branch groups is also a source of examples of groups enjoying this property. 

\begin{cor}
Let $G\le \aut(T)$ be a just-infinite finitely generated branch group, and let $\lambda= \vol_{G, \partial T}$. Then $G$ has property $\fg_\lambda$.
\end{cor}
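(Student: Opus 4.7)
The plan is to reduce to the faithful case and invoke Theorem \ref{thm-branch-wobb} directly. Given an arbitrary $G$-action on a set $X$, I would consider the kernel $N \trianglelefteq G$ of the induced homomorphism $G \to \sym(X)$. Because $G$ is just-infinite, every non-trivial normal subgroup has finite index, so there are only two possibilities: either $N = \{1\}$ or $N$ has finite index in $G$.

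In the second case, the image of $G$ in $\sym(X)$ is the finite quotient $G/N$, and the second alternative in the definition of property $\fg_\lambda$ is satisfied. In the first case, the $G$-action on $X$ is faithful, so Theorem \ref{thm-branch-wobb} applies and yields $\vol_{G,X} \succeq \vol_{G, \partial T} = \lambda$, which is the first alternative in the definition.

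There is essentially no obstacle: the argument is a two-line combination of the just-infinite hypothesis (which forces the dichotomy on $N$) with the orbital growth lower bound of Theorem \ref{thm-branch-wobb} (which handles the faithful case). The only point worth emphasizing is that the just-infinite assumption is precisely what rules out intermediate kernels, for which neither of the two alternatives in the definition of $\fg_\lambda$ would be automatic.
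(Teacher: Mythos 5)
Your proposal is correct and is precisely the argument the paper leaves implicit (the corollary is stated without proof): the just-infinite hypothesis forces the kernel of $G \to \sym(X)$ to be either trivial or of finite index, and in the respective cases Theorem~\ref{thm-branch-wobb} gives $\vol_{G,X} \succeq \vol_{G,\partial T}$ while the finite-index case gives a finite image. No gaps.
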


\subsection{Actions  of weakly branch groups with polynomially growing orbits and interval exchange transformations} \label{s-iet}

Recall that using Theorem \ref{t-wb-structure-lsc}, in \S \ref{subsec-factor-profinite} we associated to every minimal and non topologically free action of a weakly branch group $G$ on a compact space $X$, a continuous $G$-equivariant map from $X$ to $\F(\partial T)$. The following result asserts that when the growth of the action of $G$ on $X$ is bounded above by a polynomial, then the image of this map is infinite.

\begin{thm}\label{t-wb-polynomial}
	Let $G \le \aut(T)$ be a finitely generated weakly branch group, and $X$ be a minimal compact $G$-space on which $G$ acts faithfully. Assume that $\vol_{G, X}(n) \preceq n^d$ for some $d\ge 1$. Then there exists a continuous $G$-equivariant map  $q\colon X\to \F(\partial T)$ with infinite image. In particular $X$ factors onto an infinite profinite $G$-space. 
\end{thm}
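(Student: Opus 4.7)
The natural plan is to take $q = \Psi$, where $\Psi\colon X \to \F(\partial T)$, $x \mapsto \fix(G_x^0)$, is the continuous $G$-equivariant map provided by Theorem \ref{t-wb-structure-lsc} applied to the lower semi-continuous germ-stabilizer map $\Phi(x)= G_x^0$. One then needs to show that the image $F_X := \Psi(X)$ is infinite, which I would do by contradiction.

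Assuming $F_X$ is finite, Corollary \ref{c-infinite-profinite-factor}(ii) provides $n\ge 1$ and a clopen $\st_G(n)$-invariant partition $X = X_0\sqcup\cdots\sqcup X_k$, which I would refine via Lemma \ref{l-minimal-fi} to assume each $X_i$ is $\st_G(n)$-minimal. The dichotomy then asserts that for every $v\in\L(n)$ and every $i$, either $\rist_G(v)'$ acts trivially on $X_i$ or $\rist_G(v)$ acts topologically freely on $X_i$. Using faithfulness of the $G$-action and non-triviality of $\rist_G(v)'$ (which follows from Lemma \ref{lem-rist-FC}, since $\rist_G(v)$ has trivial FC-center and is therefore non-abelian), for each $v\in\L(n)$ there exists $i=i(v)$ such that $\rist_G(v)$ acts topologically freely, and hence faithfully, on $X_{i(v)}$.

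The core of the argument then exploits polynomial growth. For any finitely generated subgroup $H\le\rist_G(v)$, the restricted $H$-action on $X_{i(v)}$ is topologically free and faithful, so by a Baire category argument (the fixed-point set of each non-trivial element of $H$ is closed with empty interior in $X_{i(v)}$) the set of points with trivial $H$-stabilizer is a dense $G_\delta$. Picking such a point $y$ gives $H\cdot y \simeq H$, so Lemma \ref{l-growth-subgroup} yields $\vol_H(n)\preceq\vol_{H, X_{i(v)}}(n)\preceq\vol_{G,X}(n)\preceq n^d$, and Gromov's theorem shows $H$ is virtually nilpotent with growth degree bounded uniformly by $d$.

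I expect the main obstacle to be converting this into a contradiction with Lemma \ref{lem-rist-FC}. The uniform degree bound translates into a uniform bound on the rank and nilpotency class of finitely generated subgroups of $\rist_G(v)$; I would try to exploit this by picking a non-trivial $h\in\rist_G(v)$, choosing $g_1,g_2,\ldots\in\rist_G(v)$ with all $g_ihg_i^{-1}$ distinct (possible by the trivial FC-center of $\rist_G(v)$), and analysing the ascending chain $\Gamma_N = \langle h,g_1,\ldots,g_N\rangle$. The uniform rank bound forces this chain to stabilize to a finitely generated virtually nilpotent group $\Gamma_\infty\le\rist_G(v)$ in which $h$ has infinite conjugacy class; the delicate step is to use the specific embedding of $\Gamma_\infty$ into $\rist_G(v)\le G$, together with the weakly branch structure of $G$, to produce a non-trivial FC-element in the whole of $\rist_G(v)$, contradicting Lemma \ref{lem-rist-FC} and closing the argument.
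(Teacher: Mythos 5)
Your outline tracks the paper's own proof up to a point: you both invoke Corollary~\ref{c-infinite-profinite-factor}, reduce to the case of a clopen $\st_G(n)$-invariant partition, locate a pair $(v,i)$ on which $\rist_G(v)$ acts topologically freely, and aim to contradict Lemma~\ref{lem-rist-FC} via Gromov's theorem. Your Baire argument showing that every finitely generated $H\le\rist_G(v)$ embeds as an orbital graph, hence satisfies $\vol_H\preceq n^d$, is correct as far as it goes. The gap is in the last step, and it is genuine.

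Knowing that every finitely generated subgroup of $\rist_G(v)$ is virtually nilpotent of polynomial growth degree at most $d$ does not, by itself, contradict triviality of the FC-center of $\rist_G(v)$. The ascending chain $\Gamma_N=\langle h,g_1,\ldots,g_N\rangle$ you propose need not stabilize: there is no ACC on finitely generated subgroups of $\rist_G(v)$ just from a uniform bound on growth degree. Concretely, the group $\mathrm{Sym}_{\mathrm{fin}}(\N)$ of finitely supported permutations of $\N$ is locally finite (so every finitely generated subgroup has polynomial growth of degree $0$), yet it has trivial FC-center; no chain of the type you build stabilizes there. So no amount of massaging the chain argument will produce the required FC-element in $\rist_G(v)$ from the local information alone.

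The paper avoids this by applying the growth bound to a single, canonically finitely generated group: it considers the restriction homomorphism $\pi_v\colon\st_G(n)\to\aut(T_v)$ and sets $K_v=\pi_v(\st_G(n))$, which is finitely generated because $\st_G(n)$ has finite index in the finitely generated group $G$. Using a regular point $y\in X_i$ (i.e.\ $G_y^0=G_y$, available by Baire category), the condition $\partial T_v\subset\fix(G_y)$ lets one inject the ball of radius $n$ of $K_v$ into the $G$-orbit of $y$, giving $\vol_{K_v}\preceq\vol_{G,X}\preceq n^d$. Gromov then makes $K_v$ virtually nilpotent, hence so is its subgroup $\pi_v(\rist_G(v))\cong\rist_G(v)$, which contradicts Lemma~\ref{lem-rist-FC} since an infinite virtually nilpotent group always has non-trivial FC-center. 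The moral of the discrepancy: to use Gromov's theorem effectively you must identify a finitely generated group containing $\rist_G(v)$ and bound \emph{its} growth; bounding the growth of finitely generated subgroups of $\rist_G(v)$ is too weak when $\rist_G(v)$ itself need not be finitely generated.
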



\begin{proof}
	According to Corollary \ref{c-infinite-profinite-factor} applied to the action of $G$ on $X$, it is enough to show that case \ref{i-finite-factor} in the conclusion of Corollary \ref{c-infinite-profinite-factor} cannot hold in the present situation. We argue by contradiction and assume  that there exist $r \ge 1$ and a clopen partition $X=X_1\sqcup \cdots \sqcup X_k$ that is invariant under $\st_G(r)$ and that satisfies the conclusion \ref{i-finite-factor} of Corollary \ref{c-infinite-profinite-factor}. The subgroup $\rist_G(v)'$ cannot act trivially on every $X_i$, so we can choose $v\in \L(r)$ and $i=1,\cdots, k$ such that $\partial T_v\subset \fix(G_x^0)$ for every $x\in X_i$. We fix $v$ and $i$, and we also choose a point $y\in X_i$ such that $G^0_y=G_y$.
	
	Let $\pi_v\colon \st_G(r) \to \aut(T_v)$ be the map obtained by restricting the action of $\st_G(r)$ on $T_v$, and let $K_v:=\pi_v(\st_G(r))$. Note that since $\st_G(r)$ has finite index in $G$, it is finitely generated, and thus so is $K_v$.  We fix a finite generating subset $S$ of $\st_G(r)$ and consider the corresponding generating subset $\pi_v(S)$ of $K_v$, that we use to compute the growth $\vol_{K_v}$. We will show that $\vol_{K_v}$ must be polynomial, and derive a contradiction. 
	
	Fix $n$ and let $m=\vol_{K_v}(n)$. Let $h_1,\cdots, h_m$ be an enumeration of the ball of radius $n$ in $K_v$. For every $i=1,\ldots, m$ choose a representation  $h_i=\pi_v(s_1\cdots s_\ell)$ as a product of generators of minimal length and set $g_i=s_1\cdots s_\ell$ be the corresponding element of $G$. If  $i\neq j$ then $g_iy\neq g_j y$ since  $g_ig_j^{-1}$ projects to $h_ih_j^{-1}\neq 1$, thus does not fix $\partial T_v$, while $\partial T_v \subset \fix(G^0_y)=\fix(G_y)$. Since $\vol_{G, X}\preceq n^d$, and each $g_i$ has length at most $n$ in the generating set $S$ of $\st_G(r)$, we deduce that 
	\[\vol_{K_v}(n)=m=|\{g_1y,\ldots, g_my\}| \le Cn^d\]
	for some constant $C=C(S)$ independent of $n$. Thus the group $K_v$ has polynomial growth, and hence must be virtually nilpotent by Gromov's theorem \cite{Gro-growth}. But it is easy to check that $K_v$ cannot be virtually nilpotent (alternatively, this also follows from Lemma \ref{lem-rist-FC}). This provides a contradiction, and concludes the proof. \qedhere
\end{proof}

We now describe an application in the setting of interval exchange transformations. An {interval exchange transformation} is a left-continuous permutation of $\R/\Z$, with finitely discontinuity points, and which coincides with a translation in restriction to every interval of continuity. Interval exchange transformations form a group, denoted by $\iet$. 
For a countable subgroup $\Lambda$ of  $\R/\Z$, we also define $\iet(\Lambda)$ to be the subgroup of $\iet$ consisting of elements whose discontinuity points belong to $\Lambda$, and that are given by translations by an element of $\Lambda$ in restriction to every interval of continuity. 

 The dynamics of iterations of one interval exchange transformation is a classical and well-developed topic, see \cite{Via} for a survey. More recently there has  been  interest in the study of actions of more general groups by interval exchanges. A central question in the field is  to understand which finitely generated groups can act faithfully by interval exchanges. It turns out that the subgroup structure of $\iet$ appears to be more restricted than it might look, although few explicit obstructions are currently known. For example, Novak proved that a finitely generated subgroup of $\iet$ cannot have a distorted infinite cyclic subgroup \cite{Nov-disc}, and  Dahmani-Fujiwara-Guirardel \cite{DFG-solv} proved that a finitely generated torsion-free solvable subgroup of $\iet$ is virtually abelian (in contrast, they also show that there are uncountably many non-isomorphic finitely generated solvable subgroups of $\iet$ containing torsion \cite{DFG-solv}). Another subgroup obstruction comes from work of Cornulier \cite{Cor-comm-pc}, which implies that if $G$ is a group with  property FW (see \cite{Cor-comm-pc}), then every homomorphism from $G$ to $\iet$ has finite image.   

Note that every finitely generated subgroup $G$ of $\iet$ is a subgroup of $\iet(\Lambda)$ for some finitely generated dense subgroup $\Lambda$ of $\R/\Z$, so that it is natural to study how  the subgroup structure of $\iet(\Lambda)$ depends on $\Lambda$.  It was proven in \cite{ExtAmen} that if $\rk_\Q(\Lambda)\le 2$, then the group $\iet(\Lambda)$ is amenable, generalising a result of Juschenko and Monod \cite{Ju-Mo} which implies the same result in the case $\rk_\Q(\Lambda)=1$.   A result of the second author in \cite{MB-graph-germs} classifies the pairs $(\Lambda, \Delta)$ such that $\iet(\Lambda)$ can be embedded into $\iet(\Delta)$.

In contrast with these results, several basic questions remain open (see the discussion before Theorem \ref{t-wb-iet-intro}). As an application of Theorem \ref{t-wb-polynomial}, we will prove the following:


\begin{thm}\label{t-wb-iet}
	Let $G \le \aut(T)$ be a finitely generated weakly branch group. Then $G$ does not admit any faithful action on $\R/\Z$ by interval exchange transformations.
\end{thm}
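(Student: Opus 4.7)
The plan is to argue by contradiction via Theorem \ref{t-wb-polynomial}. Suppose $G$ is a finitely generated weakly branch group admitting a faithful embedding into $\iet$. Since $G$ is finitely generated, its image lies in $\iet(\Lambda)$ for some finitely generated subgroup $\Lambda\le \R/\Z$ containing the discontinuity points and local translation values of a finite symmetric generating set $S$ of $G$. The action of $\iet(\Lambda)$ lifts canonically to a faithful action by homeomorphisms on the Cantor space $C_\Lambda$ obtained from $\R/\Z$ by splitting every $\Lambda$-orbit point into a left- and a right-copy; this action preserves the probability measure $\mu$ coming from Lebesgue on $\R/\Z$ (which gives zero mass to the doubled points).

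The crucial preliminary observation is that this $G$-action on $C_\Lambda$ has polynomial orbital growth. Let $\tau \subset \R/\Z$ denote the finite set of all local translation values of elements of $S \cup S^{-1}$. For every $x \in C_\Lambda$ and every word $w$ of length $n$ in $S$, the element $w(x) - x$ in $\R/\Z$ is a sum of $n$ elements of $\tau$ (determined by which continuity intervals the successive generators traverse). Hence $|B_S(n) \cdot x|$ is bounded by the number of unordered multisets of size $n$ drawn from $\tau$, which is $O(n^{|\tau|})$.

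Next I would pass to a minimal closed $G$-invariant subset $Y \subseteq C_\Lambda$ on which $G$ acts faithfully; this is the first subtle point. If the action on every minimal subsystem $Y_\alpha$ had non-trivial kernel $K_\alpha$, Lemma \ref{l-Gri-branch} would give $K_\alpha \supseteq \rist_G(n_\alpha)'$ for some level $n_\alpha$. Using $\mu$-invariance and Poincar\'e recurrence, $\mu$-almost every point is contained in a minimal subsystem, so $\bigcup_\alpha Y_\alpha$ is dense in $C_\Lambda$. Together with a compactness argument in $\sub(G)$ (made cleanest in the branch case via the finite generation of $\rist_G(n)'$ from Theorem \ref{t-Francoeur}, with a suitable modification in the general weakly branch case), one extracts a uniform $N$ such that $\rist_G(N)'$ acts trivially on $\bigcup_\alpha Y_\alpha$, hence on all of $C_\Lambda$, contradicting faithfulness of $G$ on $C_\Lambda$. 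With $Y$ in hand, Theorem \ref{t-wb-polynomial} provides a continuous $G$-equivariant surjection $q\colon Y \to P$, where $P = \partial T'$ is an infinite minimal profinite $G$-space.

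The main obstacle is extracting a contradiction from this infinite profinite factor. The preimages $q^{-1}(\partial T'_v)$, as $v$ varies over the vertices of $T'$, form an arbitrarily refining system of $G$-invariant clopen partitions of $Y$, each piece with finite $G$-orbit and expressible in $C_\Lambda$ (up to finite $G$-orbit) as a finite union of half-open $\Lambda$-intervals. Combining this with Corollary \ref{cor-confined-wb}, which forces $\rist_G(v)' \le G^0_y$ for appropriate $v \in T$, and exploiting that rigid stabilizers of disjoint subtrees of $T$ pairwise commute, one extracts an infinite family of pairwise-commuting non-trivial elements of $\iet(\Lambda)$ supported on disjoint $\Lambda$-intervals arbitrarily deep in $T'$. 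The combinatorics of such commuting IETs then forces $\Lambda$ to have unbounded $\Q$-rank, contradicting the finite generation of $\Lambda$.
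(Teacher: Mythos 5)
Your setup and the polynomial growth bound are sound (and essentially coincide with Lemma \ref{l-iet-polynomial} in the paper). There are, however, two genuine problems with the remainder.

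First, the passage to a minimal faithful subsystem is not rigorous as sketched. Poincar\'e recurrence shows that $\mu$-a.e.\ point is recurrent, but a recurrent point need not lie in a minimal subsystem, so the claimed density of $\bigcup_\alpha Y_\alpha$ does not follow; nor does it yield a \emph{finite} family of minimal pieces, which is what you really need, because your compactness argument invokes finite generation of $\rist_G(n)'$ (Theorem \ref{t-Francoeur}) and this is only available for branch groups, not for general weakly branch $G$. The paper avoids both issues by invoking Imanishi's theorem (Theorem \ref{t-Imanishi}): $X_\Lambda$ decomposes into \emph{finitely many} minimal clopen components $W_1,\ldots,W_k$ plus a periodic part $Z$, so one can simply take the maximum over the finitely many levels $n_1,\ldots,n_k,n_Z$ provided by Lemma \ref{l-Gri-branch} to reach a contradiction, with no need for a Chabauty-compactness argument or finite generation of derived rigid stabilizers.

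Second, and more seriously, the concluding step does not work: it is false that the existence of an infinite family of pairwise-commuting non-trivial elements of $\iet(\Lambda)$, supported on pairwise disjoint $\Lambda$-intervals of decreasing size, forces $\Lambda$ to have unbounded rational rank. Take for instance $\Lambda=(\Z[1/2]+\Z\alpha)/\Z$ for an irrational $\alpha$, which has $\rk_\Q(\Lambda)=1$. The $\Lambda$-intervals $[2^{-n},2^{-n+1})$ are pairwise disjoint, and inside each one there is a non-trivial element of $\iet(\Lambda)$ (e.g.\ swapping the two halves); these commute, and their supports shrink to a point. So no contradiction arises this way. The paper's actual argument to rule out an infinite profinite factor of a minimal component is Corollary \ref{c-iet-profinite-factor}, which relies on the Dahmani--Fujiwara--Guirardel unfragmentability theorem: after passing to a finite-index unfragmentable subgroup $G_0$, every minimal component of $G$ splits into a \emph{bounded} number $\ell$ of minimal components of $G_0$, so no finite-index subgroup of $G$ can admit a clopen invariant partition of a minimal component into more than $\ell$ non-empty pieces — whereas an infinite profinite factor would supply such partitions of unbounded cardinality. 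Your proof is missing this key input, and the rank-of-$\Lambda$ substitute you propose in its place is not correct.
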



We need some preliminaries on the dynamics of actions of  finitely generated subgroups of $\iet$. Since every finitely generated subgroup of $\iet$ is contained in $\iet(\Lambda)$ for some finitely generated subgroup $\Lambda<\R/\Z$, without loss of generality we will consider finitely generated subgroups of  $\iet(\Lambda)$.  
 
We first recall that although the action of $\iet(\Lambda)$ on $\R/\Z$ is not continuous, the group $\iet(\Lambda)$ can be viewed as a group of continuous transformations via the following well-known ``doubling trick''.  Let $X_\Lambda$ be the space obtained from $\R/\Z$ by  replacing each point $\lambda\in \Lambda\subset \R/\Z$ with two copies $[\lambda]_-, [\lambda]_+$. Endow $X_\Lambda$ with the natural circular order induced by the circular order on $\R/\Z$, by replacing each $\lambda$ with a pair of adjacent elements   $[\lambda]_-, [\lambda]_+$. The topology induced by this circular order turns $X_\Lambda$ into a compact space homeomorphic to a Cantor set. The group $\iet(\Lambda)$ acts on $X_\Lambda$, by letting it act as usual on $(\R/\Z\setminus \Lambda)\subset X_\Lambda$, and extending its action by continuity to points of the form $[\lambda]_\pm$. Explicitly, for every $g\in \iet$ and recalling that $g$ is left-continuous on $\R/\Z$, we have  $g([\lambda]_-)= [g(\lambda)]_-$ and $g([\lambda]_+)=[\tilde{g}(\lambda)]_+$, where $\tilde{g}$ is the unique right-continuous map which coincides with $g$ away from its points of discontinuity.

We will need the following well-known observation, which can be found e.g. in \cite[Lemma 6.3]{DFG-free}. Recall that the \textbf{rational rank}  $\operatorname{rk}_\Q(\Lambda)$ of $\Lambda$ is the rank of its torsion-free part. We also recall from \S \ref{subsec-growth} that $\vol_\Gamma(n)$ is the (uniform) growth of a graph $\Gamma$.

\begin{lem} \label{l-iet-polynomial}
Assume that $\Lambda<\R/\Z$ is finitely generated, and set $d:= \operatorname{rk}_\Q(\Lambda)$. Then for every finitely generated subgroup $H\le \iet(\Lambda)$ and every point $x\in X_\Lambda$, the growth of the orbital graph $\Gamma(H, x)$ satisfies $\vol_{\Gamma(H, x)}(n)\preceq n^d$.  
\end{lem}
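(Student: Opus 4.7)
The plan is to exploit that every element of $\iet(\Lambda)$ acts on $X_\Lambda$ by piecewise translation with translation values drawn from a finite subset of $\Lambda$. Consequently the image of $x$ under words of length $n$ in a fixed finite generating set of $H$ ranges over a set of the form ``$x$ translated by an $n$-fold sum of elements of a finite subset of $\Lambda$''. Since these sums lie in a finitely generated abelian subgroup $\Lambda_0 \le \Lambda$ of rational rank at most $d$, their number grows polynomially of degree at most $d$.

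Concretely, I would fix a finite symmetric generating set $S$ of $H$ and for each $s \in S$ let $T_s \subset \Lambda$ denote the finite set of translation values realized by $s$ on its intervals of continuity. Set $T = \bigcup_{s \in S} T_s$ and $\Lambda_0 = \langle T \rangle \le \Lambda$; then $\Lambda_0$ is a finitely generated abelian group of rational rank $r \le \rk_\Q(\Lambda) = d$. The key claim, proved by induction on $n$, is that for every $x \in X_\Lambda$ and every word $w = s_n \cdots s_1$ of length at most $n$ in $S$, there exist $t_1, \ldots, t_n \in T$ such that $w(x)$ depends only on $x$ and the cumulative sum $t_1 + \cdots + t_n \in \Lambda_0$. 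The inductive step is immediate from the explicit formulas for the action on $X_\Lambda$: on a point $y \in \R/\Z \setminus \Lambda$ a generator $s$ acts as a translation by an element of $T_s$, while on a doubled point one has $s([\lambda]_\pm) = [\lambda + t]_\pm$ for some $t \in T_s$; in particular the sign $\pm$ is preserved throughout the orbit and the position is shifted by an element of $\Lambda_0$.

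It follows that the map sending each point of $B_S(n) \cdot x$ to its total displacement from $x$ is well-defined and injective into $\Lambda_0$, with image contained in the ball of radius $n$ in $\Lambda_0$ with respect to the generating set $T$. By the standard estimate for finitely generated abelian groups of rank $r$, this ball has cardinality $\preceq n^r \preceq n^d$, which yields $\vol_{\Gamma(H, x)}(n) = |B_S(n) \cdot x| \preceq n^d$. There is no genuine obstacle in the argument; the only subtlety is the bookkeeping around the doubled points of $X_\Lambda$, which is resolved by observing that the sign is preserved by the $\iet(\Lambda)$-action, so that the orbit of $x$ stays in a single ``stratum'' on which translation by $\Lambda_0$ is free.
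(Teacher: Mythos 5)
Your proof is correct. The paper does not prove this lemma itself but cites it from Dahmani--Fujiwara--Guirardel \cite{DFG-free}, and your argument is the natural one behind that reference: the displacement along an orbit is additive, with each generator contributing an increment from a fixed finite subset $T\subset\Lambda$, so the ball $B_S(n)\cdot x$ injects (thanks to the sign-preservation at doubled points) into the $T$-ball of radius $n$ in the finitely generated abelian group $\Lambda_0=\langle T\rangle\le\Lambda$, whose cardinality is $\preceq n^{\rk_\Q(\Lambda_0)}\preceq n^d$.
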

  
The following result is due to Imanishi \cite{Ima}. For the following formulation, see \cite[Corollary 2.4]{DFG-solv}.  

\begin{thm}[Imanishi \cite{Ima}, see  Corollary 2.4 in \cite{DFG-solv}] \label{t-Imanishi}
Let $\Lambda\le \R/\Z$ be a countable subgroup and $G\le \iet(\Lambda)$ be finitely generated. Then there exists a unique decomposition $X_\Lambda=W_1\sqcup \cdots \sqcup W_k \sqcup Z$ into  $G$-invariant clopen subsets such that: 
\begin{enumerate}[label=\roman*)]
\item the action of $G$ on each of the sets $W_1,\cdots, W_k$ is minimal;
\item the action of $G$ on  $Z$ factors through a finite quotient of $G$. 
\end{enumerate}
\end{thm}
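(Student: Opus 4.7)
The plan is to follow the strategy of Imanishi's theorem in the Cantor setting $X_\Lambda$. Everything rests on the following local rigidity lemma for $\iet(\Lambda)$: after the doubling trick, every $g \in \iet(\Lambda)$ is a homeomorphism of $X_\Lambda$ that restricts to a single translation by some $\lambda \in \Lambda$ on each of its finitely many clopen continuity intervals; consequently, if $g(x) = x$ and $x$ lies in the interior of such a continuity interval, then $g$ fixes pointwise a clopen neighborhood of $x$ (the translation must be zero). Fix a finite symmetric generating set $S$ of $G$ and denote by $D \subset X_\Lambda$ the finite union of endpoints of continuity intervals of elements of $S$.

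Step 1 (isolating the finite-orbit part). Let $Z \subseteq X_\Lambda$ be the set of points admitting a clopen neighborhood fixed pointwise by some finite-index subgroup of $G$. By construction $Z$ is open and $G$-invariant, and every $x \in Z$ has finite orbit. For the converse, if $|Gx| < \infty$ then the normal core $H$ of $G_x$ has finite index in $G$ and pointwise fixes $Gx$. Since $H$ has finite index in the finitely generated group $G$, it admits a finite symmetric generating set $T$. For each $t \in T$ we have $t(x) = x$, and one can replace $t$ by a conjugate $ht h^{-1}$ with $h \in H$ whose continuity-interval endpoints avoid $x$ (this is possible since those endpoints form a finite set while the orbit of $x$ under $H$ is finite, so one can choose $h$ suitably). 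Applying the rigidity lemma to the modified generators shows $H$ pointwise fixes a clopen neighborhood of $x$, so $x \in Z$. Hence $Z = \{x : |Gx| < \infty\}$. A dual neighborhood argument shows that whenever $|Gx| = \infty$, the point $x$ admits a clopen neighborhood consisting of points with infinite orbit, hence $Z$ is also closed.

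Step 2 (minimal decomposition of $Y := X_\Lambda \setminus Z$). Every orbit in the clopen $G$-invariant set $Y$ is infinite. The central claim is that every minimal closed $G$-invariant subset $M \subseteq Y$ has non-empty interior, and hence is clopen by minimality. Otherwise $M$ would be nowhere dense in $Y$; applying the piecewise-translation structure, one partitions $Y$ into finitely many clopens on which the generators act as translations, and traces how orbits of points of $M$ must remain in $M$. Since the discontinuity set of each element is finite while $M$ is nowhere dense and $G$-invariant, one reaches a contradiction by producing an orbit that accumulates simultaneously on $M$ and its complement, violating closedness of $M$. Having shown every minimal subset is clopen, disjointness and compactness give only finitely many such subsets $W_1, \ldots, W_k$; an iterative elimination on the remainder (any non-empty clopen $G$-invariant subset of $Y \setminus \bigcup W_i$ would contain a further minimal clopen subset) shows $Y = W_1 \sqcup \cdots \sqcup W_k$. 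Uniqueness follows because the $W_i$ are intrinsically the maximal clopen minimal $G$-invariant subsets.

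Step 3 (factoring through a finite quotient on $Z$). By definition of $Z$, compactness gives a finite cover of $Z$ by clopens $U_1,\ldots, U_m$ on each of which a finite-index subgroup $H_i \leq G$ acts trivially; then $\bigcap_i H_i$ is a finite-index subgroup of $G$ acting trivially on $Z$, and its normal core gives a finite-index normal subgroup of $G$ acting trivially on $Z$, so the action factors through a finite quotient. The main obstacle in this plan is Step 2: ruling out exceptional (Cantor-type) minimal invariant subsets of $Y$, which is the technical heart of Imanishi's theorem and requires careful tracking of how the finite continuity-endpoint set $D$ propagates under iteration of the generators.
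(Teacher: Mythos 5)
The statement you are trying to prove is not actually proved in the paper: it is quoted as an external result of Imanishi, in the form given in Corollary 2.4 of Dahmani--Fujiwara--Guirardel, with only a remark that transporting the statement from $\R/\Z$ to the doubled space $X_\Lambda$ is routine. A blind proof therefore has to supply Imanishi's argument itself, and that is exactly where your proposal stops short. In Step 2 the central claim --- that every minimal closed $G$-invariant subset $M$ of $Y$ has non-empty interior, hence is clopen --- is only asserted; the sentence about \enquote{producing an orbit that accumulates simultaneously on $M$ and its complement} is not an argument, and ruling out exceptional (Cantor-type) minimal sets is precisely the technical heart of the theorem, requiring the careful bookkeeping of how the finite set of singular points of the generators propagates under iteration (you acknowledge this yourself in your closing sentence). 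Likewise, in Step 1 the closedness of $Z$ is dispatched by an unexplained \enquote{dual neighborhood argument}; but a point with infinite orbit could a priori be accumulated by infinitely many small periodic clopen components of unbounded period, and excluding this possibility (equivalently, proving that the decomposition is \emph{finite}) is again part of the real content of the theorem, not a consequence of the local rigidity lemma. So what you have is a correct reduction and skeleton, but the two steps carrying the substance of Imanishi's theorem are missing; as it stands the proposal does not constitute a proof, and the honest alternative (which is what the paper does) is to cite the result.

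Two smaller points. In $X_\Lambda$ the continuity pieces of any $g \in \iet(\Lambda)$ are clopen, so every fixed point of $g$ automatically lies in the \enquote{interior} of its piece, and $g$ is then the identity on that clopen piece; hence in Step 1 the conjugation trick is unnecessary, and as written it is also flawed: the conjugated elements $h t h^{-1}$ need not generate $H$, and their fixing a neighbourhood of $x$ would only show that the subgroup they generate does. The correct route is simply that $H$, being of finite index in the finitely generated group $G$, is finitely generated; each generator fixes $x$, hence fixes pointwise the clopen continuity piece containing $x$, and the intersection of these finitely many clopen pieces is a clopen neighbourhood of $x$ fixed pointwise by $H$. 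Step 3, and the finiteness and uniqueness bookkeeping at the end of Step 2, are fine once the clopen-ness of minimal sets and of $Z$ is granted.
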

\begin{remark}
The statement of Corollary 2.4 in \cite{DFG-solv} is given in terms of the actions on $\R/\Z$ but it is straightforward to translate it in terms of the action on $X_\Lambda$  as above.

\end{remark}

The sets $W_i$ will be called the \textbf{minimal components} of $G$, and the set $Z$ its \textbf{periodic component}. We also need the following definition. 
\begin{defin}
A subgroup $G\le \iet(\Lambda)$ is \textbf{unfragmentable} if the following hold
\begin{enumerate}[label=\roman*)]  \item The action of $G$ on its periodic component is trivial. 
 \item Every finite index subgroup $H<G$ acts minimally on each minimal components of $G$ (thus the minimal components of $H$ coincide with those of $G$.)
 \end{enumerate}
\end{defin}
The above definition is given by Dahmani, Fujiwara, and Guirardel in \cite{DFG-solv}, who prove the following improvement of Imanishi's theorem \cite[Theorem 2.11]{DFG-solv}.

\begin{thm}[Dahmani--Fujiwara--Guirardel]
Every finitely generated subgroup $G\le \iet(\Lambda)$ has a finite index subgroup $G_0\le G$  which is unfragmentable.  
\end{thm}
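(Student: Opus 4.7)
The plan is to construct $G_0$ by first taking a finite index subgroup that trivializes the action on the periodic part, and then further refining to block any future fragmentation. For condition (i), apply Imanishi's theorem (Theorem~\ref{t-Imanishi}) to obtain a decomposition $X_\Lambda = W_1 \sqcup \cdots \sqcup W_k \sqcup Z$. Since the action of $G$ on $Z$ factors through a finite quotient $\phi\colon G\to F$, the kernel $G_1 = \ker \phi$ is a finite index subgroup of $G$ that acts trivially on $Z$. This property is inherited by every finite index subgroup of $G_1$, so condition (i) will hold automatically for every finite index $G_0 \le G_1$ that we construct below.

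Next, I would note that the periodic component stays put as we pass to finite index subgroups $H \le G_1$: each $W_i$ is an infinite Cantor-type space on which $G_1$ acts minimally, so every $G_1$-orbit, and hence every $H$-orbit, inside $W_i$ is infinite. Thus Lemma~\ref{l-minimal-fi} yields that $W_i$ decomposes into finitely many $H$-minimal clopen components, permuted transitively by $G_1$, and these are exactly the $H$-minimal components contained in $W_i$. In particular no $H$-periodic material can arise within the $G_1$-minimal part.

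The core of the argument is to find $G_0 \le G_1$ of finite index such that no further finite index subgroup can refine the $G_0$-minimal partition of any $G_0$-minimal component. The partitions of each $W_i$ into $H$-minimal components, as $H$ varies over finite index subgroups of $G_1$, form a directed poset under refinement. The main step is to show that this poset has a maximum element for each $i$: once this is established, one picks $H_i\le G_1$ realizing the finest $H$-minimal partition of $W_i$ and sets $G_0 := \bigcap_i H_i$, which has finite index in $G_1$ and beyond which no further refinement is possible, giving condition (ii).

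The main obstacle is proving that only finitely many distinct $H$-minimal partitions of each $W_i$ arise as $H$ ranges over finite index subgroups. Each such partition corresponds to a $G_1$-equivariant continuous surjection of $W_i$ onto a finite transitive $G_1$-set, whose pieces are clopen subsets of $W_i$. Since every clopen subset of $X_\Lambda$ is a finite union of arcs with endpoints in $\Lambda$, each such partition is determined by a finite $G_1$-invariant subset of $\Lambda$ (the set of cut points), together with combinatorial data describing how $G_1$ permutes the arcs. The finiteness of the resulting collection of partitions should follow from a combinatorial analysis of the finite $G_1$-invariant subsets of $\Lambda$ that can serve as cut sets, exploiting that $G_1$ is finitely generated and that $\Lambda$ has finite rational rank to bound the complexity of the arcs appearing in each $H$-minimal piece; this is the technical heart of the argument.
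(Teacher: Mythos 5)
First, a point of comparison: the paper does not prove this statement at all — it is quoted verbatim from Dahmani--Fujiwara--Guirardel \cite[Theorem 2.11]{DFG-solv} and used as a black box. So your proposal is not competing with an in-paper argument but with the original proof in \cite{DFG-solv}, which is a genuinely nontrivial piece of work. The parts of your argument that are carried out are correct but are the routine reductions: passing to the kernel of the finite quotient acting on $Z$ handles condition (i) (and, as you note, no new periodic part can appear inside the $W_i$, since by Lemma~\ref{l-minimal-fi} each piece of the $H$-decomposition of $W_i$ is a nonempty clopen, hence infinite, minimal $H$-space, and a minimal action on an infinite space cannot factor through a finite group); the poset of $H$-minimal partitions is directed because for $H_1,H_2$ of finite index every $(H_1\cap H_2)$-minimal component is contained in a single $H_1$-component and a single $H_2$-component; and once one knows there are only finitely many such partitions (equivalently, a uniform bound on their cardinality), a maximum exists and any $G_0$ realizing it is unfragmentable. (Minor quibbles: $G_1$ need not act minimally on $W_i$, and the $H$-components need not be permuted by $G_1$ unless you pass to the normal core; neither slip is essential.)

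The genuine gap is that the finiteness claim — that only finitely many $H$-minimal partitions of each $W_i$ occur as $H$ ranges over finite-index subgroups — is exactly the content of the theorem, and you leave it as ``should follow from a combinatorial analysis,'' which is an admission rather than a proof. The route you sketch also has concrete problems. The set of cut points of an $H$-invariant clopen partition is not $H$-invariant in $\R/\Z$: for $h\in\iet(\Lambda)$ and a cut point $\lambda$, the images $h([\lambda]_-)$ and $h([\lambda]_+)$ are governed by $h$ and its right-continuous modification, so a cut point is only sent to a cut point up to the discontinuity set of $h$; hence there is no finite $G_1$-invariant subset of $\Lambda$ ``determining'' the partition. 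Moreover, neither finite generation of $G_1$ nor $\rk_\Q(\Lambda)<\infty$ obviously bounds the candidate partitions: the number of discontinuities of elements of a finite-index subgroup $H$ (e.g.\ of large powers of a generator) is unbounded, so any count in terms of singularities of generators of $H$ degenerates as $H$ shrinks, and the example of a minimal IET whose square has two minimal components shows that the partitions really do refine when passing to subgroups. What is needed is a uniform bound, over \emph{all} finite-index subgroups simultaneously, on the number of minimal components — this is precisely what \cite{DFG-solv} establish (it is the technical heart of their Theorem 2.11), and without an argument for it your construction of $G_0$ does not get off the ground.
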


We will use this result via the following corollary. 

\begin{cor} \label{c-iet-profinite-factor}
Let $G$ be a finitely generated subgroup of $\iet(\Lambda)$, and $W_i \subset X_\Lambda$ be a minimal component of $G$. Then $W_i$ cannot factor onto an infinite profinite $G$-space. 
\end{cor}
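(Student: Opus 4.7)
The plan is to derive a contradiction from the existence of such a factor by combining the Dahmani--Fujiwara--Guirardel refinement of Imanishi's theorem with the defining feature of profinite actions, namely that continuous $G$-equivariant maps to finite $G$-sets separate points (Proposition \ref{p-odometers}). First I would pick a finite-index unfragmentable subgroup $G_0 \le G$ and, using Lemma \ref{l-minimal-fi}, decompose $W_i$ into clopen $G_0$-minimal components $W_i = W_i^{(1)} \sqcup \cdots \sqcup W_i^{(m)}$. By unfragmentability of $G_0$, every finite-index subgroup of $G_0$ still acts minimally on each $W_i^{(j)}$; this is the property I will eventually contradict.

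Next, suppose for contradiction that there is a continuous $G$-equivariant surjection $\phi \colon W_i \to Y$ onto an infinite profinite $G$-space. Since $Y$ is also profinite as a $G_0$-space, applying Lemma \ref{l-minimal-fi} to the $G_0$-action on the $G$-minimal space $Y$ yields a decomposition $Y = Y_1 \sqcup \cdots \sqcup Y_s$ into $G_0$-minimal clopen pieces, at least one of which, say $Y_k$, must be infinite. Pick a component $W_i^{(j_0)}$ whose image under $\phi$ meets $Y_k$; by $G_0$-minimality of $Y_k$, the image $\phi(W_i^{(j_0)})$ fills $Y_k$. Now $Y_k$ is an infinite profinite $G_0$-space, so Proposition \ref{p-odometers} furnishes a continuous $G_0$-equivariant map $\psi \colon Y_k \to F$ onto a finite $G_0$-transitive set $F$ with $|F| \ge 2$ (take $F = \psi(Y_k)$ after separating two distinct points of $Y_k$). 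The composition $\psi \circ \phi$ produces a proper non-empty clopen subset of $W_i^{(j_0)}$, namely the preimage of any point $f \in F$, which is invariant under the stabilizer $H \le G_0$ of $f$. Since $|F| \ge 2$, the subgroup $H$ has finite index $\ge 2$ in $G_0$; by unfragmentability, $H$ must act minimally on $W_i^{(j_0)}$, contradicting the existence of this proper invariant clopen subset.

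The point I would be most careful about is that the reduction to $G_0$ is genuinely necessary: the original $G$-minimal space $W_i$ may well admit non-trivial finite $G$-factors, arising simply from the permutation action of $G$ on the set $\{W_i^{(1)}, \ldots, W_i^{(m)}\}$, so one cannot hope to extract a contradiction at the level of $G$ by merely passing to a finite quotient of $Y$. What the infinitude of the hypothetical profinite factor $Y$ really buys is a non-trivial finite $G_0$-quotient \emph{inside} a single minimal $G_0$-component $W_i^{(j_0)}$, and this is precisely what unfragmentability of $G_0$ rules out.
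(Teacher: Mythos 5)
Your proof is correct, and it reaches the contradiction by a slightly different route than the paper, though both hinge on the same obstruction (the unfragmentable finite-index subgroup $G_0$ from Dahmani--Fujiwara--Guirardel, combined with the fact that infinite profinite actions have arbitrarily fine finite quotients). The paper represents the hypothetical infinite profinite factor as $\partial T$ via Proposition~\ref{p-odometers}(iv), lets $\ell$ be the number of $G_0$-minimal components of $W_i$, and observes that for \emph{any} finite-index $H\le G$, every clopen $H$-invariant partition of $W_i$ has at most $\ell$ pieces (since $H\cap G_0$ acts minimally on each $G_0$-component); it then picks a level $n$ of $T$ with more than $\ell$ vertices and pulls the level-$n$ cylinder partition back through $q$ using $H=\st_G(n)$ to violate this bound. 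You instead stay local: you pass to a single $G_0$-minimal component $W_i^{(j_0)}$ and its $G_0$-minimal image $Y_k$ in the factor, use Proposition~\ref{p-odometers}(ii) to extract a non-trivial finite $G_0$-quotient $F$ of $Y_k$, and then note that the stabilizer $H=\text{Stab}_{G_0}(f)$ is a proper finite-index subgroup of $G_0$ whose action on $W_i^{(j_0)}$ visibly fails to be minimal (the preimage of $f$ is a proper non-empty clopen $H$-invariant set), contradicting unfragmentability directly. Your version avoids the tree representation and the counting argument, at the cost of an extra reduction step (identifying the $W_i^{(j)}$ from Lemma~\ref{l-minimal-fi} with actual $G_0$-minimal components of $X_\Lambda$ in the sense of Imanishi, which does hold by the uniqueness clause of Theorem~\ref{t-Imanishi} since each $W_i^{(j)}$ is clopen, $G_0$-minimal, and infinite). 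Your closing remark about why the reduction to $G_0$ is essential is also accurate and matches the role $G_0$ plays in the paper's proof.
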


\begin{proof}
	There exists  $n \geq 1$ such that every subgroup of finite index in $G$ has at most  $n$ mimimal components contained in $W_i$. If $W_i$ has an infinitie profinite factor, then it factors onto a finite  space $Y$ with at least $n+1$ points. Then the kernel $H$ of the action on $Y$ has finite index and preserves all fibers of the factor map, thus must have at least $n+1$ disjoint minimal components in $W_i$, giving a contradiction. \qedhere
\end{proof}


\begin{proof}[Proof of Theorem \ref{t-wb-iet}]
Assume for a contradiction that there exists an embedding of $G$ into $\iet$. Then $G$ actually embeds in $\iet(\Lambda)$ for some finitely generated subgroup $\Lambda<\R/\Z$. Let $X_\Lambda=W_1\sqcup \cdots \sqcup W_k \sqcup Z$ be the decomposition into minimal and periodic components (see Theorem \ref{t-Imanishi}). We claim that there exists a minimal component $W_i$ on which $G$ acts faithfully.  Indeed if this is not the case, and since the action on $Z$ is also not faithful, by Lemma \ref{l-Gri-branch}  we can find $n_1,\cdots, n_k, n_Z$ such that $\rist_G(n_i)'$ acts trivially on $W_i$ for $i=1,\cdots, k$, and $\rist_Z(n_Z)'$ acts trivially on $Z$. Thus $\rist_G(m)'$ acts trivially on $X_\Lambda$ for $m=\max\{n_1, \cdots, n_k, n_Z\}$, contradicting that the action of $G$ is faithful. Without loss of generality, we assume that $G$ acts faithfully on $W_1$.  By Lemma \ref{l-iet-polynomial} and Theorem \ref{t-wb-polynomial}, the action of $G$ on $W_1$ must factor onto an infinite profinite $G$-space. This is in contradiction with Corollary \ref{c-iet-profinite-factor}. \qedhere 
\end{proof}

\begin{remark} \label{rmk-infgen-branch}
The following example shows that finite generation is a necessary assumption in Theorem \ref{t-wb-iet}. Consider the regular tree of words $T$ indexed by the alphabet $E=\{1,\cdots, d\}$ (see \S \ref{s-bounded-automorphisms} for the terminology), and let $G\le \aut(T)$ be the group of finitary automorphisms, i.e.\ automorphisms $g\in \aut(T)$ such that the section $g|_v$ is non-trivial only for finitely many vertices $v\in T$. Then $G$ is a countable locally finite branch group. Writing elements of $\R/\Z$ in base $d$, we obtain a natural surjective continuous map $\pi \colon \partial T \to \R/\Z$. It is not difficult to check that the action of $G$ on $\partial T$ descends through $\pi$  to an action on $\R/\Z$ by piecewise continuous transformations, which identifies $G$ with  a subgroup of the group $\iet(\Lambda)$ for $\Lambda=\Z[1/d]/\Z$. 
\end{remark}

\begin{remark} \label{rmk-grig-group}
For example, Theorem \ref{t-wb-iet} implies that the Grigorchuk group $G$   does not embed in the group $\iet$. 
This can be compared with the fact that the Grigorchuk group shares some dynamical features with subgroups of $\iet$.  For instance the orbital graphs of the natural action of the Grigorchuk group on the boundary of its defining tree have linear growth \cite{Barth-Grig-hecke} (compare with Lemma \ref{l-iet-polynomial}). This is used in  \cite{MB-Gri-full} to observe that it can be embedded in the topological full group of a minimal subshift. As observed in \cite{Cor-Bou}, the group $\iet$ also contains the topological full groups of a family of minimal subshifts, for instance Sturmian subshifts (in our notation these correspond to the subgroups $\iet(\Lambda)$ when $\Lambda=\langle \alpha \rangle$ for an irrational element $\alpha\in \R/\Z$). Moreover, the subshift associated to the Grigorchuk group shares with Sturmian subshifts the property to have linear word complexity (see \cite{MB-Gri-full}). We also point out that the Grigorchuk group admits a natural description as a group of interval exchanges with an infinite number of discontinuities (this is actually the point of view adopted in Grigorchuk's original article  \cite{Gri-growth}).  
\end{remark}

\subsection{Rigidity of embeddings into groups of homeomorphisms via graphs of germs} \label{subsec-rigid-embed}

The goal of this subsection is to prove a rigidity result for embeddings of finitely generated branch groups $G$ into a class of groups of homeomorphisms. We show that if $H$ is a finitely generated group of homeomorphisms of a compact space $X$  and if the {graphs of germs} of $\tilde{\Gamma}(H, x)$ of the action of $H$ satisfy a suitable one-dimensionality condition, then every group embedding $\rho \colon G\to H$ must be spatially realized in the following sense: if $Y\subset X$ is the \emph{essential support}  of $\rho(G)$ (defined below), then the action of $G$  on $Y$ induced by $\rho$ factors onto its natural action on $\partial T$.  See Theorem \ref{t-actions-leud}.

In \S \ref{s-strongly-bounded} we illustrate Theorem \ref{t-actions-leud} with a class of groups that we call strongly bounded type, essentially defined by Juschenko--Nekrashevych--de la Salle \cite{JNS} (see Theorem \ref{t-strongly-bounded}). The class of groups of strongly bounded type includes in particular groups of \emph{bounded automorphisms} of rooted trees \cite{Nek-free} (e.g. groups generated by finite-state \emph{bounded automata}), which contain many well-studied examples of weakly branch groups acting on rooted trees (see \S \ref{s-bounded-automorphisms}), as well as other groups of homeomorphisms of Cantor sets such as topological full groups (see \S \ref{s-bounded-full}). 
In particular every embedding from a finitely generated branch group $G\le \aut(T)$ to a group of strongly bounded type  gives rise to a factor map onto $\partial T$ (Theorem \ref{t-strongly-bounded}).

\subsubsection{The essential support} Let $G\le \aut(T)$ be a branch group, and $X$ a compact $G$-space. Recall that every non-trivial normal subgroup of $G$ contains  $\rist_G(n)'$ for some $n$, by Grigorchuk's  lemma (Lemma \ref{l-Gri-branch}).   For every $n$ the support $\supp_X \rist_G(n)'$ is a closed $G$-invariant subset, and the action  of $G$ on its complement factors via an action of the virtually abelian quotient $G/\rist_G(n)'$. Thus,  it is natural to restrict the attention to the action on $\supp_X \rist_G(n)'$. Since the groups $\rist_G(n)$ form a decreasing sequence,  we have a   sequence of closed $G$-invariant subsets of $X$:
\[\supp_X(\rist_G(n)') \supset \supp_X(\rist_G(n+1)') \supset \supp_X (\rist_G(n+2)')\supset \cdots\]
This leads to the following definition.

\begin{defin}
	Let $G\le \aut(T)$ be a branch group and $X$ be a compact $G$-space. The \textbf{essential support} of the action of $G$ on $X$ is the set 
	\[ Y := \bigcap_{n\ge 1} \supp_X (\rist_G(n)').\]
\end{defin}

Note that the essential support $Y$ is closed and $G$-invariant, and by compactness $Y=\varnothing$ if and only if there exists $n\ge1$ such that $\supp_X (\rist_G(n)')=\varnothing$, if and only if the action of $G$ on $X$ factors via a virtually abelian quotient $G/\rist_G(n)'$. In particular if $Y\neq \varnothing$, then the action of $G$ on $Y$ is faithful.

\subsubsection{Actions with finite-dimensional graphs of germs} We will use the following notion of dimension of a graph. 

\begin{defin} \label{d-leud} For a bounded degree graph $\Gamma$, we define the \textbf{Lipschitz euclidean dimension} of $\Gamma$, denoted $\leud(\Gamma)\in [0, \infty]$, as the supremum over all $d$ such that there exists a Lipschitz map $\N^d \to \Gamma$ whose fibers have uniformly bounded cardinality. 
\end{defin}

\begin{remark}\label{r-leud-pol}
	If a graph $\Gamma$ has volume growth bounded above by a polynomial of degree $d$, then $\leud(\Gamma) \le d$. 
	It is also worth mentioning that  $\leud(\Gamma)$ is bounded above by the asymptotic dimension of $\Gamma$ \cite{Gro-asdim} (although we will not use asymptotic dimension directly). This follows from the fact that $\N^d$ has asymptotic dimension $d$, and from the observation that asymptotic dimension behaves monotonically for Lipschitz maps with uniformly bounded fibers between graphs of bounded degrees (see  \cite[\S 6]{Benj-reg} or \cite[Prop. 2.5]{MB-graph-germs}). 
\end{remark}

The following lemma is straightforward from Definition \ref{d-leud}, and we omit the proof.

\begin{lem} \label{l-leud-monotone} Let $\Gamma$ and $\Delta$ be graphs of bounded degree. Assume that there exists a Lipschitz map $\Gamma\to \Delta$ whose fibers have uniformly bounded cardinality. Then we have $\leud(\Gamma) \le \leud(\Delta)$.
\end{lem}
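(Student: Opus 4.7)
The plan is to prove the lemma directly by composing with the given map $f\colon \Gamma \to \Delta$. Write $L_f$ for the Lipschitz constant of $f$ and $K_f$ for the uniform bound on the cardinality of its fibers. By definition of $\leud(\Gamma)$ it suffices to show that, for every integer $d$ such that there exists a Lipschitz map $g\colon \N^d \to \Gamma$ with uniformly bounded fibers, the dimension $d$ also witnesses $\leud(\Delta) \ge d$.

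To this end I would consider the composition $f\circ g\colon \N^d \to \Delta$. This map is Lipschitz with constant at most $L_f \cdot L_g$, where $L_g$ is the Lipschitz constant of $g$. For the fiber bound, note that for every $y\in \Delta$ one has
\[
(f\circ g)^{-1}(y) \;=\; \bigsqcup_{x \in f^{-1}(y)} g^{-1}(x),
\]
and since $|f^{-1}(y)| \le K_f$ and each $|g^{-1}(x)| \le K_g$, the total cardinality is uniformly bounded by $K_f \cdot K_g$. Thus $f\circ g$ is a Lipschitz map from $\N^d$ to $\Delta$ with uniformly bounded fibers, and so $d \le \leud(\Delta)$.

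Taking the supremum over all such $d$ on the left-hand side yields $\leud(\Gamma) \le \leud(\Delta)$, as required. There is no obstacle here: the argument is essentially the observation that the class of Lipschitz maps with uniformly bounded fibers is closed under composition, which is exactly the monotonicity built into the definition of $\leud$.
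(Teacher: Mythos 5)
Your proof is correct, and it is essentially the argument the authors have in mind: the paper omits the proof, remarking only that the lemma is ``straightforward from Definition~\ref{d-leud}'', and the composition argument you give — Lipschitz constants multiply, fiber bounds multiply, so the class of Lipschitz maps with uniformly bounded fibers is closed under composition — is exactly the straightforward verification being alluded to. Nothing is missing.
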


Our goal in this paragraph is to prove a rigidity result for actions of branch groups on compact spaces whose graphs of germs have finite Lipschitz euclidean dimension (Theorem \ref{t-actions-leud} below).  We first prove the following proposition, which is an application of Theorem \ref{thm-doublecomm-conf-homeo-intro}.

\begin{prop} \label{p-branch-dichotomy} Let $G\le \aut(T)$ be a finitely generated branch group. Let  $X$ be a compact $G$-space, and $Y\subset X$ be the essential support of the action.  Then one of the following holds. 
	
	\begin{enumerate}[label=\roman*)]
		\item \label{i-inj-germ} There exists a vertex $v\in T$ and a point $x\in X$ such that $\rist_G(v)_x^0 =\{1\}$. 
		
		\item There exists an upper semi-continuous $G$-equivariant map $q \colon Y \to \F(\partial T)$, which takes values in the set of non-empty closed subsets of $\partial T$ with empty interior.
	\end{enumerate}
\end{prop}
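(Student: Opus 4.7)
The plan is to construct $q$ directly under the assumption that (i) fails, using the commutator lemma (Theorem \ref{thm-doublecomm-conf-homeo-intro}) and the finite generation of $\rist_G(v)'$ provided by Theorem \ref{t-Francoeur}. For $x \in X$ I would set
\[ U(x) := \bigcup \{\partial T_v : v \in T, \; \rist_G(v)' \le G_x^0\} \quad \text{and} \quad q(x) := \partial T \setminus U(x). \]
$G$-equivariance of $q$ is immediate from the identity $g\rist_G(v)'g^{-1} = \rist_G(gv)'$.

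For upper semi-continuity of $q \colon X \to \F(\partial T)$: given a compact $K \subseteq \partial T$, the condition $q(x) \cap K = \emptyset$ is equivalent, by compactness of $K$, to the existence of finitely many vertices $v_1, \ldots, v_m$ with $K \subseteq \bigcup_i \partial T_{v_i}$ and $\rist_G(v_i)' \le G_x^0$ for all $i$; since each $\rist_G(v_i)'$ is finitely generated (Theorem \ref{t-Francoeur}) and $x \mapsto G_x^0$ is lower semi-continuous, every such condition is open in $x$, whence $\{x : q(x) \cap K = \emptyset\}$ is open. To see that $q(x) \ne \emptyset$ for $x \in Y$, I would argue that if $q(x) = \emptyset$ then a finite subcover of $\partial T$ by cylinders $\partial T_{v_i}$ with $\rist_G(v_i)' \le G_x^0$ would exist, forcing $\rist_G(n)' \le G_x^0$ for some level $n$ (via $\rist_G(n)' = \prod_{w \in \L(n)} \rist_G(w)'$); the finite generation of $\rist_G(n)'$ then provides a neighbourhood of $x$ fixed pointwise by $\rist_G(n)'$, contradicting $x \in \supp_X(\rist_G(n)')$.

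The crux of the argument is to show that $q(x)$ has empty interior for every $x \in Y$ when (i) fails. Arguing by contradiction, suppose some $\partial T_{v_0}$ is contained in $q(x_0)$. By the definition of $q$ this means $\rist_G(w)' \not\le G_{x_0}^0$ for every vertex $w$ in $T_{v_0}$. Setting $K := \rist_G(v_0)$ and $H := K \cap G_{x_0}^0$, and using that $\rist_G(w)' \le K$ for such $w$, this rewrites as $\rist_G(w)' \not\le H$ for every $w \in T_{v_0}$. The group $K$ acts faithfully on $\partial T_{v_0}$ with micro-supported action ($G$ being weakly branch), and $\rist_K(\partial T_w) = \rist_G(w)$ for every $w \in T_{v_0}$. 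Thus Theorem \ref{thm-doublecomm-conf-homeo-intro} applied to $K \acts \partial T_{v_0}$ yields, contrapositively, that $H$ is not confined in $K$. Hence there exists a net $(k_i) \subseteq K$ with $k_i H k_i^{-1} = K \cap G_{k_i x_0}^0 \to \{1\}$ in $\sub(K)$. By compactness of $X$ we may extract a subnet with $k_i x_0 \to y$; lower semi-continuity of $x \mapsto G_x^0$ then forces $\rist_G(v_0)_y^0 = K \cap G_y^0 = \{1\}$, contradicting the failure of (i).

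The main obstacle is this last step: one must upgrade the purely group-theoretic statement \enquote{$H$ is not confined in $K$} to the dynamical conclusion that $\rist_G(v_0)_y^0$ vanishes at some $y \in X$, by carefully interfacing the Chabauty-convergence inside $\sub(K)$ with the compactness of $X$ and the lower semi-continuity of the germ-stabiliser map on $X$.
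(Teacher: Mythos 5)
Your proposal is correct and essentially matches the paper's proof. You use the same map $q(x) = \partial T \setminus \bigcup\{\partial T_v : \rist_G(v)' \le G_x^0\}$, the same appeal to Theorem~\ref{thm-doublecomm-conf-homeo-intro} applied to $\rist_G(v)$ acting on $\partial T_v$, and the same use of Theorem~\ref{t-Francoeur} (finite generation of $\rist_G(v)'$) to get upper semi-continuity from lower semi-continuity of $x\mapsto G_x^0$. The only cosmetic difference is the direction of the density argument: the paper establishes upfront that the failure of (i) forces $\rist_G(v)_x^0$ to be confined in $\rist_G(v)$ for every $v,x$ (via the same compactness-of-$X$-plus-lower-semi-continuity observation you single out as the ``main obstacle''), and then reads off density of the complement of $q(x)$ directly from the commutator lemma; you instead argue the empty-interior property by contradiction, passing through the contrapositive of the commutator lemma and then using that same compactness-plus-semi-continuity step to produce a point $y$ with $\rist_G(v_0)_y^0=\{1\}$. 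These are contrapositive reorganizations of one another, resting on identical ingredients.
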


\begin{proof}
	We assume that $\rist_G(v)_x^0\neq \{1\}$ for every vertex $v$ and every $x\in X$, and we prove that the second condition must hold. This assumption implies that the group $\rist_G(v)_x^0$ is confined in $\rist_G(v)$ for every $x\in X$. Thus, by Theorem \ref{thm-doublecomm-conf-homeo-intro} for every $v$ and every $x$ there exists a vertex $w\in T$ below $v$ such that $\rist_G(w)' \le \rist_G(v)_x^0 \le G_x^0$. In particular,  for every $x\in X$ the set $O_x=\bigcup\{ \partial T_w \colon \rist_G(w)' \le G_x^0\}$ is a dense open subset of $\partial T$. Note that $O_x=\partial T$ if and only if there exists $n$ such that $\rist_G(n)' \le G_x^0$, and this is equivalent to the fact   $x$ does not belong to the essential support $Y$ of the action. For $x\in Y$, we set $q(x):= \partial T \setminus O_x$. Then $q(x)$ is non-empty and has empty interior by construction, and the map $q$ is clearly equivariant. Let us check that it is upper semicontinuous. Let $(x_i)\subset Y$ be a net converging to some point $x\in Y$, and let $C$ be a cluster point of $(q(x_i))$. We shall check that $C \subseteq q(x)$. Assume for a contradiction that there exists $\xi \in C\setminus q(x)$. Then we can find a vertex $w$ above $\xi$ such that $\rist_G(w)' \le G^0_x$. By Theorem \ref{t-Francoeur}, the group $ \rist_G(w)' $ is finitely generated, so it follows that $\rist_G(w)' \le G^0_{x_i}$ eventually. Therefore $q({x_i}) \cap \partial T_w= \varnothing$ eventually, which is a contradiction with the fact that $C$ contains the point $\xi\in \partial T_w$. \qedhere
\end{proof}

Given a  compact space $Z$ and $d\ge 1$ we let $Z^{[d]}$ be the  subspace of $\F(Z)$ consisting of non-empty finite sets of cardinality at most $d$, that is, the space of configurations of $d$ points on $Z$ without taking into account order or multiplicity. Note that $Z^{[1]}$ is naturally identified with $Z$.

The following is the main result of this subsection. Recall that the definition of graphs of germs was given in \S \ref{s-preliminaries}.

\begin{thm}[Rigidity of actions with finite-dimensional graphs of germs] \label{t-actions-leud} Let $G\le \aut(T)$ be a finitely generated branch group. Let $X$ be a compact space, and let $H \le \homeo(X)$ be a finitely generated group such that the quantity
\[ d:= \sup_{x\in X} \leud(\widetilde{\Gamma}(H, x))\]
is finite.  Let   $\rho \colon G \to H$ be an injective homomorphism, and $Y\subset X$ be the essential support of the $G$-action induced by $\rho$.  Then there exists an upper semi-continuous $G$-equivariant map $q\colon Y \to \partial T^{[d]}$. Moreover, if $d=1$, then the map $q \colon Y \to \partial T^{[1]}=\partial T$ is continuous and surjective.
	
	
\end{thm}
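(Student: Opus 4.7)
The plan is to reduce to the dichotomy of Proposition \ref{p-branch-dichotomy} applied to the $G$-action on $X$ induced by $\rho$, rule out its first alternative using the finite-dimensionality hypothesis, and then upgrade the resulting set-valued map to land in $\partial T^{[d]}$. As a first step, injectivity of $\rho$ implies that for every $x\in X$ the induced map $G/G^0_x\hookrightarrow H/H^0_x$ is injective, and since both $G$ and $H$ are finitely generated this is a Lipschitz injection between the corresponding graphs of germs, so by Lemma \ref{l-leud-monotone} we obtain $\leud(\widetilde{\Gamma}(G,x))\le d$ for every $x\in X$.

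Next I would apply Proposition \ref{p-branch-dichotomy} to the $G$-action on $X$ and rule out its first alternative. If $\rist_G(v)^0_x=\{1\}$ for some $v\in T$ and $x\in X$, then the Cayley graph of $\rist_G(v)$---which is finitely generated by Theorem \ref{t-Francoeur}---Lipschitz-injects into $\widetilde{\Gamma}(G,x)$. But the branch structure, together with level-transitivity, ensures that $\rist_G(v)$ contains, for arbitrarily large $k$, a direct product of $k$ infinite finitely generated subgroups (the rigid stabilizers of the vertices of a sufficiently deep level below $v$); picking a geodesic ray in the Cayley graph of each factor yields a Lipschitz injection $\N^k\hookrightarrow \operatorname{Cay}(\rist_G(v))$, so $\leud(\rist_G(v))=\infty$, a contradiction. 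Hence only the second alternative can hold, and we obtain an upper semi-continuous $G$-equivariant map $q\colon Y\to \F(\partial T)$ whose values are nonempty closed subsets with empty interior.

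To finish, I would show $|q(y)|\le d$ for every $y\in Y$. Assume for contradiction that $\xi_1,\ldots,\xi_{d+1}\in q(y)$ are distinct, and pick a common level $n$ separating them into ancestors $v_1,\ldots,v_{d+1}$. From the description of $q$ in the proof of Proposition \ref{p-branch-dichotomy}, for each $i$ and every vertex $w$ of $T$ above $\xi_i$ one has $\rist_G(w)'\not\le G^0_y$. The goal is to use this, together with the disjointness on $\partial T$ of the supports of the various $\rist_G(v_i)$, to build a Lipschitz map $\N^{d+1}\to \widetilde{\Gamma}(G,y)$ with uniformly bounded fibers, contradicting the dimensional bound obtained in the first step. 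Concretely, for each $i$ and each depth $m$ one picks $h_i^{(m)}\in \rist_G(w_i^{(m)})'\setminus G^0_y$ where $w_i^{(m)}$ is a descendant of $v_i$ at level $n+m$, and considers the assignment $(m_1,\ldots,m_{d+1})\mapsto h_1^{(m_1)}\cdots h_{d+1}^{(m_{d+1})}G^0_y$; since elements with disjoint supports commute, an inductive diagonal choice of the $h_i^{(m)}$'s should yield a map that is Lipschitz (via uniformly controlled word length) and whose fibers stay bounded. The continuity statement for $d=1$ is then immediate, since an upper semi-continuous map into a Hausdorff space with singleton values is automatically continuous.

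The chief obstacle is this last construction: one must select the elements $h_i^{(m)}$ so that they simultaneously have non-trivial germ at $y$, controlled word length in $G$ as $m$ grows, and separate germs coming from different index tuples up to bounded multiplicity. This is where the fine geometry of the graph of germs of a branch group, and the rigidity arising from the disjointness of cylinder supports in $\partial T$, must enter in an essential way.
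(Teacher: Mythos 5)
Your outline tracks the paper's proof closely through the first three steps: you apply Proposition~\ref{p-branch-dichotomy} to the induced $G$-action, rule out case (i) by noting that $\rist_G(v)$ contains direct products of arbitrarily many infinite finitely generated subgroups and hence has $\leud=\infty$, and thereby obtain the u.s.c.\ map $q\colon Y\to\F(\partial T)$ with values of empty interior. (Your preliminary reduction $\leud(\widetilde{\Gamma}(G,x))\le d$ via the Lipschitz injection $G/G_x^0\hookrightarrow H/H_x^0$ is a harmless reformulation; the paper works directly with $\widetilde{\Gamma}(H,x)$.)

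The genuine gap is in the last step, and you have correctly identified it yourself: choosing $h_i^{(m)}\in\rist_G(w_i^{(m)})'\setminus G_y^0$ at deeper and deeper vertices $w_i^{(m)}$ does \emph{not} give a Lipschitz map $\N^{d+1}\to\widetilde{\Gamma}(G,y)$, because the word length of such elements in $G$ typically grows without any control as $m\to\infty$ (e.g.\ exponentially in the Grigorchuk group), whereas the domain metric on $\N^{d+1}$ is linear in $m$. There is no mechanism in the branch structure to make the word length grow linearly in depth. The paper avoids this obstruction by working at a \emph{fixed} vertex $w_i$ above $\xi_i$: it sets $C_i=q(y)\cap\partial T_{w_i}$ and shows that the orbit of $C_i$ under $\rist_G(w_i)$ inside $\F(\partial T_{w_i})$ is infinite (this uses the branch property together with Lemma~\ref{l-minimal-fi} to see that a finite orbit would produce a clopen set, contradicting that $C_i$ has empty interior). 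One then extracts, from a fixed finite generating set $S_i$ of $\rist_G(w_i)$, a sequence $h_n^{(i)}=s_n^{(i)}\cdots s_1^{(i)}$ whose images $h_n^{(i)}(C_i)$ are pairwise distinct; this gives linear word-length control for free. Injectivity of $\vec{n}\mapsto \rho(h_{\vec n})H_y^0$ then follows not from a germ calculation at $y$, but from $G$-equivariance of $q$ itself: if two cosets coincide then $h_{\vec n}(q(y))=h_{\vec m}(q(y))$, which is ruled out by looking at the intersection with the relevant $\partial T_{w_i}$. This is the idea you are missing; the rest of your proof is sound.
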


\begin{proof}
	We apply Proposition \ref{p-branch-dichotomy}  to the action of $G$ on $X$. We claim that case \ref{i-inj-germ} in Proposition \ref{p-branch-dichotomy} cannot hold here. Indeed, if by contradiction there exist $x\in X$ and  $v\in T$ such that $\rist_G(v)_x^0=\{1\}$,  the Cayley graph of $\rist_G(v)$  (with respect to any finite generating subset of $\rist_G(v)$)  admits a Lipschitz embedding into the graph of germ $\widetilde{\Gamma}(H, x)$.  But $\rist_G(v)$ contains subgroups isomorphic to direct products of $n$ infinite finitely generated subgroups for arbitrary large $n$ (e.g. $\rist_G(w_1)\times \cdots \times \rist_G(w_n)$ where $w_1,\ldots, w_n$ are vertices below $v$ with $\partial T_{w_i} \cap \partial T_{w_j}=\varnothing$). Hence $\leud(\widetilde{\Gamma}(H, x))=\infty$, which contradicts that $d<\infty$. 
	
	We deduce that we have an upper semicontinuous map $q\colon Y\to \F(\partial T)$, where $Y\subset X$ is the essential support of the $G$-action. Let us show that $q$ must take values in $\partial T^{[d]}$. Assume by contradiction that there exists $y\in Y$ such that $q(y)$ contains $d+1$ distinct points $\xi_1,\ldots, \xi_{d+1}$, and let us construct a Lipschitz injective map $\N^{d+1}\to \widetilde{\Gamma}(H, y)$. Choose vertices $w_i$ above $\xi_i$, with $\partial T_{w_i} \cap \partial T_{w_j}=\varnothing$ for  $i\neq j$. For $i=1,\ldots, d+1$ let $S_i$ be a finite symmetric generating set of $\rist_G(w_i)$, and set  $C_i=q(y) \cap \partial T_{w_i}$. Then $C_i$ is a non-empty closed subset of $\partial T_{w_i}$ with empty interior. We consider the action of $\rist_G(w_i)$ on the space $\F(\partial T_{w_i})$, and restrict the attention to the orbit of $C_i$, denote it by $\Omega_i$. We claim that $\Omega_i$ must be infinite. Namely, if $|\Omega_i|<\infty$ the set $\bigcup_{C\in \Omega_i} C$ would be a closed $\rist_G(w_i)$ invariant subset of $\partial T_{w_i}$ of empty interior.  Since $G$ is a branch group,   Lemma \ref{l-minimal-fi} implies that   $\bigcup_{C\in \Omega_i} C$ is clopen, contradicting that $C_i$ has empty interior. This proves the claim. 
	
	Since each $\Omega_i$ is infinite, for every $i$ we can find an infinite sequence $(s^{(i)}_n)_{n\in \N}$ of elements of $S_i$ with the property that the sets $C_i, h^{(i)}_1(C_i),h^{(i)}_2 (C_i),\cdots$, with $h_n^{(i)}:=s_n^{(i)}\cdots s_1^{(i)}$, are pairwise distinct. For $\vec{n}=(n_1,\ldots, n_{d+1})\in \N^{d+1}$ set $h_{\vec{n}}= h^{(1)}_{n_1}\cdots  h^{(d+1)}_{n_{d+1}}$. Consider the map
	\[\iota\colon \N^{d+1} \to \widetilde{\Gamma}(H, y), \quad \iota(\vec{n})=\rho(h_{\vec{n}}) H_y^0.\] 
	
	The map $\iota$ is clearly Lipschitz, and we claim that it is injective. To see this it is enough to show that for $\vec{n} \neq \vec{m}$, we have $\rho(h_{\vec{n}}(y))\neq \rho(h_{\vec{m}})(y)$. And since the map $q\colon Y \to \F(\partial T)$ is equivariant, it is actually enough to show that  $h_{\vec{n}}(y)(q(y))\neq h_{\vec{m}}(q(y))$. To see this assume that $\vec{n}=(n_1,\ldots, n_{d+1})$ and $\vec{m}=(m_1,\ldots, m_{d+1})$ differ at the $i$th coordinate. Then the sets $h_{\vec{n}}(y)(q(y)) \cap \partial T_{w_i}= h^{(i)}_{n_i}(C_i)$ and $ h_{\vec{m}}(y)(q(y)) \cap \partial T_{w_i}= h^{(i)}_{m_i}(C_i)$ are distinct by construction.  This shows the injectivity of $\iota$. The existence of the map $\iota$ as above contradicts that $\leud(\widetilde{\Gamma}(H, y)) \le d$, thus showing that $q$ takes valued in $\partial 
	T^{[d]}$. Finally observe that if $d=1$, upper semi-continuity of the map $q$  automatically implies that $q$ is  continuous. Surjectivity then follows from the fact that the $G$-action on $\partial T$ is minimal, since $q(Y)$ is a closed non-empty invariant subset.  \qedhere
	\end{proof}



\begin{remark}
Theorem \ref{t-actions-leud} applies for instance when the graphs $\widetilde{\Gamma}(H, x)$ have asymptotic dimension $1$, since in that case we have $d=1$ (see Remark \ref{r-leud-pol}). 
\end{remark}

\subsubsection{Actions on the Cantor set of strongly bounded type} \label{s-strongly-bounded}
Theorem \ref{t-actions-leud} is particularly relevant when the graphs of germs of the group $H$ satisfy $\leud(\widetilde{\Gamma}(H, x)) = 1$. Here we describe a class of groups of homeomorphisms of  the Cantor set which appear naturally in the setting of branch groups, and which satisfy this condition. 

Recall that a \textbf{Bratteli diagram} $B$ is the data of two  sequences of finite non-empty sets $(V_n)_{n\ge 0}, (E_n)_{n\ge 1}$ together with surjective maps $\org \colon E_n \to V_{n-1}, \tg\colon E_n \to V_n$. One can visualize $B$ as a graph whose vertex set is $V=\bigsqcup V_n$ and each $e\in E_n$ is an edge between $\org(e)$ and $\tg(e)$. The \textbf{path space} of $B$, denoted $X_B$, is the space of all infinite paths of the form $x=e_1e_2e_3\cdots$, with $e_i\in E_i$ and  $\tg(e_i)=\org(e_{i+1})$. The space $X_B$ is  endowed with the topology inherited from the product topology on $\prod E_n$, which makes it a totally disconnected compact space. A \textbf{finite path} in $B$ is a finite sequence of the form $\gamma=e_1\cdots e_n$, with $\tg(e_i)=\org(e_{i+1})$. 
We extend the map $\tg$ to the set of finite paths by setting $\tg(\gamma)=\tg(e_n)$. For each $v\in \bigsqcup V_n$, we let $\L(v)$ be the set of finite paths $\gamma$ such that $\tg(\gamma)=v$. 

For every finite path $\gamma$ we denote by $C_\gamma \subset X_B$  the cylinder set of infinite paths that begin with $\gamma$. Then $C_\gamma$ is a clopen subset of $X_B$ and sets of this form are a basis of the topology. Assume that $v\in V_n$ and that $\gamma, \eta\in \L(v)$. Then we have a natural homeomorphism 
\[F_{\gamma, \eta}\colon C_\gamma \to C_\eta, \quad \gamma e_{n+1}e_{n+2} \cdots \mapsto \eta e_{n+1}e_{n+2}\cdots,\] 
Maps of this form are called \textbf{prefix replacement} maps. A homeomorphism $f\colon X_B \to X_B$ is said to be \textbf{finitary} if every point $x\in X_B$, the germ of $f$ at $x$ coincides with the germ of  some prefix replacement $F_{\gamma, \eta}$ with $x\in C_\gamma$. Equivalently if there exists partitions $X_B=C_{\gamma_1}\sqcup \cdots \sqcup C_{\gamma_k}= C_{\eta_1} \sqcup \cdots \sqcup C_{\eta_k}$, with $\tg(\gamma_i)=\tg(\eta_i)$, such that $f|_{C_{\gamma_i}}= F_{\gamma_i, \eta_i}$. Finitary homeomorphisms of $X_B$ form a group. This group coincides with the topological full group of the AF-groupoid associated to the Bratteli diagram, and it is not difficult to see that it is locally finite (see e.g. \cite[\S 2]{Mat-simple}). Note also that if $G$ is a group of finitary automorphisms, then the isotropy group $G_x/G^0_x$ is trivial  for every $x\in X_B$, as follows from the fact that a prefix replacement map that fixes $x$ must automatically fix a neighbourhood of $x$. 

The following definition is due to Juschenko, Nekrashevych, and de la Salle \cite[\S 4]{JNS}
\begin{defin} \label{d-bounded-type}
	Let $g$ be a homeomorphism of $X_B$. We say that a point $x\in X_B$ is a \textbf{singularity} of $g$ if the germ of $g$ at $x$ does not coincide with the germ of a prefix replacement map. Furthermore, for every $v\in \bigsqcup V_n$, let $A_v(g)$ be the number of paths $\gamma\in\L(v)$ such that the restriction of $g$ to $C_\gamma$ does not coincide with a prefix replacement. The homeomorphism $g$ is \textbf{of bounded type}  if it has finitely many singularities and $\sup_v A_v(g)<\infty$. 
	\end{defin}

Homeomorphisms of bounded type of $X_B$ form a group. We further give the following variant of the previous definition. 

\begin{defin} \label{d-strongly-bounded-type}
	A group of homeomorphisms $G$ of $X_B$ is said to be of \textbf{strongly bounded type} if it consists of homeomorphisms of bounded type and if for every $x\in X_B$, the isotropy group $G_x/G^0_x$ is locally finite.
	
	A group of homeomorphisms $G$ of a totally disconnected compact space $X$ is said to be of (strongly) bounded type if there exists a homeomorphisms between $X$ and the path space of a Bratteli diagram that conjugates $G$ to a group of homeomorphisms of (strongly) bounded type. 
	
\end{defin}

Let us observe the following. 

\begin{lem} \label{l-isotropy-fg}
	Let $G$ be a finitely generated group of homeomorphisms of bounded type of  $X_B$. Then for every $x\in X_B$, the isotropy group $G_x/G^0_x$ is finitely generated. In particular, $G$ is of strongly bounded type if and only if the isotropy groups are finite.  
\end{lem}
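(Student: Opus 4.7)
The plan is to exhibit an explicit finite generating set for $G_x/G_x^0$ built from the singularity data of a finite generating set of $G$. Fix a symmetric finite generating set $S\subseteq G$. By the bounded type hypothesis, each $s\in S$ has only finitely many singularities, so the subset
\[
Z:=(G\cdot x)\cap\bigcup_{s\in S}\mathrm{Sing}(s)\subseteq G\cdot x
\]
is finite. For each $y\in Z$ fix $c_y\in G$ with $c_y(x)=y$, setting $c_x:=1$ in case $x\in Z$. The proposed finite set of generators of $G_x$ modulo $G_x^0$ is
\[
T:=\{c_{s(y)}^{-1}\,s\,c_y : y\in Z\cup\{x\},\; s\in S,\; s(y)\in Z\cup\{x\}\}\subseteq G_x.
\]

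The argument hinges on two observations. First, for any $g=s_{i_n}\cdots s_{i_1}\in G$, the germ of $g$ at $x$ is the composition $\sigma_n\circ\cdots\circ\sigma_1$, where $\sigma_j$ is the germ of $s_{i_j}$ at the intermediate orbital point $x_{j-1}=s_{i_{j-1}}\cdots s_{i_1}(x)$; each $\sigma_j$ is a prefix replacement germ unless $x_{j-1}\in\mathrm{Sing}(s_{i_j})$, which since $x_{j-1}\in G\cdot x$ forces $x_{j-1}\in Z$. Second, any prefix replacement germ that fixes $x$ is trivial, since $F_{\gamma,\eta}(x)=x$ with $x\in C_\gamma$ forces $\gamma=\eta$ and hence $F_{\gamma,\eta}$ coincides with the identity on $C_\gamma$. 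As an immediate consequence, if $Z=\emptyset$ then the germ of every $g\in G_x$ at $x$ is a composition of prefix replacement germs fixing $x$, hence trivial, so $G_x=G_x^0$ and the lemma holds trivially.

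Assuming $Z\ne\emptyset$, given $g\in G_x$ one decomposes the orbital path of $g$ according to its visits to $Z\cup\{x\}$: list the indices $0=t_0<t_1<\cdots<t_r<t_{r+1}=n$ with $x_{t_\ell}\in Z\cup\{x\}$, set $y_\ell:=x_{t_\ell}$, and insert identities $c_{y_\ell}c_{y_\ell}^{-1}$ at each junction. This rewrites $g=A_r\cdots A_0$ as a product of elements $A_\ell=c_{y_{\ell+1}}^{-1}h_\ell c_{y_\ell}\in G_x$, where the subword $h_\ell$ connects $y_\ell$ to $y_{\ell+1}$ through orbital points outside $Z$. By the first observation, within $h_\ell$ only the first letter $s_\ell:=s_{i_{t_\ell+1}}$, applied at $y_\ell$, can contribute a non-prefix-replacement germ, while the tail of $h_\ell$ contributes a prefix replacement germ. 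The plan is then to argue that each $A_\ell$ agrees modulo $G_x^0$ with the element $c_{s_\ell(y_\ell)}^{-1}\,s_\ell\,c_{y_\ell}\in T$: their ratio has germ at $x$ made up of the residual prefix-replacement tail conjugated by transport factors, and hence by the second observation this ratio lies in $G_x^0$. This gives $G_x=\langle T\rangle\cdot G_x^0$ and the first assertion follows.

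The principal technical obstacle is the last identification, which requires careful bookkeeping of germs involving the fixed transport elements $c_y$; in particular one may need to enlarge $T$ (still finitely) in order to absorb the germs of the $c_y$'s at $x$ and at points of $Z$ into the generating set. Once this is carried out, the ``in particular'' assertion is immediate: under the strongly bounded type hypothesis the group $G_x/G_x^0$ is locally finite, and a finitely generated locally finite group is finite; conversely, finite isotropy trivially implies local finiteness of isotropy, completing the equivalence.
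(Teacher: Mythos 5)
Your plan captures the two key observations the paper also relies on: generators have finitely many singularities on the orbit, and a prefix-replacement germ fixing a point is trivial. But you and the paper package these differently. You attempt a direct combinatorial argument: decompose a word representing $g\in G_x$ at its visits to the finite set $Z$, insert transport elements $c_y$, and claim each factor $A_\ell$ lies in $\langle T\rangle G_x^0$. The paper instead looks at the orbital Schreier graph $\Gamma(G,x)$, observes that $G_x/G_x^0$ is a quotient of $\pi_1(\Gamma,x)$, fills in with $2$-cells every loop avoiding the finitely many singular edges to obtain a complex $\Gamma^{(2)}$ (which is legitimate since such a loop based at any point $y$ gives an element of $G_y^0$, and conjugating back to $x$ along the basepoint path stays in $G_x^0$), and then shows that a finite subgraph $\Delta$ containing $x$ and all singular edges, with each intersection $\Delta\cap\Gamma_i$ connected, has $\pi_1(\Delta,x)$ surjecting onto $\pi_1(\Gamma^{(2)},x)$ and hence onto $G_x/G_x^0$.

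The gap in your version is exactly the one you flag yourself, and it is real. You need the ratio $c_{y_{\ell+1}}^{-1}w_\ell\,c_{s_\ell(y_\ell)}$ to lie in $G_x^0$, but the germs of the transport elements $c_y$ at $x$ (and of their inverses at $y$) are completely uncontrolled: $c_y$ is an arbitrary element of $G$ sending $x$ to $y$, and even if you choose $c_y$ as a word in $S$, its orbital path may itself pass through $Z$ and contribute a non-prefix-replacement germ. The middle factor $w_\ell$ does contribute a prefix-replacement germ, but sandwiched between two uncontrolled germs that does not make the product trivial. Your proposed remedy (``enlarge $T$, still finitely, to absorb the germs of the $c_y$'s'') is not justified: a naive attempt spawns new transport factors whose germs in turn need to be absorbed, and there is no a priori bound. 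One would need either to choose the $c_y$'s with prefix-replacement germs (which need not exist inside $G$) or to set up an induction that terminates, neither of which you do. The paper's $\pi_1$ argument sidesteps this entirely, because the generators of $\pi_1(\Delta,x)$ are genuine loops based at $x$: the basepoint path and its inverse cancel in the germ computation, so no transport elements with unknown germs ever appear. That is the key structural advantage the topological formulation buys you, and it is what is missing from your write-up. (Your ``in particular'' deduction from the first assertion -- finitely generated plus locally finite implies finite -- is correct and is the intended argument.)
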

\begin{proof}
	Fix a finite symmetric generating set $S$ of $G$, which will be implicit throughout the proof.  Let  $\Gamma:=\Gamma(H, x)$ be the orbital graph, and consider its fundamental group $\pi_1(\Gamma, x)$. There is a natural epimorphism $\pi_1(\Gamma, x) \to G_x$, which associate to every loop in $\Gamma$ based at $x$ the product $s_n\cdots s_1$ of the generator that label its edges. By composition we obtain an epimorphism $\pi_1(\Gamma, x) \to G_x/G^0_x$.  Call an edge $(y, s)$ of $\Gamma$ \emph{singular} if $y$ is a singularity of $s\in S$ in the sense of Definition \ref{d-bounded-type}. Note that $\Gamma$ has a finite number of singular edges. Let $\Gamma_1, \ldots, \Gamma_k$ be the connected components of the graph obtained from $\Gamma$ by removing all singular edges. We  also fix a connected finite graph $\Delta \subset \Gamma$ large enough so that it contains $x$, it contains all singular edges, and whenever two vertices of $\Delta$ belong to a same component $\Gamma_i$, they are connected in $\Delta$ by a path which is entirely contained in $\Gamma_i$.
	Now consider the  CW 2-complex $\Gamma^{(2)}$ obtained by filling with a  2-cell every loop in $\Gamma$ which does not contain any singular edge.  
	We claim that the epimorphism $\pi_1(\Gamma, x) \to G_x/G^0_x$ factors via the natural quotient $ \pi_1(\Gamma_x) \to \pi_1(\Gamma^{(2)}, x)$ to a homomorphisms $\pi_1(\Gamma^{(2)}, x)\to G_x/G^0_x$.  This follows from the observation that if $\alpha$ is a  a loop in $\Gamma$ based at any point $y$ which does not contain any singular edge, and if  $s_1,\ldots, s_n$ are the labels read on its edges, then the germ of $s_i\ldots s_1$ at $y$ coincides with the germ of a prefix replacement. In particular $s_n\cdots s_1$ acts trivially on a neighbourhood of $y$, and thus defines a trivial element in the isotropy group $G_y/G^0_y$. 
	Now note that  each of the subgraphs $\Gamma_i$ spans a simply connected subcomplex  $\Gamma_i^{(2)}$ of $\Gamma$, and this implies that every loop in $\Gamma$ based at $x$ is homotopic to a loop  contained in $\Delta$. Thus the group $\pi_1(\Delta, x)$ projects surjectively onto $G_x/G^0_x$. Since $\Delta$ is a finite graph, the group  $\pi_1(\Delta, x)$  is a finitely generated free group, and thus $G_x/G^0_x$ is finitely generated. \qedhere
	
\end{proof}

Let $\Gamma$ be a bounded degree graph.   We will say that $\Gamma$ admits a   \textbf{sequence of bounded cut-sets } if there exists an increasing sequence $V_n$ of  subsets  of the vertex sets of $\Gamma$, with $\Gamma=\bigcup_{n}V_n$, such that $\sup_{n} |\partial V_n |< \infty$. Here the boundary $\partial V$ of  subset $V\subset \Gamma$ is defined as the set of $v\in V$ which admit at least one neighbour outside of $V$.

The relevance of this notion in this setting comes from the following proposition, first proven by Bondarenko in his thesis \cite{Bon} for bounded automata groups acting on rooted trees (a class of groups whose definition will be recalled below). It was shown in \cite[Lemma 4.3]{JNS} that it extends to all groups of homeomorphisms of bounded type.  

\begin{prop} \label{p-bond}
	Let $G$ be a finitely generated groups of homeomorphisms of bounded type of $X_B$. Then for every $x\in X_B$, the orbital graph $\Gamma(G, x)$ admits a sequence of bounded cut-sets. 
	
\end{prop}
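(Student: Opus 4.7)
The plan is to exploit the Bratteli-diagram structure of $X_B$ together with the bounded type condition on the generators to construct an increasing exhaustion of $Gx$ whose boundaries in $\Gamma(G,x)$ are uniformly bounded.

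Fix a finite symmetric generating set $S = \{s_1,\ldots, s_k\}$ of $G$ consisting of bounded type homeomorphisms, and fix $x \in X_B$. Let $M := \max_{s\in S} \sup_v A_v(s) < \infty$, and let $\mathcal{S} \subset X_B$ be the (finite) union of the singularity sets of the elements of $S$. For every level $n \ge 1$ the partition of $X_B$ into level-$n$ cylinders induces a partition $Gx = \bigsqcup_\gamma T_n^\gamma$ of the orbit into \emph{tiles} $T_n^\gamma := Gx\cap C_\gamma$. For every generator $s \in S$, a cylinder $C_\gamma$ is said to be \emph{active} for $s$ at level $n$ if $s|_{C_\gamma}$ is not a prefix replacement; an edge $(y, sy)$ of $\Gamma(G,x)$ is called \emph{singular at level $n$} if the level-$n$ cylinder containing $y$ is active for $s$, and \emph{regular at level $n$} otherwise.

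The key point, which uses the bounded type hypothesis, is a uniform control on singular edges: as $n$ grows, the active cylinders of each $s$ concentrate near the singularities of $s$ (away from these, $s$ is locally a prefix replacement with a uniform stabilization level by compactness), while the uniform bound $A_v(s)\le M$ at each vertex of the Bratteli diagram limits the ``spread'' of the active cylinders. Combining these facts, the number of orbit points appearing as endpoints of singular edges at level $n$ is bounded by a constant depending only on $|S|$, $|\mathcal{S}|$ and $M$, independently of $n$. Using this control, the exhaustion $V_n$ is built as follows: at level $n$, let $V_n$ be the union of $\{x\}$ with the tiles that are reachable from $x$ via regular inter-tile edges at level $n$. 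Since the restriction of a prefix replacement to a sub-cylinder is still a prefix replacement, the set of regular edges is monotone in $n$, so $V_n \subseteq V_{n+1}$, and $\bigcup_n V_n = Gx$ because every orbit point is eventually reachable from $x$ by a path whose edges become regular at sufficiently deep levels. The boundary $\partial V_n$ is then contained in the set of orbit points lying on singular edges at level $n$, which is uniformly bounded by the previous paragraph.

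The main obstacle lies in the rigorous verification of the two quantitative claims above: first, the uniform boundedness of the orbit points involved in singular edges at level $n$, which requires a careful combinatorial analysis of how the singularities of the generators propagate through the Bratteli diagram (using both the finite set $\mathcal{S}$ and the bound $M$); and second, that every orbit point is indeed eventually reachable from $x$ via regular edges as $n$ grows. This delicate analysis was originally carried out by Bondarenko in \cite{Bon} for bounded automata acting on regular rooted trees, and was extended to the general class of bounded type homeomorphisms of a Bratteli diagram in \cite[Lemma 4.3]{JNS}.
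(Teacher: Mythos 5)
The paper does not prove Proposition \ref{p-bond}; it is quoted as an external result, attributed in the preceding paragraph to Bondarenko's thesis \cite{Bon} (for bounded automata groups acting on rooted trees) and to \cite[Lemma 4.3]{JNS} (for the general class of groups of homeomorphisms of bounded type on $X_B$). Your proposal ultimately defers to the same two references, so at the level of how the result enters the paper, you are doing exactly what the authors do.

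That said, the sketch you provide of the underlying argument contains an imprecision worth flagging. You define the tiles as $T_n^\gamma = Gx \cap C_\gamma$ and then let $V_n$ be the union of the tiles reachable from the tile of $x$ along ``regular inter-tile edges''. But $Gx \cap C_\gamma$ is in general an \emph{infinite} set (for a dense orbit it is dense in $C_\gamma$), so the resulting $V_n$ need not be finite, whereas the notion of a sequence of bounded cut-sets is only useful for finite exhaustions (this is implicit in how it is used in Lemma \ref{l-leud-cutsets}, where finiteness of the $V_n$ is needed to exclude $\mathbb{N}^d$). Similarly, the claim that ``the number of orbit points appearing as endpoints of singular edges at level $n$ is bounded'' conflates orbit points with cylinders: the active cylinders at level $n$ are eventually in bijection with the singularities (a finite set), but each such cylinder can contain many orbit points, so the set of orbit points on singular edges is not itself uniformly bounded. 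In \cite{Bon, JNS} the finite cut-sets are built not from the cylinder tiles directly, but from the inverse-limit/tile structure of the level-$n$ Schreier graphs $\Gamma(G, \mathcal{L}(n))$, which allows one to identify a finite and uniformly bounded boundary. None of this affects the correctness of the statement you are citing, but the heuristic you offer as ``the plan'' does not accurately reflect the mechanism of the cited proof.
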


We will make use of the existence of a sequence of bounded cut-sets through the following lemma.

\begin{lem} \label{l-leud-cutsets}
	Let $\Gamma$ be a bounded degree graph. If $\Gamma$ admits a sequence of bounded cut-sets, then  $\leud(\Gamma) \le 1$.
\end{lem}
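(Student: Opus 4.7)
The plan is to argue by contradiction, reducing to two dimensions and then combining the Lipschitz hypothesis with the discrete isoperimetric inequality in $\N^2$. Suppose $\leud(\Gamma) \ge 2$. Then by definition there exist $d \ge 2$ and a Lipschitz map $\N^d \to \Gamma$ whose fibers have cardinality uniformly bounded by some $K$; restricting to a coordinate copy $\N^2 \hookrightarrow \N^d$ yields a Lipschitz map $\phi \colon \N^2 \to \Gamma$, say with Lipschitz constant $L$, whose fibers still have cardinality at most $K$. It therefore suffices to derive a contradiction with the bounded cut-set hypothesis from such a map.

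I would then transfer the hypothesis on $\Gamma$ to $\N^2$. Let $(V_n)$ be the given increasing exhaustion of $\Gamma$ by finite subsets with $|\partial V_n| \le M$ for all $n$, and set $W_n := \phi^{-1}(V_n) \subset \N^2$. Finiteness of $V_n$ together with the fiber bound for $\phi$ implies that each $W_n$ is finite; moreover $(W_n)$ is increasing with $\bigcup_n W_n = \N^2$, so $|W_n| \to \infty$. The key step is a uniform bound on the inner vertex boundary $\partial W_n$ in $\N^2$: if $x \in W_n$ has a neighbor $y \in \N^2 \setminus W_n$, then $d_\Gamma(\phi(x), \phi(y)) \le L$ while $\phi(x) \in V_n$ and $\phi(y) \notin V_n$, so $\phi(x)$ lies in the $L$-neighborhood (in $\Gamma$) of $\partial V_n$. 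Using the bounded degree $D$ of $\Gamma$, this $L$-neighborhood has at most $M(D+1)^L$ vertices, and combining with the fiber bound yields $|\partial W_n| \le K M (D+1)^L$, a bound independent of $n$.

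To conclude, I would invoke the classical isoperimetric inequality in $\N^2$: any finite $W \subset \N^2$ has inner vertex boundary of size at least $c\sqrt{|W|}$ for some absolute constant $c > 0$. Applied to $W_n$ with $|W_n| \to \infty$, this forces $|\partial W_n| \to \infty$, contradicting the uniform upper bound of the previous paragraph. The only point requiring care is the interpretation of the hypothesis: the definition must be read with the $V_n$ being proper (in practice, finite) subsets of $\Gamma$, since the trivial choice $V_n = \Gamma$ would satisfy the literal statement and make the conclusion vacuous; once this is done, all the geometric content is concentrated in the one-step isoperimetric comparison between $\N^2$ and its image in $\Gamma$.
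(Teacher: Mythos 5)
Your proof is correct and takes essentially the same route as the paper: you pull back the bounded cut-set exhaustion along the Lipschitz map with uniformly bounded fibers, check that the pulled-back exhaustion of $\N^2$ still has uniformly bounded inner vertex boundary via the same one-step geodesic argument, and close with the isoperimetric inequality for $\N^2$; the paper phrases the pull-back as a transfer lemma for an arbitrary bounded-degree graph $\Delta$ in place of $\N^2$ and then simply cites the fact that $\N^d$ ($d\geq 2$) has no bounded cut-set exhaustion, which is precisely the isoperimetric input you make explicit. Your remark that the $V_n$ must be read as finite is also on point — the paper's definition omits the word "finite" but its own proof of this lemma explicitly assumes it.
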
 

\begin{proof}

More generally, we will show the following: let $\Delta$ be another bounded degree graph, and assume that there exists a Lipschitz map $\iota \colon \Delta \to \Gamma$ whose fibers have uniformly bounded cardinality.  Then $\Delta$ also has a sequence of bounded cut-sets. The statement  follows by choosing   $\Delta=\N^d$, which does not have a sequence of bounded cut-sets unless $d=1$.

	Let $\iota$ have Lipschitz constant $K>0$, and  fibers with cardinality bounded by $C_1>0$.  Let also $m\ge 1$ be an upper  bound on the degree of $\Gamma$, and $(V_n)$ be  a sequence of finite subsets of $\Gamma$ with $C_2:=\sup |\partial V_n|<\infty$. Set $W_n=\iota^{-1}(V_n)$. We  claim that $(W_n)$ is an exhaustion of $\Delta$ with bounded boundary. We have $|W_n|\le C_1|V_n|$, so that the sets $W_n$ are finite, and it is clear that the sets $W_n$ are increasing and that $\bigcup W_n =\Delta$. Thus we only need to bound the size of the boundary $\partial W_n$. Let $w\in \partial W_n$, and $z\in \Delta$ be a neighbour such that $z\notin W_n$. We have $\iota(w)\in V_n, \iota(z)\notin V_n$, and the distance between $\iota(w)$ and $\iota(z)$ is $\le K$. Since any geodesic path from $\iota(w)$ and $\iota(z)$ must contain a vertex $v\in \partial V_n$, we deduce that $\iota(w)$ is at distance $\le K$ from some vertex $v\in \partial V_n$, that is $w$ is in the preimage of the ball $B_\Gamma(v, K)$. Since $w$ is arbitrary we obtain $\partial W_n \subset \bigcup_{v\in \partial V_n} \iota^{-1}(B_{\Gamma}(v, K))$.  But since the degree of $\Gamma$ is bounded by $m$, we have $|B_{\Gamma}(v, K)|\le m^k$ for every $v$. Thus for every $n$ we have
	\[|\partial W_n| \le \sum_{v\in \partial V_n} |\iota^{-1}(B_\Gamma(v, K))|\le |\partial V_n|C_1 m^K  \le C_2C_1m^K,\]
	showing that $|\partial W_n|$ is uniformly bounded. \qedhere

\end{proof}


We refer to \S \ref{s-bounded-automorphisms} and \S \ref{s-bounded-full} for examples of classes of groups to which the following result applies.

\begin{thm}\label{t-strongly-bounded}
	Let $G\le \aut(T)$ be a finitely generated branch group, and $H$ be a group of homeomorphisms of strongly bounded type of a totally disconnected compact space $X$. Let $\rho \colon G\to H$ be an injective homomorphism, and $Y\subset X$ be the essential support of the action induced by $\rho$. Then there exists a continuous surjective $G$-equivariant map $q\colon Y\to \partial T$. 
\end{thm}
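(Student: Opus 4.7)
The strategy is to deduce this from Theorem \ref{t-actions-leud} applied with $d=1$. For that I need to check that the graphs of germs of $H$ acting on $X$ all satisfy $\leud(\widetilde{\Gamma}(H,x))\le 1$. The first move is a reduction: since $\rho$ is injective and $G$ is finitely generated, $\rho(G)\le H$ is a finitely generated subgroup. As a subgroup of a group of homeomorphisms of strongly bounded type, $\rho(G)$ is itself of strongly bounded type, because elements of bounded type form a subgroup of $\homeo(X)$ and because the isotropy group $\rho(G)_x/\rho(G)_x^0$ embeds naturally into $H_x/H_x^0$, hence inherits local finiteness. Since the essential support $Y$ depends only on the image $\rho(G)$, I may replace $H$ by $\rho(G)$ from the outset and thus assume that $H$ is finitely generated.

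Under this assumption, Lemma \ref{l-isotropy-fg} implies that $H_x/H_x^0$ is finite for every $x\in X$. Fix $x$ and look at the natural projection $\widetilde{\Gamma}(H,x)\to \Gamma(H,x)$: this is a $1$-Lipschitz (in fact edge-preserving) map whose fibers are all in bijection with $H_x/H_x^0$, hence finite and of uniform size over the vertex set of $\Gamma(H,x)$. Both graphs have degree bounded by the size of a finite generating set, so by Lemma \ref{l-leud-monotone}
\[\leud(\widetilde{\Gamma}(H,x))\le \leud(\Gamma(H,x)).\]
By Proposition \ref{p-bond} the orbital graph $\Gamma(H,x)$ admits a sequence of bounded cut-sets (this is where we use that $H$ consists of homeomorphisms of bounded type), and Lemma \ref{l-leud-cutsets} then gives $\leud(\Gamma(H,x))\le 1$. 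Combining, I obtain $\leud(\widetilde{\Gamma}(H,x))\le 1$ for every $x$, so the quantity $d$ in the hypothesis of Theorem \ref{t-actions-leud} is at most $1$.

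I then apply Theorem \ref{t-actions-leud} with this $d\le 1$. The conclusion provides an upper semi-continuous $G$-equivariant map $Y\to \partial T^{[1]}=\partial T$, which is in fact continuous by the final assertion of that theorem. This is precisely the desired map $q$.

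The only genuine subtleties are the two bookkeeping points in the reduction step, namely that strong boundedness is inherited by the finitely generated subgroup $\rho(G)$ (so that Lemma \ref{l-isotropy-fg} is available) and that the essential support is unchanged when we restrict attention from $H$ to $\rho(G)$. Once these are in hand, the argument is just a chain of three monotonicity/comparison lemmas culminating in an invocation of Theorem \ref{t-actions-leud}.
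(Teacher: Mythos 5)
Your argument is correct and is essentially identical to the paper's proof of Theorem \ref{t-strongly-bounded}: replace $H$ by the finitely generated subgroup $\rho(G)$, use Lemma \ref{l-isotropy-fg} to get finite isotropy groups, deduce $\leud(\widetilde{\Gamma}(G,x))\le\leud(\Gamma(G,x))\le 1$ via Lemma \ref{l-leud-monotone}, Proposition \ref{p-bond}, and Lemma \ref{l-leud-cutsets}, then invoke Theorem \ref{t-actions-leud} with $d=1$. The only difference is that you spell out the two bookkeeping points (heritability of strong boundedness, invariance of the essential support) more explicitly than the paper, which leaves them implicit.
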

\begin{proof}
	The image $\rho(G)$ is a finitely generated group of homeomorphisms of $X$ of strongly bounded type. Thus the orbital graph of every $x\in X$ satisfies $\leud(\Gamma(G, x))=1$ according to Proposition \ref{p-bond} and Lemma \ref{l-leud-cutsets}. Moreover, by Lemma \ref{l-isotropy-fg} the isotropy group $G_x/G_x^0$ is finite. Since the graph of germs $\widetilde{\Gamma}(G, x)$ covers $\Gamma(G, x)$ with fibers of cardinality $|G_x/G^0_x|$, by Lemma \ref{l-leud-monotone} we also have $\leud(\widetilde{\Gamma}(G, x))\le 1$. Thus, we can apply Theorem \ref{t-actions-leud} and deduce the conclusion. \qedhere
\end{proof}

\subsubsection{Groups of bounded automorphisms of rooted trees}  \label{s-bounded-automorphisms}
An important class of groups of homeomorphisms of strongly bounded type appear among groups of automorphisms of rooted trees. Namely, consider the special case of Bratteli diagram such that each set $V_i$ is contains only one point, so that it is determined by the sequence $(E_i)_{i\ge 1}$. We interpret $(E_i)$ as a sequence of finite alphabets, and the set of finite paths consists of formal words $e_1\cdots e_n$ with $e_i\in E_i$. This set naturally has the structure of rooted tree $T$, called the \textbf{tree of words} associated to the sequence $(E_i)$, whose $n$th level $\L(n)$ is consists of words of length $n$, and where each word $w= e_1\cdots e_n$ is connected by an edge to words of the form $we, e\in E_{n+1}$. (The tree $T$ should not be confused with the graph associated to the Bratteli diagram). The boundary $\partial T$ is identified with the path space of the diagram. 

If $T$ is a tree of words, for every $n$ we denote by $T^{(n)}$ the tree of words associated to the shifted sequence $(E_{i-n})_{i\ge {n+1}}$.  For every $v=e_1\cdots e_n$, the subtree $T_v$ is equal to  the set $vT^{(n)}$ of concatenations of the form $vw, w\in T^{(n)}$. For every $g\in G$ and every $v\in \L(n)$, there exists  a unique $g|_v\in \aut(T^{(n)})$ satisfying
\[g(vw)= g(v)g|_v(w), \quad w\in T^{(n)}.\]
The element $g|_v$ is called the \textbf{section} of $G$ at $v\in T$.   

\begin{defin} \label{d-bounded-automorphisms} Following Nekrashevych \cite[Def. 4.3]{Nek-free}, we say that an automorphism $g\in \aut(T)$ is \textbf{bounded} if there exists finitely many points $\xi_1,\ldots, \xi_k\in \partial T$ and a number $m>0$ such that $g|_w$ is trivial for every $w$ which does not belong to the $m$-neighbourhood of the rays defining $\xi_1,\ldots, \xi_k$. 
\end{defin}

It is not difficult to see that bounded automorphisms of a tree of words $T$  form a subgroup of $\aut(T)$ (see  \cite[Prop. 4.3]{Nek-free}), that we denote $\mathcal{B}(T)$. 
\begin{prop}
	Let $T$ be a tree of words of bounded degree. Then the group of  bounded automorphisms $\mathcal{B}(T)$ is a group of homeomorphisms of $\partial T$ of strongly bounded type in the sense of Definition \ref{d-strongly-bounded-type}. 
\end{prop}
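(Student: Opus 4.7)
The plan is to verify the two conditions defining ``strongly bounded type''. First, I identify $\partial T$ with the path space of the Bratteli diagram $B$ with singleton vertex sets $V_n = \{\ast_n\}$ and edge sets matching the alphabets of $T$; under this identification, $\L(\ast_n)$ is the set of level-$n$ vertices of $T$, cylinders $C_\gamma$ are the boundary cylinders $\partial T_\gamma$, and for $g \in \aut(T)$ the restriction $g|_{C_\gamma}$ coincides with a prefix replacement $F_{\gamma,\eta}$ if and only if the tree section $g|_\gamma$ is trivial (in which case $\eta = g(\gamma)$). Then, for any $g \in \mathcal{B}(T)$ with parameters $m$ and rays $\xi_1, \ldots, \xi_k$: the singularities of $g$ are contained in $\{\xi_1, \ldots, \xi_k\}$, because for any $\xi \notin \{\xi_1, \ldots, \xi_k\}$ and for $n$ sufficiently large the ancestor $v_n(\xi)$ sits at tree-distance strictly more than $m$ from each ray, so $g|_{v_n(\xi)} = 1$ and the germ of $g$ at $\xi$ is a prefix replacement; and $\sup_n A_{\ast_n}(g) < \infty$ because the level-$n$ vertices with nontrivial section lie in the $m$-neighborhood of the $k$ rays, whose cardinality at each level is uniformly bounded by the bounded-degree assumption on $T$.

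For the second condition, I need to verify that $\mathcal{B}(T)_x/\mathcal{B}(T)_x^0$ is locally finite for every $x \in \partial T$. A preliminary observation is that if $g \in \mathcal{B}(T)_x$ has nontrivial germ at $x$ then $x$ must be one of its rays, for otherwise $g|_{v_n(x)} = 1$ for $n$ large and $g$ acts trivially on a neighbourhood of $x$. Given a finitely generated subgroup $\Gamma \le \mathcal{B}(T)$, the identity $(gh)|_w = g|_{h(w)}\, h|_w$ implies that the bound parameter does not grow under composition, so a finite generating set of $\Gamma$ provides a common bound $m$ for all elements of $\Gamma$. By Lemma \ref{l-isotropy-fg} applied to $\Gamma$ (which is finitely generated and of bounded type by the previous step), the quotient $\Gamma_x / \Gamma_x^0$ is a finitely generated group, and local finiteness of $\mathcal{B}(T)_x / \mathcal{B}(T)_x^0$ reduces to showing that this quotient is actually finite.

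The main obstacle is precisely this upgrade from ``finitely generated'' to ``finite'' for $\Gamma_x/\Gamma_x^0$. The strategy I would pursue is to show that the germ of an element $g \in \Gamma_x$ at $x$ is determined by a finite amount of combinatorial data, so that the germ map factors through a finite group. Concretely, for $n$ large enough that all rays of a chosen finite generating subset of $\Gamma_x$ other than $x$ have branched off before level $n$, the section $g|_{v_n(x)}$ is a bounded automorphism of $T^{(n)}$ with bound $m$ supported in the $m$-tube around the tail of $x$; I would then argue that the germ at this tail depends only on how $g|_{v_n(x)}$ permutes finitely many local configurations in the tube, and identify the corresponding finite image group. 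Making this precise---and, in particular, ruling out that $\Gamma_x/\Gamma_x^0$ could contain an infinite Burnside-type subgroup---will likely require a careful structural analysis, perhaps via a self-similarity-type reduction, of bounded automorphisms fixing a single ray with a uniform bound $m$.
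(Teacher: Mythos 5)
Your argument for the first condition (bounded type) is clean and correct: the identification $\partial T \simeq X_B$ with singleton vertex sets, the characterization of prefix-replacement germs via triviality of tree sections, and the bound on $A_{\ast_n}(g)$ via the bounded-degree assumption are all fine. Your preliminary observations toward the second condition are also sound: the observation that an element with nontrivial germ at $x$ must have $x$ among its singular rays, and the fact that $\max$ of the generators' bounds is a uniform bound for the whole subgroup via $(gh)|_w = g|_{h(w)}h|_w$, are both correct and would be part of any complete argument.

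However, there is a genuine gap exactly where you flag it. After reducing (via Lemma~\ref{l-isotropy-fg}) to showing that $\Gamma_x/\Gamma_x^0$ is \emph{finite} for $\Gamma \le \mathcal{B}(T)$ finitely generated, you sketch a strategy (the germ is determined by finite combinatorial data in the $m$-tube around $x$) but do not carry it out, and you explicitly acknowledge that ``making this precise \ldots will likely require a careful structural analysis.'' That is the crux of the proposition, and without it the proof is incomplete: ``finitely generated'' plus ``torsion'' does not give finite without further argument (this is precisely your Burnside worry), and you do not establish the finite-exponent or finite-image structure you would need. The paper does not reprove this step either; it simply cites where the argument is carried out (Theorem~4.4 of \cite{Nek-free} and Lemma~2.7 of \cite{AAMBV}), which is the standard analysis of germs of bounded automorphisms along a fixed ray that your last paragraph gestures at. So the right diagnosis is: your reduction is correct, your intuition for why it should work is correct, but the actual proof of local finiteness of the isotropy groups is missing and is the only nontrivial content of the proposition.
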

\begin{proof}
	It is clear that $\mathcal{B}(T)$ consists of groups of homeomorphisms of bounded type of $\partial T$, and it is not difficult to check that its isotropy groups are locally finite, using that $T$ has bounded degree  (see the argument in the proof of \cite[Th. 4.4.] {Nek-free}, or  \cite[Lemma 2.7]{AAMBV}).
\end{proof}

Thus, from Theorem \ref{t-strongly-bounded} we obtain the following  result.
\begin{cor}\label{c-bounded-automorphisms}
	Let $G\le \aut(T_1)$ be a finitely generated branch group, and let $T_2$ be a tree of words of bounded degree. Let $G\to \mathcal{B}(T_2)$ be an injective homomorphism, and $Y\subset \partial T_2$ the essential support of the associated $G$-action. Then there exists a continuous surjective equivariant map $Y \to \partial T_1$. 
\end{cor}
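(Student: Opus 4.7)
The plan is to apply Theorem \ref{t-strongly-bounded} directly. The two hypotheses to verify are that $\partial T_2$ is a totally disconnected compact space, and that $\mathcal{B}(T_2)$ is a group of homeomorphisms of strongly bounded type of $\partial T_2$ in the sense of Definition \ref{d-bounded-type}. The first is immediate since $T_2$ is a locally finite rooted tree (this is where the bounded degree hypothesis enters); its boundary $\partial T_2$ is a closed subspace of the product of the finite alphabets $E_i$, hence compact and totally disconnected. The second point is precisely the content of the proposition immediately preceding the corollary.

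With these hypotheses in hand, I would instantiate Theorem \ref{t-strongly-bounded} with $X = \partial T_2$ and $H = \mathcal{B}(T_2)$, feeding it the embedding $\rho \colon G \to \mathcal{B}(T_2)$. The theorem then outputs a continuous $G$-equivariant map $q \colon Y \to \partial T_1$, where $Y \subset \partial T_2$ is the essential support of the induced $G$-action. This is exactly the statement of the corollary.

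There is essentially no obstacle to overcome at this stage: the corollary is a direct specialization of Theorem \ref{t-strongly-bounded}. The substantive work has already been absorbed into the proof of that theorem, which combined Bondarenko's structural result (Proposition \ref{p-bond}) with the bounded-cut-set criterion (Lemma \ref{l-leud-cutsets}) to force $\leud(\widetilde{\Gamma}(H,x)) \leq 1$, and then invoked the rigidity statement Theorem \ref{t-actions-leud}. The only thing worth emphasizing in the write-up is why the preceding proposition applies, namely that $\mathcal{B}(T_2)$ consists of bounded-type homeomorphisms of $\partial T_2$ (built into Definition \ref{d-bounded-automorphisms}) and that its isotropy groups are locally finite, a fact which relies on $T_2$ having bounded degree.
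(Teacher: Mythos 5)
Your proof is correct and matches the paper's own argument: the corollary is stated immediately after the proposition showing that $\mathcal{B}(T_2)$ is a group of homeomorphisms of strongly bounded type, and the paper simply invokes Theorem \ref{t-strongly-bounded} as you do. You have also correctly pinpointed where the bounded-degree hypothesis on $T_2$ is used, namely to ensure local finiteness of the isotropy groups.
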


Many well studied examples of groups acting on rooted trees arise as subgroups of the group of bounded automorphisms $\mathcal{B}(T)$. An important class of subgroups of $\mathcal{B}(T)$ are groups generated by \emph{finite state bounded automata}, defined by Sidki \cite{Sid}. Let us recall this notion. Assume the sequence of alphabets $(E_i)$ is constant equal to some given alphabet $E$. The corresponding tree $T$ is called the \textbf{regular tree of words} associated to $E$. Note that the shifted trees $T^{(n)}$ are all equal to $E$, so that for every $g\in \aut(T)$ and $w\in T$ the section $g|_w$ is still an element of $\aut(T)$. We  say that $g\in \aut(T)$ is \textbf{finite state} if there exists a finite subset $A\subset \aut(T)$ such that $g|_w\in A$ for every $w\in T$.  This is equivalent to the fact that $g$ can be defined by a finite-state automaton over the alphabet $E$. The \textbf{activity function} of $g\in \aut(T)$ is the function $A_g(n)$ that counts the number of vertices $w\in \L(n)$ such that $g|_w$ is non-trivial (compare with Definition \ref{d-bounded-type}). Obviously if $g\in \aut(T)$ is bounded in the sense of Definition \ref{d-bounded-automorphisms}, then its activity function $A_g(n)$ is bounded. One can check that if $g$ is finite state, then the converse also holds. A \textbf{bounded automaton group} is a  group $G\le \aut(T)$ generated by finitely many bounded finite-state automorphisms. This class contains many well-studied examples of (weakly) branch groups, including the first Grigorchuk group, the Gupta-Sidki groups, the  Basilica group, iterated monodromy groups of post-critically finite polynomials \cite{Nek-book}, see \cite{BKN} for more examples. 
Examples of subgroups of $\mathcal{B}(T)$ which are not finite state include the extended family of Grigorchuk groups $(G_\omega)$ \cite{Gri-growth}. See also \cite{Nek-minimal-cantor} and  \cite[\S 4.3.3]{JNS} for additional examples of subgroups of $\mathcal{B}(T)$.
In particular Corollary \ref{c-bounded-automorphisms} applies to homomorphisms between any two groups among the above classes of groups, provided that the source group is a branch group.

\subsubsection{More examples of  actions of branch groups of strongly bounded type} \label{s-bounded-full}
Groups of homeomorphisms of strongly bounded type also appear outside of the realm of groups acting on rooted trees, in particular in the setting of topological full groups of \'etale groupoids. We refer to \cite[\S 4]{JNS} for a list of examples.   In particular the {topological full group} of every minimal homeomorphism of the Cantor set is  a group of strongly bounded type (this follows from the Bratteli-Vershik representation of Cantor minimal systems obtained in \cite{Her-Put-Ska}, see \cite[\S 4]{JNS} for details).   The family of groups constructed by Nekrashevych in \cite{Nek-frag}, some of which are simple and have intermediate growth, are also all groups of homeomorphisms of strongly bounded type.  Thus Theorem \ref{t-strongly-bounded} applies to homomorphisms from branch groups with values in these classes of groups. Explicit examples of embeddings of Grigorchuk groups into groups in these classes  can be found in  \cite{MB-Gri-full} and \cite {Gri-Le-Na} (see also the tightly related construction in \cite{Vor}) and \cite{Nek-frag}. More examples of homomorphisms from branch groups to groups of homeomorphisms of strongly bounded type can be found in \cite{MB-full-automata}.

\subsubsection{Contracting actions on rooted trees} Theorem \ref{t-actions-leud} also implies a rigidity result of  another type of actions of branch groups on rooted trees, namely contracting actions. 

\begin{defin}
	Let $T$ be a regular tree of words associated to a finite alphabet $E$. A subgroup $G\le \aut(T)$ is \textbf{contracting} if it consists of finite state automorphisms and if there exists a finite set $\mathcal{N}\subset \aut(G)$ such that for every $g\in G$, there exists $m>0$ such that $g|_v\in \mathcal{N}$ for every $v\in T$ at level $\ge m$.  
	
	We say that an action of a group $G$ on $T$ is contracting if it comes from a homomorphism $G\to \aut(T)$ taking values in a contracting group. 
\end{defin}
Contracting groups constitute the central objects in  the theory of iterated monodromy groups, see \cite{Nek-book}. Groups generated by finite-state bounded automata (see \S \ref{s-bounded-automorphisms}) are contracting, see \cite[Th. 3.8.8]{Nek-book}, but the class of contracting groups is strictly larger than the class of bounded automata groups.

\begin{remark} \label{r-contracting-not-unique}
	If $G$ is a contracting group, a contracting action of $G$ on $T$ is in general not unique, even up to conjugation on the boundary. For example, given a tree of words $T$ over the alphabet $E$ for every $d\ge 1$, consider the tree of words $T^{\otimes d}$ associated to the sequence of alphabets $E^d$. The group $G$ acts on $T^{\otimes d}$, and this action is contracting  if the original action of $G$ on $T$ was. Note that the boundary $\partial T^{\otimes d}$ identifies with $(\partial T)^d$, with the diagonal action of $G$. 
	
\end{remark}
Theorem \ref{t-actions-leud} has the following corollary, which can be seen as a partial converse to Remark \ref{r-contracting-not-unique}.

\begin{cor}
	Let $G\le \aut(T_1)$ be a finitely generated branch group, and suppose that $G$ has a faithful contracting action on a regular tree of words $T_2$,  with essential support $Y\subset \partial T_2$. Then there exists $d\ge 1$ and an upper semi-continuous $G$-equivariant map $q\colon Y \to \partial T_2^{[d]}$. Moreover, if the action of $G$ on $T_2$ is level-transitive, then $Y=\partial T_2$ and the map $q\colon \partial T_2 \to \partial T_1^{[d]}$ is continuous.

\end{cor}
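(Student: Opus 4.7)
The plan is to apply Theorem \ref{t-actions-leud} to the embedding $G \hookrightarrow \aut(T_2) \le \homeo(\partial T_2)$ given by the contracting action, with $X = \partial T_2$. The only point to verify is that $d := \sup_{x\in \partial T_2} \leud(\widetilde{\Gamma}(G, x)) < \infty$. For this I would invoke two standard properties of contracting self-similar groups (see \cite{Nek-book}): the orbital Schreier graphs $\Gamma(G, x)$ on $\partial T_2$ have polynomial growth of uniformly bounded degree, and the isotropy groups $G_x/G_x^0$ are finitely generated with polynomial growth of uniformly bounded degree. Since $\widetilde{\Gamma}(G, x)$ is a Galois cover of $\Gamma(G, x)$ with deck group $G_x/G_x^0$, it follows that $\widetilde{\Gamma}(G, x)$ has polynomial growth of uniformly bounded degree, and Remark \ref{r-leud-pol} yields $d < \infty$. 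Theorem \ref{t-actions-leud} then produces the desired upper semi-continuous $G$-equivariant map $q\colon Y \to \partial T_1^{[d]}$.

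Now assume that the action on $T_2$ is level-transitive, so that $G$ acts minimally on $\partial T_2$. The equality $Y = \partial T_2$ is immediate: each $\rist_G(v)$ is non-abelian by Lemma \ref{lem-rist-FC}, so $\rist_G(n)'$ is non-trivial and, by faithfulness, acts non-trivially on $\partial T_2$. As $\rist_G(n)'$ is normal in $G$, its support in $\partial T_2$ is a non-empty closed $G$-invariant subset, hence all of $\partial T_2$ by minimality, and the intersection $Y$ also equals $\partial T_2$.

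For continuity, rather than refining the map $q$ produced above, I would apply Theorem \ref{t-wb-structure-lsc} to the minimal $G$-space $\partial T_2$ and to the lower semi-continuous $G$-equivariant map $\Phi(x) = G_x^0$. This provides a continuous $G$-equivariant map $\Psi\colon \partial T_2 \to \F(\partial T_1)$, $\Psi(x) = \fix_{\partial T_1}(G_x^0)$. It then suffices to show that $\Psi(x) \subseteq q(x)$ for every $x$, which combined with Theorem \ref{t-actions-leud} gives $|\Psi(x)|\le d$ and hence that $\Psi$ takes values in $\partial T_1^{[d]}$. For this containment, if $\xi \in \fix_{\partial T_1}(G_x^0)$ and some vertex $w$ above $\xi$ satisfied $\rist_G(w)'\le G_x^0$, then $\rist_G(w)'$ would fix $\xi$, contradicting the consequence of Lemma \ref{l-wb-rigid-move-derived} that $\rist_G(w)'$ has no global fixed point in $\partial (T_1)_w$. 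The main obstacle in this plan is really the uniform polynomial bound on the growth of $\widetilde{\Gamma}(G, x)$ for contracting actions; the remaining steps are an assembly of the earlier structural results.
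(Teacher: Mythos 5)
Your proof follows the same strategy as the paper's for the existence of the upper semi-continuous map, and the argument for $Y=\partial T_2$ is also the same. The continuity argument, however, is a genuinely different (and arguably cleaner) route. Two comments.

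First, a gap in the argument that $d<\infty$. You invoke that the isotropy groups $G_x/G_x^0$ are ``finitely generated with polynomial growth of uniformly bounded degree'' and infer that the Galois cover $\widetilde\Gamma(G,x)$ inherits polynomial growth. That inference is not automatic: having polynomial growth as an abstract group does not control how many deck translates of a vertex lie within a given ball of $\widetilde\Gamma(G,x)$, because $G_x/G_x^0$ could sit in the graph of germs with large distortion. What you actually need, and what the cited reference gives, is the stronger fact from \cite[Prop.\ 4.1]{Nek-free} that the isotropy groups $G_x/G_x^0$ are \emph{finite with uniformly bounded cardinality}. Then the covering map has fibers of bounded size, so $|B_{\widetilde\Gamma}(n)|\le C|B_\Gamma(n)|$ and the polynomial bound transfers immediately. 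This is exactly the deduction the paper makes.

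Second, your continuity argument is a different and somewhat more conceptual route than the paper's. The paper shows continuity by shrinking $d$ so that the image of $q$ contains a set $Q$ of maximal cardinality $d$, then uses the $G$-invariant ultrametric on $\partial T_1$ to see that the orbit closure $X$ of $Q$ consists only of sets of cardinality exactly $d$, and finally combines upper semi-continuity with maximality of $d$ to force $q^{-1}(X)=\partial T_2$ and continuity. You instead apply Theorem \ref{t-wb-structure-lsc} directly to $\Phi(x)=G_x^0$ to produce a continuous $\Psi(x)=\fix_{\partial T_1}(G_x^0)$, and bound its cardinality by $d$ via the inclusion $\Psi(x)\subseteq q(x)$ (which you correctly derive from Lemma \ref{l-wb-rigid-move-derived}). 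This avoids the cardinality-refinement and the invariant metric entirely, at the cost of producing a possibly different map than the original $q$; this is fine, since the statement only asserts existence. You implicitly use that $\Psi(x)\neq\varnothing$ for $x\in Y$, which is true (otherwise some $\rist_G(n)'\le G_x^0$ would contradict $x\in\supp(\rist_G(n)')$), and you should make that explicit since $\partial T_1^{[d]}$ consists of non-empty sets.
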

\begin{proof}
By a result of Nekrashevych  \cite[Prop. 2.13.6]{Nek-book}, if $G\le \aut(T_2)$ is a finitely generated contracting group, then there exists $d$ such that for every $\xi \in \partial T_2$ the orbital graph ${\Gamma}(G, \xi)$ has  polynomial growth of degree at most $d$. Moreover by \cite[Prop. 4.1]{Nek-free}, the isotropy groups $G_\xi/G^0_\xi$ are finite and their cardinality is uniformly bounded in $\xi$. Thus, the graphs of germs $\widetilde{\Gamma}(G, \xi)$ also have  polynomial growth of degree at most $d$. By Remark \ref{r-leud-pol}, we deduce that   $\leud(\widetilde{\Gamma}(G, \xi))\le d$ for every $\xi \in \partial T_2$. Thus Theorem \ref{t-actions-leud} applies. Assume further that the action of $G$ on $T_2$ is level-transitive, i.e. the action on $\partial T_2$ is minimal. Since $Y$ is closed and $G$-invariant, we must have $Y=\partial T_2$. Upon replacing $d$ with a smaller number, we can assume that the image of the map $q\colon \partial T_2 \to \partial T_1^{[d]}$ contains a finite subset $Q=\{\xi_1,\ldots, \xi_d\} \subset \partial T_2$ of cardinality $d$. Let $X\subset \partial T_1^{[d]}$ be the closure of the $G$-orbit of $Q$. Then $X$ consists entirely of sets of cardinality $d$. Since $q$ is upper semicontinuous and $d$ is the maximal cardinality of subsets in $\partial T_1^{[d]}$, the preimage $q^{-1}(X)$ is closed, and since it is also non-empty by minimality we must have $q^{-1}(X)=\partial T_2$. So $q$ takes values in $X$, and using again that the cardinality of sets in $X$ is maximal, the upper semi-continuity forces $q$ to be continuous. \qedhere
\end{proof}

\bibliographystyle{amsalpha}
\bibliography{bib-prox}

\end{document}